\documentclass[letterpaper, 11pt,  reqno]{amsart}

\usepackage{amsmath,amssymb,amscd,amsthm,amsxtra, esint}

\usepackage[margin=1.2in,marginparwidth=1.5cm, marginparsep=0.5cm]{geometry}

\usepackage[implicit=true]{hyperref}
\allowdisplaybreaks[2]

\sloppy

\hfuzz  = 0.5cm 


\setlength{\pdfpagewidth}{8.50in}
\setlength{\pdfpageheight}{11.00in}

\usepackage{color}

\definecolor{gr}{rgb}   {0.,   0.69,   0.23 }
\definecolor{bl}{rgb}   {0.,   0.5,   1. }
\definecolor{mg}{rgb}   {0.85,  0.,    0.85}
\definecolor{yl}{rgb}   {0.8,  0.7,   0.}
\definecolor{or}{rgb}  {0.7,0.2,0.2}

\newtheorem{theorem}{Theorem} [section]

\newtheorem{lemma}[theorem]{Lemma}
\newtheorem{proposition}[theorem]{Proposition}
\newtheorem{remark}[theorem]{Remark}

\newtheorem{definition}[theorem]{Definition}


\DeclareMathOperator*{\supp}{supp}

\DeclareMathOperator{\Law}{Law}
\DeclareMathOperator*{\wlim}{w-lim}

\newcommand{\1}{\hspace{0.5mm}\text{I}\hspace{0.5mm}}
\newcommand{\II}{\text{I \hspace{-2.8mm} I} }
\newcommand{\III}{\text{I \hspace{-2.9mm} I \hspace{-2.9mm} I}}

\newcommand{\IV}{\text{I \hspace{-2.9mm} V}}

\newcommand{\noi}{\noindent}
\newcommand{\Z}{\mathbb{Z}}
\newcommand{\R}{\mathbb{R}}

\newcommand{\D}{\mathcal{D}}
\newcommand{\T}{\mathbb{T}}
\newcommand{\bul}{\bullet}

\let\Re=\undefined\DeclareMathOperator*{\Re}{Re}
\let\Im=\undefined\DeclareMathOperator*{\Im}{Im}

\let\P= \undefined
\newcommand{\P}{\mathbf{P}}
\newcommand{\PP}{\mathbb{P}}

\newcommand{\Q}{\mathbf{Q}}

\newcommand{\E}{\mathbb{E}}
\newcommand{\EE}{\mathcal{E}}

\newcommand{\F}{\mathcal{F}}

\newcommand{\al}{\alpha}
\newcommand{\be}{\beta}
\newcommand{\dl}{\delta}

\newcommand{\nb}{\nabla}

\newcommand{\Dl}{\Delta}
\newcommand{\eps}{\varepsilon}
\newcommand{\kk}{\kappa}
\newcommand{\g}{\gamma}
\newcommand{\G}{\Gamma}
\newcommand{\ld}{\lambda}
\newcommand{\Ld}{\Lambda}
\newcommand{\s}{\sigma}

\newcommand{\ft}{\widehat}

\newcommand{\wt}{\widetilde}
\newcommand{\cj}{\overline}

\newcommand{\dt}{\partial_t}
\newcommand{\dd}{\partial}

\newcommand{\ta}{\theta}

\newcommand{\Ta}{\Theta}

\newcommand{\W}{\mathbf{W}}

\renewcommand{\l}{\ell}
\renewcommand{\o}{\omega}
\renewcommand{\O}{\Omega}

\newcommand{\les}{\lesssim}

\newcommand{\jb}[1]
{\langle #1 \rangle}

\newcommand{\ind}{\mathbf 1}

\renewcommand{\S}{\mathcal{S}}

\newcommand{\A}{\mathcal{A}}
\newcommand{\X}{\mathcal{X}}

\newcommand{\M}{\mathcal{M}}

\newcommand{\N}{\mathbb{N}}

\newcommand{\NN}{\mathcal{N}}

\newcommand{\bC}{\mathbf{C}}

\newcommand{\GG}{\mathcal{G}}

\newcommand{\J}{\mathcal{J}}

\renewcommand{\H}{\mathcal{H}}

\newcommand{\too}{\longrightarrow}


\newcommand{\WW}{\mathbb{W}}
\newcommand{\YB}{\mathbf{Y}}
\newcommand{\ZB}{\mathbf{Z}}



\usepackage{tikz}

\usetikzlibrary{shapes.misc}
\usetikzlibrary{shapes.symbols}
\usetikzlibrary{shapes.geometric}
\usepackage{pgfplots}
\pgfplotsset{%
every x tick/.style={black, thick},
every y tick/.style={black, thick},
every tick label/.append style = {font=\footnotesize},
every axis label/.append style = {font=\footnotesize},
compat=1.12
  }

\newcommand{\per}{\text{\rm per}}

\newcommand{\rhoo}{\vec{\rho}}
\newcommand{\muu}{\vec{\mu}}

\newcommand{\ze}{\lambda}

\newcommand{\z}{\zeta}

\newcommand{\ZZ}{\mathcal{Z}}

\newcommand{\plan}{\mathfrak{p}}

\newcommand{\ff}{{\bf f}}
\newcommand{\vv}{{\bf v}}

\newcommand{\eval}{\text{eval}_{t = 0}}
\newcommand{\proj}{\text{proj}}

%
\newcommand{\jbb}[1]
{[\hspace{-0.6mm}[ #1 ]\hspace{-0.6mm}]}

\newtheorem*{ackno}{Acknowledgments}

\numberwithin{equation}{section}
\numberwithin{theorem}{section}

\makeatletter
\@namedef{subjclassname@2020}{%
  \textup{2020} Mathematics Subject Classification}
\makeatother

\begin{document}
\baselineskip = 14pt

\title[Hyperbolic $P(\Phi)_2$-model on the plane]
{Hyperbolic $P(\Phi)_2$-model on the plane}

\author[T. Oh, L. Tolomeo, Y. Wang, and G. Zheng]
{Tadahiro Oh,  Leonardo Tolomeo,  Yuzhao Wang, and Guangqu Zheng}

%

\address{
Tadahiro Oh, School of Mathematics\\
The University of Edinburgh\\
and The Maxwell Institute for the Mathematical Sciences\\
James Clerk Maxwell Building\\
The King's Buildings\\
Peter Guthrie Tait Road\\
Edinburgh\\ 
EH9 3FD\\
United Kingdom,
 and 
 School of Mathematics and Statistics, Beijing Institute of Technology, Beijing 100081,
China}

\email{hiro.oh@ed.ac.uk}

\address{
Leonardo Tolomeo,  School of Mathematics\\
The University of Edinburgh\\
and The Maxwell Institute for the Mathematical Sciences\\
James Clerk Maxwell Building\\
The King's Buildings\\
Peter Guthrie Tait Road\\
Edinburgh\\ 
EH9 3FD\\
United Kingdom}

\email{l.tolomeo@ed.ac.uk}

\address{
Yuzhao Wang\\
School of Mathematics\\
Watson Building\\
University of Birmingham\\
Edgbaston\\
Birmingham\\
B15 2TT\\ United Kingdom}

\email{y.wang.14@bham.ac.uk}

\address{
Guangqu Zheng, School of Mathematics\\
The University of Edinburgh\\
and The Maxwell Institute for the Mathematical Sciences\\
James Clerk Maxwell Building\\
The King's Buildings\\
Peter Guthrie Tait Road\\
Edinburgh\\ 
EH9 3FD\\
 United Kingdom, 
 Department of Mathematical Sciences\\
University of Liverpool\\
Mathematical Sciences Building\\ 
Liverpool, L69 7ZL
United Kingdom, 
and
Department of Mathematics and Statistics, Boston University, 665 Commonwealth Avenue, Boston, MA 02215, USA
 }

\email{gzheng90@bu.edu}

%
%
%
%

%
%
\subjclass[2020]{35L71, 60H15, 35K05, 35R60}

\keywords{nonlinear wave equation; stochastic damped nonlinear wave equation;
canonical stochastic quantization; Gibbs measure; 
stochastic nonlinear heat equation;
coming down from infinity}

%
%
%

\begin{abstract}

In this paper, we construct 
invariant Gibbs dynamics for  
the hyperbolic $\Phi^{k+1}_2$-model
 (namely,  defocusing stochastic damped nonlinear
wave equation forced 
by an additive space-time white noise)
on the plane.
(i)~For this purpose, we first revisit the construction
of a $\Phi^{k+1}_2$-measure on the plane.
More precisely, 
by establishing coming down from infinity
for the associated stochastic nonlinear heat equation (SNLH)
on the plane,  
we first construct a $\Phi^{k+1}_2$-measure 
on the plane
as a limit of the $\Phi^{k+1}_2$-measures on large tori.
(ii)~We then construct invariant Gibbs dynamics for  
the hyperbolic $\Phi^{k+1}_2$-model
on the plane, 
by taking a  limit
 of the invariant Gibbs dynamics on large tori
 constructed  by the first two  authors with Gubinelli and Koch (2022). 
Here, 
our main strategy 
is to develop further the ideas
from  a recent work on the hyperbolic $\Phi^3_3$-model 
on the three-dimensional torus by the first two authors and Okamoto (2025), 
and to 
study convergence of the so-called enhanced Gibbs measures,
for which 
 coming down from infinity
for the associated SNLH with positive regularity plays a crucial role.
By combining wave and heat analysis together with 
 ideas from optimal transport theory, 
 we then 
 conclude global well-posedness of 
the hyperbolic $\Phi^{k+1}_2$-model
on the plane
and invariance of the associated Gibbs measure.
As a byproduct of our argument, 
we also obtain invariance 
of the limiting 
 $\Phi^{k+1}_2$-measure on the plane
 under the dynamics
 of the parabolic
  $\Phi^{k+1}_2$-model.

\end{abstract}


\maketitle


%
%
\tableofcontents

\section{Introduction}
\label{SEC:1}

\subsection{Hyperbolic $\Phi^{k+1}_2$-model}

We study the following stochastic damped nonlinear wave equation
(SdNLW) forced by an additive space-time white noise, posed on the plane $\R^2$:
\begin{align}
\dt^2 u + \dt u + (1- \Dl)u \,  +  u^k = \sqrt{2}\xi, 
\label{NLW1}
\end{align}

\noi
where $k \in 2\N+1$, $u$ is a real-valued unknown, 
and $\xi(x, t)$ is a Gaussian space-time white noise on $\R^2 \times \R_+$
with the space-time covariance given by
\[ \E\big[ \xi(x_1, t_1) \xi(x_2, t_2) \big]
= \dl(x_1 - x_2) \dl (t_1 - t_2).\]

With  $\vec{u} = (u, \dt u)$, 
define the energy $\mathcal{E}(\vec u)$ by 
\begin{align}
\begin{split}
\EE(\vec{u})
& = E(u) +  \frac 12 \int_{\R^2} (\dt u)^2 dx \\
& = \frac 12 \int_{\R^2} |\jb{\nabla} u|^2 dx + \frac 12 \int_{\R^2} (\dt u)^2 dx 
+ \frac 1 {k+1} \int_{\R^2} u^{k+1} dx, 
\end{split}
\label{Hamil1}
\end{align}

\noi
where $E(u)$ is given by 
\begin{align}
 E(u)
 = \frac 12 \int_{\R^2} |\jb{\nabla} u|^2 dx 
+ \frac 1 {k+1} \int_{\R^2} u^{k+1} dx.
\label{Hamil2}
\end{align}

\noi
Note that the energy $\EE(\vec u)$ is 
 precisely the energy (= Hamiltonian) 
for  the  (deterministic) nonlinear wave equation (NLW):
\begin{align}
\dt^2 u + (1 -  \Dl)  u  + u^k  = 0.
\label{NLW2}
\end{align}

\noi
Namely, 
with  $v = \dt u$, we can write NLW \eqref{NLW2} 
in the following Hamiltonian formulation:
\begin{align}
\dt   \begin{pmatrix}
u \\ v
\end{pmatrix}
=     
\begin{pmatrix}
0 & 1\\-1 & 0 
\end{pmatrix}
\begin{pmatrix} 
\frac{\dd\mathcal{E}}{\dd u}
\rule[-3mm]{0pt}{2mm}
\\
 \frac{\dd\mathcal{E}}{\dd v}
\end{pmatrix}.
\label{NLW2a}
\end{align}

\noi
Similarly, we can write  SdNLW  \eqref{NLW1} as
\begin{align}
\dt   \begin{pmatrix}
u \\ v
\end{pmatrix}
=     
\begin{pmatrix}
0 & 1\\-1 & 0 
\end{pmatrix}
\begin{pmatrix} 
\frac{\dd\mathcal{E}}{\dd u}
\rule[-3mm]{0pt}{2mm}
\\
 \frac{\dd\mathcal{E}}{\dd v}
\end{pmatrix}
+      \begin{pmatrix} 
0 
\\
- v + \sqrt 2 \xi
\end{pmatrix}.
\label{NLW3}
\end{align}

\noi
Consider  a 
 Gibbs measure $\rhoo$ of the form 
\begin{align}
``d\rhoo(\vec u ) = \ZZ^{-1}e^{-\EE(\vec u  )}d\vec u 
=  d\rho \otimes d\mu_0(\vec u )\text{''}, 
\label{Gibbs0}
\end{align}

\noi
where $\vec u = (u, v) = (u, \dt u)$,  $\rho$ is a $\Phi^{k+1}_2$-measure on $\R^2$, 
and $\mu_0$ is the (spatial) white noise measure on $\R^2$.
By drawing an analogy to the finite-dimensional Hamiltonian dynamics, 
we expect that the Gibbs measure $\rhoo$ in \eqref{Gibbs0}
is invariant under the NLW dynamics \eqref{NLW2} on $\R^2$.
Moreover, it is easy to see that the white noise measure $\mu_0$ on 
the second component $v= \dt u$
is invariant under the 
Ornstein-Uhlenbeck dynamics: 
\begin{align}
\dt  v = - v + \sqrt 2 \xi.
\label{OU1}
\end{align}

\noi
Thus, by viewing the SdNLW dynamics \eqref{NLW3}
 as a superposition
of the NLW dynamics~\eqref{NLW2a} and the 
Ornstein-Uhlenbeck dynamics~\eqref{OU1}, 
we expect  the Gibbs measure $\rhoo$ in \eqref{Gibbs0}
to be invariant under the SdNLW dynamics \eqref{NLW1}.

Indeed, from the stochastic quantization point of view
\cite{PW, RSS}, 
 the equation \eqref{NLW1}
corresponds to  the so-called canonical stochastic quantization equation\footnote{Namely, 
 the `Hamiltonian' stochastic quantization equation given as the Langevin equation
 with the momentum $v = \dt u$; see \eqref{NLW3}.
The parabolic $\Phi^{k+1}_d$-model
 is the stochastic gradient flow for the energy functional~$E(u)$ defined in \eqref{Hamil2}.}
  for the $\Phi^{k+1}_2$-measure
 and thus is of importance in mathematical physics (in particular, constructive quantum field theory); see \cite{RSS}, where the terminology ``canonical stochastic quantization'' was introduced.
In the parabolic setting, 
the stochastic quantization equation 
of the $\Phi^{k+1}_d$-measure\footnote{The subscript $d$ 
denotes the dimension of the underlying space.} is given by the 
following  parabolic $\Phi^{k+1}_d$-model
(= stochastic nonlinear heat equation (SNLH)):
\begin{align}
 \dt X + (1- \Dl)X \,  +  X^k = \sqrt{2}\xi, 
\label{NLH1}
\end{align}

\noi
under which the $\Phi^{k+1}_d$-measure remains invariant;
see 
\cite{DD, Hairer, MW17a, MW17b, HM, GH1, AK1, GH2, AK2}.
The equation \eqref{NLW1}
is the hyperbolic counterpart of the parabolic 
$\Phi^{k+1}_d$-model when $d = 2$.
 For this reason, we refer  to  \eqref{NLW1} 
as the {\it hyperbolic $\Phi^{k+1}_2$-model}.
Our main goal in this paper is  to construct the global-in-time dynamics
at the Gibbs equilibrium for 
the hyperbolic $\Phi^{k+1}_2$-model~\eqref{NLW1}
posed on~$\R^2$. 

In the parabolic case, Mourrat and Weber \cite{MW17a}
proved pathwise global well-posedness
of the parabolic $\Phi^{k+1}_2$-model \eqref{NLH1}
on $\R^2$
with given deterministic initial data
(of negative regularity) 
for any $k \in 2\N+1$ (see Remark 4 in \cite{MW17a}).\footnote{Remark 1.5 in the arXiv version.}
When $k = 3$, the second author proved
pathwise global well-posedness of the hyperbolic $\Phi^{4}_2$-model 
on $\R^2$
with given deterministic initial data
(but of positive regularity).
However, such a pathwise global well-posedness result 
is not available for higher values of $k$.
In order to overcome this difficulty, we
apply 
Bourgain's invariant measure argument \cite{BO94, BO96} `in spirit'
and construct global-in-time dynamics in a probabilistic manner.

For this purpose, 
 we first revisit the construction
of a $\Phi^{k+1}_2$-measure on the plane.
More precisely, 
by establishing coming down from infinity
for the associated SNLH
on the plane,  
we  construct a $\Phi^{k+1}_2$-measure 
on the plane
as a limit of the $\Phi^{k+1}_2$-measures on large tori.
In order to construct invariant Gibbs dynamics
for the hyperbolic $\Phi^{k+1}_2$-model~\eqref{NLW1}, 
we 
develop further the ideas
from  a recent work on the hyperbolic $\Phi^3_3$-model~\cite{OOT2}
 by the first two authors and Okamoto.
 More precisely, 
our main strategy is to
study convergence of the so-called {\it enhanced Gibbs measures}
(= distributions
of the enhanced data sets), 
which provides
the main statistical control, 
by combining wave and heat analysis
 together with 
 ideas from optimal transport theory.
In our analysis, we crucially exploit 
the finite speed of propagation
for the hyperbolic $\Phi^{k+1}_2$-model \eqref{NLW1}.
Moreover, 
 coming down from infinity
for the associated SNLH with positive regularity plays a fundamental role, 
which is of independent interest.
As a byproduct of our argument, 
we also obtain invariance 
of the limiting 
 $\Phi^{k+1}_2$-measure on the plane
 under the dynamics
 of the parabolic
  $\Phi^{k+1}_2$-model;
  see Remarks~\ref{REM:inv1} and~\ref{REM:conv2}.

\subsection{Review of the periodic problem}
\label{SUBSEC:1.2}

Following the previous works \cite{OTh2, GKO}, 
Gubinelli, Koch, and the first two authors
\cite{GKOT}
studied SdNLW \eqref{NLW1} on the two-dimensional torus $\T^2= (\R/\Z)^2$.
By introducing 
a proper renormalization (see  \eqref{Wick2} below; see also \cite{OTh2, GKO}), 
they  constructed  global-in-time 
invariant Gibbs dynamics for  SdNLW \eqref{NLW1} on $\T^2$
via the so-called Bourgain's invariant measure argument~\cite{BO94, BO96}.
See \cite{OTh2} for the construction of invariant Gibbs dynamics
for the deterministic NLW \eqref{NLW2} on $\T^2$, preceding \cite{GKOT}.
See also \cite{ORTz} for the corresponding results (for both SdNLW \eqref{NLW1} and NLW \eqref{NLW2}) on a two-dimensional 
compact Riemannian manifold without boundary.

Given $L > 0$, 
define a dilated torus $\T_L^2$ by setting 
\[\T^2_L = (\R/L\Z)^2.\]

\noi
Then, the aforementioned result in \cite{GKOT}
applies to SdNLW posed on $\T^2_L$ for any $L > 0$.
Our main strategy for studying SdNLW \eqref{NLW1}
on $\R^2$ is then to take 
 a large torus limit  $L \to \infty$
 of the $L$-periodic SdNLW dynamics on $\T^2_L$.
As such, we first provide a review of the 
$L$-periodic problem on the dilated torus $\T^2_L$
in this subsection 
(especially since the presentation in \cite{GKOT}
is only for the $L = 1$ case).

Let us first introduce some notations.
Fix $L > 0$ and set 
\begin{align*}
\Z_L^2 =  (\Z/L)^2.
\end{align*}

\noi
Given $\ze  \in \Z_L^2$, 
we set 
\begin{align}
e_{\ze}^L(x) = \frac 1L e^{2\pi i \ze \cdot x}
\label{exp1}
\end{align}

\noi
for $x \in \T^2_L$.
Then, $\{e_{\ze}^L\}_{\ze \in \Z_L^2}$
forms an orthonormal basis of $L^2(\T^2_L)$.
We define the Fourier transform $\ft f(\ze)$ of a function $f$ on $\T^2_L$
by 
\begin{align*}
\ft f(\ze) = \int_{\T^2_L} f(x) \cj {e_\ze^L(x)} dx, \quad \ze \in \Z^2_L, 
\end{align*}

\noi
with the associated Fourier series expansion:\footnote{Hereafter, 
we may drop the inessential factor $2\pi$.}
\begin{align*}
f(x) = \sum_{\ze \in \Z_L^2} \ft f(\ze) e_\ze^L(x).
\end{align*}

We first go over the construction of the Gibbs measure on $\T^2_L$;
see \cite{DPT1, OTh1} for details (with $L = 1$).
See also \cite{LOZ} 
for the construction of analogous log-correlated Gibbs measures in the one-dimensional setting.
Given $ s \in \R$, 
let $\mu_{s, L}$ denote
a Gaussian measure on $L$-periodic distributions 
with the covariance operator $(1-\Dl_{\T^2_L})^{-s}$, 
%
%
 formally defined by\footnote{We use $\ZZ_{s, L}$, etc. to denote various normalizing constants, which 
may vary line by line.}
\begin{align}
\begin{split}
 d \mu_{s, L} (u)
&    = \ZZ_{s, L}^{-1} \exp\bigg(-\frac 12 \| u\|_{{H}^s(\T^2_L)}^2\bigg) du\\
& =  \ZZ_{s, L}^{-1} \prod_{\ze \in \Z_L^2} 
 e^{-\frac 12 \jb{\ze}^{2s} |\ft u(\ze)|^2}   
 d\ft u(\ze) , 
\end{split}
\label{mu0}
\end{align}

\noi
where $\jb{\,\cdot\,} = (1 + |\cdot |^2)^\frac 12$ and $\ft u(\ze)$, $\ze \in \Z^2_L$,  denotes the Fourier transform of $u$ on $\T^2_L$.
We note that 
$\mu_{1, L}$ corresponds to the massive Gaussian free field
on $\T^2_L$, 
while $\mu_{0, L}$ corresponds to the white noise on $\T^2_L$.
We then set 
\begin{align}
\muu_L = \mu_{L} \otimes \mu_{0, L},  \quad \text{where} \quad 
\mu_{L} = \mu_{1, L}.
\label{mu1}
\end{align}

\noi
Given $s \in \R$, let
 \[ \vec H^s(\T^2_L) = H^s(\T^2_L) \times  H^{s-1}(\T^2_L).\]
 
\noi
Then, $\muu_L$ in \eqref{mu1} is formally given by 
\begin{align}
 d \muu_{L} (u, v)
&    = \ZZ_{L}^{-1} \exp\bigg(-\frac 12 \| (u, v) \|_{ \vec H^1(\T^2_L)}^2\bigg) du dv.
\label{mu2}
\end{align}

\noi
Namely, 
 the measure $\muu_L$ is defined as 
   the induced probability measure
under the map:
\begin{equation*}
\o \in \O \longmapsto \big(u(\o), v(\o)\big) \in 
\D'(\T_L^2)\times \D'(\T_L^2)
\subset 
\D'(\R^2)\times \D'(\R^2), 
 \end{equation*}

\noi
where $u(\o) = u_L(\o)$ and $v(\o) = v_L(\o)$ are given by
the following Gaussian Fourier series:
\begin{equation}\label{series}
u(\o) = u_L(\o) = \sum_{\ze \in \Z^2_L} \frac{g_{L\ze}(\o)}{\jb{\ze}}e_\ze^L
\quad\text{and}\quad
v(\o)= v_L(\o) = \sum_{\ze \in \Z^2_L} h_{L\ze}(\o)e_\ze^L.
\end{equation}

\noi
Here, 
   $\{g_n,h_n\}_{n\in\Z^2}$ denotes  a family of independent standard complex-valued  Gaussian random variables conditioned  that $g_{-n} = \cj{g_n}$ and $h_{-n} = \cj{h_n}$, 
 $n \in \Z^2$.
From \eqref{series}, 
it is easy to see   that  $\muu_L = \mu_{1, L}\otimes\mu_{0, L}$ is supported on
$\vec H^{s}(\T^2_L) \setminus \vec H^0(\T^2_L)$
for $s < 0$.

In view of 
 \eqref{Hamil1}
and  \eqref{mu2}, 
  the Gibbs measure $\rhoo_L$ on $\T^2_L$ 
  is formally given by 
\begin{align}
d\rhoo_L(u,\dt u ) = \ZZ_L^{-1} e^{
- \frac1{k+1} \int_{\T^2_L} u^{k+1} dx} 
d\muu_L (u, \dt u ).
\label{Gibbs1}
\end{align} 

\noi
Due to  the roughness of the support of $\muu_L$, 
the interaction potential  $\int_{\T^2_L} u^{k+1} dx $ in \eqref{Gibbs1} is not well defined
and thus a  renormalization is required to give a proper meaning to 
the expression in~\eqref{Gibbs1}.

Given $N \in \N$, let $\P_N$
be the Dirichlet projection onto the frequencies $\{|\ld| \le N\}$.
Then, with  $u$  as in \eqref{series}, 
it follows from \eqref{series}, \eqref{exp1},  and  a Riemann sum approximation that 
 \begin{align}
\begin{split}
 \s_{N,  L} :\! &=  \E\big[\big(\P_{N}u(x)\big)^2\big] 
  = \sum_{\substack{\ld\in\Z_L^2\\|\ld|\leq  N}}\frac{1}{\jb{ \ld }^2}\frac 1{L^2} \\
&  \hspace{0.5mm} = \sum_{\substack{n\in\Z^2\\|n|\leq L N}}\frac{1}{\jb{ \frac nL  }^2}\frac 1{L^2} 
 \sim \int_{\R^2} \ind_{|z|\le N}
\frac{dz}{1 + |z|^2} 
\sim \log  N
\end{split}
\label{sig1}
 \end{align}

\noi
for $N \gg 1$, 
independent of $x\in\T^2_L$,
which diverges to $\infty$ as $N\to \infty$ (for each fixed period $L > 0$).
In particular, $u = \lim_{N \to \infty} \P_N u $ is only a distribution
and thus,  for any integer $\l \ge2 $,  the power $(\P_N u)^\l$
does not converge to any limit.
For each $x \in \T^2_L$, we now define 
 the Wick power $:\!(\P_{N} u)^{\l}(x) \!: \, = \, 
 :\!(\P_{N} u)^{\l}(x) \!:_L$ by\footnote{Note that the definition \eqref{Wick1}
 of the Wick power depends on the period $L > 0$.
We, however, suppress the subscript $L$ from the Wick power, 
when it is clear from the context.}
\begin{align}
:\!(\P_{N} u)^{\l}(x) \!:
 \,  =   H_{\l} (\P_{N} u(x); \s_{N, L}),
\label{Wick1}
\end{align}

\noi
where $H_{\l}(x;\s)$ is the Hermite polynomial of degree $\l \in \Z_{\ge 0}  := \N\cup\{0\}$
with a variance parameter $\s> 0$.
Arguing as in \cite{GKO, GKO2, GKOT}, 
we can show that 
$:\!(\P_{N} u)^{\l} \!:$ converges, almost surely and 
in $L^p(\O)$ for any finite $p \ge 1$, 
to a limit, denoted by 
$:\! u^{\l} \!:$ in $H^s(\T^2_L)$, $s < 0$.
Then, 
by defining
 the truncated renormalized potential energy:
\begin{align}
R_N^L(u) = 
  \frac{1}{k+1} 
\int_{\T^2_L} :\!(\P_N u)^{k+1}(x) \!: dx, 
\label{R1}
\end{align}

\noi
a standard computation shows  that  $\{R_N^L (u) \}_{N \in \N}$
converges to some limit, denoted by $R^L(u)$, 
in  $L^p(d\mu_L)$ for any finite $p \geq 1$, as $N \to \infty$.
The convergence 
of  the truncated renormalized potential energy
$\{R_N^L (u) \}_{N \in \N}$
together with 
Nelson's estimate
implies  that the truncated density
$\{e^{-R^L_N(u)}\}_{N\in \N}$
converges to 
the limiting density 
$e^{-R^L(u)}$ in $L^p(d\mu_L)$ for any finite $p \ge 1$,
 as $N \to \infty$.
Hence, by defining  the renormalized  truncated Gibbs measure:
\begin{align}
d\rhoo_{N, L}(u,\dt u )= \ZZ_{N, L}^{-1}e^{-R_N^L(u)}d\muu_L(u,\dt u), 
\label{Gibbs2}
\end{align}

\noi
we then conclude that 
the renormalized truncated  Gibbs measure $\rhoo_{N, L}$  converges
in total variation 
 to the limiting Gibbs measure $\rhoo_L$ given by
\begin{align}
\begin{split}
d\rhoo_L(u,\dt u )
& = \ZZ_L^{-1} e^{- R^L(u)}d\muu_L(u, \dt u )\\
& = \ZZ_L^{-1} \exp\bigg( - \frac{1}{k+1} 
\int_{\T^2_L} :\! u^{k+1}(x) \!: dx\bigg)d\muu_L(u, \dt u ).
\end{split}
\label{Gibbs3}
\end{align}

\noi
Furthermore, for each fixed $0<   L < \infty$, 
the resulting Gibbs measure $\rhoo_L$ is equivalent\footnote{Namely, 
$\rhoo_L$ and $\muu_L$ are mutually absolutely continuous.} 
to the base Gaussian measure $\muu_L$.

\begin{remark}\label{REM:Gibbs}\rm

The first marginal of the Gibbs measure $\rhoo_L$
(namely, after integrating \eqref{Gibbs3}
in $\dt u$)
is precisely the $\Phi^{k+1}_2$-measure $\rho_L$ on $\T^2_L$, given by 
\begin{align}
\begin{split}
d\rho_L(u)
& = \ZZ_L^{-1} e^{- R^L(u)}d\mu_L(u)\\
& = \ZZ_L^{-1} \exp\bigg( - \frac{1}{k+1} 
\int_{\T^2_L} :\! u^{k+1}(x) \!: dx\bigg)d\mu_L(u ), 
\end{split}
\label{Gibbs4}
\end{align}

\noi
where
$\mu_{L} = \mu_{1, L}$ is as in \eqref{mu1}.
Note that $\rho_L$ is the Gibbs measure associated with the energy functional 
 $E(u)$  in \eqref{Hamil2}.
The Gibbs measure $\rhoo_L$ in \eqref{Gibbs3} can then be written as
$\rhoo_L = \rho_L \otimes \mu_{0,  L}$, 
where $\mu_{0,  L}$ is the (spatial) white noise measure on $\T^2_L$.

\end{remark}

Next, we discuss stochastic dynamics associated with the Gibbs measure
$\rhoo_L$ in \eqref{Gibbs3}.
This process is known as 
 stochastic quantization \cite{PW}.
 In the parabolic setting, 
 Da Prato and Debussche \cite{DD}
 studied the parabolic $\Phi^{k+1}_2$-model \eqref{NLH1} on $\T^2$, 
 associated with the $\Phi^{k+1}_2$-measure $\rho_{L = 1}$ in \eqref{Gibbs4}.
 With the Wick renormalization, 
 they constructed global-in-time invariant dynamics for \eqref{NLH1} on $\T^2$
 with the initial data distributed by  the $\Phi^{k+1}_2$-measure~$\rho_{L = 1}$. 
 This result is readily applicable to 
 the parabolic $\Phi^{k+1}_2$-model \eqref{NLH1} posed on 
 the dilated torus~$\T^2_L$ for any $L > 0$.
 In \cite{MW17a}, with an intricate use of weighted Besov spaces (see \eqref{besov1} below), 
 Mourrat and Weber extended this result
 to  the parabolic $\Phi^{k+1}_2$-model on the  plane~$\R^2$.

We now consider the hyperbolic $\Phi^{k+1}_2$-model on $\T^2_L$
for fixed $L > 0$:
\begin{align}
\dt^2 u + \dt u + (1- \Dl)u \,  +  u^k = \sqrt{2}\xi_L
\label{SNLW0}
\end{align}

\noi
with the Gibbsian initial data distributed by the Gibbs measure $\rhoo_L$
in \eqref{Gibbs3}, 
where $\xi_L$ is a space-time white noise on $\T^2_L\times \R_+$.
In view of the equivalence of the Gibbs measure $\rhoo_L$ in~\eqref{Gibbs3}
and the base Gaussian measure $\muu_L$ in \eqref{mu1}
for each fixed  $L > 0$, 
it suffices to study~\eqref{SNLW0} with the 
Gaussian initial data distributed by 
$\muu_L$.

Let $\Phi = \Phi_L$  be the solution to the linear stochastic damped wave equation on $\T^2_L$
with the Gaussian initial data distributed by~$\muu_L$ in~\eqref{mu1}:
\begin{align*}
\begin{cases}
\dt^2 \Phi + \dt\Phi +(1-\Dl)\Phi  = \sqrt{2}\xi_L\\
(\Phi,\dt\Phi)|_{t=0}=(\phi_0,\phi_1)
\quad \text{with }\ 
\Law (\phi_0, \phi_1) = \muu_L.
\end{cases}
\end{align*}

\noi
Here, $\Law(X)$ of a random variable $X$ denotes the law of $X$.
Define the linear damped wave propagator $\D(t)$ by 
\begin{equation}
\D(t) = e^{-\frac{t}2}\frac{\sin\Big(t\sqrt{\frac34-\Dl}\Big)}{\sqrt{\frac34-\Dl}}
\label{lin2}
\end{equation} 

\noi
as a Fourier multiplier operator.
Then, the stochastic convolution $\Phi$ defined above can be expressed as 
\begin{align*} 
\Phi (t) 
 = \big(\dt\D(t) +\D(t)\big)\phi_0 + \D(t)\phi_1+ \sqrt{2}\int_0^t\D(t - t')dW_L(t'), 
\end{align*}

\noi
where  $W_L$ denotes a cylindrical Wiener process on $L^2(\T^2_L)$:
\begin{align}
W_L(t)
 = \sum_{\ze \in \Z_L^2 } B_{\ze} (t) e_\ze^L
\label{WW1}
\end{align}

\noi
and  
$\{ B_\ze \}_{\ze \in \Z^2_L}$ 
is defined by 
$B_{\ze}(t) = \jb{\xi_L, \ind_{[0, t]} \cdot e_\ze^L}_{\T^2_L\times \R_+}$.
Here, $\jb{\cdot, \cdot}_{\T^2_L\times \R_+}$ denotes 
the duality pairing on $\T^2_L\times \R_+$.
Then, we see that  $\{ B_\ze \}_{\ze \in \Z^2_L}$ is a family of mutually independent complex-valued\footnote
{In particular, $B_0$ is  a standard real-valued Brownian motion.} 
Brownian motions conditioned  that $B_{-\ze} = \cj{B_\ze}$, $\ze \in \Z^2_L$. 
Note that  $\text{Var}(B_\ze(t)) = t$, $\ze \in \Z^2_L$.

Let $N \in \N$.
Given   $(x, t) \in \T^2_L \times \R_+$,
we see that 
 $\Phi_N(x, t)=\P_N\Phi(x, t)$
 is a mean-zero real-valued Gaussian random variable with variance
 \begin{align*}
 \E \big[\Phi_N^2(x, t)\big] = \s_{N, L} \sim \log N \too \infty
 \end{align*}

\noi
as $N \to \infty$, where $\s_{N, L}$ is as in \eqref{sig1}.
As in \eqref{Wick1}, we define the Wick power $:\!\Phi_N^{\l}(x, t) \!:$ by setting
\begin{align}
:\!\Phi_N^{\l}(x, t) \!:
 \, =   H_{\l} (\Phi_N(x, t); \s_{N, L}).
\label{Wick2}
\end{align}
 
 \noi
Then, 
$:\!\Phi_N^{\l} \!:$ converges, almost surely and 
in $L^p(\O)$ for any finite $p \ge 1$, 
to a limit, denoted by 
$:\!\Phi^{\l} \!:$\,,  in $C(\R_+; H^s(\T^2_L))$, $s < 0$.\footnote{Here, 
we endow the space
$C(\R_+; H^s(\T^2_L))$ with the compact-open topology in time, 
namely with the topology of uniform convergence on compact intervals.
}

Given $N \in \N$, 
consider the following truncated SdNLW on $\T^2_L$:
\begin{align}
\dt^2 u_N   + \dt u_N  +(1-\Dl)  u_N 
+
\P_N\big((\P_N u_N)^{k}  \big) 
   = \sqrt{2} \xi_L .
\label{SNLW2}
\end{align}

 \noi
 Proceeding with the first order expansion (\cite{McKean, BO96, DD}):
\begin{equation}
u_N = \Phi_N + v_N,
\label{decomp1}
\end{equation}

\noi
 we see that 
 the remainder  term $v_N = u_N - \Phi_N$ satisfies 
 \begin{equation}
\dt^2 v_N + \dt v_N + (1 - \Dl) v_N +  
\sum_{\ell=0}^k {k\choose \ell} \P_N \big(\Phi_N^\ell v_N^{k-\ell}\big) = 0
\label{SNLW3}
\end{equation}

\noi
with the zero initial data.
As pointed out above, 
the power $\Phi_N^\ell$ does not converge to any limit  as $N\to \infty$. 
Furthermore, 
a triviality is known for \eqref{SNLW3} (at least for $k = 3$).
Namely, 
the solution $u_N$ to \eqref{SNLW2}
tends to 0 as we remove the regularization ($N \to \infty$); see \cite{OOR}.
This triviality result necessitates the use of a renormalization.
Therefore, 
we instead consider the following renormalized version of \eqref{SNLW3}:
 \begin{equation}
\dt^2 v_N + \dt v_N + (1 - \Dl) v_N + 
 \sum_{\ell=0}^k {k\choose \ell} \P_N \big( :\!\Phi_N^\l\!:  v_N^{k-\ell}\big) = 0
\label{SNLW3a}
\end{equation}

\noi
with the zero initial data.
By formally taking a limit as $N \to \infty$, 
we then obtain the limiting equation:
 \begin{equation}
\dt^2 v  +\dt v+  (1 - \Dl) v +  \sum_{\ell=0}^k {k\choose \ell} :\!\Phi^\l\!:  v^{k-\ell} = 0.
\label{SNLW4}
\end{equation}

\noi
Given the almost sure space-time regularity of the Wick powers
$\{\, :\!\Phi^\l\!: \,\}_{\l = 1}^k$, standard deterministic analysis with  the product estimates 
(Lemma~\ref{LEM:bilin1})
and Sobolev's inequality yields 
local well-posedness of \eqref{SNLW4}
with the continuous map, 
sending the enhanced data set to the solution:
\begin{align*}
 (\Phi  ,  :\!\Phi^2\!:, \dots, :\!\Phi^k\!:\,) &  \in \big(C([0, T]; H^{-\eps}(\T^2_L))\big)^{\otimes k} \\
& \longmapsto (v, \dt v) \in C([0, T]; \vec H^{1-\eps}(\T^2_L))
\end{align*}

\noi
for some small $\eps > 0$.
This deterministic local well-posedness analysis also applies
to \eqref{SNLW3a}, uniformly in $N \in \N$.

In view of the decomposition~\eqref{decomp1}, 
the renormalized truncated SdNLW  \eqref{SNLW3a} for $v_N$
corresponds to  the following
 the renormalized truncated  SdNLW for $u_N = \Phi_N + v_N $:
\begin{align}
 \dt^2 u_N +\dt u_N +  (1 -  \Dl)  u_N   + \P_N \big(:\! (\P_N u_N)^k\!:  \big) =  \sqrt 2\xi_L.
\label{SNLW5}
 \end{align}

\noi
Here, the renormalized nonlinearity in \eqref{SNLW5}
 is interpreted as 
\begin{align*}  
\P_N \big(:\! (\P_N u_N)^k\!:  \big)
& =  \P_N \big(:\! (\Phi_N + \P_N v_N)^k\!:  \big)\\
& = 
 \sum_{\ell=0}^k {k\choose \ell} \P_N \big(:\!\Phi_N^\l\!:  v_N^{k-\ell}\big).
\end{align*}

\noi
Then, 
the aforementioned local well-posedness of \eqref{SNLW4}, 
together with the convergence of 
the truncated enhanced data set 
$\{\, :\!\Phi_N^\l\!: \,\}_{\l = 1}^k$
to the limiting enhanced data set
$\{\, :\!\Phi^\l\!: \,\}_{\l = 1}^k$, implies
that $u_N$ 
converges almost surely to  a stochastic 
process $u
 = \Phi + v$, 
where $v$ satisfies~\eqref{SNLW4}.
It is in this sense that we say that 
the renormalized SdNLW on the dilated torus $\T^2_L$:
\begin{align}
\dt^2 u + \dt u +  (1 -  \Dl)  u   +  :\!u^k\!: \,= \sqrt 2 \xi_L
\label{SNLW6}
\end{align}

\noi
is locally well-posed with the Gaussian initial data distributed by $\muu_L$, 
and hence with the Gibbsian initial data in view of the equivalence of 
the Gibbs measure $\rhoo_L$ and the base Gaussian measure $\muu_L$.

Once the local-in-time dynamics is constructed, 
Bourgain's invariant measure argument~\cite{BO94, BO96}
allows us to construct global-in-time dynamics
for the hyperbolic $\Phi^{k+1}_2$-model~\eqref{SNLW6} on $\T^2_L$
and to prove invariance of the Gibbs measure $\rhoo_L$ in \eqref{Gibbs3}.
This argument is  based essentially on the following three ingredients:

\smallskip

\begin{itemize}
\item invariance of the truncated Gibbs measure $\rhoo_{N, L}$ in \eqref{Gibbs2}
under the truncated SdNLW dynamics \eqref{SNLW5}, 
which provides a probabilistic growth bound on the solution $u_N$, uniformly in $N \in \N$, 

\smallskip

\item a PDE approximation argument
(analogous to the local well-posedness argument in the current setting),

\smallskip

\item convergence (in total variation) of the truncated Gibbs measure $\rhoo_{N, L}$ in \eqref{Gibbs2}
to the limiting Gibbs measure $\rhoo_L$ in \eqref{Gibbs3}.

\end{itemize}

\noi
See \cite{ORTz} for full details of this argument.

\subsection{Main result}

Our goal in this paper is to extend the construction of the invariant Gibbs dynamics
of the hyperbolic $\Phi^{k+1}_2$-model 
on $\T^2_L$ described in the previous subsection to the plane~$\R^2$.
The main idea is to apply
Bourgain's invariant measure argument `in spirit', 
where we replace
the frequency truncation parameter $N \to \infty$
by the growing period $L \to \infty$.

We first state our main result.
Given $s \in \R$, $1 \le p < \infty$, 
and $\mu > 0$, 
let 
$W^{s, p}_\mu(\R^2)$ be  the weighted Sobolev space 
defined in \eqref{WSP} below.
We set 
\begin{align*}
\vec W^{s, p}_\mu(\R^2)
=  W^{s, p}_\mu(\R^2) \times  W^{s-1, p}_\mu(\R^2).
\end{align*}

\noi
We also set
\begin{align*}
\vec H_\text{loc}^{s}(\R^2)
=  H_\text{loc}^{s}(\R^2) \times  H_\text{loc}^{s-1}(\R^2).
\end{align*}

\noi
Here,  $H_\text{loc}^{s}(\R^2)$ denotes the class of functions $u$ belonging
to $H^s(K)$ for each compact subset $K \subset \R^2$,
where 
the 
$H^s(K)$-norm  is defined as the restriction norm onto the set $K$;
see \eqref{loc1} below.
The topology on 
$H_\text{loc}^{s}(\R^2)$ is induced by the following metric:
\[
d_{H_\text{loc}^{s}}(f, g) 
= \sum_{j=1}^\infty 2^{-j}
 \frac{  \| f - g\|_{H^{s}([-j, j]^2)}    }{1 +  \| f - g\|_{  H^{s}([-j, j]^2) }  }.
\]

\noi
By definition, we have $d_{H_\text{loc}^{s}}(f_n, f)\to 0$ if and only if
$f_n$ converges to $f$ in $H^{s}([-j, j]^2)$ for each $j\in\N$.
In the following, we endow
$C(\R_+; \vec H_\textup{loc}^{s}(\R^2))$
with the compact-open topology in time.\footnote{
We say that $u \in C(\R_+; \vec H_\textup{loc}^{s}(\R^2))$
if $u \in C(\R_+; \vec H^{s}(K))$
 for each compact subset $K \subset \R^2$.
 }

Throughout  the paper, we assume that 
random initial data are independent of a stochastic forcing
(such as $\xi_L$ and $\xi$).
Given $R> 0$, let 
 $B_R = \{ x \in  \R^2: |x|\le R\}$ denotes the (closed) ball of radius $R$ centered at the origin
and 
$\bC_R$ denote the cone given by 
\begin{align}
\begin{split}
\bC_R 
& = \{ (x, t)\in\R^2\times\R_+:  |x| + |t| \leq R   \} \\
& = \{ (x, t)\in\R^2\times [0, R]:  x \in B_{R-t}  \} .
\end{split}
\label{cone}
\end{align}

\begin{theorem}\label{THM:1}
Let $k \in 2\N+1$.
There exists a subset 
 $\A = \{ L_j: j \in \N\} \subset \N$
\textup{(}with $L_j < L_{j'}$ for $j < j'$\textup{)}
such that the following statements hold.

\smallskip

\noi
\textup{(i)} 
Let $s < 0$, finite $p \ge 1$, and $\mu > 0$.
Let $\rhoo_L$ be the Gibbs measure 
on the dilated torus $\T^2_{L}$ defined in \eqref{Gibbs3}.
When viewed as a probability measure on 
the weighted Sobolev space~$\vec W^{s, p}_\mu(\R^2)$, 
the $L_j$-periodic Gibbs  measure $\rhoo_{L_j}$ 
converges 
weakly to a limiting Gibbs measure $\rhoo_\infty$
as $j \to \infty$.
The limiting Gibbs measure $\rhoo_\infty$ on $\R^2$
can be written as 
\[\rhoo_\infty = \rho_\infty \otimes \mu_{0, \infty},\]

\noi
where $\rho_\infty$ is the $\Phi^{k+1}_2$-measure
on $\R^2$ constructed as a limit
of the $L_j$-periodic $\Phi^{k+1}_2$-measure $\rho_{L_j}$,
and $\mu_{0, \infty}$ is the white noise measure on $\R^2$.

\smallskip

\noi
\textup{(ii)} 
The hyperbolic $\Phi^{k+1}_2$-model  on $\R^2$\textup{:}
\begin{align}
\dt^2 u + \dt u +  (1 -  \Dl)  u \,   +  :\!u^k\!: \,= \sqrt 2 \xi
\label{SNLW7}
\end{align}

\noi
is globally well-posed
almost surely with respect to the Gibbsian initial data
distributed by the Gibbs measure $\rhoo_\infty$ on $\R^2$, 
constructed in Part \textup{(i)}.
Furthermore, 
 the  Gibbs measure $\rhoo_\infty$ is invariant under the resulting 
 hyperbolic $\Phi^{k+1}_2$-dynamics on $\R^2$.

More precisely,
the following statements hold.
There exist
a sequence 
 $\{(u_{L_j}, \dt u_{L_j})\}_{j \in \N}$ 
 and
 a 
non-trivial stochastic process $(u, \dt u)$
almost surely belonging to 
$C(\R_+; \vec H_\textup{loc}^{-\eps}(\R^2))$
for any $\eps >0$
 such that 

\smallskip

\begin{itemize}
\item for each $j \in \N$, 
 $(u_{L_j}, \dt u_{L_j})$ 
 is  the global-in-time solution to   SdNLW~\eqref{SNLW6} on $\T^2_{L_j}$
 with 
 $\Law \big((u_{L_j}(0), \dt u_{L_j}(0))\big) = \rhoo_{L_j}$, constructed in \cite{GKOT},

\smallskip

\item 
as $j \to \infty$, 
$(u_{L_j}(0), \dt u_{L_j}(0))$
converges almost surely to $(u(0), \dt u(0))$,
distributed by the 
limiting  Gibbs measure $\rhoo_\infty$,
in 
$ \vec H_\textup{loc}^{-\eps}(\R^2)$,

\smallskip

\item 
 given any $R>0$,  
 $(u_{L_j}, \dt u_{L_j})$ 
converges in probability to $(u, \dt u)$ 
on the cone $\bC_R$
\textup{(}more precisely, in $L^\infty([0, R]; \vec H^{-\eps}(B_{R-t}))\textup{;}~see \eqref{space1})$\textup{)}, 
as $j \to \infty$.

\end{itemize}

%
%
%

\noi
Furthermore, the law of $(u(t), \dt u(t))$ for any $t \in \R_+$
is given by the renormalized Gibbs measure $\rhoo_\infty$.

\end{theorem}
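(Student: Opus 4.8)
The plan is to implement Bourgain's invariant measure argument with the frequency cutoff $N$ replaced by the period $L$, so the two-parameter structure (finite-$L$, finite-$N$) is handled by first sending $N\to\infty$ on each torus $\T^2_L$ using the results of \cite{GKOT} recalled above, and then sending $L\to\infty$ along a carefully chosen subsequence $\A = \{L_j\}$. The three ingredients I would need are: (a) convergence of the enhanced Gibbs measures; (b) a deterministic/probabilistic PDE approximation argument comparing the $L$-periodic dynamics to the limiting $\R^2$-dynamics on a fixed cone $\bC_R$; and (c) a uniform-in-$L$ probabilistic growth bound for the solutions coming from invariance of $\rhoo_L$ under the $L$-periodic SdNLW flow. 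Part (i) of the theorem (construction of $\rhoo_\infty = \rho_\infty \otimes \mu_{0,\infty}$ as a weak limit on $\vec W^{s,p}_\mu(\R^2)$) is where the "coming down from infinity" for the associated SNLH with positive regularity enters: I would use the parabolic $\Phi^{k+1}_2$-dynamics (the heat equation \eqref{NLH1}) as an auxiliary tool, showing that $\rho_{L_j}$ (equivalently, the enhanced measure carrying the Wick powers $:\!u^\l\!:$ as extra components) forms a tight, hence weakly convergent along a subsequence, family; the quantitative decay-from-infinity estimate for SNLH provides the uniform-in-$L$ moment bounds on compact sets needed for tightness in the weighted topology. The white-noise second component is $L$-independent in an obvious way, so the product structure $\rhoo_\infty = \rho_\infty\otimes\mu_{0,\infty}$ passes to the limit.

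For Part (ii), I would localize everything to a fixed cone $\bC_R$ and exploit finite speed of propagation for the wave equation. Fix $R>0$. On $\bC_R$ the $L$-periodic solution $(u_L,\dt u_L)$ to \eqref{SNLW6}, for $L$ large compared to $R$, depends (via finite speed of propagation, after the first-order expansion $u_L = \Phi_L + v_L$) only on the restriction of the initial data and the noise to $B_R$. The strategy is: (1) couple the noises $\xi_L$ (for $L \in \A$) and $\xi$ and the Gibbsian initial data so that the enhanced data sets $(\Phi_L, :\!\Phi_L^2\!:, \dots, :\!\Phi_L^k\!:)$ converge in $C([0,R]; \vec H^{-\eps}(B_R))^{\otimes k}$ (in probability) to the limiting enhanced data set on $\R^2$ — this uses the weak convergence of $\rhoo_{L_j}$ from Part (i) together with a Skorokhod-type representation and the regularity theory for the stochastic convolution; (2) run the deterministic local well-posedness map for the $v$-equation \eqref{SNLW4} on $\bC_R$, which by the product estimate (Lemma~\ref{LEM:bilin1}) and Sobolev's inequality is a continuous function of the enhanced data set on the cone, hence transfers the convergence to $v_{L_j}\to v$ in $C([0,T]; \vec H^{1-\eps}(B_{R-t}))$ for short time; (3) iterate in time using the uniform-in-$L$ probabilistic growth bound from invariance of $\rhoo_{L_j}$ (ingredient (c)) to globalize, exactly as in Bourgain's argument \cite{BO94, BO96} as carried out in \cite{ORTz}, obtaining global convergence on $\bC_R$ in probability; (4) since $R>0$ is arbitrary and the limiting solution built on $\bC_R$ and $\bC_{R'}$ agree on $\bC_{\min(R,R')}$ by finite speed of propagation, the solutions patch together to a global process $(u,\dt u)\in C(\R_+; \vec H^{-\eps}_\mathrm{loc}(\R^2))$. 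Invariance of $\rhoo_\infty$ then follows because invariance of $\rhoo_{L_j}$ under the $L_j$-periodic flow is preserved under the convergence: for fixed $t$, $\mathrm{Law}(u_{L_j}(t),\dt u_{L_j}(t)) = \rhoo_{L_j} \to \rhoo_\infty$ weakly on $\vec W^{s,p}_\mu$, while the left-hand side also converges to $\mathrm{Law}(u(t),\dt u(t))$ by the PDE convergence, so the two limits coincide. Non-triviality is inherited from the non-triviality on the tori together with the convergence.

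The byproduct (invariance of $\rho_\infty$ under the parabolic $\Phi^{k+1}_2$-model on $\R^2$) comes for free from the same machinery applied to the heat equation \eqref{NLH1}: the parabolic dynamics on $\T^2_L$ leaves $\rho_L$ invariant (Da Prato--Debussche \cite{DD}), one has an analogous $L\to\infty$ approximation argument for SNLH (here genuinely easier, since the Mourrat--Weber theory \cite{MW17a} already provides the well-posed $\R^2$-dynamics in weighted Besov spaces), and the same weak-limit/law-matching argument yields invariance of the limiting $\rho_\infty$.

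The main obstacle I expect is ingredient (c), the uniform-in-$L$ probabilistic growth bound, together with making the coupling in step (1) genuinely work in the weighted topology. On a fixed torus the growth bound is immediate from invariance, but here the invariant measure $\rhoo_{L_j}$ itself varies with $j$, its normalizing constant $\ZZ_{L_j}$ is $L_j$-dependent, and a priori bounds must be uniform in $L_j$ on each cone $\bC_R$ — this is precisely where "coming down from infinity for SNLH with positive regularity" does the heavy lifting, converting the lack of a uniform global-in-space control into a uniform-on-compact-sets control via the smoothing of the heat flow, and where ideas from optimal transport theory are used to compare $\rhoo_{L_j}$ to $\rhoo_\infty$ quantitatively enough to close the iteration. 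Matching the wave-side finite-speed-of-propagation localization with the heat-side coming-down-from-infinity estimates — i.e. the "combining wave and heat analysis together" flagged in the abstract — is the technical heart of the proof.
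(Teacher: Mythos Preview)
Your outline is essentially the paper's strategy, and steps (2)--(4) and the invariance argument are right. But step (1) as written conceals the main technical difficulty, and your proposed resolution (Skorokhod-couple the initial data, then deduce convergence of the enhanced data sets) would not work as stated.

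The issue is that the map $\phi \mapsto \big(\S(t)\phi,\ :\!(\S(t)\phi)^2\!:,\ \dots,\ :\!(\S(t)\phi)^k\!:\big)$ is not continuous in $\phi$ in any useful topology: the Wick powers are defined only probabilistically, and almost sure convergence of $u_{0,L_j}$ in $W^{-\eps,p}_\mu$ (which Skorokhod would give you) does not by itself imply convergence of $:\!(\S(t)u_{0,L_j})^\ell\!:$. The paper circumvents this in two moves. First, after periodizing a single space-time noise $\xi$ and a single spatial white noise $\zeta$ to obtain $\xi_L$ and $u_{1,L}$, on the cone $\bC_R$ with $L\gg R$ one has $\xi_L=\xi$ and $u_{1,L}=u_1$ exactly; hence the enhanced data set splits as $\Xi_0(u_{0,L})$ (built from $\S(t)u_{0,L}$ alone) and $\Xi_1(u_1,\xi)$ (built from $\D(t)u_1+\Psi$), and only $\Xi_0$ depends on $L$. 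Second, the paper pushes forward $\rho_L$ under the map $\Xi_0$ to obtain the \emph{enhanced Gibbs measure} $\nu_L$ on the space of $k$-tuples of space-time distributions, and proves weak (equivalently Wasserstein-1, since the metric is bounded) convergence of $\{\nu_L\}_{L\in\A}$ directly. Tightness of $\{\nu_L\}$ uses invariance of $\rho_L$ under the parabolic flow to replace $u_{0,L}$ by $X_L(1)=Y_L(1)+Z_L(1)$, with the Gaussian piece $:\!(\S(t)Z_L(1))^m\!:$ handled by explicit covariance-kernel estimates and the remainder $(\S(t)Y_L(1))^{\ell-m}$ controlled by coming down from infinity in weighted Sobolev spaces of \emph{positive} regularity --- this is exactly where that estimate is indispensable, since $Y_L(1)$ must be regular enough to multiply the Wick powers. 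The optimal Wasserstein-1 plan between $\nu_L$ and $\nu_\infty$ is what furnishes the coupling of $\Xi_0(u_{0,L})$ across $L$ that feeds into the stability estimate in your step (2); Skorokhod is used only inside the proof identifying $\lim\nu_L$, and is applied there to the \emph{heat} initial data, not the wave one.

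In short: replace your step (1) by the two-part statement ``$\Xi_1$ is $L$-independent on $\bC_R$ for $L\gg R$, and the enhanced Gibbs measures $\nu_L=(\Xi_0)_\#\rho_L$ converge in Wasserstein-1 on the enhanced data space,'' and prove the latter via the parabolic substitution $u_{0,L}\rightsquigarrow X_L(1)$ combined with the positive-regularity coming-down-from-infinity.
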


In Theorem \ref{THM:1}\,(i), 
we needed to take a sequence $\{L_j\}_{j \in \A}$
 due to the non-uniqueness of the limiting Gibbs measure
on $\R^2$; see Remark \ref{REM:sing}\,(ii).

As in the periodic case, 
the limit 
$(u, \dt u)$
constructed in Theorem \ref{THM:1}\,(ii)
is a solution to 
 the  hyperbolic $\Phi^{k+1}_2$-model~\eqref{SNLW7} on $\R^2$
with the Gibbsian initial data
in the sense that 
the remainder 
$v = u -   \Phi$ 
satisfies the equation \eqref{SNLW4}
on $\R^2\times \R_+$, 
where $\Phi$ denotes 
the solution to the linear stochastic damped wave equation on $\R^2$
with the Gibbsian initial data:
\begin{align*}
\begin{cases}
\dt^2 \Phi + \dt\Phi +(1-\Dl)\Phi  = \sqrt{2}\xi\\
(\Phi,\dt\Phi)|_{t=0}=(u_0, u_1)
\quad \text{with }\ 
\Law (u_0, u_1) = \rhoo_\infty.
\end{cases}
\end{align*}

\noi
We note that 
the solution $(v, \dt v)$ to \eqref{SNLW4} 
thus constructed is 
unique in $C(\R_+; \vec H^{1-\eps}_\text{loc}(\R^2))$,
which follows from 
the finite speed of propagation
and a local well-posedness result
presented in 
 Subsection \ref{SUBSEC:4.1}.
Namely, 
the solution $(u, \dt u)$ 
to  the  hyperbolic $\Phi^{k+1}_2$-model~\eqref{SNLW7} on $\R^2$
with the Gibbsian initial data, 
constructed in Theorem \ref{THM:1}\,(ii), 
is unique in the class
\[ (\Phi, \dt \Phi) + 
C(\R_+; \vec H^{1-\eps}_\text{loc}(\R^2)).\]

As for the construction 
of the limiting $\Phi^{k+1}_2$-measure $\rho_\infty$ on $\R^2$, 
see also \cite{Simon, BG1}
and the references therein.\footnote{See also  \cite{DDJ} which appeared after the 
the first version of this paper.}
Note that the assumption on the almost sure convergence of 
$(u_{L_j}(0), \dt u_{L_j}(0))$
in Theorem \ref{THM:1}\,(ii)
indeed follows from the weak convergence
of  $\rhoo_{L_j}$ 
to  $\rhoo_\infty$
 in Theorem \ref{THM:1}\,(i) and 
the Skorokhod representation theorem
(Lemma \ref{LEM:Sk})
and therefore is not an additional assumption.

Stochastic nonlinear wave equations (SNLW)
 have attracted extensive attention from both applied and theoretical 
 points of view; 
see \cite[Chapter 13]{DZ} and \cite{OOcomp} for the references therein.
In particular, over the last five years, we have seen a significant progress
in the well-posedness theory of 
 SNLW in the singular setting:\footnote{Some of the works mentioned below
are on SNLW without damping.}
\begin{align*}
\dt^2 u + \dt u + (1 -  \Dl)  u + \NN(u)  =  \zeta, 
\end{align*}

\noi
where the noise $\zeta$
is primarily taken to be  a space-time  white noise $\xi$.
Here,  $\NN(u)$ denotes a nonlinearity
which may be 
of a power-type 
 \cite{GKO, GKO2, GKOT,   ORTz, OOR, Tolo2, OOT1, Bring2, OOT2, OWZ, BDNY}
and trigonometric and exponential nonlinearities
\cite{ORSW,  ORW,  ORSW2, GHOZ, OZ2}.
We also  mention the works 
 \cite{OTh2,   OPTz,  OOTz, STzX, OZ, OOPTz}
 on the (deterministic) 
 nonlinear wave equations \eqref{NLW2} with  rough random initial data
and
\cite{Deya1, Deya2,  OOcomp} 
on SNLW with 
more singular (both in space and time) noises.
We point out that
the only known well-posedness  result up to date
for singular SNLW posed  on an unbounded domain
is the work  \cite{Tolo2}  by the second author, 
where he  established
{\it pathwise} global well-posedness
of the cubic SNLW on $\R^2$ with an additive space-time white noise forcing:
\begin{align}
\dt^2 u  + (1- \Dl)u \,  +  u^k = \xi,
\label{NLW4}
\end{align}

\noi
where $k = 3$.
Note that a slight modification of the argument yields
pathwise global well-posedness
for SdNLW \eqref{NLW1} on $\R^2$ when $k = 3$.

Theorem \ref{THM:1} provides 
the second well-posedness result for singular SNLW on an unbounded domain.
Our construction of the global-in-time dynamics, however, 
is quite different from that in \cite{Tolo2}.
In particular, it is 
not pathwise,  but is based on 
a probabilistic argument, more precisely, 
on Bourgain's invariant measure argument `in spirit'.
Here, by pathwise global well-posedness, 
we mean 
global well-posedness for  any {\it deterministic}
initial data  in a given function space, 
where only  a priori control  of explicitly given stochastic terms 
is used
(namely the statistical information of a solution is not used), 
whereas 
Bourgain's invariant measure argument
crucially relies on  the statistical information 
of solutions
(to approximating equations).
We point out that 
pathwise global well-posedness
(in the sense described above)
of SdNLW \eqref{NLW1} or SNLW \eqref{NLW4}
for the (super-)quintic case $k \ge 5$ remains 
 a challenging open question
even on the torus $\T^2$,\footnote{As pointed out in \cite{Tolo2}, 
local well-posedness
of of SdNLW \eqref{NLW1} or SNLW \eqref{NLW4} on $\R^2$ 
essentially requires a global control
(which also yields global well-posedness)
and remains open for $k \ge 5$.} 
and therefore, 
the situation for the stochastic wave equation is completely different 
from SNLH \eqref{NLH1} on $\R^2$, 
where Mourrat and Weber \cite{MW17a}
proved pathwise global well-posedness of \eqref{NLH1} on $\R^2$
 for any $k \in 2\N + 1$.

\medskip
%

Thanks to the finite speed of propagation, 
the argument in \cite{GKO, GKOT}
yields local well-posedness of \eqref{SNLW7}
on the ball $B_R \subset \R^2$ for each $R > 0$; see Proposition \ref{PROP:LWP} below.
As $R \to \infty$, 
however, the local existence time shrinks to $0$.
In order to construct a solution to~\eqref{SNLW7}
on some time interval $[0, \tau]$, uniformly on $\R^2$, 
we need to make use of statistical ingredients,
and thus, establishing local well-posedness of \eqref{SNLW7}, 
uniformly
on $\R^2$, is essentially as difficult as 
establishing its global  well-posedness, 
as already observed in \cite{Tolo2}.

On the dilated torus $\T^2_L$, 
 the Gibbs measure $\rhoo_L$
and the base Gaussian measure $\muu_L$ are equivalent for each finite $L > 0$.
However, this equivalence of $\rhoo_L$ and $\muu_L$ is not uniform when $L \to \infty$,
since 
 the potential energy
$R^L$, defined as the limit of $R^L_N$ in \eqref{R1}, 
grows like $\sim L$ as $L \to \infty$.
Indeed from 
\eqref{R1}, \eqref{Wick1}, and Lemma \ref{LEM:W1} with \eqref{exp1}, we have
\begin{align*}
 \E_{\muu_L}&   \Big[\big(R^L(u)\big)^2\Big]
 = \lim_{N \to \infty} \E_{\muu_L}  \Big[\big(R_N^L(u)\big)^2\Big]\\
&  = C_k 
\lim_{N \to \infty}
\iint_{\T^2_L \times \T^2_L} \E_{\mu_L}\Big[ H_{k+1}(\P_N u(x); \s_{N, L})
H_{k+1}(\P_N u(y); \s_{N, L}) \Big]dx dy \\
&  = C_k 
\lim_{N \to \infty} 
\iint_{\T^2_L \times \T^2_L} \Big(\E_{\mu_L}\big[ \P_N u(x)\P_N u(y) \big]\Big)^{k+1}dx dy \\
&  = C_k 
\lim_{N \to \infty} 
\iint_{\T^2_L \times \T^2_L} 
\bigg(\sum_{\substack{\ld \in \Z^2_L\\ |\ld| \le N}}
\frac{e_\ld^L(x-y)}{\jb{\ld}^2}\frac 1L \bigg)^{k+1} dx dy \\
&  = C_k L^2
\lim_{N \to \infty}\sum_{\substack{n_1, \dots, n_{k+1} \in \Z^2\\ 
n_1 + \cdots +n_{k+1} = 0\\ |n_j| \le LN}}
\bigg(\prod_{j = 1}^{k+1}\frac{1}{\jb{\frac{n_j}{L}}^2}\bigg) \frac{1}{L^{2k}} \\
& \sim L^2, 
\end{align*}

\noi
where the last step follows from a Riemann sum approximation.
See also Remark \ref{REM:sing}\,(i).
This non-uniformity causes difficulty in studying the large torus limit $L \to \infty$.
In the one-dimensional case, 
there is no need  for a renormalization in constructing a Gibbs measure, 
and 
Bourgain \cite{BO00} used the 
Brascamp-Lieb concentration inequality \cite[Theorem~5.1]{BL2}
to reduce the relevant analysis to that for the Gaussian case, 
uniformly in the period $L\gg1$.
In the current two-dimensional case, 
due to the use of the renormalization, 
the log-concavity needed for 
the 
Brascamp-Lieb concentration inequality
is not available, and thus 
we need an alternative approach.

In this work, 
the main statistical control
comes from the study on the so-called 
{\it enhanced Gibbs measures}, 
namely the distributions
of the enhanced data sets.
This idea played a crucial  role
in a recent work \cite{OOT2}
on the hyperbolic $\Phi^3_3$-model on 
the three-dimensional torus $\T^3$ by the first two authors
and Okamoto.
Given  $R>0$, 
our goal is to construct a solution to~\eqref{SNLW7}
on the cone $\bC_R$ in \eqref{cone}
as a limit of the solution $u_L$ to the $L$-periodic problem~\eqref{SNLW6};
see~\eqref{NW2} below.
When $L \gg R$, 
we see that the $L$-periodic spatial white noise
$\z_L$ defined in~\eqref{wh1}
and
the $L$-periodic space-time white noise $\xi_L$
defined in~\eqref{wh4}
agree on the cone  $\bC_R$ with 
the spatial white noise 
$\z$ on $\R^2$ (see Definition \ref{DEF:white})
and the space-time white noise on $\R^2\times \R_+$, respectively;
see \eqref{wh7} below.
Therefore, by restricting our attention to the cone $\bC_R$, 
we conclude from 
 the finite speed of propagation
 and the observation above 
 that the difference
of the $L$-periodic
problem \eqref{NW2} 
for different values of $L\ge1$
appears {\it only in the first component}~$u_{0, L}$ of the initial data
with 
$\Law (u_{0, L}) = \rho_L$, 
where $\rho_L$ is the $L$-periodic $\Phi^{k+1}_2$-measure in \eqref{Gibbs4}.
This essentially reduces
the  convergence problem on the cone $\bC_R$
 of the $L$-periodic
problem \eqref{NW2} to 
studying convergence properties
of the enhanced Gibbs measure 
\begin{align}
\nu_L = \Law(\Xi_0(u_{0, L})), 
\label{ex1}
\end{align}

\noi
where $\Xi_0(u_{0, L})$ denotes
the enhanced data set,\footnote{Namely, the Wick powers of the associated linear solution.} emanating from the first component 
$u_L|_{t = 0} =  u_{0, L}$ of the initial data
with $\Law (u_{0, L}) = \rho_L$;
see \eqref{nuL} and \eqref{xi1}.
As mentioned above, 
by restricting our attention to the cone $\bC_R$, 
when $L \gg R$, 
 the second enhanced data set $\Xi_1(u_1, \xi)$
in \eqref{xi2} (see also \eqref{NW5}), 
involving the second component $u_1$
of the initial data and the space-time white noise forcing $\xi$, 
does {\it not} depend on $L$, which simplifies some analysis.
As for the
 second enhanced data set $\Xi_1(u_1, \xi)$, 
 its {\it mapping properties} 
 (viewing its elements as (random) multiplication operators) play an important role;
 see Definition \ref{DEF:X} and Proposition \ref{PROP:need23}.

Let us briefly  describe  four main steps of the proof of Theorem \ref{THM:1}.

\smallskip
\noi
$\bullet$ {\bf Step 1:}
Coming down from infinity
for the associated  SNLH on $\R^2$.

In this first step, we establish
coming down from infinity 
(namely, an estimate on a solution 
independent of initial data)
for
SNLH 
\eqref{SHE5}
in (i)~weighted Lebesgue spaces $L^p_\mu(\R^2)$ (Proposition \ref{PROP:CI1})
and (ii)~weighted Sobolev spaces $W^{s, p}_\mu(\R^2)$
of positive regularities (Proposition~\ref{PROP:CI3}).
The coming down from infinity in weighted Lebesgue
spaces
 yields tightness
of the $L$-periodic Gibbs measures $\rhoo_L$,
which then allows us to extract a sequence $\{\rhoo_L\}_{L \in \A}$
converging weakly to a limiting Gibbs measure~$\rhoo_\infty$;
see Subsection \ref{SUBSEC:3.3}.
On the other hand, the coming down from infinity 
in weighted Sobolev spaces of positive regularities
plays a crucial role in Step 3.

In recent years, coming down from infinity has been studied 
in the context of (singular) SNLH; see, for example,  \cite{TW18, MoiW1, MoiW2}.
See the introduction in \cite{MoiW1}
for a further discussion.
In the case of weighted Lebesgue spaces (Proposition~\ref{PROP:CI1}), 
our argument 
follows closely that in~\cite{TW18}
and aims to establish a certain differential inequality (see Lemma~\ref{LEM:CI2}).
Due to the use of the weight, however, the argument is more complicated
and requires a careful decomposition of the physical space into unit cubes, 
combined with the Littlewood-Paley decomposition; see \eqref{C6}-\eqref{C15}.
In the case of weighted Sobolev spaces of positive regularities
(Proposition~\ref{PROP:CI3}), 
our argument is based on a Gronwall-type argument
with the coming down from infinity for weighted Lebesgue spaces.
We present details in  Section \ref{SEC:3}.

\smallskip
\noi
$\bullet$ {\bf Step 2:} Local well-posedness and stability of SdNLW \eqref{SNLW7}
on the cone $\bC_R$.

This second step
 is entirely deterministic, by viewing initial data $(u_0, u_1)$
 and a forcing $\xi$ as given deterministic spatial\,/\,space-time distributions,  and follows from a slight modification
of the local well-posedness argument in \cite{GKOT}.
Here, stability refers
to that with respect to the enhanced data set
$\Xi_0(u_0)$ in 
\eqref{xi1}, 
emanating from 
the first component $u_0$ of the initial data.
See Subsection \ref{SUBSEC:4.1} for details.

\smallskip
\noi
$\bullet$ {\bf Step 3:}
Convergence of the enhanced Gibbs measures.

In this step and Step 4, we restrict our attention to $L\in \A \subset \N$
constructed in Step 1.  This is due to the non-uniqueness of the limiting Gibbs measure
on $\R^2$; see Remark \ref{REM:sing}\,(ii).

Our main goal in this step is to prove convergence of $\{\nu_L\}_{L \in \A}$
to a natural limit 
\begin{align}
\nu_\infty = \Law (\Xi_0(u_{0}))
\label{ex2}
\end{align}
with $\Law (u_0) = \rho_\infty$ constructed in Step 1.
Here, the mode of convergence
is weak convergence as well as convergence in 
the Wasserstein-1 metric.
The proof is broken into two parts, 
where we first establish tightness
of $\{\nu_L\}_{L \in \A}$ (Proposition \ref{PROP:tight}) and then show that the limit is
indeed unique, given by $\nu_\infty$ in \eqref{ex2}
(Proposition \ref{PROP:uniq}).

The (first) enhanced data set 
$\Xi_0(u_{0, L})$
consists of the Wick powers 
$: \!(\S(t) u_{0, L})^\l\! :$\,, $\l = 1, \dots, k$, 
of the linear solution 
$\S(t) u_{0, L}
= (\dt \D(t) + \D(t))u_{0, L}$
with $\Law (u_{0, L}) = \rho_L$, 
where $\rho_L$ is the $L$-periodic $\Phi^{k+1}$-measure in~\eqref{Gibbs4}.
Hence, 
in view of the invariance of $\rho_L$ under the parabolic $\Phi^{k+1}_2$-model, 
instead of  studying  $\Xi_0(u_{0, L})$, 
it suffices to  study $\Xi_0(X_L(1))$, 
where $X_L$ is the solution to the $L$-periodic
SNLH \eqref{SHE7} (namely, the parabolic $\Phi^{k+1}_2$-model)
with the Gibbsian initial data $\Law(X_{0, L}) = \rho_L$.
As a result, the argument involves an intricate combination of wave and heat analysis
(see, in particular, the proof of Proposition \ref{PROP:uniq})
as well as the coming down from infinity in weighted Sobolev spaces
of positive regularities.
See Subsection \ref{SUBSEC:4.2} for details.

\smallskip
\noi
$\bullet$ {\bf Step 4:} Global well-posedness and invariance of the limiting Gibbs measure.

We first prove well-posedness of the hyperbolic $\Phi^{k+1}_2$-model
on the cone $\bC_R$ for each $R > 0$.
In view of the global well-posedness of the $L$-periodic hyperbolic $\Phi^{k+1}_2$-model
\eqref{NW2}, 
the local well-posedness and stability results established in Step 2
allow us to reduce the problem
to estimating the size of the first enhanced data set $\Xi_0(u_{0, L})$, 
studying the mapping properties of the second enhanced data set 
$\Xi_1(u_2, \xi)$, 
and convergence in the Wasserstein-1 metric
of the enhanced Gibbs measure $\nu_L$ to $\nu_\infty$
established in Step 3.
Invariance of the limiting Gibbs measure $\rhoo_\infty$
then follows from the weak convergence of $\{\rhoo_L\}_{L\in \A}$ to $\rhoo_\infty$
(Theorem \ref{THM:1}\,(i)), 
the convergence in law, as $\D'(\R_2\times \R_+)$-valued random variables,  of the solution $u_L$, $L \in \A$, 
to the $L$-periodic hyperbolic $\Phi^{k+1}_2$-model \eqref{NW2}
to the solution $u$ to the hyperbolic $\Phi^{k+1}_2$-model~\eqref{NW1} on $\R^2$,
which follows as a corollary of the global well-posedness
(Remark \ref{REM:conv}).

\medskip

In view of the finite speed of propagation,
when we work on the cone $\bC_R$,  
the idea of breaking the enhanced data set
into two groups $\Xi_0(u_{0, L})$ in \eqref{xi1}
and $\Xi_1(u_{1}, \xi)$ in \eqref{xi2}, 
where the latter is independent of $L \gg R$, 
seems new but is natural.
Let us make a brief comparison to  the work 
\cite{OOT2} on the hyperbolic
$\Phi^3_3$-model by the first two authors and Okamoto.
What is common is the use of the enhanced Gibbs measures
(= the laws of the enhanced data sets
of the $L$-periodic (or  frequency-truncated) Gibbs measures), 
while, in the current paper, 
we only consider the enhanced Gibbs measure
emanating from the first component of the initial data
for the reason explained above.
In this work, 
we reduce various problems to those
for the associated stochastic heat equation, 
which leads to an interesting mixture of wave and heat analysis.
Lastly, we remark that, in a recent remarkable preprint \cite{BDNY}
resolving a challenging open problem on well-posedness of the hyperbolic $\Phi^4_3$-model, 
a mixture of wave and heat analysis also played an important role
(but in a different context from our analysis).

\begin{remark}\rm
(i) A slight modification of the proof of Theorem \ref{THM:1}
applies to  the deterministic NLW \eqref{NLW2} on $\R^2$
with the Gibbsian initial data, thus yielding
global well-posedness of (the renormalized version of) \eqref{NLW2}
with $\Law (u(0), \dt u(0)) = \rhoo_\infty$
and invariance of $\rhoo_\infty$ under the resulting dynamics.\footnote{In 
a recent preprint \cite{BL22}, 
Barashkov and Laarne independently obtained 
global well-posedness and invariance of the Gibbs measure 
 the deterministic NLW \eqref{NLW2} on $\R^2$
 (with $k = 3$).}
See  \cite{MV} for the one-dimensional case.
We also mention \cite{BO00, BTT, Deng, Xu, CdS1, CdS2, KMV, Bring3, OQS}
for works on the 
construction of invariant Gibbs dynamics
for Hamiltonian PDEs 
on  unbounded domains,
(including those with confining potentials).

\smallskip

\noi
(ii) In this paper, we only consider the defocusing case $k \in 2\N+1$.
If (a)~$k \in 2\N+2$ or (b)~$k \in 2\N+ 1$
but with the $-$ sign on the potential energy $\frac 1 {k+1}\int u^{k+1} dx$ in \eqref{Hamil1}, 
namely,  the focusing case, 
then it is known that the associated (renormalized) Gibbs measure
on the two-dimensional torus $\T^2$ is not normalizable
even with a taming by a power of the Wick-ordered $L^2$-norm; see 
\cite{OST}.  See also \cite{BS}.
We refer interested readers to \cite{LRS, OST2, OOT1, OOT2}
on the (non-)construction of focusing Gibbs measures on the general $d$-dimensional torus.

When $k = 2$, 
it is possible to construct the Gibbs measure on $\T^2$ with a taming by a power
of the Wick-ordered $L^2$-norm
(see \cite{BO99, OST})
and  the associated invariant Gibbs dynamics
for the hyperbolic $\Phi^3_2$-model on $\T^2$;
see Remark 1.8 in~\cite{OTh2}.
See also \cite{OOT2} for the three-dimensional case.
Due to the non-defocusing nature of the problem, 
however, we expect a certain triviality phenomenon to take place in taking 
a large torus limit of the $L$-periodic $\Phi^3_2$-measure, 
just as in the one-dimensional focusing  case
\cite{Rider, TolW}.\footnote{See a recent preprint
\cite{SS} on the triviality of the $\Phi^3_2$-measure on the plane.}

\smallskip

\noi
(iii) In \cite{Tolo, Tolo3}, the second author
proved ergodicity of the Gibbs measure
for the hyperbolic $\Phi^{k+1}_d$-model posed on $\T^d$ when $d = 1, 2$.
See also \cite{FT}.
It is of interest to study the corresponding problem on $\R^d$.

\end{remark}

\begin{remark}\label{REM:sing}\rm
(i) 
As mentioned above, the equivalence
of the $L$-periodic $\Phi^{k+1}_2$-measure $\rho_L$
and the base Gaussian free field $\mu_L$ on $\T^2_L$
is not uniform.
In fact, it is expected that the limiting $\Phi^{k+1}_2$-measure 
$\rho_\infty$ on $\R^2$ constructed in Theorem \ref{THM:1}\,(i)
and the base Gaussian measure (= the large torus limit of $\mu_L$)
are mutually singular.
We will address this issue 
in a forthcoming work.

\smallskip

\noi
(ii)
The  limiting Gibbs measure $\rho_\infty$  
constructed in Theorem \ref{THM:1}\,(i)
in principle depends on the sequence $L \in \A$
and, in general,  there is no uniqueness statement
for the  limiting Gibbs measure.
See \cite{GJS1, GJS2, GJS3}.
See also the discussion at the end of Section 2
in \cite{BG1}.

In a recent preprint \cite{BDW}
that appeared after the first version of the current paper, 
Bauerschmidt, Dagallier, and Weber
proved uniqueness of the $\Phi^4_2$-measure on the plane
in the high temperature regime.
Namely, by replacing $\frac 1{k+1}$
in~\eqref{Gibbs4}
by $\frac \be {k+1}$ with $k = 3$, 
they proved 
that there exists $\be_* > 0$ such that 
 the 
$\Phi^4_2$-measure on the plane 
with the inverse temperature $\be $
is unique
for $0 < \be <  \be_*$.
As a consequence, 
when $k = 3$, 
 by replacing 
  $\frac 1{k+1}$
in~\eqref{Gibbs3}
by $\frac \be {k+1}$ and 
$:\!u^k\!:$
by 
$\be :\!u^k\!:$ in \eqref{SNLW7}, 
the convergence claims in 
Theorem \ref{THM:1}
hold
for the entire families $\{\rhoo_L\}_{L \ge 1}$ 
and  $\{(u_{L}, \dt u_{L})\}_{L \ge 1}$, 
thus giving uniqueness of the invariant
hyperbolic $\Phi^4_2$-dynamics on the plane.
While we expect that the argument in \cite{BDW}
 to hold also for a higher power
(which then would imply uniqueness
of the invariant hyperbolic $\Phi^{k+1}_2$-dynamics on the plane), 
we do not pursue this issue here.
See also a very recent preprint \cite{DHYZ}
for uniqueness of the $\Phi^4_3$-measure on $\R^3$
in the high temperature regime.

\end{remark}

\begin{remark}\label{REM:inv1}\rm

Let $\rho_\infty$ be the limiting $\Phi^{k+1}$-measure
in Theorem \ref{THM:1}\,(i)
constructed as a limit of $\{\rho_L\}_{L \in \A}$.
Consider 
the parabolic $\Phi^{k+1}_2$-model \eqref{SHE1} on $\R^2$
with the Gibbsian initial data $\Law(X_0) = \rho_\infty$.
Then, it follows from 
%
%
the proof of Proposition \ref{PROP:uniq}
(see Remark \ref{REM:conv2})
that $\rho_\infty$
is invariant under 
the parabolic $\Phi^{k+1}_2$-dynamics on   $\R^2$.
While this fact is not difficult to prove, 
to the authors' knowledge, it is not explicitly written
and thus we decided to mention it here.
\end{remark}

\section{Preliminary}

\subsection{Notations}
By $A\les B$, we mean  $A\leq CB$ 
for some  constant $C> 0$.
We use   $A \sim  B$
to mean 
 $A\les B$ and $B\les A$.
We write $A\ll B$, if there is some small $c>0$
such that  $A\leq cB$.  
We may use subscripts to denote dependence on external parameters; for example,
 $A\les_{\dl} B$ means $A\le C(\dl) B$.
We use $a-$ 
(and $a+$) to denote $a- \eps$ (and $a+ \eps$, respectively)
for arbitrarily small $\eps > 0$.
If this notation appears in an estimate, 
then  an implicit constant 
is allowed to depend on $\eps> 0$ (and it usually diverges as $\eps \to 0$).
We also use the notation $p = \infty - $
to denote a sufficiently large number $p \gg 1$, depending on the context.
For conciseness of notation, 
we set
$a \vee b = \max(a, b)$.

Throughout this paper, we fix a rich enough probability 
space $(\O, \F, \mathbb{P})$, on which all the random objects
are defined.
The realization $\o\in\O$ is often omitted in the writing.
Given  a random variable $X$, we denote by $\Law(X)$
the law  of $X$.

In the remaining part of the paper,  we only work with 
real-valued functions\,/\,distributions.
Given a time-differentiable function, 
we set
$\vec u = (u, \dt u)$.

We 
define the Fourier transform of a function $f$ on $\R^d$
by setting
\[ \ft f(\eta) =  \F(f) (\eta)
=  \int_{\R^d} f(x) e^{- 2\pi i \eta \cdot x} dx\]

\noi
with the inverse Fourier transform given by 
$\F^{-1}(f) (\eta ) = \ft f(-\eta)$.

Let $s \in \R$ and $1 \leq p \leq \infty$.
We define the $L^2$-based Sobolev space $H^s(\R^d)$
by the norm:
\begin{align*}
\| f \|_{H^s} =  \|\jb{\nb}^s f\|_{L^2}  =  \| \jb{\eta}^s \ft f (\eta) \|_{L^2}, 
\end{align*}

\noi
where $\jb{\,\cdot\,} = (1 + |\cdot|^2)^\frac 12$.
We also define the $L^p$-based Sobolev space $W^{s, p}(\R^d)$
by the norm:
\begin{align}
\| f \|_{W^{s, p}} =    \|\jb{\nb}^s f\|_{L^p} = \big\| \F^{-1} [\jb{\eta}^s \ft f(\eta)] \big\|_{L^p}.
\label{sob1}
\end{align}

\noi
When $p = 2$, we have $H^s(\R^d) = W^{s, 2}(\R^d)$. 
We recall the following interpolation result
which follows from the Littlewood-Paley characterization of Sobolev norms via the square function
and H\"older's inequality;
let $s, s_1, s_2 \in \R$ and $p, p_1, p_2 \in (1,\infty)$
such that $s = \ta s_1 + (1-\ta) s_2$ and $\frac 1p = \frac \ta{p_1} + \frac{1-\ta}{p_2}$
for some $0< \ta < 1$.
Then, we have
\begin{equation}
\| u \|_{W^{s,  p}} \les \| u \|_{W^{s_1, p_1}}^\ta \| u \|_{W^{s_2, p_2}}^{1-\ta}.
\label{interp}
\end{equation}

Let $X(\R^d)$ be a function space on $\R^d$ such as
the Sobolev space $W^{s, p}(\R^d)$
defined above or the weighted Sobolev\,/\,Besov spaces defined below.
Given a (nice) subset $K \subset \R^d$, 
we define the localized version of a function space $X(\R^d)$
by the restriction norm:
\begin{align}
\| f\|_{X(K)} = \inf  \big\{ \| g \|_{X(\R^d)}: \, f \equiv g \text{ on } K\big\}.
\label{loc1}
\end{align}

In dealing with a space of space-time functions, 
we often use short-hand notations such as
$L^q_T H^s_x$
for $L^q([0, T]; H^s(\R^d))$.
Given a space $X(\R^d)$ of functions on $\R^d$, 
we set $L^\infty([0, R]; X(B_{R-t}))$
by 
\begin{align}
L^\infty([0, R]; X(B_{R-t})) = 
\big\{ u \in \D'(\mathring \bC_R): 
t \in [0, R] \mapsto \|u(t)\|_{X(B_{R-t})}\text{ is in $L^\infty$}
\big\}, 
\label{space1}
\end{align}

\noi
where 
$\mathring \bC_R$ denotes the interior of the cone
$ \bC_R$ and 
$X(B_{R-t})$ is defined by the restriction norm as in \eqref{loc1}.
Here, $B_R \subset \R^d$
 denotes the (closed) ball of radius $R$ centered at the origin
and $\bC_R$ denotes the cone as in \eqref{cone}
 (but in  $ \R^d \times \R_+$).

\smallskip

Let $\D(t)$ denote the linear damped wave propagator defined in 
\eqref{lin2}.
We then set
\begin{align}
\S(t) = \dt \D(t) + \D(t).
\label{lin3}
\end{align}

\noi
Namely, $u = \S(t) u_0$ satisfies
\begin{align*}
\begin{cases}
\dt^2 u + \dt u + (1-\Dl) = 0\\
(u, \dt u)|_{t= 0} = (u_0, 0).
\end{cases}
\end{align*}

\noi
Let  $p_t$  denote the standard heat kernel on $\R^2$
given by  $p_t(x) = \frac{1}{4\pi t} e^{- \frac{|x|^2}{4t}  } $.
Then, 
let $P(t)$ denote  the linear heat propagator given by 
\begin{align}
P(t)f  = e^{t(\Dl - 1)} f = e^{-t} (p_t*f).
\label{heat1}
\end{align}


We now introduce the Littlewood-Paley projector, 
which is adapted to weighted Sobolev and Besov spaces
defined in the next subsection.
Recall the following definition \cite[Definition~1]{MW17a}.\footnote{Definition 2.1 in the arXiv version.}
Given $\ta_0 \ge 1$, 
 the Gevrey class 
 $\GG^{\ta_0} \subset C^\infty(\R^d; \R)$ 
 of order $\ta_0$
consists of functions $f$ satisfying
the following;
given any compact set $K \subset \R^d$, 
there exists $C_K > 0$ such that 
$\sup_{x \in K}|\dd^\al f(x)| \le (C_K)^{|\al|+1} (\al!)^{\ta_0}$
for any multi-index $\al$.
We use   $\GG^{\ta_0}_c$ to denote
the set of compactly supported functions in  $\GG^{\ta_0}$.

Let  $\Z_{\ge 0} := \N \cup\{0\}$.
Let us now introduce the 
Littlewood-Paley projector
$\Q_k$, $k \in \Z_{\ge 0}$,  defined by Gevrey class multipliers.
As seen in \cite{MW17a}, such a choice is suitable
for weighted spaces with a stretched exponential weight;
see \eqref{eta2}.
As in Section 3 of \cite{MW17a}, let 
\begin{align}
\chi_0, \chi_1 \in \GG^{\ta_0}_c(\R^d; [0, 1])
\label{LP1}
\end{align}
with 
\begin{align*}
\supp \chi_0 \subset \big\{|\xi|\le \tfrac{4}{3}\big\}
\qquad \text{and}\qquad 
\supp \chi_1 \subset \big\{\tfrac 34 \le |\xi|\le \tfrac{8}{3}\big\}
\end{align*}

\noi
such that 
\begin{align*}
\sum_{k = 0}^\infty \chi_k(\xi) \equiv 1
\end{align*}

\noi
on $\R^d$, where $\chi_k(\xi) = \chi_1(2^{1-k} \xi)$ for $k \ge 2$.
We then define the Littlewood-Paley projector $\Q_k$ by 
\begin{align}
\Q_k(f) = \F^{-1} (\chi_k \ft f) = \eta_k * f, 
\label{LP2}
\end{align}

\noi
where $\eta_k$ is defined by  
\begin{align}
\eta_k = \F^{-1}(\chi_k)
\label{eta1}
\end{align}

\noi
for $k \in \Z_{\ge 0}$.
Namely, we have $\eta_k(x) = 2^{dk} \eta(2^kx)$ for $k \in \N$, 
where 
$\eta(x)  = 2^{-d}\eta_1(2^{-1} x)$.
We also recall 
Proposition 1 in \cite{MW17a}\footnote{Proposition 2.2 in the arXiv version.}
that 
\begin{align}
|\eta(x)| \les e^{-c|x|^\frac{1}{\ta_0}}.
\label{eta2}
\end{align}

\subsection{Weighted Sobolev and Besov spaces}

In this subsection, we introduce weighted Sobolev and Besov spaces
and discuss their basic properties.
We first recall the definition of the stretched exponential weight introduced in \cite{MW17a}.
Let $\ta_0$ be as in \eqref{LP1}, appearing in the definition 
of the Littlewood-Paley
projector $\Q_k$ in \eqref{LP2}.
Given $0 < \dl < \frac 1{\ta_0}$ ($\le 1$)
and $\mu > 0$, 
we define the weight $w_\mu(x)$
by setting
\begin{align}
w_\mu(x) = e^{-\mu \jb{x}^\dl}.
\label{weight1}
\end{align}

\noi
It is easy to check that 
the weight $w_\mu$ is $w_{-\mu}$-moderate
in the sense:
\begin{align}
w_\mu(x+y) \le w_{-\mu}(x) w_\mu(y)
\label{weight2}
\end{align}

\noi
for any $x, y \in \R^2$;
see \cite[(2.6)]{MW17a}.
In particular, 
it follows from \eqref{weight2} that 
\begin{align}
w_\mu(x) \les w_\mu(n)
\label{weight2a}
\end{align}

\noi
for any  $x \in Q_n := n + \big[-\frac 12, \frac 12\big)^2$.
Let $0 < \al < 1$.
Then, recall from \cite[(3.25)]{MW17a}\footnote{(3.24) in the arXiv version.}
that we have 
\begin{align}
|\Q_k w_\mu(x)| \les 2^{-\al k}w_\mu(x)
\label{weight3}
\end{align}

\noi
for any $x \in \R^2$ and $k \in \Z_{\ge 0}$.
In the remaining part of the paper, we fix $0 < \dl < \frac{1}{\ta_0}$ in \eqref{weight1}
and we often drop the dependence on $\dl$ in various estimates.

Let $1 \le p <  \infty$ and $\mu > 0$.
Then,  the weighted Lebesgue space
 $ L^p_\mu(\R^d)$  is defined by the norm:
\begin{align}
\|f \|_{L^p_\mu}   =\bigg(\int_{\R^d} |f(x)|^p w_\mu (x) dx\bigg)^\frac 1p.
\label{Lpmu}
\end{align}

\noi
While it is possible to introduce a weighted Sobolev space
by the $L^p_\mu$-norm of $\jb{\nb}^s f$ (see~\eqref{WSP2} below), 
we use a slightly different definition by first introducing spatial localization.

Let $\phi:\R \to [0,1]$ be a smooth even 
function such that $\phi$ is non-increasing on $\R_+$, 
$\phi =1$ on $\big[-\frac 54, \frac54\big]$,  and  $\phi=0$  on  $(-\infty, -\frac 85\big]\cup \big[\frac85, \infty)$.
Given $j \in \Z_{\ge 0}$, 
we define $\phi_j$ by 
\begin{align}
\phi_0(x) = \phi(|x|) 
\qquad
{\rm and}
\qquad
\phi_j(x) = \phi\big(\tfrac{|x|}{2^{j}}\big) -  \phi\big(\tfrac{|x| }{2^{j-1}}\big), \ \ j \in \N.
\label{phi}
\end{align}

\noi
Then, it is easy to check that
\begin{align}
\sum_{j=0}^\infty \phi_j(x) =1 \,\,\,\, 
\text{for any $x\in\R^d$}, 
\label{phi1a}
\end{align}

\noi
and 
\begin{align}
\supp \phi_0 \subset \big\{ |x| \le  \tfrac85\}
\quad \text{and}\quad 
\supp \phi_j \subset \big\{ \tfrac 58 \cdot 2^j \le |x| \le  \tfrac85 \cdot 2^j \}, \  \ j \in \N.
\label{phi2}
\end{align}

\noi
By convention, we set $\phi_{-1} = 0$.
Given $s \ge 0$ and $1 \le p \le \infty$, 
it follows from \eqref{phi} that 
\begin{align}
\| \phi_j\|_{W^{s, p}(\R^d)} \sim 2^{\frac d p j}.
\label{phi2a}
\end{align}

\noi
We also define slightly fattened cutoff functions $\wt \phi_j$, $j \in \Z_{\ge 0}$,  by setting 
\begin{align}
\wt \phi_j =   \phi_{j-1} + \phi_j + \phi_{j+1}, \ \ j \in \N.
\label{phi3a}
\end{align}

\noi
Then, we have 
\begin{align}
\wt \phi_j \phi_j = \phi_j.
\label{ID1}
\end{align}

\noi
for any $j\in \Z_{\ge 0}$.

Given $1 \le p < \infty$ and $\mu > 0$, 
we  define the weighted Sobolev space
 $W^{s,p}_\mu(\R^d)$ by the norm:
\begin{align}
\|f \|_{W^{s,p}_\mu}  = 
\bigg(\sum_{j=0 }^\infty w_\mu(2^{j})\| \phi_j f\|_{W^{s,p}}^p\bigg)^\frac 1p , 
\label{WSP}
\end{align}

\noi
where  $W^{s,p} = W^{s,p}(\R^d)$ denotes the usual  $L^p$-based Sobolev space on $\R^d$
defined in \eqref{sob1}.
When $p = 2$, we set 
\[H^s_\mu(\R^d) = W^{s, 2}_\mu(\R^d).\]

\noi
The following embedding follows from the definition \eqref{WSP}
with Lemma \ref{LEM:bilin1} and \eqref{phi2a}:
\[W^{s, p_1}_{\mu_1} (\R^d) \subset W^{s, p_2}_{\mu_2}(\R^d) 
\quad   \text{ for $1 \le p_2 \le p_1 < \infty$
and $\mu_2 \ge \mu_1 > 0$.}
\]

\noi
From \eqref{WSP} and \eqref{Lpmu} with \eqref{weight1} and \eqref{phi2}, 
there exists $a_1 > 0$ such that 
\begin{align}
\begin{split}
\|f \|_{W^{s,p}_\mu}
& \le 
\sum_{j=  0}^\infty w_{\frac 1p \mu}(2^j) 
\| \phi_j f \|_{W^{s,p}  }
 \le 
\sum_{j=  0}^\infty e^{-  \frac{\mu}{p}  2^{j\dl}    }   
\| \phi_j f \|_{W^{s,p}  }, \\
\| \phi_j f \|_{W^{s,p}}
& \le e^\frac \mu p
 e^{  \frac{\mu}{p}  2^{j\dl}   }   
\| f \|_{W^{s,p}_\mu } ,\\
\| \phi_j f \|_{L^p}
& \le 
 e^{  \frac{\mu}{p} a_1  2^{j\dl}   }   
\| f \|_{L^p_\mu } 
\end{split}
\label{Lpm1}
\end{align}

\noi
for any $j \in \Z_{\ge 0}$.
Similar bounds hold when we replace $\phi_j$ by $\wt \phi_j$.

Let $1 \le q \le p < \infty$.
By applying Sobolev's inequality (with $\frac sd \ge \frac 1q - \frac 1p$)
and the embedding $\l^q(\Z_{\ge 0}) \subset \l^p(\Z_{\ge 0})$,  we have 
\begin{align}
\begin{split}
\|f \|_{W^{0,p}_\mu}  
& = 
\big\| w_{\frac \mu p } (2^{j})\| \phi_j f\|_{L^{p}}\big\|_{\l^p_j(\Z_{\ge 0})}\\
& \les
\big\| w_{\frac \mu p } (2^{j})\| \phi_j f\|_{W^{s, q}}\big\|_{\l^q_j(\Z_{\ge 0})}\\
& = \|f \|_{W^{s,q}_{\frac{q}{p}\mu}}  .
\end{split}
\label{sob2}
\end{align}

\begin{remark}\rm
We point out that when $s = 0$, 
the $W^{0, p}_\mu$-norm defined in  \eqref{WSP}
is not equivalent to the $L^p_\mu$-norm defined in \eqref{Lpmu}.
See also Remark \ref{REM:equiv} below.

\end{remark}

We also recall the definition of the weighted Besov space
$B^{s, \mu}_{p, q}(\R^d)$
from \cite[Section 3]{MW17a}.
Given $s \in \R$, 
$1 \le p, q \le \infty$, and $\mu \ge 0$, 
we define
the weighted Besov space
$B^{s, \mu}_{p, q}(\R^d)$
as the completion of $C^\infty_c(\R^d)$ under the norm:
\begin{align}
\| f\|_{B^{s, \mu}_{p, q}}
= \Big\|2^{sk} \|\Q_k f\|_{L^p_\mu(\R^2)}\Big\|_{\l^q_k(\Z_{\ge 0})}.
\label{besov1}
\end{align}

\noi
See also \cite{GH1}.
When $\mu = 0$, 
the weighted Besov space
$B^{s, \mu}_{p, q}(\R^d)$
reduces to the usual
 Besov space
$B^{s}_{p, q}(\R^d)$.
We recall the following embeddings
for (unweighted) Sobolev and Besov spaces:
\begin{align}
\| f \|_{B^s_{p, \infty}}
\les 
\|f\|_{W^{s, p}}
\les 
\| f \|_{B^s_{p, 1}}
\les 
\| f \|_{B^{s+\eps}_{p, \infty}}
\label{besov2}
\end{align}

\noi
for any $\eps > 0$.

%
%
%

We first establish the following compact embedding for weighted Besov spaces, 
which plays a crucial role in proving Theorem \ref{THM:1}\,(i).

\begin{lemma}\label{LEM:cpt}
Let $1 \le p < \infty$.
Then, given any $s > s'$ and $\mu < \mu'$, 
the embedding
\[
W^{s, p}_{\mu} (\R^d) \hookrightarrow W^{s', p}_{\mu'}(\R^d)
\]

\noi
is compact.

\end{lemma}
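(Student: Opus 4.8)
The plan is to reduce the weighted statement to the well-known compact embedding of unweighted Sobolev spaces on bounded domains, using a diagonal argument over the dyadic annuli to handle the infinite sum in the definition \eqref{WSP} of $W^{s,p}_\mu(\R^d)$. Concretely, given a bounded sequence $(f_m)$ in $W^{s,p}_\mu(\R^d)$, I would first use the compact support of $\phi_j$ in \eqref{phi2}: for each fixed $j$, the functions $\phi_j f_m$ are supported in the fixed ball $\{|x| \le \frac85 2^j\}$ and, by \eqref{Lpm1}, are uniformly bounded in $W^{s,p}$; since $s > s'$, the Rellich--Kondrachov-type compact embedding $W^{s,p}(K) \hookrightarrow W^{s',p}(K)$ on the bounded set $K$ gives a subsequence along which $\phi_j f_m$ converges in $W^{s',p}$. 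A diagonal extraction over $j \in \Z_{\ge 0}$ then produces a single subsequence (not relabelled) along which $\phi_j f_m$ converges in $W^{s',p}$ for \emph{every} $j$.

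The second ingredient is a tail estimate showing that the contribution of large $j$ to the $W^{s',p}_{\mu'}$-norm is uniformly small. Here the gain is precisely the hypothesis $\mu < \frac{\mu'}{2^\dl}$: by \eqref{Lpm1} applied with the exponent $\mu$, one has $\|\phi_j f_m\|_{W^{s',p}} \lesssim \|\phi_j f_m\|_{W^{s,p}} \le e^{\frac\mu p} e^{\frac\mu p 2^{j\dl}} \|f_m\|_{W^{s,p}_\mu}$, while the weight in the target norm contributes $w_{\mu'}(2^j) = e^{-\mu' 2^{j\dl}}$. Since $\mu' 2^{j\dl} - \mu 2^{j\dl} \ge (\mu' - 2^\dl \mu)\, 2^{(j-1)\dl} \to \infty$ — using $2^{j\dl} = 2^\dl \cdot 2^{(j-1)\dl}$ and $\mu' - 2^\dl\mu > 0$ — the quantity $w_{\mu'}(2^j)\|\phi_j f_m\|_{W^{s',p}}^p$ is bounded by $C e^{-c 2^{j\dl}} \sup_m \|f_m\|_{W^{s,p}_\mu}^p$, which is summable in $j$ with an arbitrarily small tail, uniformly in $m$. (Strictly speaking one should also account for the finite overlap between the cutoffs $\phi_j$ and the annuli implicit in the $W^{s',p}_{\mu'}$ definition, but this costs only a harmless bounded multiplicative constant since each $\phi_j$ is supported in at most boundedly many dyadic annuli, cf. \eqref{phi2}, \eqref{phi3a}.)

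Combining the two ingredients is then routine: given $\eps > 0$, choose $J$ so that $\sum_{j > J} w_{\mu'}(2^j)\|\phi_j f_m\|_{W^{s',p}}^p < \eps$ for all $m$, and then use convergence of $\phi_j f_m$ in $W^{s',p}$ for each $j \le J$ (finitely many indices) to make $\sum_{j \le J} w_{\mu'}(2^j)\|\phi_j(f_m - f_{m'})\|_{W^{s',p}}^p$ small for $m, m'$ large. This shows $(f_m)$ is Cauchy in $W^{s',p}_{\mu'}(\R^d)$, hence convergent, establishing compactness of the embedding.

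The main obstacle is bookkeeping rather than conceptual: one must be careful that the $W^{s',p}_{\mu'}$-norm of $\phi_j f_m$ genuinely controls, and is controlled by, the $W^{s',p}$-norms of the pieces $\phi_{j'}(\phi_j f_m)$ for $j'$ near $j$ (so that the localization operators used in the two norms are compatible up to bounded overlap), and that the Rellich embedding is applied on a fixed bounded set independent of $m$. The weighted gain $\mu < \mu'/2^\dl$ is exactly what is needed to beat the exponential loss $e^{\frac\mu p 2^{j\dl}}$ incurred when passing from the $\mu$-weighted norm to an unweighted norm on the $j$-th annulus; without it the tail would not be summable. I expect no difficulty with the regularity drop $s > s'$, which is handled entirely by the classical compact Sobolev embedding on bounded domains.
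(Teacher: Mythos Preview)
Your proposal is correct and follows the same overall strategy as the paper: localize via the cutoffs $\phi_j$, apply Rellich's lemma on each compactly supported piece, extract a diagonal subsequence, and use the weight gain $\mu' > 2^\dl \mu$ to control the tail in $j$.

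The one notable difference is in the endgame. The paper explicitly identifies the limit as $F = \sum_j F_j$ (where $F_j$ is the $W^{s_1,p}$-limit of $\phi_j f_{n_k^{(k)}}$ for an intermediate regularity $s' < s_1 < s$), proves $F \in W^{s',p}_{\mu'}$, and then establishes convergence via dominated convergence. This forces them to re-localize $\phi_j F = \phi_j \sum_\ell F_\ell$, which brings in the overlap between neighboring $\phi_\ell$'s and requires the auxiliary estimate $\|\phi_j g\|_{W^{s',p}} \lesssim 2^{\eps j}\|g\|_{W^{s_1,p}}$ (their \eqref{cpt1}); the shift from $2^j$ to $2^{j-1}$ in the overlap is precisely where the factor $2^\dl$ in the hypothesis enters. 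Your Cauchy-sequence argument sidesteps all of this: since you work directly with $\|\phi_j(f_m - f_{m'})\|_{W^{s',p}}$, no re-localization is needed, no intermediate $s_1$ is required, and the tail estimate follows immediately from \eqref{Lpm1}. Your route is genuinely more economical; the paper's has the minor advantage of exhibiting the limit explicitly.
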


\begin{proof} 

Let $\{ f_n\}_{n\in\N}$ be a bounded sequence in $W^{s,p}_\mu(\R^d)$.
Then it follows from \eqref{WSP} and~\eqref{phi2} that for any $j\in\N$,
the sequence $\{ \phi_j f_n\}_{ n\in\N }$
is a bounded sequence in $W^{s,p}(\R^2)$
with a bounded support:
$ \big\{  |x| \le  \tfrac85 \cdot 2^j \}$, 
$j \in \Z_{\ge0}$.

In the following, we implement a diagonal argument.
Let $j = 0$.
Then,  by Rellich's lemma (with $s' < s$), 
there exists a subsequence 
 $\{ \phi_0 f_{n_k^{(0)}}\}_{ k\in\N }$ 
 which is convergent in 
$W^{s',p}(\R^d)$.
Then,
for $j \in \N$, 
by  Rellich's lemma, 
we can choose a subsequence
$\{n_k^{(j)}\}_{k \in \N}$
of $\{n_k^{(j-1)}\}_{k \in \N}$
such that 
 $\{ \phi_j f_{n_k^{(j)}}\}_{ k\in\N }$ is convergent in 
$W^{s',p}(\R^d)$.
Now, consider the sequence 
 $\{ \phi_j f_{n_k^{(k)}}\}_{ k\in\N }$.
Then, it is clear from the construction that, for each  $j \in \Z_{\ge 0}$, 
$ \phi_j f_{n_k^{(k)}}$
 converges to some $F_j $ in 
$W^{s',p}(\R^d)$ as $k \to \infty$.

By setting 
\[
F = \sum_{j =  0}^\infty F_j,
\]

\noi
 we claim that  $F\in W^{s', p}_{\mu'}(\R^d)$
 for any $\mu' > \mu$
and
that 
$f_{n_k^{(k)}}$ converges to $F$  in $W_{\mu'}^{s', p}(\R^d)$
for $\mu'> 2^\dl \mu$.

First,  note that
we have 
\begin{align}
\|  \phi_j f \|_{W^{s', p}}
\les \|  f \|_{W^{s', p}}, 
\label{cpt1}
\end{align}

\noi
uniformly in $j \in \Z_{\ge 0}$.
When $s' = 0$, \eqref{cpt1}  follows from H\"older's inequality.
When $s' < 0$, it follows from Lemma \ref{LEM:bilin1}\,(ii)
and \eqref{phi2a}
that 
\begin{align*}
\|  \phi_j f \|_{W^{s', p}}
\les 
\| \phi_j \|_{W^{- s', \infty}}
\|   f \|_{W^{s', p}}
\les \|  f \|_{W^{s', p}}.
\end{align*}

\noi
When $s' > 0$, 
the fractional Leibniz rule (see Lemma \ref{LEM:bilin1}\,(i) below), 
\eqref{phi2a}, 
and Sobolev's inequality yield
\begin{align*}
\|  \phi_j f \|_{W^{s', p}}
\les 
\| \phi_j \|_{W^{s', \infty}}
\|   f \|_{W^{s', p}}
\les \|   f \|_{W^{s', p}}.
\end{align*}

\noi
This proves the bound \eqref{cpt1}.
Then, 
from \eqref{Lpm1}, 
 \eqref{phi2}, 
\eqref{cpt1}, 
$w_{\mu'}(2^j)  = w_{\mu}(2^j) w_{\mu'-\mu}(2^j)$, 
and the uniform bound on the $W^{s, p}_\mu$-norm
of $f_{n_k^{(k)}}$, 
 we have
\begin{align*}
\|F\|_{W_{\mu'}^{s', p}}
& \le \sum_{j = 0}^\infty \| F_j\|_{W_{\mu'}^{s', p}}
 \le \sum_{j = 0}^\infty\sum_{\l = 0}^\infty 
 w_{\frac 1p \mu'}(2^\l) 
 \| \phi_\l F_j \|_{W^{s', p}}\\
&  \le \sum_{j = 0}^\infty\sum_{\l = 0}^\infty 
 w_{\frac 1p \mu'}(2^\l) 
 \lim_{k \to \infty}
 \| \phi_\l \phi_j f_{n_k^{(k)}} \|_{W^{s', p}}\\
&  =  \sum_{\l = 0}^\infty\sum_{i  = -1}^1
 w_{\frac 1p \mu'}(2^\l) 
 \lim_{k \to \infty}
 \| \phi_\l \phi_{\l + i} f_{n_k^{(k)}} \|_{W^{s', p}}\\
&  \les 
\sum_{\l = 0}^\infty 
 w_{\frac 1p \mu'}(2^\l) 
 \lim_{k \to \infty}
 \|  \phi_\l f_{n_k^{(k)}} \|_{W^{s, p}}\\
&  \le 
\sum_{\l = 0}^\infty
 w_{\frac 1p (\mu'-\mu)}(2^\l) 
 \cdot 
\sup_{k \in \N} \Big(  w_{\frac 1p \mu}(2^\l)  \| \phi_\l f_{n_k^{(k)}}\|_{W^{s, p}}\Big)\\
& < \infty, 
\end{align*}

\noi
provided that $s' < s$ and $\mu' > \mu$.

Next, we show   convergence.  First, note that, as a limit of 
$\phi_j f_{n_k^{(k)}}$, 
we have $F_j(x) = 0$  whenever $\phi_j(x) = 0$.
Thus,  in view of \eqref{phi2}, we can write $F_j = \phi_j G_j$
with a well-defined function $G_j = \ind_{\phi_j \ne 0 } \cdot F_j / \phi_j$
and thus,  we have
\begin{align}
 f_{n_k^{(k)}} - F = \sum_{j = 0}^\infty \phi_j ( f_{n_k^{(k)}} - G_j) .
\label{cpt2}
 \end{align}

\noi
We set $F_{-1} = G_{-1} = 0$.

From \eqref{WSP}, we have
\begin{align}
\begin{split}
\| f_{n_k^{(k)}} - F  \|_{W^{s',p}_{\mu'}}^p
& = \sum_{j = 0}^\infty 
w_{\mu'}(2^j) 
 \| \phi_j (
f_{n_k^{(k)}}  -F) \|_{W^{s', p}}^p.
\end{split}
\label{cpt3}
\end{align}

\noi
Then, from \eqref{cpt2} with \eqref{phi2}
followed by \eqref{cpt1} 
we have 
\begin{align}
\begin{split}
w_{\mu'}(2^j)  \| \phi_j (
f_{n_k^{(k)}}  -F) \|_{W^{s', p}}^p
& = w_{\mu'}(2^j) 
\bigg \| \phi_j
\sum_{i= - 1}^1 \phi_{j+i}  (f_{n_k^{(k)}}  - G_{j+i}) \bigg \|_{W^{s', p}}^p\\
& 
\les   w_{\mu'}(2^j)
\bigg \| 
\sum_{i= - 1}^1 \phi_{j+i}  (f_{n_k^{(k)}}  - G_{j+i}) \bigg \|_{W^{s', p}}^p\\
& 
\les 
\sum_{\l = j- 1}^{j+1}
 w_{\mu'}(2^{\l-1})
 \| \phi_{\l}  (f_{n_k^{(k)}}  - G_{\l})  \|_{W^{s', p}}^p\\
 & 
= \sum_{\l = j- 1}^{j+1}
 w_{\mu'}(2^{\l-1})
 \| \phi_{\l}  f_{n_k^{(k)}}  - F_{\l}  \|_{W^{s', p}}^p.
 \end{split}
 \label{cpt4}
\end{align}

\noi
For each $j \in \Z_{\ge0}$, 
the right-hand side of \eqref{cpt4} tends to $0$ as $k \to \infty$.
Namely, for each $j \in \Z_{\ge 0}$, 
the summand on the right-hand side of  \eqref{cpt3} tends to $0$ as $k \to \infty$. 
On the other hand, in view of the uniform bound
on the $W^{s, p}_\mu$-norm of $f_{n_k^{(k)}}$
and the fact that $F\in W^{s', p}_{\mu'}(\R^d)$, we have
\begin{align*}
w_{\mu'}(2^j) 
&  \| \phi_j (
 f_{n_k^{(k)}}  -F) \|_{W^{s', p}}^p\\
& \les w_{\mu'}(2^j) 
 \| \phi_j f_{n_k^{(k)}} \|_{W^{s', p}}^p
+ w_{\mu'}(2^j) 
 \| \phi_j   F \|_{W^{s', p}}^p, 
\end{align*}

\noi
which is summable in $j$.
Therefore, by the dominated convergence theorem
applied to \eqref{cpt3}, 
we conclude that 
$f_{n_k^{(k)}}$ converges to $ F$ in $W^{s',p}_{\mu'}(\R^d)$
as $k \to \infty$.
This concludes the proof of Lemma \ref{LEM:cpt}.
\end{proof}

There seems to be no embedding relation 
analogous to \eqref{besov2}
for weighted Sobolev spaces $W^{s, p}_\mu(\R^d)$ in \eqref{WSP}
and weighted Besov spaces.
With a slight loss in $s$ and $\mu$, however, 
we have the following embeddings
for weighted Sobolev spaces and weighted Besov spaces.
We present the proof in Appendix \ref{SEC:A}.

\begin{lemma}\label{LEM:equiv}

Let $ s \in \R$,  $1 \le p < \infty$, and $\mu > 0$.
Then, there exist $c_1, c_2  > 0$ such that 
\begin{align*}
\| f \|_{W^{s, p}_\mu} \les \|f\|_{B^{s', \mu'}_{p, 1}}
\end{align*}

\noi
for any $s' > s$ and $0 < \mu' < c_1 \mu$, 
and
\begin{align*}
\|f\|_{B^{s, \mu}_{p, \infty}} \les \| f \|_{W^{s, p}_{\mu'}}
\end{align*}

\noi
\noi
for any $0 < \mu' < c_2 \mu$.

\end{lemma}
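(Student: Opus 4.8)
The plan is to prove the two embeddings
\[
\| f \|_{W^{s, p}_\mu} \les \|f\|_{B^{s', \mu'}_{p, 1}} \qquad (s' > s, \ 0 < \mu' < c_1 \mu)
\]
and
\[
\|f\|_{B^{s, \mu}_{p, \infty}} \les \| f \|_{W^{s, p}_{\mu'}} \qquad (0 < \mu' < c_2 \mu)
\]
separately, in each case by expanding one norm according to its defining partition (the spatial cutoffs $\phi_j$ for the $W^{s,p}_\mu$-norm, the Littlewood--Paley pieces $\Q_k$ for the Besov norm) and comparing term by term. The key mechanism in both directions is that one inserts a fattened cutoff via \eqref{ID1}, applies the product estimates of Lemma \ref{LEM:bilin1} together with \eqref{phi2a} to move a $\phi_j$ past $\Q_k$ at the cost of $2^{\eps j}$ (exactly as in the proof of \eqref{cpt1} in Lemma \ref{LEM:cpt}), and then absorbs that polynomial-in-$2^j$ loss into the gap between the weights using $w_{\mu'}(2^j) = w_\mu(2^j) w_{\mu' - \mu}(2^j)$ and the stretched-exponential decay \eqref{weight1}; the loss in $s$ is absorbed by passing between $\l^1$ and $\l^\infty$ summation in the Littlewood--Paley index, using \eqref{besov2}.

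For the first embedding, I would start from \eqref{WSP}, bound $\|f\|_{W^{s,p}_\mu} \le \sum_j w_{\frac1p \mu}(2^j) \|\phi_j f\|_{W^{s,p}}$ as in \eqref{Lpm1}, and then estimate each $\|\phi_j f\|_{W^{s,p}} \les \|\phi_j f\|_{B^{s+\eps}_{p,1}}$ using \eqref{besov2}. Writing $\phi_j f = \sum_k \phi_j \Q_k f$ and using \eqref{ID1} to replace $\phi_j \Q_k f$ by $\phi_j \Q_k(\wt\phi_{j'} f)$ for the relevant spatial indices $j'$ near $j$, one applies the product/mapping estimates to get $\|\phi_j \Q_k f\|_{W^{s+\eps}_{p}} \les 2^{(s+\eps)k} 2^{-Mk} \|\Q_k(\wt\phi_j f)\|_{L^p}$ — here one exploits that $\phi_j$ is smooth and compactly supported so the commutator $[\phi_j, \Q_k]$ is harmless and $\phi_j$ localizes to a region where $\Q_k$ with $2^k \gg 2^j$ contributes negligibly; this is where the role of the Gevrey multipliers and the fast decay of $\eta$ in \eqref{eta2} (hence the moderateness \eqref{weight3}) matters. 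Summing in $k$ (the $\l^1$ structure of the $B^{s+\eps}_{p,1}$-norm is what makes this clean) and then in $j$, with the weight gap swallowing the $2^{\eps j}$ from the cutoff manipulation, yields $\|f\|_{W^{s,p}_\mu} \les \|f\|_{B^{s+\eps, \mu'}_{p,1}}$; choosing $\eps$ so that $s' = s + \eps > s$ and $\mu'$ a suitable fraction $c_1 \mu$ of $\mu$ completes the bound.

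For the second embedding, I would go the other way: from \eqref{besov1}, $\|f\|_{B^{s,\mu}_{p,\infty}} = \sup_k 2^{sk} \|\Q_k f\|_{L^p_\mu}$, and now decompose $f = \sum_j \phi_j f$ spatially, so that $\Q_k f = \sum_j \Q_k(\phi_j f)$. Using \eqref{weight2} to pull the weight onto the factors and the same Gevrey-localization to see that for fixed $k$ the sum over $j$ is essentially finite (plus a rapidly decaying tail), one bounds $\|\Q_k(\phi_j f)\|_{L^p_\mu} \les 2^{-sk+\eps k} \| \phi_j f\|_{W^{s}_p}$ times appropriate weight factors, and then recognizes $\sum_j w_{\mu'}(2^j)^{1/p}\|\phi_j f\|_{W^{s}_p}$-type sums as controlled by $\|f\|_{W^{s,p}_{\mu'}}$ via \eqref{Lpm1} and the $\l^p \hookrightarrow \l^\infty$ (or Cauchy--Schwarz) step. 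Taking the supremum over $k$ and again using the weight gap $\mu' < c_2 \mu$ to absorb the $2^{\eps k}$ and $2^{\eps j}$ losses finishes the proof.

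The main obstacle I anticipate is making rigorous the ``$\phi_j$ localizes $\Q_k$'' heuristic in both directions — i.e., controlling the off-diagonal terms where the Littlewood--Paley frequency scale $2^k$ and the spatial dyadic scale $2^j$ are very different. The positive answer comes precisely from the Gevrey-class construction of $\Q_k$ (so $\eta_k$ has stretched-exponential decay, \eqref{eta2}), which is exactly why \cite{MW17a} set things up this way; one estimates $\|\phi_j \eta_k * (\wt\phi_{j'} f)\|_{L^p}$ by the decay of $\eta_k$ over the distance between $\supp\phi_j$ and $\supp \wt\phi_{j'}$ when $|j - j'|$ is large, and similarly over the overlap region when the frequency is too high for the given spatial scale, in each case producing a gain that beats any fixed polynomial in $2^j$ and $2^k$. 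Everything else is the routine bookkeeping of summing geometric and stretched-exponential series and choosing the constants $c_1, c_2$, which I would relegate to Appendix \ref{SEC:A} as stated.
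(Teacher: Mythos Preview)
Your outline has the right shape but two of its load-bearing steps are wrong as stated. In the first embedding, the appeal to \eqref{ID1} to write $\phi_j \Q_k f = \phi_j \Q_k(\wt\phi_{j'} f)$ is invalid: \eqref{ID1} only gives $\wt\phi_j\phi_j = \phi_j$, and since $\Q_k$ is a non-local convolution one cannot insert a spatial cutoff on the inside for free. The related heuristic ``$\Q_k$ with $2^k \gg 2^j$ contributes negligibly on $\supp\phi_j$'' confuses frequency with space --- high frequencies are of course present on any bounded region --- and the displayed bound with the factor $2^{-Mk}$ has no justification. The paper does not try to localize through $\Q_k$ at all here; instead it proves the pointwise-in-$(j,k)$ estimate $\|\phi_j \Q_k f\|_{W^{s,p}} \les e^{c_0\frac{\mu}{p}2^{j\dl}}\, 2^{(s+\eps)k}\|\Q_k f\|_{L^p_\mu}$ (Lemma~\ref{LEM:equiv2}) by \emph{interpolating} between the trivial weighted bound $\|\phi_j \Q_k f\|_{L^p} \le e^{\frac{\mu}{p}a_1 2^{j\dl}}\|\Q_k f\|_{L^p_\mu}$ from \eqref{Lpm1} and the unweighted bound $\|\phi_j\Q_k f\|_{W^{s,p}} \les 2^{\eps j}2^{(s+\eps)k}\|\Q_k f\|_{L^p}$ from \eqref{cpt1} and Bernstein; the sum over $j$ then consumes the weight gap $\mu > c_0\mu'$, and the sum over $k$ the regularity gap $s' > s$.

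For the second embedding your plan is closer to the paper's, but you are missing a second spatial partition: $\Q_k(\phi_j f)$ is not compactly supported, so to evaluate its $L^p_\mu$-norm the paper writes $2^{sk}\|\Q_k f\|_{L^p_\mu} \le \sum_{j,j'} e^{-c\frac{\mu}{p}2^{j'\dl}}\,2^{sk}\|\phi_{j'}\Q_k(\phi_j f)\|_{L^p}$ and then splits into near-diagonal terms $j' \ge j-2$ (handled by Bernstein, with no loss in $k$) and far terms $j' \le j-3$, for which $|x-y|\sim 2^j$ on the relevant supports and the Gevrey kernel bound $|\eta_k(x-y)| \les 2^{dk} e^{-c'|2^k\cdot 2^j|^{1/\ta_0}} e^{-c'|2^k(x-y)|^{1/\ta_0}}$ from \eqref{eta2} kills both the $2^{sk}$ and the weight factors. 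Note there is \emph{no} $2^{\eps k}$ loss in this direction --- consistent with the statement, which loses only in $\mu$, not in $s$ --- so your remark about absorbing a $2^{\eps k}$ loss via the weight gap is both unnecessary and impossible, since the weight lives in the spatial index $j$, not in $k$.
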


\begin{remark}\label{REM:equiv} \rm

Another way to define a weighted Sobolev space 
would be 
to define a space~$\W^{s, p}_\mu(\R^2)$ via the norm:
\begin{align}
\| f \|_{\W^{s, p}_\mu} = \| \jb{\nb}^s f \|_{L^p_\mu}.
\label{WSP2}
\end{align}

\noi
When $s = 0$, we have $\W^{0, p}_\mu(\R^2) = L^p_\mu(\R^2)$.
Furthermore, this weighted Sobolev space~$\W^{s, p}_\mu(\R^2)$ is compatible with the weighted Besov space
defined in \eqref{besov1} in the following sense:
\begin{align}
\| f \|_{B^{s, \mu}_{p, \infty}}
\les 
\|f\|_{\W^{s, p}_\mu}
\les 
\| f \|_{B^{s, \mu}_{p, 1}}, 
\label{WSP3}
\end{align}

\noi
where the first inequality follows from Young's inequality
with \cite[Lemma 1]{MW17a}\footnote{Lemma 2.6 in the arXiv version.}
while the second inequality follows from Minkowski's inequality.
It follows from Lemma \ref{LEM:equiv}
 and \eqref{WSP3} that 
\begin{align*}
 \|f\|_{\W^{s - \eps, p}_{\mu_1}} \les \| f \|_{W^{s, p}_\mu} \les \|f\|_{\W^{s + \eps, p}_{\mu_2}}.
\end{align*}

\noi
for any $\eps > 0$ and $ \mu_1 \gg  \mu \gg \mu_2> 0$.

In this paper, 
we use the weighted Sobolev space $W^{s, p}_\mu(\R^2)$ defined in~\eqref{WSP}
since it is more convenient to work with
 $W^{s, p}_\mu(\R^2)$ in 
 establishing coming down from infinity
in a weighted Sobolev spaces of positive regularities.

See Remark \ref{REM:wave} for a discussion
on the bounded property of the linear damped wave propagator on $\W^{s, 2}_\mu(\R^d)$.

\end{remark}

\subsection{Linear estimates on weighted Sobolev spaces}

In this subsection, we establish 
linear estimates
for the linear heat propagator and the linear damped wave propagator
on weighted Sobolev spaces.

\begin{lemma}\label{LEM:heat}
There exist small $ C_0,   c  > 0 $ 
such that
 for 
any $s \ge 0$, $1 \le p < \infty$, 
$\mu > 0$, 
and $0 < \mu' <  C_0 \mu$, we have
\begin{align}
\| P(t)  f \|_{W^{s, p}_{\mu}(\R^d)}
\les t^{-\frac{s}{2}} e^{-c t} \| f \|_{L^p_{\mu'}(\R^d)}
\label{heat2}
\end{align}

\noi
for any $t>0$, 
where  $P(t) = e^{t (\Dl-1)}$ is as in \eqref{heat1}.
\end{lemma}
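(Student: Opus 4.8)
The plan is to reduce the weighted Sobolev bound \eqref{heat2} to the corresponding mapping property in \emph{un}weighted $L^p$-based Sobolev spaces, using the localization structure of the norm in \eqref{WSP}. Concretely, recall from \eqref{WSP} and \eqref{heat1} that
\begin{align*}
\| P(t) f \|_{W^{s, p}_\mu}^p = \sum_{j = 0}^\infty w_\mu(2^j) \| \phi_j \, e^{-t}(p_t * f) \|_{W^{s, p}}^p,
\end{align*}
so the task is to control $\| \phi_j P(t) f \|_{W^{s, p}}$ by the pieces $\| \phi_\l f \|_{L^p}$ (or $\| \phi_\l f\|_{L^p_{\mu'}}$ directly via \eqref{Lpm1}) in a way that gains a stretched-exponential factor in the spatial separation $|2^j - 2^\l|$.

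First I would insert the partition of unity $1 = \sum_\l \wt\phi_\l^{(m)}$ (a suitably fattened version of \eqref{phi3a}, or simply $\sum_\l \phi_\l$) and split $P(t) f = \sum_\l P(t)(\phi_\l f)$, giving
\begin{align*}
\| \phi_j P(t) f \|_{W^{s, p}} \le \sum_{\l = 0}^\infty \| \phi_j \, P(t)(\phi_\l f) \|_{W^{s, p}}.
\end{align*}
For the diagonal and near-diagonal terms $|\l - j| \le C$ I would use the standard smoothing estimate for the heat semigroup $\| P(t) g \|_{W^{s, p}(\R^d)} \les t^{-s/2} e^{-ct} \| g \|_{L^p(\R^d)}$ (which follows from the explicit heat kernel $p_t$ and Young's inequality; the $e^{-ct}$ comes from the mass term in $P(t) = e^{t(\Dl - 1)}$), together with $\| \phi_j h \|_{W^{s, p}} \les \| h \|_{W^{s, p}}$ up to harmless factors as in \eqref{cpt1}; the $\eps$-loss in the exponent is there precisely to absorb such localization multipliers when $s \ge 0$, via the fractional Leibniz rule (Lemma \ref{LEM:bilin1}) and Sobolev's inequality exactly as in the proof of Lemma \ref{LEM:cpt}. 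For the off-diagonal terms, where $\dist(\supp \phi_j, \supp \phi_\l) \gtrsim 2^{\max(j, \l)}$ by \eqref{phi2}, the kernel of $\phi_j P(t) \phi_\l$ is $\phi_j(x) e^{-t} p_t(x - y) \phi_\l(y)$ with $|x - y| \gtrsim 2^{\max(j,\l)}$ on the support, so the Gaussian decay of $p_t(x-y) = (4\pi t)^{-d/2} e^{-|x-y|^2/(4t)}$ gives a gain; I would use $e^{-|x-y|^2/(4t)} \le e^{-|x-y|/(8)} e^{-|x-y|^2/(8t)}$ (valid after splitting, and bounding $t^{-d/2} e^{-|x-y|^2/(8t)}$ by a $t$-independent constant times $|x-y|^{-d}$ or similar) to extract a factor $e^{-c 2^{\max(j,\l)}}$, which dominates the weight ratio $w_\mu(2^j)/w_{\mu'}(2^\l) \lesssim e^{C\mu 2^{j\dl}}$ precisely because $\dl < 1$ and $\mu'$ is taken small relative to $\mu$ — this is where the hypothesis $0 < \mu' < C_0 \mu$ and the constraint $\dl < 1/\ta_0 < 1$ enter. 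After these kernel bounds, taking the $\l^p_j$ norm and using Schur's test (or Young's inequality on $\Z_{\ge 0}$) on the resulting summation kernel in $(j, \l)$, then comparing with $\| f \|_{L^p_{\mu'}}$ via \eqref{Lpm1}, yields \eqref{heat2}.

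The main obstacle is the bookkeeping in the off-diagonal regime: one must simultaneously (a) convert Gaussian-in-$|x-y|/\sqrt t$ decay into a $t$-uniform stretched-exponential-in-$2^j$ gain without losing the $t^{-s/2 - \eps} e^{-ct}$ prefactor, keeping track of where powers of $t$ are spent versus saved, and (b) ensure the gain beats \emph{both} the weight discrepancy $e^{C\mu 2^{j\dl}}$ and the geometric factors $2^{\eps j}$, $2^{(d/p)j}$ coming from the localization multipliers (cf.\ \eqref{phi2a}, \eqref{cpt1}); this is exactly the kind of careful decomposition the introduction flags as ``more complicated'' due to the weight. The estimate $\| \phi_j h \|_{W^{s,p}} \lesssim 2^{\eps j} \|h\|_{W^{s,p}}$ together with summability of $\sum_\l 2^{-\eps|\l - j|}$-type kernels makes everything converge, and the small parameter $\eps > 0$ in the exponent of $t$ is the slack that makes the $s \ge 0$ localization estimates go through uniformly. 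I expect no genuinely new idea is needed beyond what already appears in the proof of Lemma \ref{LEM:cpt} combined with standard heat-kernel estimates, but the proof will require assembling these ingredients carefully; details I would defer rather than grind through here.
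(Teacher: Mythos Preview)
Your overall strategy is sound and would work, but the paper organizes the argument differently, and the difference is worth noting. You decompose only the input via $f = \sum_\l \phi_\l f$ and then try to extract off-diagonal decay directly from the Gaussian kernel on the support of $\phi_j(\cdot)\phi_\l(\cdot)$. The paper instead introduces a \emph{self-similar} partition of unity on the heat kernel itself, setting $\phi_m^t(x) = \phi_m(x/\sqrt t)$ and writing $p_t = \sum_m \phi_m^t p_t$; this gives the clean, $t$-uniform bound $\|\phi_m^t p_t\|_{W^{s,1+}} \les t^{-s/2-}\exp(-c_1 4^m)$, after which Young's inequality and a support constraint $\Lambda_{t,j}$ (encoding which triples $(j,m,m')$ can interact) reduce everything to a double sum that is handled by splitting $t \le 1$ versus $t > 1$ and using $\dl < 1$.

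Your route forces a case split on $t$ versus $2^{\max(j,\l)}$: the inequality you write, $e^{-|x-y|^2/(4t)} \le e^{-|x-y|/8} e^{-|x-y|^2/(8t)}$, is false when $|x-y| < t$, so in that regime you must instead borrow the decay from the mass factor $e^{-t}$ (which you can, since $t > |x-y| \gtrsim 2^{\max(j,\l)}$ there). This is exactly the ``bookkeeping'' you flag as the main obstacle, and it does go through, but the paper's self-similar decomposition sidesteps it entirely by packaging both the smoothing $t^{-s/2}$ and the off-diagonal decay into the single factor $e^{-c_1 4^m}$, uniformly in $t$. The paper's version also has the advantage of being reused verbatim in the proof of Proposition~\ref{PROP:CI3} (see \eqref{W3}), where the same three-index decomposition appears.
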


\begin{proof}

We first introduce a cutoff function $\phi_m^t$ in terms of the self-similar variable $\frac{x}{t^\frac 12}$
for the (homogeneous) heat equation $\dt u - \Dl u = 0$
by setting
\begin{align}
\phi_m^t(x) := \phi_m\bigg( \frac{x}{ t^{\frac12}} \bigg)
\label{heat3}
\end{align}

\noi
for $m\in\Z_{\ge 0}$. 
Then, given $j \in \Z_{\ge 0}$, 
it follows
from \eqref{phi1a}, \eqref{phi2},  and \eqref{heat3} that 
\begin{align}
\begin{aligned}
\phi_j (P(t) f) &= e^{-t} \sum_{m, m' = 0}^\infty \phi_j \big[ (\phi_m^t  p_t)\ast ( \phi_{m'}  f) \big] \\
&= e^{-t} \sum_{(m, m') \in\Ld_{t, j}} 
\phi_j \big[ (\phi_m^t   p_t)\ast (  \phi_{m'}  f) \big], 
\end{aligned}
\label{heat4}
\end{align}

\noi
where $ \Ld_{t, j} \subset (\Z_{\ge 0})^2$  is given by 
\begin{align}
\begin{split}
\Ld_{t, j} = \Big\{(m, m') \in (\Z_{\ge 0})^2:\
& \tfrac 58\cdot 2^{m'} 
\cdot \ind_{\{ m'\geq 1 \}}
\le
    \tfrac 85 \cdot 2^j + \tfrac 85 \cdot
    2^{m} t^{\frac12} 
    \\
& \text{and }\,  \tfrac 58\cdot 2^{m}t^\frac 12  
\cdot  \ind_{\{ m\geq 1 \}}
\le
    \tfrac 85 \cdot 2^j
+  \tfrac 85 \cdot
    2^{m'} 
\Big\}.
\end{split}
\label{heat5}
\end{align}

In view of \eqref{heat3}, 
a direct computation shows that 
\begin{align*}
\|\phi^t_m p_t \|_{L^{1}} & \les   \exp ( -c_1 4^m ),\\
\|\phi^t_m p_t \|_{W^{1,1}} & \les t^{-\frac12}  \exp ( -c_1 4^m )
\end{align*}

\noi
for some $c_1 > 0$.
Hence, by the interpolation \eqref{interp}, we have
\begin{align} 
\|\phi^t_m p_t \|_{W^{s,1}} 
\les t^{-\frac{s}{2} } \exp ( -c_1 4^m )
\label{heat6}
\end{align}

\noi
for any $t > 0$ and $0 < s < 1$.
We point out that \eqref{heat6}
also holds for any $s \ge 0$.
Then, 
from \eqref{heat1}, \eqref{Lpm1},  \eqref{heat4}, 
 the fractional Leibniz rule (Lemma \ref{LEM:bilin1}\,(i) below), 
 \eqref{phi}, 
 Young's inequality, 
and \eqref{heat6}, 
we have
\begin{align}
\begin{split}
\| P(t)  f \|_{W^{s, p}_{\mu}}
&\leq e^{-t} \sum_{j = 0 }^\infty e^{-\frac{\mu}{p}2^{j\dl}} 
\|  \phi_j   ( p_t\ast  f )\|_{W^{s, p}} \\
&\leq
e^{-t}
\sum_{j  = 0 }^\infty e^{-\frac{\mu}{p}  2^{j\dl}}
\sum_{(m, m')\in\Ld_{t, j} } \| \phi_j  \|_{W^{s,\infty}}
\|   ( \phi^t_m p_t) \ast  (\phi_{m'} f )\|_{W^{s, p}} \\
&\les
e^{-t} 
\sum_{j  = 0 }^\infty e^{-\frac{\mu}{p} 2^{j\dl}}
\sum_{(m, m')\in\Ld_{t, j}}
 \|   \phi^t_m p_t \|_{W^{s,1}}   \| \phi_{m'} f  \|_{L^{p}} \\
&\les   
t^{-\frac{s}{2}} e^{-t}
\sum_{j  = 0 }^\infty e^{-\frac{\mu}{p}  2^{j\dl}}
\sum_{(m, m')\in\Ld_{t, j}} 
  e^{-c_1 4^m}
  \|  f  \|_{L^{p}_{\mu'}} e^{\frac{\mu'}{p} a_1 2^{m'\dl}  }.
  \end{split}
\label{heat6a}
\end{align}

Hence, it remains  to estimate 
\begin{align*} 
A_t  := \sum_{j = 0}^\infty e^{-\frac{\mu}{p} 2^{j\dl} }
\sum_{(m, m')\in\Ld_{t, j}}  e^{-c_1 4^m}
  e^{\frac{\mu'}{p} a_1 2^{m'\dl}  }.
\end{align*}

\noi
First, we consider the case $0 < t \les 1$.
By summing over $m'$ with \eqref{heat5}, 
we have
\begin{align} 
A_t  \les  \sum_{j, m = 0}^\infty e^{-\frac{\mu}{p}2^{j\dl} }
e^{-c_1 4^m}
  e^{\frac{\mu'}{p} c_2 ( 2^{j \dl}  +  2^{m\dl} t^\frac{\dl}2 ) } \les 1, 
  \label{heat7}
\end{align}

\noi
provided that $ \mu > c_2  \mu'$.
Next, we consider the case $ t \gg  1$.
Note that $4^m \sim 2^{m\dl} t^\frac{\dl}{2}$
implies 
\begin{align}
 2^{m\dl}t^\frac{\dl}{2} \sim  t^{\frac{\dl}{2-\dl}}
\ll t
\label{heat7a}
\end{align}

\noi
 for $t \gg 1$ since $\dl < 1$.
By first summing over $m'$ as in~\eqref{heat7}
and then summing over $m$, 
\begin{align} 
A_t  \les  \sum_{j = 0}^\infty e^{
(-\frac{\mu}{p}  + \frac{\mu'}p c_2 ) 2^{j \dl} }
e^{c_3 t^{\frac{\dl}{2}\frac{2}{2-\dl}}}.
  \label{heat8}
\end{align}

\noi
Therefore, the desired bound \eqref{heat2} follows
from \eqref{heat6a}, \eqref{heat7}, and \eqref{heat8}
with \eqref{heat7a}.
\end{proof}

Next, we establish
estimates for 
the linear damped wave operator.

\begin{lemma}\label{LEM:wave}
Let $s \in \R$.
Then, there exists $C_0 > 0$ such that 
\begin{align}
\| \D(t) f\|_{H^s_\mu} \les e^{-\frac{t}{4}} \|  f\|_{H^{s-1}_{\mu'}} 
\label{Damp1}
\end{align}

\noi
and 
\begin{align}
\| \S(t) f\|_{H^s_\mu} \les  e^{-\frac{t}{4}}  \|  f\|_{H^{s}_{\mu'}}
\label{Damp2}
\end{align}

\noi
for $\mu > C_0 \mu' > 0$.
Here, $\D(t)$ and $\S(t)$ are as in \eqref{lin2} and \eqref{lin3}.

\end{lemma}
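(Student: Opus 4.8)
The plan is to exploit the Fourier-multiplier structure of $\D(t)$ and $\S(t)$ together with the $\GG^{\ta_0}$-microlocal calculus already set up for the weighted spaces, reducing everything to the heat-type estimate of Lemma~\ref{LEM:heat} (or to an analogous direct computation). First I would observe that, by \eqref{WSP} with $p = 2$, it suffices to bound $w_\mu(2^j)^{\frac12}\|\phi_j\,\D(t)f\|_{H^s}$ in $\l^2_j$, and similarly for $\S(t)$. Inserting a partition $f = \sum_{j'} \phi_{j'} f$ (after a fattening as in \eqref{ID1}), one has to control $\phi_j\,\D(t)(\phi_{j'}f)$ in $H^s$. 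The operator $\D(t)$ has kernel $K_t = \F^{-1}\big(e^{-t/2}\,\widehat{\sin(t\sqrt{3/4-\Dl}\,)}/\sqrt{3/4-\Dl}\big)$; the prefactor $e^{-t/2}$ already gives the exponential decay, while the remaining multiplier $m_t(\eta) = \sin(t\sqrt{3/4+|\eta|^2})/\sqrt{3/4+|\eta|^2}$ is bounded by $\jb{\eta}^{-1}$ uniformly in $t$, which is precisely the loss of one derivative appearing in \eqref{Damp1}. For $\S(t) = \dt\D(t) + \D(t)$ the corresponding multiplier is $O(1)$ in $\jb{\eta}$, giving \eqref{Damp2} with no derivative loss.

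The key steps, in order, would be: (i) write $\D(t) = e^{-t/2}\, T_t$ where $T_t$ is the Fourier multiplier with symbol $m_t$, and note $\|T_t g\|_{H^s} \les \|g\|_{H^{s-1}}$ uniformly in $t > 0$; similarly $\S(t) = e^{-t/2}\,\widetilde T_t$ with $\|\widetilde T_t g\|_{H^s}\les\|g\|_{H^s}$. (ii) Reduce the weighted estimate to a spatially-localized one: using \eqref{Lpm1} (for $p = 2$) it is enough to show $\|\phi_j T_t(\phi_{j'}f)\|_{H^{s}} \les C(j,j')\,\|\phi_{j'}f\|_{H^{s-1}}$ with $\sum_{j'} w_\mu(2^j)^{\frac12} w_{\mu'}(2^{j'})^{-\frac12} C(j,j')$ summable after the $\l^2_j$-sum, which forces a gain like $C(j,j')\les e^{-c(2^{j\dl}+2^{j'\dl})}$ whenever $|j - j'|$ is large. (iii) Establish that off-diagonal gain: this is where one uses that $K_t$ (hence the localized pieces) is, after the $e^{-t/2}$ factor, a finite-speed-of-propagation object — $T_t$ has kernel supported in $\{|x|\le t\}$ up to a rapidly decaying tail coming from the $\GG^{\ta_0}$ cutoffs — so $\phi_j T_t \phi_{j'}$ is negligible unless $2^{j}$ and $2^{j'}$ are comparable or $t \gtrsim 2^{\max(j,j')}$, and in the latter regime the factor $e^{-t/2}$ beats $e^{+c\,2^{j\dl}}$ since $\dl < 1$. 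Alternatively, one can subordinate $\D(t)$ and $\S(t)$ to the heat propagator and quote Lemma~\ref{LEM:heat} directly, but the finite-speed-of-propagation route is cleaner here. (iv) Sum the resulting bounds in $j$ and $j'$, choosing $C_0$ large enough that $w_\mu$ dominates all the $e^{+c\,2^{j'\dl}}$ losses, exactly as in the final step of the proof of Lemma~\ref{LEM:heat} (cf.\ \eqref{heat7}--\eqref{heat8}).

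The main obstacle I expect is step (iii): making precise the interplay between the finite propagation speed of the wave kernel and the stretched-exponential weight $w_\mu(x) = e^{-\mu\jb{x}^\dl}$ with $\dl < 1$. One must show that the "bad" region — where the support of $\phi_j T_t(\phi_{j'}\,\cdot\,)$ is nonempty because $t$ is large — is harmless thanks to the genuine exponential-in-$t$ damping $e^{-t/2}$, which dominates the at-most-stretched-exponential-in-space growth $e^{\mu' a_1 2^{j'\dl}}$ precisely because $2^{j'\dl} \ll 2^{j'} \lesssim t$ in that regime (the same mechanism as \eqref{heat7a}). Handling the rapidly-decaying but non-compactly-supported tails of the Gevrey-class kernels, so that $\phi_j T_t \phi_{j'}$ is controlled for $|j-j'|\gg1$ at small $t$ as well, is the one genuinely technical point; it is dealt with using the bound \eqref{eta2}-type decay of the $\GG^{\ta_0}$ kernels and is routine but must be done carefully. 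Everything else is bookkeeping with \eqref{Lpm1}, the fractional Leibniz rule (Lemma~\ref{LEM:bilin1}), and summation of geometric-type series.
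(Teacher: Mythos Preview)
Your approach is essentially the paper's: decompose $f=\sum_\l \phi_\l f$, use finite speed of propagation to restrict which $(j,\l)$ pairs contribute to $\phi_j\,\D(t)(\phi_\l f)$, then let the genuine exponential factor $e^{-t/2}$ absorb the stretched-exponential weight losses since $\dl<1$. Two simplifications relative to your plan are worth noting. First, the ``genuinely technical point'' you flag in step~(iii) is a non-issue: the damped wave propagator $\D(t)$ has \emph{exact} finite speed of propagation (its distributional kernel is genuinely supported in $\{|x|\le t\}$), and the spatial cutoffs $\phi_j$ are ordinary $C^\infty_c$ functions, not the Gevrey-class objects $\chi_k$ used for the Littlewood-Paley projectors --- so there are no tails to control, and the support restriction \eqref{range1} in the paper is sharp. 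Second, instead of the fractional Leibniz rule (which in Lemma~\ref{LEM:bilin1}(i) is stated only for $0\le s\le 1$, so would require extra case-splitting for general $s\in\R$), the paper removes the cutoff $\phi_j$ via the cleaner product estimate $\|fg\|_{H^s}\les\|f\|_{\F L^{|s|,1}}\|g\|_{H^s}$ together with the uniform bound $\|\phi_j\|_{\F L^{|s|,1}}\les 1$, which works for all $s$ at once.
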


\begin{proof}
In view of  \eqref{phi1a}, write 
\begin{align}
\begin{aligned}
 \| \phi_j  \D(t) f \|_{H^s}
 =  \bigg\|  \sum_{\l = 0}^\infty \phi_j  \D(t)   (\phi_\l f) \bigg\|_{H^s}.
\end{aligned}
\label{Damp3}
\end{align}

\noi
In the following, we only consider $j, \l \ge 1$
but a similar argument holds for the case $j = 0$ or $\l = 0$.
Thanks to  the finite speed of propagation and \eqref{phi2}, 
we have
\begin{align*}
\supp \D(t) (\phi_\l f) \subset \big\{ \tfrac 58 \cdot 2^\l -t
 \le |x| \le  \tfrac85 \cdot 2^\l +t \}.
\end{align*}

\noi
As a result, 
 $\phi_j  \D(t)   (\phi_\l f) \neq 0$
only when
\begin{align}
\tfrac 58\cdot  2^{j} \leq \tfrac 85 \cdot 2^\l+ t 
\qquad
{\rm and}
\qquad
\tfrac 58 \cdot 2^{\l} - t \leq   \tfrac 85 \cdot 2^j.
\label{range1}
\end{align}

%

From the triangle inequality, we have  $\jb{\xi}^s \les \jb{\xi_1}^{|s|} \jb{\xi_2}^s $
for $\xi = \xi_1 + \xi_2$.
Then, from  Young's  inequality on the Fourier side, we have
\begin{align}
\| f g \| _{H^s}
\les \| f\|_{\F L ^{|s|,1}} \|g\|_{H^s}, 
\label{Damp5}
\end{align}

\noi
where the Fourier-Lebesgue norm is defined by 
\begin{align}
 \| f\|_{\F L ^{s,1}} = \| \jb{\xi}^s \ft f(\xi) \|_{L^1_\xi}.
\label{Damp5a}
 \end{align}

\noi
Note that from  \eqref{phi}, we have 
\begin{align}
\| \phi_j \|_{\F L^{s,1}} \les_s 1,
\label{Damp5b}
\end{align} 
 uniformly in  $j \in \Z_{\ge 0}$.

Given $j \in \Z_{\ge 0}$ and $t \ge 0$, 
define $\G_{j, t}$ by 
\[ \G_{j, t} = \big\{\l \in \Z_{\ge 0}: 
\l \text{ satisfies \eqref{range1}}\big\}.\]
Then, we deduce from 
\eqref{Damp3}, 
\eqref{Damp5}
\eqref{lin2}, and 
\eqref{Lpm1}, we have
\begin{align}
\begin{aligned}
\| \phi_j \D(t) f \|_{H^s}
&\les   
e^{-\frac{t}{2}} 
\sum_{ \l \in \G_{j, t} }
\| \phi_\l f\|_{H^{s-1}} \\
&\les e^{-\frac{t}{2}} 
\bigg(\sum_{ \l \in \G_{j, t} } e^{\frac{\mu'}{2} 2^{\l \dl}}\bigg)
\| f\|_{H^{s-1}_{\mu'}}\\
&\les e^{-\frac{t}{2}} 
 e^{c_0\mu'( 2^{j\dl} + t^\dl)}
\| f\|_{H^{s-1}_{\mu'}}, 
\end{aligned}
\label{Damp6}
\end{align}

\noi
where the implicit constant is independent of $j \in \Z_{\ge 0}$.
Then, from \eqref{Lpm1} and \eqref{Damp6}, we have 
\noi
\begin{align*}
\| \D(t) f \|_{H^s_\mu}
&\le \sum_{j = 0}^\infty e^{-\frac{\mu}{2} 2^{j \dl} } \| \phi_j \D(t) f \|_{H^s} \\
&\les e^{-\frac t2} 
\sum_{j = 0}^\infty e^{-\frac{\mu}{2} 2^{j \dl} }
 e^{c_0\mu'( 2^{j\dl} + t^\dl)}
   \| f\|_{H^{s-1}_{\mu'}} \\
&\les e^{-\frac t2} 
 e^{c_0t^\dl}
   \| f\|_{H^{s-1}_{\mu'}} 
  \les e^{-\frac t4} 
   \| f\|_{H^{s-1}_{\mu'}} ,    
\end{align*}

\noi
provided that $\mu > 2c_0 \mu'$.
This proves \eqref{Damp1}.
Recalling that
\[
\S(t) = \frac{1}{2}\D(t) + e^{-\frac{t}{2}}\cos\Big(t \sqrt{\tfrac34 -\Dl} \Big),
\]

\noi
the second  bound \eqref{Damp2} follows from an analogous
computation.
 \end{proof}

\begin{remark}\label{REM:wave}\rm

(i) 
Let $w_\mu$ be as in \eqref{weight1}.
Then, for any $0 < \dl \le 2$ and $\mu > 0$, 
 the Fourier transform of the weight $w_\mu$ is positive.
Indeed, it follows from 
Lemma 5 in \cite{EOR}
on completely monotonic functions that 
\[ w_\mu(x) = \int_0^\infty e^{-t \jb{x}^2} d\al_\mu (t)
= e^{-t} \int_0^\infty e^{-t |x|^2} d\al_\mu (t), \]

\noi
where $\al_\mu(t)$ is bounded and non-decreasing and the integral converges for any $x \in \R^2$.
Hence, as a superposition of the Gaussians $e^{-t |x|^2}$, 
we conclude that the Fourier transform of the weight $w_\mu$ is positive.

\smallskip

\noi
(ii)  Let $\W^{s, 2}_\mu(\R^d)$ be the weighted Sobolev space defined in \eqref{WSP2} with $p = 2$.
Then, an analogue of Lemma \ref{LEM:wave}
also holds for $\W^{s, 2}_\mu(\R^d)$.
Using the positivity of the Fourier transform of the weight $w_{\frac{\mu}{2}}$, we have
\begin{align*}
\|\D(t) f \|_{\W^{s, 2}_\mu}
& = \| \jb{\nb}^s \D(t) f\|_{L^2_\mu}
= \Big\| \big(\jb{\,\cdot\,}^s \ft {\D(t) f}\big)*\ft w_{\frac{\mu}{2}}(\xi) \Big\|_{L^2_\xi}\\
& \les  \Big\| \big(\jb{\,\cdot\,}^{s-1} |\ft { f}|\big)*\ft w_{\frac{\mu}{2}}(\xi) \Big\|_{L^2_\xi}
= \|f\|_{\W^{s-1, 2}_\mu}.
\end{align*}

\noi
A similar computation holds for $\S(t)$.
Note that, unlike Lemma \ref{LEM:wave},
there is no loss in the coefficient $\mu$ of the weight.

\end{remark}

\subsection{Product estimates}

We first state the product estimate on $\R^d$.

\begin{lemma}\label{LEM:bilin1}

\textup{(i)} 
 Let $s\ge 0$.
 Suppose that 
 $1<p_j,q_j \le \infty$ and $1 \le r < \infty$
 such that  $\frac1{p_j} + \frac1{q_j}= \frac1r$, $j = 1, 2$. 
%
%
%
%
%
%
%
%
%
 Then, we have  
\begin{align}  
\| fg \|_{W^{s, r}(\R^d)} 
\les \Big( \| f \|_{L^{p_1}(\R^d)} 
\|  g \|_{W^{s, q_1}(\R^d)} + \|  f \|_{W^{s, p_2}(\R^d)} 
\|  g \|_{L^{q_2}(\R^d)}\Big).
\label{B1}
\end{align}

\smallskip

\noi
\textup{(ii)} 
Let $s > 0$.
 Suppose that 

\smallskip
 \begin{itemize}
 \item[(ii.a)]
$ 1< p \le \infty$ and $1 < q,r < \infty$, 

\smallskip
 \item[(ii.b)]
$ 1< p = r \le \infty$ and $q = \infty$, or

\smallskip
 \item[(ii.c)]
$1<p = q'\le \infty$ 
and $r=1$ 

 \end{itemize}
 
\smallskip 
 
\noi
such that  
$\frac 1r \le \frac 1p + \frac 1 q \le \frac1 r + \frac sd$
and 
$q, r' \ge p'$.
Then, we have 
\begin{align}
\| fg \|_{W^{-s, r}(\R^d)  } \les  \| f\|_{W^{-s, p}(\R^d) } \|g \|_{W^{s,q}(\R^d)}.
\label{B2}
\end{align}

\end{lemma}

As for the fractional Leibniz rule \eqref{B1}
see  \cite{KP, CW}
for $p_j,q_j <  \infty$
and 
\cite[(2.1)]{MS}
(for the one-dimensional case which can be easily extended
to higher dimensions); see also \cite{GO}.
As for the second estimate \eqref{B2}, 
see \cite{GKO, BOZ}.
The estimates \eqref{B1} and \eqref{B2} indeed hold 
for wider ranges of indices;
see \cite{BOZ} for a further discussion.

\begin{remark}\label{REM:R}\rm
Note that, given a (nice) subset $K \subset \R^d$, 
the estimates \eqref{B1} and \eqref{B2} 
also hold on the localized version of Sobolev spaces $W^{s, p}(K)$
defined in \eqref{loc1}.
Given $f$ and $g$ on $K$, 
let ${\bf f}$ and ${\bf g}$
be extensions onto $\R^d$.
Then, from \eqref{loc1} and \eqref{B1}, we have 
\begin{align*}  
\| fg \|_{W^{s, r}(K)} 
& \le \| {\bf f}{\bf g} \|_{W^{s, r}(\R^d)} \\
& \les \Big( \| {\bf f} \|_{L^{p_1}(\R^d)} 
\|  {\bf g} \|_{W^{s, q_1}(\R^d)} + \|  {\bf f} \|_{W^{s, p_2}(\R^d)} 
\|  {\bf g} \|_{L^{q_2}(\R^d)}\Big).
\end{align*}

\noi
By taking infima over extensions 
${\bf f}$ and ${\bf g}$, we then obtain
\begin{align*}  
\| fg \|_{W^{s, r}(K)} 
& \
\les \Big( \| f \|_{L^{p_1}(K)} 
\|  g \|_{W^{s, q_1}(K)} + \|  f \|_{W^{s, p_2}(K)} 
\|  g \|_{L^{q_2}(K)}\Big).
\end{align*}  

A similar comment applies to all the preliminary estimates 
presented in this section.

\end{remark}

Next, we state a bi-parameter version
of the fractional Leibniz rule.

\begin{lemma}\label{LEM:bilin0}
Let $0 \le s \le 1$, $ 1 < p_1, p_2 \le \infty$, 
and $ 1 \le p < \infty$
with $\frac 1p = \frac 1{p_1}+ \frac 1 {p_2}$.
Then, we have
\begin{align}
\| \jb{\nb_x}^s
\jb{\nb_y}^s (fg) \|_{L^p_{x, y}(\R^{2d})}
\les \| \jb{\nb_x}^s
\jb{\nb_y}^s f \|_{L^{p_1}_{x, y}(\R^{2d})}
\| \jb{\nb_x}^s
\jb{\nb_y}^s g \|_{L^{p_2}_{x, y}(\R^{2d})}
\label{bi0}
\end{align}

\noi
for any functions  $f= f(x, y)$ and $g= g(x, y)$  on $\R^d_x \times \R^d_y$.

\end{lemma}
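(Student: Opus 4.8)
The plan is to prove this by a bi-parameter Littlewood--Paley (paraproduct) argument, after a preliminary reduction. First I would observe that $\jb{\nb_x}^s$ and $\jb{\nb_y}^s$ act on different variables, hence commute, and that $\jb{\nb_x}^{-a}\jb{\nb_y}^{-b}$ is bounded on $L^q(\R^{2d})$ for all $1<q<\infty$ and $a,b\ge 0$ (it is convolution in each variable with a Bessel-potential kernel, which is a finite measure on $\R^d$). Therefore $\| \jb{\nb_x}^a\jb{\nb_y}^b F\|_{L^q}\les \| \jb{\nb_x}^s\jb{\nb_y}^s F\|_{L^q}$ whenever $0\le a,b\le s$, and it suffices to establish the ``full'' bi-parameter fractional Leibniz rule
\[
\| \jb{\nb_x}^s\jb{\nb_y}^s(fg)\|_{L^p}
\les \sum_{a,b\in\{0,s\}}
\| \jb{\nb_x}^a\jb{\nb_y}^b f\|_{L^{p_1}}\,\| \jb{\nb_x}^{s-a}\jb{\nb_y}^{s-b}g\|_{L^{p_2}},
\]
since each of the four terms on the right is dominated by $\| \jb{\nb_x}^s\jb{\nb_y}^s f\|_{L^{p_1}}\| \jb{\nb_x}^s\jb{\nb_y}^s g\|_{L^{p_2}}$.

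To prove the displayed inequality, I would introduce inhomogeneous Littlewood--Paley projections $\{P^x_j\}_{j\ge 0}$ and $\{P^y_k\}_{k\ge 0}$ in the $x$- and $y$-variables separately and use the product Triebel--Lizorkin characterization
\[
\| \jb{\nb_x}^s\jb{\nb_y}^s h\|_{L^p(\R^{2d})}
\sim \Big\| \Big(\textstyle\sum_{j,k\ge 0}\jb{2^j}^{2s}\jb{2^k}^{2s}|P^x_jP^y_k h|^2\Big)^{1/2}\Big\|_{L^p(\R^{2d})},
\]
valid for $1<p<\infty$, which follows by iterating the one-parameter square function estimate together with the Fefferman--Stein vector-valued maximal inequality. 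Writing $f=\sum_{j,k}P^x_jP^y_kf$, $g=\sum_{j',k'}P^x_{j'}P^y_{k'}g$ and splitting the resulting double sum separately in the $x$-frequencies ($j\gg j'$, $j\sim j'$, $j\ll j'$) and in the $y$-frequencies ($k\gg k'$, $k\sim k'$, $k\ll k'$) decomposes $fg$ into nine bi-paraproducts. In each regime one of the two factors controls the output frequency in each variable, so the derivative $\jb{\nb_x}^a$ (resp. $\jb{\nb_y}^b$) can be placed on whichever factor carries the high $x$- (resp. $y$-) frequency; the remaining factor is then absorbed by the iterated Hardy--Littlewood maximal operator $M_xM_y$, and H\"older's inequality with $\frac1p=\frac1{p_1}+\frac1{p_2}$, the square function characterization above, and the iterated maximal inequality (all requiring $1<p,p_1,p_2<\infty$) close each piece. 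This is precisely the one-parameter fractional Leibniz argument behind Lemma \ref{LEM:bilin1}\,(i), carried out in both variables simultaneously.

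The step I expect to be the main obstacle is the treatment of the diagonal frequency interactions, and in particular the double-diagonal piece $\sum_{|j-j'|\le 1,\ |k-k'|\le 1}(P^x_jP^y_kf)(P^x_{j'}P^y_{k'}g)$, whose summands are not frequency-localized to almost-disjoint boxes. There I would reorganize the output by its own dyadic scales $(l,m)$, note that $l\les j$ and $m\les k$, and then sum over $l$ (and $m$) via a Schur-type estimate against the geometric weights $2^{-s(j-l)}$ --- which works precisely because $s\ge 0$ --- with the endpoint $s=0$ handled directly by Cauchy--Schwarz in $(j,k)$ followed by H\"older and two square function estimates. This diagonal bookkeeping, doubled relative to the one-parameter case, is routine but is where the real care lies. (An alternative route is to iterate the one-parameter rule of Lemma \ref{LEM:bilin1}\,(i) in a vector-valued, $W^{s,p}_y$-valued form; but the iteration is unavoidably bi-parameter, since once $\jb{\nb_x}^s$ or $\jb{\nb_y}^s$ has acted on $fg$ the result is no longer a product.)
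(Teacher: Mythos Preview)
Your proposal is correct in outline, but it takes a genuinely different route from the paper's proof. The paper does not carry out the bi-parameter paraproduct analysis you sketch. Instead, it first applies the Marcinkiewicz multiplier theorem to the symbol $\jb{\eta_1}^s\jb{\eta_2}^s/\big((1+|\eta_1|^s)(1+|\eta_2|^s)\big)$ to reduce to bounding $\|(1+|\nb_x|^s)(1+|\nb_y|^s)(fg)\|_{L^p}$, expands this into four terms, handles the three mixed terms by H\"older plus the one-parameter fractional Leibniz rule (Lemma~\ref{LEM:bilin1}\,(i)), and then invokes the bi-parameter fractional Leibniz rule of Muscalu--Pipher--Tao--Thiele \cite{MPTT} as a black box for the remaining term $\||\nb_x|^s|\nb_y|^s(fg)\|_{L^p}$. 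Your approach is more self-contained --- you are essentially reproving an inhomogeneous version of the MPTT estimate via the nine-piece bi-paraproduct decomposition --- whereas the paper's approach is much shorter because it outsources the hard diagonal-diagonal analysis to \cite{MPTT}. Both are valid; the paper's route is preferable for exposition, while yours would be appropriate if one wanted to avoid quoting a result whose proof is itself nontrivial.
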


\begin{proof}
Let $0 \le s \le 1$ and $ 1 < p < \infty$.
Let $f= f(x, y)$ and $g= g(x, y)$ be functions on $\R^d_x \times \R^d_y$.
Then, by the Marcinkiewicz multiplier theorem
(Theorem 6.2.4 in \cite{Gra14}), we have 
\begin{align}
\begin{split}
\| \jb{\nb_x}^s
\jb{\nb_y}^s (fg) \|_{L^p_{x, y}(\R^{2d})}
& \les 
\| fg \|_{L^p_{x, y}(\R^{2d})}
+ \big\| |\nb_x|^s (fg) \big\|_{L^p_{x, y}(\R^{2d})}\\
& \quad + \big\||\nb_y|^s (fg) \big\|_{L^p_{x, y}(\R^{2d})}
+ \big\| |\nb_x|^s|\nb_y|^s (fg) \big\|_{L^p_{x, y}(\R^{2d})}.
\end{split}
\label{bi1}
\end{align}

\noi
Indeed, noting that 
\begin{align*}
\text{RHS of }\eqref{bi1}
\sim \big\|(1+ |\nb_x|^s)
(1+ |\nb_y|^s) (fg) \big\|_{L^p_{x, y}(\R^{2d})}, 
\end{align*}

\noi
 the bound \eqref{bi1} follows
since the multiplier $m(\eta_1, \eta_2)$, $(\eta_1, \eta_2) \in \R^d \times \R^d$,  defined by 
\begin{align*}
 m(\eta_1, \eta_2) = \frac{\jb{\eta_1}^s\jb{\eta_2}^s}
{(1 + |\eta_1|^s) (1 + |\eta_2|^s)}
\end{align*}
	
\noi
is a Marcinkiewicz multiplier.

By H\"older's inequality and the usual fractional Leibniz rule (Lemma \ref{LEM:bilin1}\,(i)), 
the first three terms on the right-hand side of \eqref{bi1} are bounded 
by the right-hand side of \eqref{bi0}.
As for the last term on the right-hand side of \eqref{bi1}, 
it follows from  the bi-parameter fractional Leibniz rule (\cite[(61) on p.\,295]{MPTT};
see also \cite[(3.3)]{MS})
that 
\begin{align*}
\big\| |\nb_x|^s|\nb_y|^s (fg) \big\|_{L^p_{x, y}(\R^{2d})}
& \les 
\big\| |\nb_x|^s|\nb_y|^s f \big\|_{L^{p_1}_{x, y}(\R^{2d})}
\|  g \|_{L^{p_2}_{x, y}(\R^{2d})}\\
& \quad + \| f \big\|_{L^{p_1}_{x, y}(\R^{2d})}
\big\| |\nb_x|^s|\nb_y|^s g \big\|_{L^{p_2}_{x, y}(\R^{2d})}\\
& \quad + \big\| |\nb_x|^sf \big\|_{L^{p_1}_{x, y}(\R^{2d})}
\big\| |\nb_y|^s g \big\|_{L^{p_2}_{x, y}(\R^{2d})}\\
& \quad + \big\| |\nb_y|^sf \big\|_{L^{p_1}_{x, y}(\R^{2d})}
\big\| |\nb_x|^s g \big\|_{L^{p_2}_{x, y}(\R^{2d})}, 
\end{align*}

\noi
which is once again bounded
by the right-hand side of \eqref{bi0}.
This concludes the proof of Lemma \ref{LEM:bilin0}.
\end{proof}

We  extend the product estimates in Lemma \ref{LEM:bilin1} to  weighted Sobolev spaces.

\begin{lemma} 
\label{LEM:bilin2}

\smallskip
\noi
 {\rm (i)} 
 Let $s \ge 0$ and $\mu>0$.
Suppose that 
 $1<p_j,q_j \le \infty$ and $1 \le r < \infty$
such that $\frac1{p_j} + \frac1{q_j}= \frac1r$, $j = 1, 2$. 
 Then, we have  
\begin{align}
\| fg \|_{W^{s,r}_\mu} 
\les  \| f\|_{W^{0, p_1}_\mu} \|g\|_{W^{s,q_1}_{\frac{\mu}{2^\dl} }}
+  \| f\|_{W^{s, p_2}_\mu} \|g\|_{W^{0, q_2}_{ \frac{\mu}{2^\dl} }}.
\label{prod1}
\end{align}

\smallskip
\noi
{\rm (ii)} 
Let $s> 0$ and $\mu>0$, 
and let $1 \le  p, q, r \le \infty$ be  as in Lemma \ref{LEM:bilin1}\,(ii).
Then, we have 
\begin{align}
\| fg \|_{W^{-s,r}_\mu} 
\les  \|  f\|_{W^{-s, p}_\mu} \|g\|_{W^{s,q}_{\frac{\mu}{2^\dl}}}.
\label{prod2}
\end{align}

\end{lemma}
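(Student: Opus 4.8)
The plan is to deduce both estimates from the unweighted product estimates of Lemma~\ref{LEM:bilin1} by exploiting the spatial localization built into the definition~\eqref{WSP} of the weighted Sobolev norm; part~(i) uses Lemma~\ref{LEM:bilin1}\,(i) and part~(ii) uses Lemma~\ref{LEM:bilin1}\,(ii), following the same scheme. The starting point is the localization identity
\[
\phi_j(fg) = (\phi_j f)(\wt\phi_j g), \qquad j\in\Z_{\ge 0},
\]
which is valid because $\wt\phi_j\equiv 1$ on $\supp\phi_j$, a consequence of \eqref{phi1a}, \eqref{phi2}, and \eqref{phi3a}. Here $f$ carries only the exact-scale cutoff $\phi_j$, whereas $g$ carries the fattened cutoff $\wt\phi_j=\phi_{j-1}+\phi_j+\phi_{j+1}$, and this asymmetry is ultimately what explains why it is $g$ (not $f$) that loses a factor $2^\dl$ in the weight. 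Applying Lemma~\ref{LEM:bilin1}\,(i) to $(\phi_j f)(\wt\phi_j g)$, together with $\|\wt\phi_j g\|_X\le\sum_{i=-1}^{1}\|\phi_{j+i}g\|_X$, gives, for $0\le s\le 1$,
\[
\|\phi_j(fg)\|_{W^{s,r}}\les \|\phi_j f\|_{L^{p_1}}\sum_{i=-1}^{1}\|\phi_{j+i}g\|_{W^{s,q_1}} + \|\phi_j f\|_{W^{s,p_2}}\sum_{i=-1}^{1}\|\phi_{j+i}g\|_{L^{q_2}},
\]
and, via Lemma~\ref{LEM:bilin1}\,(ii), the analogous bound $\|\phi_j(fg)\|_{W^{-s,r}}\les\|\phi_j f\|_{W^{-s,p}}\sum_{i=-1}^{1}\|\phi_{j+i}g\|_{W^{s,q}}$.

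To reassemble the weighted norm I would raise these to the $r$-th power, multiply by $w_\mu(2^j)$, sum in $j$, split $w_\mu(2^j)$ multiplicatively between the two factors via $w_\mu w_{\mu'}=w_{\mu+\mu'}$ (in part~(ii), where only the inequality $\tfrac1p+\tfrac1q\le\tfrac1r+\tfrac sd$ is assumed, the slack in the exponents is accommodated by a routine modification of the $\ell^r$-summation, e.g.\ a three-factor H\"older inequality with an additional stretched-exponentially summable factor), and apply H\"older in $j$ to decouple the $f$- and $g$-sums. The $f$-sum remains tied to the exact scale $j$ and is controlled by the corresponding weighted norm of $f$ through the elementary bounds~\eqref{Lpm1}. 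For the $g$-sum, after reindexing $\ell=j+i$ the weight $w_\mu(2^j)=w_\mu(2^{\ell-i})$ must be compared with the natural scale-$\ell$ weight $w_\mu(2^\ell)$; since $w_\mu(2^{\ell-1})=e^{-\mu 2^{(\ell-1)\dl}}=w_{2^{-\dl}\mu}(2^\ell)$, the presence of the fattened cutoff forces precisely the reduction $\mu\mapsto\mu/2^\dl$ in the weight carried by $g$, producing $\|g\|_{W^{s,q_1}_{\mu/2^\dl}}$, $\|g\|_{L^{q_2}_{\mu/2^\dl}}$, and $\|g\|_{W^{s,q}_{\mu/2^\dl}}$ on the right.

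The step I expect to be the main obstacle is this weight bookkeeping, on two counts. First, one must keep the $2^\dl$-loss confined to the factor carrying the fattened cutoff $\wt\phi_j$, i.e.\ to $g$; this is the reason for attaching $f$ to the exact-scale cutoff $\phi_j$ throughout. Second, and more delicate, is reconciling the pointwise weighted Lebesgue norm $L^p_\mu$ of~\eqref{Lpmu} with the $\phi_j$-localized structure of~\eqref{WSP}, which is needed whenever a Lebesgue norm appears in~\eqref{prod1}: these two scales of spaces are genuinely inequivalent at $s=0$, and the passage between them must be effected through the elementary comparison estimates~\eqref{Lpm1} and~\eqref{weight2a}. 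Once the weights are tracked correctly, \eqref{prod1} and~\eqref{prod2} follow by summing over $j$.
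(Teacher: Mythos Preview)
Your proposal is correct and follows essentially the same approach as the paper's proof: localize via $\phi_j(fg)=(\phi_j f)(\wt\phi_j g)$ using \eqref{ID1}, apply Lemma~\ref{LEM:bilin1}\,(i)--(ii) to each piece, and recombine using H\"older in $j$ together with the weight comparison $w_\mu(2^{j-1})=w_{\mu/2^\dl}(2^j)$, which accounts for the $2^\dl$-loss on the factor carrying $\wt\phi_j$. The only cosmetic difference is that the paper first passes to the $\ell^1$-type bound from \eqref{Lpm1} before applying H\"older, whereas you work directly with the $\ell^r$ structure; your anticipation of the $L^p_\mu$ versus $W^{0,p}_\mu$ discrepancy is on point and is handled exactly through~\eqref{Lpm1} as you suggest.
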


\begin{proof} (i) 
From \eqref{WSP}
  \eqref{ID1},
 the fractional Leibniz rule (Lemma \ref{LEM:bilin1}\,(i)), 
and H\"older's inequality (in $j$) with $\frac 1r = \frac 1{p_1} + \frac 1{q_1}$, \eqref{phi3a}, 
and \eqref{weight1}, 
 we have
\begin{align*}
\| fg \|_{W^{s,r}_\mu}
&\les \bigg(\sum_{j = 0}^\infty w_\mu(2^j) 
   \|  \phi_j  f  g  \| _{W^{s,r}}^r
   \bigg)^\frac 1r 
= \bigg(\sum_{j = 0}^\infty w_\mu(2^j)
 \| \phi_j  f \cdot \wt \phi_j  g  \| _{W^{s,r}}^r
 \bigg)^\frac 1r  \\
& \les
\bigg(\sum_{j = 0}^\infty w_\mu(2^j) 
 \| \phi_j f \| _{L^{p_1}} ^r
\| \wt \phi_j g  \| _{W^{s,q_1}}^r  \bigg)^\frac 1r  \\
& \quad +
\bigg(\sum_{j = 0}^\infty w_\mu(2^j)
\| \phi_j f \|_{W^{s,p_2}}^r \| \wt \phi_j  g  \| _{L^{q_2}}^r
 \bigg)^\frac 1r \\
&  \les  \| f\|_{W^{0, p_1}_\mu} \|g\|_{W^{s,q_1}_{\frac{\mu}{2^\dl} }}
+  \| f\|_{W^{s, p_2}_\mu} \|g\|_{W^{0, q_2}_{ \frac{\mu}{2^\dl} }}, 
\end{align*}

\noi
which yields \eqref{prod1}.

\smallskip

\noi
(ii) By  Lemma \ref{LEM:bilin1}\,(ii), we have
\[
\| \phi_j f \cdot   \wt \phi_j g  \| _{W^{-s,r}}
\les  \| \phi_j f   \| _{W^{-s,p}}  
 \|\wt \phi_j  g    \| _{W^{s,q}}.  
\]

\noi
Then, \eqref{prod2} follows from applying
 H\"older's inequality as in Part\,(i).
\end{proof}

\subsection{Tools from stochastic analysis}

In the following, we review some basic facts on  
the Hermite polynomials 
and the Wiener chaos estimate. 
See,  for example,  \cite{Kuo, Nu}.

We define the $k$th Hermite polynomials 
$H_k(x; \s)$ with variance $\s$ via the following generating function:
\begin{equation*}
e^{tx - \frac{1}{2}\s t^2} = \sum_{k = 0}^\infty \frac{t^k}{k!} H_k(x;\s) 
 \end{equation*}
 
\noi
for $t, x \in \R$ and $\s > 0$.
When $\s = 1$, we set $H_k(x) = H_k(x; 1)$.
Then, we have
\begin{align}
 H_k(x ; \s) = \s^{\frac{k}{2}}H_k(\s^{-\frac{1}{2}}x ).
\label{H1a}
 \end{align}

\noi
It is well known that $\big\{ H_k /\sqrt{k!}\big\}_{k \in \Z_{\ge 0}}$
form an orthonormal basis of $L^2(\R; \frac{1}{\sqrt{2\pi}} e^{-x^2/2}dx)$.
The following identity:
\begin{align*}
 H_k(x+y) 
& = \sum_{\l = 0}^k
\begin{pmatrix}
k \\ \l
\end{pmatrix}
x^{k - \l} H_\l(y), 
\end{align*}

\noi
which, together with \eqref{H1a}, yields
\begin{align}
\begin{split}
H_k(x+y; \s )
&  = \s^\frac{k}{2}\sum_{\l = 0}^k
\begin{pmatrix}
k \\ \l
\end{pmatrix}
\s^{-\frac{k-\l}{2}} x^{k - \l} H_\l(\s^{-\frac{1}{2}} y)\\
&  = 
\sum_{\l = 0}^k
\begin{pmatrix}
k \\ \l
\end{pmatrix}
 x^{k - \l} H_\l(y; \s).
 \end{split}
\label{Herm3}
\end{align}

Let $(H, B, \mu)$ be an abstract Wiener space.
Namely, $\mu$ is a Gaussian measure on a separable Banach space $B$
with $H \subset B$ as its Cameron-Martin space.
Given  a complete orthonormal system $\{e_j \}_{ j \in \N} \subset B^*$ of $H^* = H$, 
we  define a polynomial chaos of order
$k$ to be an element of the form $\prod_{j = 1}^\infty H_{k_j}(\jb{x, e_j})$, 
where $x \in B$, $k_j \ne 0$ for only finitely many $j$'s, $k= \sum_{j = 1}^\infty k_j$, 
$H_{k_j}$ is the Hermite polynomial of degree $k_j$, 
and $\jb{\cdot, \cdot} = \vphantom{|}_B \jb{\cdot, \cdot}_{B^*}$ denotes the $B$--$B^*$ duality pairing.
We then 
denote the closure  of 
polynomial chaoses of order $k$ 
under $L^2(B, \mu)$ by $\mathcal{H}_k$.
The elements in $\H_k$ 
are called homogeneous Wiener chaoses of order $k$.
Then, we have the following Ito-Wiener decomposition:
\begin{equation*}
L^2(B, \mu) = \bigoplus_{k = 0}^\infty \mathcal{H}_k.
\end{equation*}

\noi
See Theorem 1.1.1 in \cite{Nu}.
See also \cite{J, Bog}.
We also set
\[ \H_{\leq k} = \bigoplus_{j = 0}^k \H_j\]

\noi
 for $k \in \N$.

We now state the Wiener chaos estimate, 
which is a consequence of Nelson's hypercontractivity \cite{Nelson2}.
See, for example,   \cite[Theorem I.22]{Simon}.
See also \cite[Proposition 2.4]{TTz}.

\begin{lemma}\label{LEM:hyp}
Let $k \in \Z_{\ge 0}$.
Then, we have 
 \begin{equation*} 
 \Big( \E \big[|X|^p \big]\Big)^{\frac 1p} \le (p-1)^\frac{k}{2} \Big( \E\big[|X|^2\big] \Big)^{\frac 12}
 \end{equation*}

\noi
for any random variable $X \in \mathcal{H}_k$ and  any $2 \leq p < \infty$.

\end{lemma}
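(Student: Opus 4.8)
The plan is to deduce the estimate from Nelson's hypercontractivity inequality for the Ornstein--Uhlenbeck (Mehler) semigroup on the abstract Wiener space $(H, B, \mu)$. Recall that the OU semigroup $\{T_\rho\}_{0 \le \rho \le 1}$ admits the Mehler kernel representation $T_\rho f(x) = \int_B f\big(\rho x + \sqrt{1-\rho^2}\, y\big)\, d\mu(y)$ and is characterized on the Ito--Wiener decomposition $L^2(B,\mu) = \bigoplus_{j\ge 0} \H_j$ (the second quantization $\Gamma(\rho\, \Id_H)$) by $T_\rho|_{\H_j} = \rho^j\, \Id$; indeed, on a polynomial chaos $\prod_{j} H_{k_j}(\jb{x, e_j})$ of order $k = \sum_j k_j$ this is immediate from $T_\rho H_m(\jb{\cdot, e}) = \rho^m H_m(\jb{\cdot, e})$ and the multiplicativity of $T_\rho$ over an orthonormal family, and it extends to all of $\H_k$ by the $L^2$-density of polynomial chaoses in $\H_k$ together with the $L^2$-contractivity of $T_\rho$. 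Then I would record Nelson's theorem in the form: for $1 < p \le q < \infty$ and $0 \le \rho \le 1$ with $\rho^2 \le \frac{p-1}{q-1}$, one has $\|T_\rho f\|_{L^q(B,\mu)} \le \|f\|_{L^p(B,\mu)}$; see \cite{Nelson2, Simon, TTz}.

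Given $X \in \H_k$ and $2 \le p < \infty$, apply this with base exponent $2$, target exponent $p$, and the admissible value $\rho = (p-1)^{-1/2} \in (0,1]$, for which $\rho^2 = \frac{1}{p-1} = \frac{2-1}{p-1}$. Since $X \in \H_k$, we have $T_\rho X = \rho^k X$ $\mu$-a.e., hence $X = \rho^{-k} T_\rho X$, and Nelson's inequality (applied to $f = X \in L^2 \subset L^p$ on the left as $T_\rho X$) yields
\[
\| X \|_{L^p(B,\mu)} = \rho^{-k} \| T_\rho X \|_{L^p(B,\mu)} \le \rho^{-k} \| X \|_{L^2(B,\mu)} = (p-1)^{\frac k2} \| X \|_{L^2(B,\mu)},
\]
which is exactly the claimed bound (and in particular shows a priori that $X \in L^p$). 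The case $k = 0$ (where $\H_0$ consists of constants) and the case $p = 2$ both reduce to the trivial equality $\|X\|_{L^2} = \|X\|_{L^2}$.

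If one wishes a self-contained argument rather than citing Nelson's theorem, the standard route is: (a) prove the two-point hypercontractive inequality on $(\{-1,1\}, \tfrac12\delta_{-1} + \tfrac12\delta_{1})$ for the symmetric Markov kernel interpolating between the identity and the averaging projection, which reduces to testing on $g = 1 + \eps t$ and a short one-variable calculus estimate; (b) tensorize to $\{-1,1\}^n$ using the multiplicativity of the $L^p \to L^q$ operator norm under tensor products of kernels; (c) pass to the Gaussian measure via the central limit theorem applied to $n^{-1/2}\sum_{i\le n}\eps_i$, identifying the limit of the discrete kernels with the Mehler kernel; (d) lift to the infinite-dimensional Wiener space by a martingale convergence argument over the finite-dimensional $\sigma$-algebras generated by $\{e_1,\dots,e_n\}$ and density. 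The only genuinely computational step is the two-point inequality in (a); everything else is soft functional-analytic or probabilistic limiting. Since this classical inequality is precisely what is collected in \cite[Theorem I.22]{Simon} (see also \cite{Kuo, Nu}), I would simply invoke it and present only the two-line deduction above.
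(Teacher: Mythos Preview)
Your proposal is correct and follows exactly the approach indicated in the paper: the lemma is stated without proof there, merely attributed to Nelson's hypercontractivity with references to \cite{Nelson2}, \cite[Theorem I.22]{Simon}, and \cite[Proposition 2.4]{TTz}. Your deduction via the Mehler semigroup eigenvalue identity $T_\rho|_{\H_k} = \rho^k\,\Id$ with $\rho = (p-1)^{-1/2}$ is the standard two-line argument behind those citations, so there is nothing to add.
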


Next, we recall the following orthogonal property of Wick powers;
see \cite[Lemma 1.1.1]{Nu}.

\begin{lemma}\label{LEM:W1}

Let $Y_1, Y_2$ be two real-valued, mean-zero, and jointly Gaussian random variables with variances
$\s_1 = \E[ Y_1^2] > 0$ and $\s_2 = \E[ Y_2^2] > 0$.
Then, for $k, m \in \N \cup\{0\}$, we have 
\begin{align*}
\E\big[ H_k(Y_1 ; \s_1) H_m(Y_2;  \s_2)\big]
=  \ind_{k=m} \cdot k!   \big(\E[ Y_1 Y_2] \big)^k.
\end{align*}

\end{lemma}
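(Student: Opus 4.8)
The plan is to read off the identity from the exponential generating function of the Hermite polynomials recorded just above the statement, namely $e^{tx - \frac12 \s t^2} = \sum_{k \ge 0} \frac{t^k}{k!} H_k(x;\s)$. First I would fix real parameters $s, t$, substitute $(x,\s) = (Y_1, \s_1)$ with parameter $s$ and $(x,\s) = (Y_2,\s_2)$ with parameter $t$, and multiply the two generating functions to get
\[
e^{sY_1 + tY_2 - \frac12 \s_1 s^2 - \frac12 \s_2 t^2}
= \sum_{k, m \ge 0} \frac{s^k t^m}{k!\, m!}\, H_k(Y_1;\s_1)\, H_m(Y_2;\s_2).
\]
Taking expectations, the left-hand side is computed explicitly: $sY_1 + tY_2$ is a mean-zero Gaussian with variance $s^2 \s_1 + 2st\,\E[Y_1Y_2] + t^2\s_2$, so the Gaussian moment generating formula gives $\E\big[e^{sY_1+tY_2}\big] = \exp\big(\tfrac12 s^2\s_1 + st\,\E[Y_1Y_2] + \tfrac12 t^2 \s_2\big)$, whence
\[
\E\Big[ e^{sY_1 + tY_2 - \frac12 \s_1 s^2 - \frac12 \s_2 t^2} \Big]
= e^{st\,\E[Y_1Y_2]}
= \sum_{n \ge 0} \frac{(st)^n}{n!}\big(\E[Y_1Y_2]\big)^n.
\]

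Next I would interchange expectation and summation on the right-hand side of the first display and match the coefficients of $s^k t^m$ against the second display. Since the latter contains only diagonal powers $s^n t^n$, this forces $\E\big[H_k(Y_1;\s_1) H_m(Y_2;\s_2)\big] = 0$ whenever $k \ne m$, while the diagonal terms yield $\frac{1}{(n!)^2}\E\big[H_n(Y_1;\s_1) H_n(Y_2;\s_2)\big] = \frac{1}{n!}\big(\E[Y_1Y_2]\big)^n$, i.e.\ the claimed formula with $k = m = n$. To justify the termwise expectation I would note from \eqref{H1a} that $|H_k(x;\s)| \le C(k,\s) \jb{x}^k$, so that $\sum_{k,m} \frac{|s|^k |t|^m}{k!\, m!}\,\E\big[|H_k(Y_1;\s_1)|\, |H_m(Y_2;\s_2)|\big]$ is dominated, via Cauchy--Schwarz and the finiteness of all Gaussian moments, by a convergent series for $|s|, |t|$ small; Fubini's theorem then legitimizes the interchange, and small $s, t$ suffices since the conclusion is an identity of power-series coefficients.

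I do not expect a genuine obstacle: the single mildly technical point is the justification of the term-by-term expectation, which is routine given the Gaussian tails of $Y_1, Y_2$ and the polynomial growth of the Hermite polynomials. If one prefers to avoid the generating-function manipulation entirely, an alternative is to invoke the known orthogonality relation $\E[H_k(Z) H_m(Z)] = \ind_{k=m}\, k!$ for a standard Gaussian $Z$, combined with the scaling relation \eqref{H1a} and a decomposition of the jointly Gaussian pair $(Y_1, Y_2)$ into a common multiple of a single standard Gaussian plus an independent remainder; but the generating-function computation above is cleaner and is the version I would present.
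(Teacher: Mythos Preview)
Your generating-function argument is correct and complete; the justification of the termwise expectation via the polynomial bound on $H_k$ and Gaussian integrability is exactly what is needed. The paper itself does not give a proof of this lemma but simply cites \cite[Lemma~1.1.1]{Nu}; the argument you wrote is essentially the one found there, so there is no real divergence in approach to compare.
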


The following lemma allows us to compute 
the regularity of a stochastic term
by testing it against a test function.
See \cite[Proposition A.3.3]{Tolo1}
for the proof.

\begin{lemma}\label{LEM:Kol}

Let $X \in \H_{\le k}$ for some $k \in \N$.\footnote{This means that 
$\jb{X, \varphi} \in \H_{\le k}$
for any test function $\varphi$.}
Let $R \ge 1$.
Suppose 
that  there exist $\s \in \R$ and $2\le q < \infty$
such that 
\[
\E\Big[ \big|  \jb{X, \varphi} \big|^2 \Big] \leq A^2 \| \varphi\|_{W^{\s, q'}}^2
\]

\noi
for any test function $\varphi\in C^\infty_c$ supported on a ball $B$ of radius 1
with $B \subset B_{2R}$, 
where $\frac{1}{q'}+ \frac{1}{q} =1$. 
Then, given any small $\eps >0$ and  any finite $p\geq \frac{4}{\eps}$, we have 
\[
\E\Big[ \| X\|_{W^{-\frac{2}{q} -\sigma - \eps,\infty   } (B_R)  }^p \Big] \les_{\s, q, R }  A^p.
\]
\end{lemma}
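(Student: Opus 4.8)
The plan is to reduce the weighted $W^{-\frac2q - \s - \eps, \infty}(B_R)$-bound to a countable family of pointwise-in-frequency estimates on Littlewood--Paley pieces, dyadically localized in physical space, and then to sum using the hypercontractivity afforded by $X \in \H_{\le k}$. First I would fix a smooth partition of unity $\{\psi_\ell\}$ on $B_{2R}$ by bump functions supported on balls of radius $\tfrac12$ (so each $\psi_\ell$ is an admissible test function after rescaling), and note that on $B_R$ we have $X = \sum_\ell \psi_\ell X$ with $O(R^2)$ many terms. For each fixed $\ell$ and each dyadic frequency $N = 2^m$, the piece $\mathbf{P}_N(\psi_\ell X)$ (Littlewood--Paley projection) is, at each point $x$, of the form $\jb{X, \varphi_{x, N, \ell}}$, where $\varphi_{x, N, \ell}(y) = \psi_\ell(y)\,\check\chi_N(x - y)$ is (up to a harmless constant and after normalization) supported in a ball of radius $1$ inside $B_{2R}$, provided $x$ ranges over the support of $\psi_\ell$. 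Applying the hypothesis gives
\[
\E\Big[\big|\mathbf{P}_N(\psi_\ell X)(x)\big|^2\Big] \le A^2 \,\|\varphi_{x, N, \ell}\|_{W^{\s, q'}}^2 \lesssim A^2 \, N^{2\s + \frac{2}{q'} \cdot 2}
\]
after computing $\|\check\chi_N\|_{W^{\s, q'}} \sim N^{\s + 2/q'}$ from Bernstein's inequality on $\R^2$ (using $\psi_\ell \in C^\infty_c$ and the Leibniz rule to move derivatives). Since $\mathbf{P}_N(\psi_\ell X) \in \H_{\le k}$, Lemma~\ref{LEM:hyp} upgrades this to
\[
\E\Big[\big|\mathbf{P}_N(\psi_\ell X)(x)\big|^p\Big] \lesssim_{k} (p-1)^{\frac{kp}{2}} A^p \, N^{p(2\s + 4/q')}.
\]

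The key step is then a Besov-embedding / Kolmogorov-type summation: I would estimate $\|\psi_\ell X\|_{W^{-\frac2q - \s - \eps, \infty}}$ (equivalently, up to $\eps$-losses, $\|\psi_\ell X\|_{B^{-\frac2q - \s - \eps/2}_{\infty, \infty}}$, using \eqref{besov2}) by $\sup_N N^{-\frac2q - \s - \eps/2}\|\mathbf{P}_N(\psi_\ell X)\|_{L^\infty_x}$. Bounding the $L^\infty_x$-norm over the radius-$O(R)$ support by an $L^p_x$-norm and integrating the pointwise moment bound over $x$ (this costs a factor $\lesssim_R 1$), I get
\[
\E\Big[\|\mathbf{P}_N(\psi_\ell X)\|_{L^p_x}^p\Big] \lesssim_{k, R} (p-1)^{\frac{kp}{2}} A^p \, N^{p(2\s + 4/q')}.
\]
Now I would sum over the dyadic blocks: choosing $p \ge 4/\eps$ makes $N^{-\eps p/2 \cdot \frac12} \le N^{-2}$, so $\sum_N N^{(-\frac2q - \s - \eps/2 + 2\s + 4/q')p} = \sum_N N^{(\s + \eps/2 \cdot(-1) + \dots)p}$ — here I must check the arithmetic $-\tfrac2q - \s - \tfrac\eps2 + 2\s + \tfrac{4}{q'} = \s + \tfrac4{q'} - \tfrac2q - \tfrac\eps2$; recalling $\tfrac1{q'} = 1 - \tfrac1q$ this is $\s + 4 - \tfrac6q - \tfrac\eps2$, which is \emph{not} obviously negative, so the exponent bookkeeping needs to be done carefully, possibly by choosing the intermediate Besov index differently (e.g. testing $\mathbf{P}_N$ against $L^1$-normalized rather than $L^\infty$-normalized kernels, which changes $N^{2/q'}$ to $N^{2/q' - 2}= N^{-2/q}$) — this is exactly the point where the claimed exponent $-\tfrac2q - \s - \eps$ is pinned down. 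Once the summable geometric series is in place, $\sum_N \big(\E[\|\mathbf{P}_N(\psi_\ell X)\|_{L^p_x}^p]\big)^{1/p} \lesssim_{k, R, \eps} A$, and then by Minkowski and the $\ell^\infty \subset \ell^p$ embedding one concludes $\E[\|\psi_\ell X\|_{W^{-\frac2q - \s - \eps, \infty}(B_R)}^p] \lesssim_{\s, q, R} A^p$; summing the $O(R^2)$ contributions from $\ell$ finishes the proof.

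The main obstacle I anticipate is precisely the frequency-exponent accounting in the last step: one has to identify the correct $L^{q'}$-Bernstein cost of the localized Littlewood--Paley kernel and match it against the target regularity $-\tfrac2q - \s - \eps$, making sure the $\eps$-room (and the requirement $p \ge 4/\eps$) genuinely produces a convergent dyadic sum; a secondary technical point is ensuring the test functions $\varphi_{x, N, \ell}$ remain supported in a \emph{single} radius-$1$ ball contained in $B_{2R}$ for all relevant $x$ and $N$, which is what forces the partition of unity into radius-$\tfrac12$ pieces and the mild enlargement from $B_R$ to $B_{2R}$ in the hypothesis. Everything else — Bernstein, the Leibniz rule for $\psi_\ell$, hypercontractivity, and the Besov--Sobolev embedding \eqref{besov2} — is routine given the tools already assembled in the excerpt.
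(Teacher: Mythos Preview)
The paper does not prove this lemma; it only cites \cite[Proposition A.3.3]{Tolo1}. Your overall strategy---localize in physical space with a partition of unity by radius-$\tfrac12$ bumps, test the Littlewood--Paley pieces against the hypothesis, upgrade second moments to $p$th moments via Lemma~\ref{LEM:hyp}, then sum over dyadic frequencies---is the standard one, and almost certainly coincides with the argument in the cited reference.

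Your only real difficulty is self-inflicted: the kernel norm is computed incorrectly. In dimension $2$ the standard ($L^1$-normalized) Littlewood--Paley kernel $\eta_N(x) = N^2 \eta(Nx)$ satisfies
\[
\|\eta_N\|_{L^{q'}} = N^{2(1 - 1/q')} = N^{2/q},
\qquad
\|\eta_N\|_{W^{\s, q'}} \sim N^{\s + 2/q},
\]
not $N^{\s + 2/q'}$. With the correct exponent the pointwise second moment of $\mathbf P_N(\psi_\ell X)(x)$ is $\lesssim A^2 N^{2(\s + 2/q)}$, and after the weight $N^{-2/q - \s - \eps}$ one is left with $N^{-\eps}$ before any $L^\infty_x$ cost. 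There is no need for an alternative normalization; the target exponent $-\tfrac{2}{q} - \s - \eps$ drops out immediately.

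One further point: the passage from pointwise moments to the $L^\infty_x$-norm is not free. You need Bernstein's inequality $\|\mathbf P_N g\|_{L^\infty} \lesssim N^{2/p}\|\mathbf P_N g\|_{L^p}$ (equivalently, the embedding $B^{s+2/p}_{p,\infty} \hookrightarrow B^{s}_{\infty,\infty}$), which costs $N^{2/p}$. Summing over dyadic $N$ then requires $-\eps p + 2 < 0$, i.e.\ $p > 2/\eps$; allotting a further $\eps/2$ for the embedding $B^{s+\eps/2}_{\infty,\infty}\hookrightarrow B^{s}_{\infty,1}\hookrightarrow W^{s,\infty}$ explains the stated threshold $p \ge 4/\eps$. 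Once you correct the $q'\mapsto q$ slip and insert the Bernstein factor explicitly, your sketch goes through without further adjustment.
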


\medskip

\noi
$\bul$ {\bf Wasserstein-1 metric and weak convergence. }

Let $(X, d)$ be  a Polish space (separable complete metric  space).
Then, the Wasserstein-1 metric 
for two probability measures $\mu,  \nu$
on $X$ is defined by 
\begin{align}
d_{\rm Wass}(\mu, \nu) 
= \inf_{\plan\in\Pi(\mu, \nu)} 
\int_{X \times X} d (x,y)  d\plan (x, y).
\label{Wass}
\end{align}

\noi
Here,  $\Pi(\mu, \nu)$ is the set of probability measures $\plan$
on $X \times X$
whose first and second marginals are given by $\mu$ and $\nu$, 
namely, 
\begin{align}
 \int_{y \in X}  d\plan (x, y) = d\mu(x) \qquad \text{and}
\qquad 
 \int_{x \in X}  d\plan (x, y) = d\nu(y).
\label{Wass2}
\end{align}

\noi
The Wasserstein-1 metric is also known as  the 
Kantorovich-Rubinstein distance;
see  the 
Kantorovich-Rubinstein theorem (\cite[Theorem 1.14]{Villani})
which provides the dual characterization of \eqref{Wass}.
We point out that the infimum in \eqref{Wass}
is indeed attained; see \cite[Theorem~1.3]{Villani}.

In general, convergence in the Wasserstein-1 metric 
is stronger than weak convergence.
We recall the following characterization of convergence
in the Wasserstein-1 metric; 
see \cite[Theorem 7.12]{Villani}.

\begin{lemma}\label{LEM:wass}
Let $(X, d)$ be a Polish space. 
Given a sequence of probability measures 
$\{\mu_n\}_{n \in \N}$ on $X$ and a  probability measure $\mu$ on $X$, 
 the sequence $\{\mu_n\}_{n \in \N}$
 converges to $\mu$ in the Wasserstein-1 metric as $n \to \infty$
 if and only if

\begin{itemize}
\item[(i)]    $\mu_n$ converges weakly to $\mu$ as $n \to \infty$, and 

\smallskip

\item[(ii)]
for some 
\textup{(}and thus for any\textup{)} $x_0\in X$, we have 
\begin{align}
\lim_{R\to \infty} \limsup_{n\to \infty}
 \int_{d(x, x_0) \ge R} d(x, x_0) d\mu_n(x) = 0.
\label{C22}
\end{align}


\end{itemize}

\end{lemma}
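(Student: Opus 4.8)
The plan is to deduce the equivalence from the Kantorovich--Rubinstein duality
\[
d_{\rm Wass}(\mu, \nu) = \sup\Big\{ \textstyle\int_X f\, d\mu - \int_X f\, d\nu : \ \mathrm{Lip}(f) \le 1\Big\}
\]
(see \cite[Theorem 1.14]{Villani}) together with one elementary coupling inequality. Throughout I would work under the harmless assumption that $\mu$ and all the $\mu_n$ lie in the Wasserstein space $P_1(X)$ (finite first moment about $x_0$), so that $d_{\rm Wass}$ is finite and the duality applies; if (ii) holds with a finite limit this is automatic. First I would record that, since $x \mapsto d(x, x_0)$ is $1$-Lipschitz, the triangle inequality applied to an arbitrary coupling $\plan \in \Pi(\mu_n, \mu)$ gives $\int_X d(y, x_0)\, d\mu(y) \le \int_{X \times X} d(x, y)\, d\plan(x, y) + \int_X d(x, x_0)\, d\mu_n(x)$; taking the infimum over $\plan$ and using the symmetry of $d_{\rm Wass}$ yields
\[
\Big| \int_X d(x, x_0)\, d\mu_n(x) - \int_X d(x, x_0)\, d\mu(x) \Big| \le d_{\rm Wass}(\mu_n, \mu).
\]

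For the forward implication, assume $d_{\rm Wass}(\mu_n, \mu) \to 0$. Then (ii) is immediate from the displayed bound, and (i) follows because for every bounded Lipschitz $f$ the duality gives $|\int_X f\, d\mu_n - \int_X f\, d\mu| \le \mathrm{Lip}(f)\, d_{\rm Wass}(\mu_n, \mu) \to 0$, and bounded Lipschitz functions are convergence-determining on a Polish space.

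The substance is the converse. Assuming (i) and (ii), the key step I would carry out is to upgrade them to uniform integrability of $\{d(\cdot, x_0)\}$ along $\{\mu_n\}$, namely $\lim_{R \to \infty} \sup_n \int_X d(x, x_0)\, \ind_{\{d(x, x_0) > R\}}\, d\mu_n(x) = 0$. This I would extract from the pointwise bound $d(x, x_0)\,\ind_{\{d(x, x_0) > 2R\}} \le 2\big(d(x, x_0) - d(x, x_0) \wedge R\big)$: applying (i) to the bounded continuous function $d(\cdot, x_0) \wedge R$ and (ii) to $d(\cdot, x_0)$, then letting $n \to \infty$ and afterward $R \to \infty$ (dominated convergence under $\mu$, since $d(\cdot, x_0) \in L^1(\mu)$), forces the supremum over large $n$ to vanish; the finitely many small-$n$ terms are handled by finiteness of their first moments.

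Finally, to conclude $d_{\rm Wass}(\mu_n, \mu) \to 0$ from (i) together with this uniform integrability, I would fix $\eps > 0$, pick $R$ with $\mu(\partial B(x_0, R)) = 0$ large enough that the first-moment contribution of $\{d(\cdot, x_0) > R\}$ is $< \eps$ for $\mu$ and uniformly for all $\mu_n$, couple the restrictions of $\mu_n$ and $\mu$ to $B(x_0, R)$ using that weak convergence implies Wasserstein convergence on a bounded set (the cost $d$ being bounded by $2R$ there), couple the remaining tail masses arbitrarily at a cost $O(\eps)$ by the uniform integrability, and glue the two couplings; letting $\eps \to 0$ finishes. Alternatively, uniform integrability together with tightness of $\{\mu_n\}$ (Prokhorov, from (i)) makes $\{\mu_n\}$ relatively compact in $(P_1(X), d_{\rm Wass})$, and any $d_{\rm Wass}$-limit point is a weak limit point, hence equals $\mu$ by (i), so the whole sequence converges. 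I expect this last step — converting weak convergence plus uniform integrability into genuine control of the transport cost, whether via the explicit core/tail coupling or via the compactness argument — to be the only delicate point, the rest being bookkeeping around the duality formula and the coupling triangle inequality.
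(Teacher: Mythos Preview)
Your argument is correct and is essentially the standard proof; note, however, that the paper does not supply its own proof of this lemma but simply cites \cite[Theorem~7.12]{Villani}, whose proof proceeds along exactly the lines you outline (forward direction via the Kantorovich--Rubinstein duality and the $1$-Lipschitz function $d(\cdot,x_0)$; converse via the uniform-integrability upgrade $\int d(\cdot,x_0)\,\ind_{\{d>R\}}\,d\mu_n \to 0$ and then relative compactness in $P_1(X)$). There is nothing to correct.
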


\begin{remark}\label{REM:wass}\rm

When the metric $d$ on $X$ is bounded, 
the condition \eqref{C22} trivially holds, 
and thus 
convergence in the Wasserstein-1 metric coincides
with weak convergence.
See also Remark 7.13\,(iii) in \cite{Villani}.

\end{remark}

Lastly, 
we recall the Prokhorov theorem
and the Skorokhod representation theorem.
We first recall the definition of tightness.

\begin{definition}\label{DEF:tight}\rm
Let $\J$ be a nonempty index set.
A family 
 $\{ \mu_n \}_{n \in \J}$ of probability measures
on a metric space $\M$ is said to be   tight
if, for every $\eps > 0$, there exists a compact set $K_\eps\subset \M$
such that $ \sup_{n\in \J}\mu_n(K_\eps^c) \leq \eps$. 
We say  that $\{ \mu_n \}_{n \in \J}$ is relatively compact, if every sequence 
in  $\{ \mu_n \}_{n \in \J}$ contains a weakly convergent subsequence. 

\end{definition}

%
We now 
recall the following 
Prokhorov theorem from \cite{Bill99}.

\begin{lemma}[Prokhorov theorem]
\label{LEM:Pro}

If a sequence of probability measures 
on a metric space $\M$ is tight, then
it is relatively compact. 
If in addition, $\M$ is separable and complete, then relative compactness is 
equivalent to tightness.

\end{lemma}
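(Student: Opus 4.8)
The plan is to establish the two implications separately. The first — tightness implies relative compactness — holds for an arbitrary metric space $\M$, whereas the converse uses separability and completeness in an essential way; the desired equivalence for Polish $\M$ is then their conjunction.

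\textbf{Tightness $\Rightarrow$ relative compactness.} Given a tight sequence $\{\mu_n\}_{n\in\N}$, fix increasing compact sets $K_1 \subseteq K_2 \subseteq \cdots \subseteq \M$ with $\sup_n \mu_n(\M \setminus K_j) \le \tfrac1j$. Since each $\mu_n$ assigns no mass outside $\bigcup_j K_j$, we may replace $\M$ by the closure of $\bigcup_j K_j$, which is a separable metric space, noting that weak convergence there implies weak convergence on the original $\M$. By the Urysohn metrization theorem, embed this separable space homeomorphically into the Hilbert cube $Q = [0,1]^\N$, which is compact, and view each $\mu_n$ as a Borel probability measure on $Q$. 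Since $C(Q)$ is separable and, by the Riesz representation theorem, the probability measures on $Q$ form a weak-$*$ closed subset of the unit ball of $C(Q)^*$, the Banach--Alaoglu theorem together with a diagonal extraction produces a subsequence $\mu_{n_k}$ converging weakly on $Q$ to a Borel probability measure $\mu$ on $Q$. Tightness then forces $\mu$ to live on $\M$: the portmanteau theorem applied to the closed sets $K_j \subseteq Q$ gives $\mu(K_j) \ge \limsup_k \mu_{n_k}(K_j) \ge 1 - \tfrac1j$, so $\mu(\bigcup_j K_j) = 1$. Hence $\mu$ is a probability measure on $\M$ and $\mu_{n_k} \to \mu$ weakly, which is precisely relative compactness.

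\textbf{Relative compactness $\Rightarrow$ tightness (for $\M$ separable and complete).} Argue by contradiction: suppose there is $\eps_0 > 0$ such that every compact $K \subset \M$ satisfies $\sup_n \mu_n(\M\setminus K) > \eps_0$. Fix $k \in \N$ and, using separability, write $\M = \bigcup_{i\ge1} B(x_i, \tfrac1k)$. I claim there is $N_k$ with $\inf_n \mu_n\big(\bigcup_{i\le N_k} B(x_i,\tfrac1k)\big) > 1 - \eps_0 2^{-k-1}$. If not, choose for each $m$ a measure $\mu_{n_m}$ with $\mu_{n_m}\big(\bigcup_{i\le m} B(x_i,\tfrac1k)\big) \le 1 - \eps_0 2^{-k-1}$; by relative compactness some subsequence, still denoted $\mu_{n_m}$, converges weakly to a probability measure $\nu$, and the portmanteau theorem applied to the open set $U_{m_0} = \bigcup_{i\le m_0} B(x_i,\tfrac1k)$ gives $\nu(U_{m_0}) \le \liminf_m \mu_{n_m}(U_{m_0}) \le 1 - \eps_0 2^{-k-1}$ for each fixed $m_0$ (since $\mu_{n_m}(U_{m_0}) \le 1-\eps_0 2^{-k-1}$ for all $m \ge m_0$); letting $m_0 \to \infty$ contradicts $\nu(\M) = 1$. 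With the $N_k$ in hand, set
\[ A = \bigcap_{k\ge1} \bigcup_{i\le N_k} \overline{B(x_i, \tfrac1k)}. \]
Then $A$ is closed and totally bounded, hence compact by completeness, while
\[ \mu_n(\M\setminus A) \le \sum_{k\ge1}\Big(1 - \mu_n\big(\textstyle\bigcup_{i\le N_k}B(x_i,\tfrac1k)\big)\Big) < \sum_{k\ge1}\eps_0 2^{-k-1} = \frac{\eps_0}{2} \]
for every $n$, contradicting the choice of $\eps_0$. Hence $\{\mu_n\}$ is tight.

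Combining the two parts yields tightness $\Leftrightarrow$ relative compactness when $\M$ is separable and complete. The main obstacle is the converse implication: the delicate point is the contradiction argument, where relative compactness of the whole family is used to pass to a weak limit of a carefully chosen subsequence and the portmanteau inequality for open sets is exploited, followed by the (standard but crucial) observation that a closed, totally bounded subset of a complete metric space is compact. The first implication, by contrast, is essentially soft functional analysis: reduction to a separable subspace, compactification via the Hilbert cube, and weak-$*$ compactness of the space of probability measures.
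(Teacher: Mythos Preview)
Your proof is correct and follows the standard argument (essentially Billingsley's). Note, however, that the paper does not actually prove this lemma: it is stated without proof and simply attributed to \cite{Bill99}, so there is no ``paper's proof'' to compare against. Your write-up supplies the classical details that the paper omits by citation.
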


Lastly, we recall  the following Skorokhod representation 
theorem from   \cite[Chapter 31]{Bass}.

\begin{lemma}[Skorokhod representation theorem]\label{LEM:Sk}

Let $\M$ be a complete  
separable metric space \textup{(}i.e.~a Polish space\textup{)}.
Suppose that    
a sequence 
 $\{\mu_n\}_{n\in\N}$
 of  probability measures 
 on $\M$ converges  weakly   to a probability measure $\mu$
as $n \to\infty$.
Then, there exist a probability space $(\wt \O, \wt \F, \wt\PP)$,
and random variables $X_n, X:\wt \O \to \M$ 
such that 
\begin{align*}
\Law( X_n) = \mu_n
\qquad \text{and}\qquad
\Law(X) = \mu ,
\end{align*}

\noi
and $X_n$ converges $\wt\PP$-almost surely to $X$ as $n\to\infty$.

\end{lemma}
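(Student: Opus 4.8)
The plan is to realize all of the random variables on the single probability space $\bigl([0,1),\mathcal B,\lambda\bigr)$ ($\lambda$ = Lebesgue measure) by a common ``coding map'' built from a nested family of small Borel cells adapted to the limit law $\mu$. When $\M=\R$ this is the classical quantile construction: writing $F_n,F$ for the distribution functions and $U(\omega)=\omega$, one puts $X_n:=F_n^{-1}(U)$ and $X:=F^{-1}(U)$, so that $\L(X_n)=\mu_n$, $\L(X)=\mu$, and since $F_n\to F$ at every continuity point of $F$ one has $F_n^{-1}(u)\to F^{-1}(u)$ for all but countably many $u$, hence $X_n\to X$ $\lambda$-a.e. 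For general Polish $\M$ I would replace the ordered intervals $\bigl(F(x-),F(x)\bigr)$ by nested Borel cells.

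First I would fix, for each $m\in\N$, a countable Borel partition $\{B^m_i\}_{i\in\N}$ of $\M$ with $\operatorname{diam}B^m_i<2^{-m}$ and $\mu(\partial B^m_i)=0$ for all $i$, chosen so that the level-$(m{+}1)$ partition refines the level-$m$ one; such partitions come from a countable dense subset of $\M$ by intersecting and subtracting balls whose radii avoid the countably many values that charge the corresponding spheres under $\mu$. Given any Borel probability measure $\rho$ on $\M$, I would define the coding map $\Psi_\rho\colon[0,1)\to\M$ by partitioning $[0,1)$ into consecutive half-open intervals of lengths $\rho(B^m_i)$, nested from each level to the next consistently with the refinement of the partitions; then $\lambda$-a.e.\ $\omega$ lies in a decreasing chain of intervals corresponding to cells $B^1_{i_1(\omega)}\supset B^2_{i_2(\omega)}\supset\cdots$ of positive $\rho$-mass, and $\Psi_\rho(\omega)$ is the unique point of $\bigcap_m\overline{B^m_{i_m(\omega)}}$ (nonempty by completeness of $\M$, a single point since the diameters tend to $0$). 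An elementary computation with continuity from below shows $\L(\Psi_\rho)(U)=\rho(U)$ for every open $U\subset\M$ --- use that $\{\Psi_\rho\in U\}=\bigcup_m\bigcup\{\,I^{m,\rho}_i:\overline{B^m_i}\subset U\,\}$ and that these interval unions increase in $m$ with total $\rho$-mass $\rho(\bigcup\{B^m_i:\overline{B^m_i}\subset U\})\uparrow\rho(U)$ --- whence $\L(\Psi_\rho)=\rho$. I would then set $X:=\Psi_\mu$ and $X_n:=\Psi_{\mu_n}$, so that $\L(X)=\mu$ and $\L(X_n)=\mu_n$.

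It remains to prove $X_n\to X$ $\lambda$-a.e. Since each $B^m_i$ is a $\mu$-continuity set, the portmanteau theorem gives $\mu_n(B^m_i)\to\mu(B^m_i)$ as $n\to\infty$ for every fixed $m,i$; hence for each fixed $m$ the finite partial sums $\sum_{l\le i}\mu_n(B^m_l)$ converge to $\sum_{l\le i}\mu(B^m_l)$, i.e., the endpoints of the level-$m$ intervals defining $\Psi_{\mu_n}$ converge to those defining $\Psi_\mu$. Remove from $[0,1)$ the (countably many, hence $\lambda$-null) endpoints of the level-$m$ intervals of $\mu$ over all $m$, together with the $\lambda$-null sets on which $\Psi_\mu$ or some $\Psi_{\mu_n}$ is undefined, and fix $\omega$ in the complement. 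For each $m$, since $\omega$ lies strictly inside its level-$m$ interval for $\mu$ and the endpoints for $\mu_n$ converge, for all $n$ large $\omega$ lies in the level-$m$ interval for $\mu_n$ carrying the \emph{same} index; then $\Psi_{\mu_n}(\omega)$ and $\Psi_\mu(\omega)$ both lie in the closure of that common level-$m$ cell, so $d\bigl(\Psi_{\mu_n}(\omega),\Psi_\mu(\omega)\bigr)<2^{-m}$. As $m$ is arbitrary, $\Psi_{\mu_n}(\omega)\to\Psi_\mu(\omega)$, i.e., $X_n(\omega)\to X(\omega)$, which proves the lemma.

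The conceptual point --- and the reason no diagonal argument over $n$ is needed --- is to code by the \emph{limit} measure's partition skeleton: weak convergence then forces the finitely many cumulative sums at each level to converge, which makes the cell assignment at each fixed level eventually stabilize in $n$. The only part demanding genuine care is the setup: choosing the partitions so that every cell is a $\mu$-continuity set (so that the portmanteau theorem applies) and verifying $\L(\Psi_\rho)=\rho$; both are standard. I would also note that the one-dimensional quantile construction above is precisely the special case of this scheme with $\M=\R$ and the cells taken to be intervals.
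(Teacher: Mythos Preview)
Your argument is a correct rendition of the classical proof of the Skorokhod representation theorem. The paper does not supply its own proof of this lemma; it merely quotes the result from \cite[Chapter~31]{Bass} as a standard tool, so there is no in-paper argument to compare against. Your construction via nested $\mu$-continuity partitions and the associated coding maps $\Psi_\rho$ on $[0,1)$ is essentially the textbook approach (see, e.g., Billingsley or Dudley), and the key steps --- existence of the refining partitions with $\mu(\partial B^m_i)=0$, the verification that $\L(\Psi_\rho)=\rho$ via open sets, and the endpoint-convergence argument yielding eventual agreement of cell indices at each fixed level --- are all handled correctly.
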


%
%
%
%
%
%
%
%
%
%
%
%

\section{Coming down from infinity}
\label{SEC:3}

In this section, we study the  following stochastic nonlinear heat equation (SNLH) on $\R^2$:
\begin{align}
\begin{cases}
\dt X +(1 -\Dl) X  + :\!X^k\!: \, = \sqrt 2\xi \\
X|_{t=0} = X_0,
\end{cases} 
\label{SHE1}
\end{align}

\noi
where $k \in 2\N + 1$ and 
$\xi$ is  a Gaussian space-time white noise on $\R_+\times\R^2$.
Let $Z$ denote the 
solution to the following linear equation:
\begin{align}
\begin{cases}
\dt Z + (1-\Dl ) Z  = \sqrt 2\xi \\
Z|_{t = 0} = 0.
\end{cases}
\label{SHE2}
\end{align}

\noi
Namely, $Z$ is the stochastic convolution, 
formally given by 
\begin{align*}
Z(t) = \sqrt 2\int_0^t P(t - t')  \xi (dt'), 
\end{align*}

\noi
where $P(t)= e^{t(\Dl-1)}$ is the linear heat propagator.
Then, with the first order expansion
\begin{align}
X = Y + Z,
\label{SHE3a}
\end{align}

\noi
 the remainder term $Y = X-Z$ satisfies
\begin{align}
\begin{cases}
\dt Y + (1-\Dl) Y 
+ \sum_{\l=0}^k {k\choose \ell} :\!Z^\l\!: \,Y^{k-\l} = 0\\
Y|_{t=0} = X_0,
\end{cases} 
\label{SHE4}
\end{align}

\noi
where 
$:\!Z^\l\!:$ denotes the Wick-renormalized power of $Z$.
See Section 5 in \cite{MW17a} for a further discussion.
We say that $X$ is a solution to \eqref{SHE1}
if it satisfies \eqref{SHE3a} together with \eqref{SHE2}
and~\eqref{SHE4}.
Our main goal in this section is to establish
coming down from infinity for a solution $Y$ to~\eqref{SHE4} 
in weighted Sobolev spaces of positive regularities
(Proposition \ref{PROP:CI3}),
which will play a crucial role in Subsection \ref{SUBSEC:4.2}.
For this purpose, we  consider 
\begin{align}
\begin{cases}
\dt Y + (1-\Dl) Y 
+ \sum_{\ell=0}^k Z^{(\l)}Y^{k-\ell} = 0\\
Y|_{t=0} = X_0,
\end{cases} 
\label{SHE5}
\end{align}

\noi
where $Z^{(0)}= 1$ and $Z^{(\l)}$, $\l = 1 , \dots, k$, 
are given space-time distributions.
We 
first establish
coming down from infinity for a solution $Y$ to \eqref{SHE5}
in  weighted Lebesgue spaces 
in Subsection~\ref{SUBSEC:3.1}.
In Subsection~\ref{SUBSEC:3.2}, 
we then 
establish
coming down from infinity
in weighted Sobolev spaces of positive regularities
(Proposition \ref{PROP:CI3}).
As a corollary to the coming down from infinity
in  weighted Lebesgue spaces, 
we construct a limiting Gibbs measure $\rhoo_\infty$
on the plane $\R^2$ 
(Theorem \ref{THM:1}\,(i));
see Subsection \ref{SUBSEC:3.3}.
We point out that
while the construction of a limiting Gibbs measure on $\R^2$
requires only 
the coming down from infinity 
in  weighted Lebesgue spaces
(Proposition \ref{PROP:CI1}), 
the coming down from infinity 
in  weighted Sobolev spaces of positive regularity
(Proposition \ref{PROP:CI3})
plays an essential role in the construction
of the enhanced Gibbs measure
as well as 
global-in-time dynamics on $\R^2$
presented in Section \ref{SEC:GWP}.

In the following discussion, 
the regularity  of the initial data $X_0$ in \eqref{SHE5} does not
play an important role
(which is precisely the point of coming down from infinity)
as long as a solution $X = Y+Z$ exists.
For this reason, we do not precisely state the regularity
of the initial data $X_0$ in
Propositions \ref{PROP:CI1} and \ref{PROP:CI3}.
For our application to \eqref{SHE1}
(and its $L$-periodic counterpart~\eqref{SHE7}), 
it suffices to take $X_0 \in B^{-\eps, \mu}_{p_0, \infty}(\R^2)$
for some $\eps > 0$,  $p_0 \gg1 $,  and $\mu > 0$
so that a global solution is guaranteed to exist; see \cite{MW17a}.
In particular, the initial data distributed by the $L$-periodic $\Phi^{k+1}_2$-measure
$\rho_L$ in \eqref{Gibbs4} almost surely satisfies this regularity condition.

%

\subsection{Coming down from infinity on weighted Lebesgue spaces}
\label{SUBSEC:3.1}

In this subsection, 
our main goal is to prove the following coming down from infinity
for a solution $Y$ to \eqref{SHE5}
in the weighted Lebesgue space~$L^p_\mu(\R^2)$.
As compared to the previous work \cite{TW18}
on $\T^2$, 
we need to proceed with more care,  
using a decomposition of the physical space into unit cubes
and  the Littlewood-Paley decomposition; see \eqref{C6}-\eqref{C15}.

\begin{proposition}\label{PROP:CI1}
Let $k \in 2\N + 1$ and $T> 0$.
Let $Y$ be a solution to  \eqref{SHE5}
on the time interval $[0, T]$.
Then, 
given any finite $p\geq 1$ and $\mu > 0$, 
there exist $\ld > 1$, small $\eps > 0$, finite $p_0 \gg 1$, and $0 < \mu_0 < \mu$ such that 
\begin{align}
\| Y(t) \|_{L^p_\mu} 
\leq C \bigg\{ t^{-\frac{1}{(\ld-1)p}} 
\vee   
\sum_{\l = 0}^k \|Z^{(\l)}(t)\|_{B^{-\eps, \mu_0}_{p_0, p_0}}^{\frac{p_0}{\ld p}}
 \bigg\}
 \label{C00}
\end{align}

\noi
for $0 < t \le T$, 
where the constant $C$ is independent of  the initial condition  $X_0$
in \eqref{SHE5} and $T > 0$.
\end{proposition}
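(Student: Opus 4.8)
The plan is to adapt the coming-down-from-infinity argument of Tolomeo--Weber \cite{TW18} from $\T^2$ to the weighted setting on $\R^2$, the new feature being that the weight $w_\mu$ decays at spatial infinity, so one cannot directly test the equation against $|Y|^{p-2}Y$ globally without carefully tracking how the Laplacian interacts with the weight. First I would fix $\lambda>1$ (to be chosen, just slightly larger than $1$) and derive an energy-type identity for the quantity $N(t) = \|Y(t)\|_{L^p_\mu}^p = \int_{\R^2} |Y(t,x)|^p w_\mu(x)\,dx$. Differentiating in time and using \eqref{SHE5} gives
\begin{align*}
\frac{1}{p}\frac{d}{dt} N(t)
= -\int (1-\Dl)Y \cdot |Y|^{p-2}Y\, w_\mu\,dx
- \sum_{\l=0}^k \binom{k}{\l}\int Z^{(\l)} Y^{k-\l}|Y|^{p-2}Y\, w_\mu\,dx.
\end{align*}
The diffusion term, after integrating by parts, produces the good term $-\frac{4(p-1)}{p^2}\int |\nb(|Y|^{p/2})|^2 w_\mu\,dx - \int |Y|^p w_\mu\,dx$ plus an error of the form $\int |Y|^{p}\, |\Dl w_\mu|\,dx$ coming from moving one derivative onto the weight; since $|\Dl w_\mu| \les \mu^2 w_\mu$ (the weight is smooth with controlled logarithmic derivatives, being a stretched exponential of order $\dl<1$), this error is dominated by $N(t)$ and can be absorbed, at the cost of a constant depending on $\mu$.

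Next I would handle the nonlinear terms. Since $k$ is odd, the top-order term $\l=0$ contributes $-\int Y^{k-1}|Y|^{p-2}Y^2 w_\mu = -\int |Y|^{p+k-1} w_\mu \le 0$, which is the crucial dissipative term that drives the coming down; all lower-order terms $\l \ge 1$ involve the rough coefficients $Z^{(\l)}$ and must be estimated. The key point is to interpolate: for each $\l\ge 1$ one writes $\int Z^{(\l)} |Y|^{p+k-\l-1}\,\mathrm{sgn}\, w_\mu$ (roughly) and estimates it by pairing $Z^{(\l)}$, which lives in a weighted negative-regularity Besov space $B^{-\eps,\mu_0}_{p_0,p_0}$, against $|Y|^{p+k-\l-1}$, which one controls in a positive-regularity weighted Sobolev space using the gradient term $\nb(|Y|^{p/2})\in L^2_\mu$ together with Lemma \ref{LEM:bilin2} (weighted product estimate) and Lemma \ref{LEM:equiv} (Sobolev--Besov comparison). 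This is exactly where the decomposition of the physical space into unit cubes $Q_n$ combined with the Littlewood--Paley pieces $\Q_k$ enters (as signposted in \eqref{C6}--\eqref{C15}): one localizes, uses \eqref{weight2a} to replace $w_\mu(x)$ by $w_\mu(n)$ on $Q_n$, applies an unweighted Besov product estimate on each cube, and re-sums with a slightly worse weight $\mu_0 < \mu$. After Young's inequality, each nonlinear error is bounded by $\tfrac12$ of the good dissipative/gradient terms plus a quantity of the form $C\big(\sum_\l \|Z^{(\l)}\|_{B^{-\eps,\mu_0}_{p_0,p_0}}\big)^{\beta} N(t)^{\gamma}$ with $\gamma<1$ (here is where $\lambda$ is tuned so that the Hölder exponents close and $\gamma = \lambda^{-1}\cdot(\text{something})$).

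Putting this together yields a differential inequality of the form
\begin{align*}
\frac{d}{dt} N(t) \le -c\, N(t)^{\lambda} + C\, \mathcal{Z}(t)^{\theta},
\end{align*}
where $\mathcal{Z}(t) = \sum_{\l=0}^k \|Z^{(\l)}(t)\|_{B^{-\eps,\mu_0}_{p_0,p_0}}$ and $\lambda>1$ (the super-linear dissipation comes from the Gagliardo--Nirenberg / Sobolev embedding $\|Y\|_{L^p_\mu} \les \|\nb(|Y|^{p/2})\|_{L^2_\mu}^{\cdots}\|Y\|_{L^{\text{low}}_\mu}^{\cdots}$ combined with the genuine gain in the exponent from the $\l=0$ term $-\int|Y|^{p+k-1}w_\mu$). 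An elementary ODE comparison lemma (Lemma \ref{LEM:CI2}, the analogue of the one in \cite{TW18}) then gives $N(t) \les t^{-1/(\lambda-1)} \vee \sup_{[0,t]}\mathcal{Z}^{\theta}$, and raising to the power $1/p$ and relabeling exponents (so that $p_0/(\lambda p)$ appears) yields \eqref{C00}, with the bound manifestly independent of $X_0$ and of $T$. I expect the main obstacle to be the bookkeeping in the weighted nonlinear estimate: one must verify that the loss from moving derivatives onto $w_\mu$, the loss $\mu_0<\mu$ from the cube decomposition, and the loss in $s$ from the Besov--Sobolev comparison \eqref{besov2}/Lemma \ref{LEM:equiv} can all be made consistent simultaneously, while still keeping the power of $N(t)$ strictly below $1$ and choosing $p_0$ large enough that the rough terms $Z^{(\l)}$ are paired with a positive-regularity factor with enough integrability to apply the weighted fractional Leibniz rule of Lemma \ref{LEM:bilin2}.
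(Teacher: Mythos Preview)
Your proposal is correct and follows essentially the same route as the paper: differentiate $\|Y\|_{L^p_\mu}^p$ for even $p$, split off the good terms $K_t=\int|\nabla Y|^2 Y^{p-2}w_\mu$ and $L_t=\int Y^{p+k-1}w_\mu$, control the cross terms $B_\l$ via the unit-cube decomposition plus Littlewood--Paley (exactly as in \eqref{C6}--\eqref{C15}), and close with Lemma~\ref{LEM:CI2}. Two small clarifications: the exponent $\lambda$ is not a parameter to be tuned but is forced to be $\lambda=1+(k-1)/p$ by the H\"older step $\|Y\|_{L^p_\mu}^p\lesssim L_t^{p/(p+k-1)}$ (no Gagliardo--Nirenberg is needed here), and the reason the Littlewood--Paley argument in \eqref{C7}--\eqref{C9} is genuinely required---rather than just invoking $w_\mu(x)\sim w_\mu(n)$ on $Q_n$---is that the duality in \eqref{C6} leaves $w_\mu$ inside the $W^{\eps,r}(Q_n)$ norm, and extracting it uses the smoothing bound \eqref{weight3} on $\Q_k w_\mu$.
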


For the later use, we set
\begin{align}
C_{\bar Z, p, \mu}(t) = 
\sup_{0 \le t' \le t}
\sum_{\l = 0}^k \|Z^{(\l)}(t')\|_{B^{-\eps, \mu_0}_{p_0, p_0}}^{\frac{p_0}{\ld p}}.
\label{CZ}
\end{align}

Before proceeding to the proof of Proposition \ref{PROP:CI1}, 
we first recall the following key lemma; see Lemma 3.8 in \cite{TW18}.

\begin{lemma}\label{LEM:CI2}
Let $F: [0, T] \to [0, \infty)$ be a differentiable function.
Suppose that there exist $\ld > 1$ and $c_1, c_2 > 0$  such that 
\begin{align} 
\dt F(t) + c_1 F^\ld(t) \le c_2
\label{CI1}
\end{align}

\noi
for any $0 \le t \le T$.
Then, we have
\begin{align*}
F(t)
& \le \frac{F(0)}{\big(1 + t F^{\ld-1}(0)(\ld-1)\frac {c_1} 2\big)^\frac{1}{\ld-1}}
\vee \bigg(\frac{2c_2}{c_1}\bigg)^\frac 1\ld\\
& \le \Bigg\{t^{-\frac 1{\ld-1}}\bigg((\ld-1) \frac {c_1} 2\bigg)^{-\frac{1}{\ld-1}}\Bigg\}
\vee \bigg(\frac{2c_2}{c_1}\bigg)^\frac 1\ld
\end{align*}

\noi
for any $0 <  t \le T$.
\end{lemma}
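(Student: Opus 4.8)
The plan is to treat Lemma~\ref{LEM:CI2} as a comparison estimate for the dissipative differential inequality \eqref{CI1}, organized around the equilibrium threshold
\[
M := \bigg(\frac{2c_2}{c_1}\bigg)^{\frac{1}{\ld}},
\qquad\text{so that } \ \frac{c_1}{2}M^\ld = c_2 .
\]
The first point I would record is the elementary observation that, whenever $F(t)\ge M$, the inequality \eqref{CI1} upgrades to an autonomous, strictly dissipative bound: since $\frac{c_1}{2}F^\ld(t)\ge\frac{c_1}{2}M^\ld=c_2$,
\[
\dt F(t) \le c_2 - c_1 F^\ld(t)
= \Big(c_2 - \tfrac{c_1}{2}F^\ld(t)\Big) - \tfrac{c_1}{2}F^\ld(t)
\le -\frac{c_1}{2}F^\ld(t) < 0 .
\]
In particular, $F$ is decreasing on any subinterval of $[0,T]$ on which $F\ge M$; this uses only that a differentiable function with negative derivative on an interval is decreasing, so no $C^1$ hypothesis is needed.

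Next I would fix $t_0\in(0,T]$ and bound $F(t_0)$. If $F(t_0)\le M$, then the first term in the maximum already gives the conclusion, so assume $F(t_0)>M$. I claim that then $F(s)>M$ for all $s\in[0,t_0]$. To see this, suppose not and set $t_*=\sup\{\,s\in[0,t_0]:F(s)\le M\,\}$; by continuity $F(t_*)\le M$, and since $F(t_0)>M$ we have $t_*<t_0$ and $F(s)>M$ for every $s\in(t_*,t_0]$. By the previous paragraph $F$ is non-increasing on $(t_*,t_0]$, and letting $s\downarrow t_*$ and invoking continuity gives $F(t_0)\le F(t_*)\le M$, contradicting $F(t_0)>M$. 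This proves the claim.

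Granting the claim, on $[0,t_0]$ we have $F>0$ and $\dt F\le-\frac{c_1}{2}F^\ld$, which I would integrate in the standard way by passing to $\psi(t):=F(t)^{1-\ld}$: then $\psi'(t)=(\ld-1)F(t)^{-\ld}\big(-\dt F(t)\big)\ge(\ld-1)\frac{c_1}{2}$, hence $\psi(t_0)\ge\psi(0)+(\ld-1)\frac{c_1}{2}t_0$, and inverting (using that $x\mapsto x^{-1/(\ld-1)}$ is decreasing on $(0,\infty)$),
\[
F(t_0)
\le \Big(F(0)^{1-\ld} + (\ld-1)\tfrac{c_1}{2}t_0\Big)^{-\frac{1}{\ld-1}}
= \frac{F(0)}{\big(1 + t_0\,F(0)^{\ld-1}(\ld-1)\frac{c_1}{2}\big)^{\frac{1}{\ld-1}}} .
\]
Discarding the $1$ in the last denominator gives $F(t_0)\le t_0^{-\frac{1}{\ld-1}}\big((\ld-1)\frac{c_1}{2}\big)^{-\frac{1}{\ld-1}}$. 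Combining this with the trivial case $F(t_0)\le M$ yields both displayed inequalities of the lemma. The whole argument is elementary; the only step that needs a little care — and the only place I expect any subtlety at all — is the continuity/bootstrap reduction to the region $\{F\ge M\}$, i.e.\ the verification of the claim above.
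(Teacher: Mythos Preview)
Your argument is correct and is exactly the standard proof of this comparison lemma. The paper does not actually give its own proof of Lemma~\ref{LEM:CI2}; it simply cites \cite[Lemma 3.8]{TW18}, so there is nothing further to compare.
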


Lemma \ref{LEM:CI2}  shows that in order to establish coming down from infinity, 
it suffices to establish a bound of the form \eqref{CI1}.

We now present the proof of Proposition \ref{PROP:CI1}.

\begin{proof}[Proof of Proposition \ref{PROP:CI1}] 

By H\"older's inequality, it suffices to consider $p \in 2\N$.
 Fix  $p \in 2\N$.
Then, using the equation  \eqref{SHE5}, we have 
\begin{align}
\begin{aligned}
\frac{1}{p} \dt \| Y(t) \|_{L^p_\mu}^p 
&= \jb{ \dt Y(t), Y^{p-1}(t) w_\mu  }_{L^2_x} \\
&= \bigg\langle \Dl Y(t) - Y(t) -  \sum_{\l=0}^k Z^{(\l)}(t) Y^{k-\l}(t), Y^{p-1}(t) w_\mu 
\bigg\rangle_{L^2_x}.
\end{aligned}
\label{C0}
\end{align}

\noi
For simplicity of notation, we will drop the time dependence in the following computations.

From \eqref{weight1} with $0 < \dl < 1$, we have 
\begin{align}\label{C1}
\begin{split}
  \langle \Dl Y, Y^{p-1} w_\mu   \rangle_{L^2}
  &= -\int_{\R^2} \nb Y \cdot \nb( Y^{p-1} w_\mu ) dx \\
  &=- (p-1)\int_{\R^2}  |\nb Y|^2  Y^{p-2}  w_\mu   dx \\
& \hphantom{X}  + \mu\dl \int_{\R^2} (\nb Y \cdot x) Y^{p-1} \jb{x}^{\dl-2} w_\mu dx \\
  &=: -(p-1) K_t + B_0.
\end{split}
\end{align}

\noi
Since $p$ is even, we have
\begin{align}\label{C2}
\begin{aligned}
  \langle -Y,  Y^{p-1} w_\mu   \rangle_{L^2} 
  &=  - \int_{\R^2} Y^p w_\mu dx \le 0.
\end{aligned}
\end{align}

\noi
As for the contribution from the last term in \eqref{C0}, 
recalling that $Z^{(0)}= 1$, we write
\begin{align}
\label{C3}
\begin{aligned}
 \bigg\langle &  -  \sum_{\l=0}^k  Z^{(\l)} Y^{k-\l}, Y^{p-1} w_\mu  \bigg\rangle_{L^2} \\
 &= -\int_{\R^2} Y^{p+k-1} w_\mu dx 
  - \sum_{\l = 1}^{k} \int_{\R^2} Y^{p + k - \l-1 } Z^{(\l)} w_\mu dx\\
  &=: - L_t     - \sum_{\l = 1}^{k} B_\l.
 \end{aligned}
 \end{align}

\noi
Since $p \in 2\N$ and $k \in 2\N+1$, 
we have $K_t$ and $L_t$ are non-negative.
In the following, we control the terms $B_\l$, $\l = 0, \dots, k$, 
by these non-negative terms.
For this reason, we set
\begin{align}
M_t = K_t + L_t. 
\label{C3a}
\end{align}

We first treat $B_0$ in \eqref{C1}.
In view of the fast decay of the weight $w_\mu$, 
it follows from H\"older's inequality that 
\begin{align}
\| Y\|_{L^p_\mu}^p  \le C_{\mu, \dl} \| Y\|_{L^{p+k-1}_\mu}^p
=  C_{\mu, \dl} L_t^\frac{p}{p+k-1}
\le C_{\mu, \dl} M_t^\frac{p}{p+k-1}.
\label{C4}
\end{align}

\noi
Then, by recalling $0 < \dl < 1$
and applying
 Cauchy-Schwarz's inequality
 and Young's inequality with \eqref{C4}
 and \eqref{C3a}, 
we have 
\begin{align}
\begin{aligned}
|B_0| &\le \mu \dl 
 \int_{\R^2} |\nb Y|  \cdot  |Y|^{p-1} \jb{x}^{\dl-1} w_\mu dx \\
&  \le  C_{\mu, \dl}
  \int_{\R^2}
   |\nb Y|    |Y|^{\frac{p-2}{2}}  \cdot  |Y|^{\frac{p}{2}}    w_\mu dx 
 \\
 &\leq C_{\mu, \dl} K_t^{\frac12} \| Y\|_{L^p_\mu}^{\frac p2}  
 \leq  \tfrac{1}{100} K_t + C'_{\mu, \dl} \| Y\|_{L^p_\mu}^{p}   \\
 &\leq \tfrac{1}{100} M_t + C''_{ \mu, \dl}.
  \end{aligned}
  \label{C5}
\end{align}

Next,  we consider $B_\l$, $\l = 1, \dots, k$.
Let $Q= \big[-\frac 12, \frac 12\big)^2$ be the unit cube in $\R^2$
and set $Q_n = n + Q$.
Fix small $\eps > 0$
and $ r > 1$ sufficiently close to $1$.
Then, we have 
\begin{align}
\begin{split}
|B_\l| 
& = 
\bigg|\int_{\R^2} Y^{p + k - \l-1 } Z^{(\l)} w_\mu dx\bigg|
\le \sum_{n \in \Z^2}  \bigg|
\int_{Q_n} Y^{p + k - \l-1 } Z^{(\l)} w_\mu dx\bigg|\\
& \le \sum_{n \in \Z^2}  
\| Y^{p + k - \l-1 }w_\mu\|_{W^{\eps, r}(Q_n)} 
\|Z^{(\l)}\|_{W^{-\eps, r'}(Q_n)}, 
\end{split}
\label{C6}
\end{align}

\noi
where $\frac 1r + \frac 1{r'} = 1$.

From \eqref{besov2}
and the Littlewood-Paley decomposition
with the uniform boundedness on $L^r(\R^2)$
of the Littlewood-Paley projector $\Q_j$, we have
\begin{align}
\begin{split}
& \|   Y^{p + k - \l-1 } w_\mu\|_{W^{\eps, r}(Q_n)} 
\les \| Y^{p + k - \l-1 }w_\mu\|_{B^{2\eps}_{r, \infty}(Q_n)} \\
& \hphantom{X}=   
\sup_{j \in \Z_{\ge 0}}2^{2\eps j}
\| \Q_j (Y^{p + k - \l-1 }w_\mu)\|_{L^r(Q_n)} \\
& \hphantom{X}\le 
\sup_{j \in \Z_{\ge 0}}   2^{2\eps j} \sum_{j_1, j_2 = 0}^\infty
\big\| \Q_j \big(\Q_{j_1}(Y^{p + k - \l-1 })\Q_{j_2}w_\mu\big)\big\|_{L^r(Q_n)} \\
& \hphantom{X}\les    \sum_{j_1 \ge  j_2 +2}  2^{2\eps j_1}
\| \Q_{j_1}(Y^{p + k - \l-1 })\Q_{j_2}w_\mu\|_{L^r(Q_n)} \\
& \hphantom{XX}
 + \sum_{j_1 <   j_2 +2}
2^{2\eps j_2}
\| \Q_{j_1}(Y^{p + k - \l-1 })\Q_{j_2}w_\mu\|_{L^r(Q_n)} \\
&\hphantom{X}
 =: \1 + \II.
\end{split}
\label{C7}
\end{align}

\noi
As for the first term $\1$, it follows from 
\eqref{weight3} and 
\eqref{weight2a} that 
\begin{align}
\begin{split}
\1 
& \les
\sum_{j_1 \ge   j_2 +2}
2^{2\eps j_1}
2^{-\frac  12 j_2}w_\mu (n)
\| \Q_{j_1}(Y^{p + k - \l-1 })\|_{L^r(Q_n)} \\
& \les w_\mu (n)
\| Y^{p + k - \l-1 }\|_{W^{3\eps, r}(Q_n)}.
\end{split}
\label{C8}
\end{align}

\noi
Similarly, we have
\begin{align}
\begin{split}
\II
& \les
\sum_{j_1 <   j_2 +2}
2^{(2\eps -\frac  12) j_2}w_\mu (n)
\| \Q_{j_1}(Y^{p + k - \l-1 })\|_{L^r(Q_n)} \\
& \les w_\mu (n)
\| Y^{p + k - \l-1 }\|_{W^{3 \eps, r}(Q_n)}.
\end{split}
\label{C9}
\end{align}

In the following, we estimate $\| Y^{p + k - \l-1 }\|_{W^{3\eps, r}(Q_n)}$.
Define $M_t(Q_n)$ 
by 
\begin{align}
M_t(Q_n)  = 
\int_{Q_n}  |\nb Y|^2  Y^{p-2} 
+  Y^{p+k-1}  dx .
\label{C9a}
\end{align}

\noi
By the interpolation \eqref{interp}, we have 
\begin{align}
\begin{split}
& \| Y^{p + k - \l-1 }  \|_{W^{3\eps, r}(Q_n)}
 \les 
\| Y^{p + k - \l-1 }\|_{L^r(Q_n)}^{1-3\eps}
\| Y^{p + k - \l-1 }\|_{W^{1, r}(Q_n)}^{3\eps}\\
& \quad \sim 
\| Y^{p + k - \l-1 }\|_{L^r(Q_n)}^{1-3\eps}
\|\nb ( Y^{p + k - \l-1 })\|_{L^r(Q_n)}^{3\eps}
+ 
\| Y^{p + k - \l-1 }\|_{L^r(Q_n)}.
\end{split}
\label{C10}
\end{align}

\noi
By H\"older's inequality
and by choosing $r > 1$ sufficiently close to $1$, 
we have 
\begin{align}
\| Y^{p + k - \l-1 }\|_{L^r(Q_n)}
= 
\| Y\|_{L^{(p+k-\l-1)r}(Q_n)}^{p+k-\l-1}
\leq M_t(Q_n)^\frac{p+k-\l-1}{p+k-1}
\label{C11}
\end{align}

\noi
for any 
$\l = 1, \dots, k$.
On the other hand, by H\"older's inequality, 
we have
\begin{align}
\begin{split}
\|\nb ( Y^{p + k - \l-1 })\|_{L^r(Q_n)}
& \sim \|\nb  Y \cdot|Y|^\frac{p-2}{2}  Y^{\frac p2 + k - \l-1 }\|_{L^r(Q_n)}\\
& \leq M(Q_n)^\frac 12
\|  Y^{\frac p2 + k - \l-1 }\|_{L^\frac{2r}{2-r}(Q_n)}\\
& \leq M(Q_n)^\frac 12
 \big\||Y|^\frac p2 \big\|_{L^q(Q_n)}^{1 + \frac 2p (k - \l - 1)},
 \end{split}
\label{C12a}
\end{align}

\noi
where
$q = \frac 2p \cdot \frac {2r}{2-r} \big(\frac p 2 + k - \l - 1\big)$.
By Sobolev's inequality,\footnote{When $\l = k$, we have $q \le 2$ for $r > 1$
sufficiently close to $1$,  and thus there is no need to apply Sobolev's inequality.} the interpolation \eqref{interp}, 
and Young's inequality followed by H\"older's inequality, we have
\begin{align}
\begin{split}
\big\||Y|^\frac p2 \big\|_{L^q(Q_n)}
& \les \big\||Y|^\frac p2 \big\|_{L^2(Q_n)} 
+ \big\||Y|^\frac {p-2}2 \nb Y\big\|_{L^2(Q_n)}\\
& \le M_t(Q_n)^{\frac 12 \cdot \frac p {p+k-1}}+ 
M_t(Q_n)^\frac 12.
\end{split}
\label{C13}
\end{align}

\noi
Hence, from \eqref{C10}, \eqref{C11}, \eqref{C12a}, and \eqref{C13}, 
we obtain
\begin{align}
& \| Y^{p + k - \l-1 }  \|_{W^{3\eps, r}(Q_n)}
 \les 
M_t(Q_n)^{1-\ta} + 1
\label{C14}
\end{align}

\noi
for some $\ta > 0$, provided that $\eps > 0$ is sufficiently small, 
where the implicit constant is independent of $\l = 1, \dots, k$.

Hence, putting 
\eqref{C6}, \eqref{C7}, 
\eqref{C8}, \eqref{C9},
and \eqref{C14}
together
and applying Young's inequality with \eqref{besov2}, we have
\begin{align}
\begin{split}
|B_\l| 
& \les \sum_{n \in \Z^2}  
w_\mu(n) \Big(M_t(Q_n)^{1-\ta} + 1\Big)
\|Z^{(\l)}\|_{W^{-\eps, r'}(Q_n)}\\
& \le \eps_0 
\sum_{n \in \Z^2}  
w_\mu(n) M_t(Q_n)
+ C_{\eps_0}
\sum_{n \in \Z^2}  
w_\mu(n)
\|Z^{(\l)}\|_{W^{-\eps, r'}(Q_n)}^\frac1\ta
+C_1\\
& \les \eps_0 
M_t + C_{\eps_0}
\|Z^{(\l)}\|_{B^{-\frac \eps2,  \mu_0}_{p_0, p_0}}^{p_0}
+C_2
\end{split}
\label{C15}
\end{align}

\noi
for some small $\eps_0>0$, finite $p_0 \gg1$, 
and $\mu_0 <   \mu$.
Here, the last step follows from 
\eqref{C3a}, \eqref{C9a}, and \eqref{weight2a}
for the first term, 
while it follows from 
\eqref{besov1} and \eqref{weight2a} for the second term.

Therefore, from 
\eqref{C0},
\eqref{C1},
\eqref{C2}, 
\eqref{C3},
\eqref{C5}, and \eqref{C15}, 
we obtain 
\begin{align*}
\frac{1}{p} \dt \| Y(t) \|_{L^p_\mu}^p 
+ c_0 M_t \les 
\sum_{\l = 0}^k \|Z^{(\l)}(t)\|_{B^{-\frac{\eps}{2}, \mu_0}_{p_0, p_0}}^{p_0}.
\end{align*}

\noi
Finally, in view of 
\eqref{C4}, 
there exists $\ld > 1$ such that 
\begin{align}
\begin{aligned}
\frac{1}{p} \dt \| Y(t) \|_{L^p_\mu}^p 
+ c_1 \| Y(t) \|_{L^p_\mu}^{\ld p }
 \les 
\sum_{\l = 0}^k \|Z^{(\l)}(t)\|_{B^{-\frac{\eps}{2}, \mu_0}_{p_0, p_0}}^{p_0}
\end{aligned}
\label{C16}
\end{align}

\noi
for any $0 \le t \le T$.
Finally, the desired bound \eqref{C00} follows
from \eqref{C16} and Lemma \ref{LEM:CI2}.
This concludes the proof of Proposition \ref{PROP:CI1}.
\end{proof}

\subsection{Coming down from infinity on  weighted Sobolev spaces
of positive regularities}
\label{SUBSEC:3.2}

In this subsection, we establish the following coming down from infinity
for a solution $Y$ to \eqref{SHE5} in a weighted Sobolev space
of positive regularity.

\begin{proposition}\label{PROP:CI3}

Let $k \in 2\N + 1$ and  $T> 0$.
Let $Y$ be a solution to  \eqref{SHE5}
on the time interval $[0, T]$. 
Then, 
given any 
$0 < s < 1$, 
finite $p\geq 1$,  and $\mu > 0$, 
there exist $\ld, \ld' > 1$, small $ \ta > 0$, finite $q_0, q \gg 1$, 
and $0 < \mu', \mu_1 < \mu$ such that 
\begin{align}
\begin{split}
\| Y(t)\|_{W^{s, p}_\mu}
& \le C(T) \bigg\{
t^{-\frac s2}
\Big( t^{-\frac{1}{( \ld' - 1) p} } \vee  C_{\bar Z, p, \mu'}(t)\Big)\\
& \hphantom{lXXXX}
+ t^{-\frac{1}{\ld - 1} }\vee
\big(C_{\bar Z, q_0, \mu}(t)\big)^{q_0} + Q_{\bar Z, \ta, q, \mu_1} (t) + 1
\bigg\}
\end{split}
\label{CZ2a}
\end{align}

\noi
for $0 < t \le T$, 
where the constant $C(T)$ is independent of  the initial condition  $X_0$
in \eqref{SHE5}.
Here, $C_{\bar Z, p, \mu}$ is as in \eqref{CZ}
and 
\begin{align}
Q_{\bar Z, \ta, q, \mu_1} (t) = \sup_{0 \le t' \le t} \sum_{\l=0}^k  \|  Z^{(\l)} (t') \|_{B^{-\frac \ta2, \mu_1}_{q, q}}^q.
\label{CZ2}
\end{align}

\end{proposition}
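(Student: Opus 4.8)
The plan is to upgrade the coming down from infinity in weighted Lebesgue spaces (Proposition \ref{PROP:CI1}) to positive Sobolev regularity by a Duhamel/smoothing argument, exactly as one does for the deterministic heat semigroup, but carefully tracking the weights. First I would write the equation \eqref{SHE5} in its mild (Duhamel) form on a short interval: for $0 < t \le T$ and any $0 < t_0 < t$,
\begin{align*}
Y(t) = P(t - t_0) Y(t_0) - \int_{t_0}^t P(t - t') \sum_{\l = 0}^k Z^{(\l)}(t') Y^{k - \l}(t')\, dt'.
\end{align*}
The idea is to choose $t_0 = t/2$ so that the linear term $P(t-t_0)Y(t_0)$ is controlled by the weighted Lebesgue bound \eqref{C00} at time $t/2$ together with the smoothing estimate \eqref{heat2}: $\| P(t/2) Y(t/2)\|_{W^{s,p}_\mu} \lesssim (t/2)^{-\frac s2 - \eps} e^{-ct/2} \| Y(t/2)\|_{L^p_{\mu'}}$, and the latter is $\lesssim t^{-\frac s2 - \eps}(t^{-\frac{1}{(\ld'-1)p}} \vee C_{\bar Z, p, \mu'}(t))$ after renaming the exponent $\ld$ from Proposition \ref{PROP:CI1} as $\ld'$ and absorbing the $p$-dependence. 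This produces the first line of \eqref{CZ2a}.

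Next I would estimate the Duhamel integral term. For each $\l = 0, \dots, k$ I would place the product $Z^{(\l)}(t') Y^{k-\l}(t')$ in a negative-regularity weighted Sobolev space $W^{-\eps, r}_{\mu''}$ via the weighted product estimate Lemma \ref{LEM:bilin2}\,(ii) (splitting the factors so that $Z^{(\l)}$ lands in $W^{-\eps, p_0}_{\mu_1}$, controlled by its Besov norm through Lemma \ref{LEM:equiv}, and the high power $Y^{k-\l}$ lands in a positive-regularity weighted Sobolev space controlled by interpolation between $L^p_\mu$ and — when $\l < k$ — a slightly better Lebesgue exponent, again supplied by Proposition \ref{PROP:CI1} applied at a range of exponents). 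Then I would apply the heat smoothing \eqref{heat2} again to gain $s + \eps$ derivatives at the cost of $(t-t')^{-\frac{s+2\eps}{2}}$, which is integrable in $t'$ near $t'=t$. Summing the resulting integral, bounding $\sup_{t'\le t}\|Y(t')\|_{L^p_\mu}$ and the various powers of $Z^{(\l)}$ Besov norms, produces the remaining terms on the right-hand side of \eqref{CZ2a}: the $t^{-\frac{1}{\ld-1}}$ from the worst power of the $L^p_\mu$ coming-down bound, the $(C_{\bar Z, q_0, \mu}(t))^{q_0}$ and the quantity $Q_{\bar Z, \ta, q, \mu_1}(t)$ defined in \eqref{CZ2}, and the additive $+1$. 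A Gronwall-type bootstrap may be needed if a linear-in-$Y$ power ($\l = k-1$, giving a term $Z^{(k-1)} Y$) forces the unknown $\|Y\|_{W^{s,p}_\mu}$ onto the right-hand side; but since that term only involves $Z^{(k-1)}$ to a finite power and $Y$ linearly with an integrable heat kernel in front, a standard Gronwall on $[t/2, t]$ closes it, as anticipated in the introduction ("our argument is based on a Gronwall-type argument with the coming down from infinity for weighted Lebesgue spaces").

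The main obstacle I expect is bookkeeping of the weight parameters through the chain of estimates: each application of the weighted product estimate Lemma \ref{LEM:bilin2} and of the weighted heat estimate Lemma \ref{LEM:heat} degrades the weight coefficient $\mu$ by a fixed multiplicative factor ($2^{-\dl}$ or a constant $C_0$), and one must verify that starting from a sufficiently small $\mu', \mu_1 < \mu$ all intermediate weights stay positive and comparable, while the final output is still in $W^{s,p}_\mu$ with the original $\mu$. A secondary point requiring care is the choice of the Lebesgue exponents: to control $Y^{k-\l}$ one needs $\|Y(t')\|_{L^{p(k-\l)}_{\mu/(k-\l)}}$-type norms, so Proposition \ref{PROP:CI1} must be invoked with exponent $p(k-\l)$ and a correspondingly reweighted norm, which is why the exponents $\ld, \ld'$ and constants $C_{\bar Z, \,\cdot\,, \,\cdot\,}$ appear with different parameters in \eqref{CZ2a}. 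Once these parameter chases are organized, the positivity $0 < s < 1$ is exactly what makes the single application of \eqref{heat2} (gaining at most slightly more than one derivative in $L^p$) sufficient, so no iteration in regularity is needed.
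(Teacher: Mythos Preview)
Your overall skeleton matches the paper's proof: Duhamel from $t_0 = t/2$, heat smoothing plus Proposition~\ref{PROP:CI1} on the linear piece, product estimates plus Gronwall on the Duhamel integral. But two points are not right as stated.

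First, and more substantively: you cannot place $Y^{k-\l}$ in a positive-regularity space by ``interpolation between $L^p_\mu$ and a slightly better Lebesgue exponent''. Higher Lebesgue integrability does not buy Sobolev regularity on $\R^2$; Sobolev embedding goes the other way. To control $\|Y^{k-\l}\|_{W^{\ta,p_2}}$ you need an actual derivative on $Y$, and the only source is the quantity $\|Y\|_{W^{s,p}_\mu}$ you are trying to estimate. The paper handles this (see \eqref{W5}) by interpolating $\|Y\|_{W^{\ta,p}} \les \|Y\|_{W^{s,p}}^{\ta/s}\|Y\|_{L^p}^{1-\ta/s}$ with $\ta \ll s$, so that \emph{every} term with $\l < k$ carries a factor $\|Y\|_{W^{s,p}_\mu}^{\ta/s}$. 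Young's inequality then turns each such term into a linear factor $\|Y(t')\|_{W^{s,p}_\mu}$ plus lower-order pieces involving $\|Y\|_{L^{q_0}_\mu}$ and $\|Z^{(\l)}\|$, and Gronwall on $[t/2,t]$ closes the estimate. So the Gronwall step is not optional or confined to $\l = k-1$; it is the mechanism that absorbs the unavoidable $W^{s,p}_\mu$-feedback from every nonlinear term.

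Second, a technical point: the heat estimate \eqref{heat2} as stated maps $L^p_{\mu'}$ into $W^{s,p}_\mu$, so you cannot feed it the product $Z^{(\l)}Y^{k-\l}$, which only lives in a negative-regularity space. You need a variant $\|P(t)f\|_{W^{s,p}_\mu} \les t^{-(s+\ta)/2 - \eps}\|f\|_{W^{-\ta,p}_{\mu'}}$. The paper does not prove this as a separate lemma; instead it reopens the spatial decomposition from the proof of Lemma~\ref{LEM:heat} (cutoffs $\phi_j$, $\phi_m^t$, $\phi_{m'}$) and applies the unweighted product estimates of Lemma~\ref{LEM:bilin1} on each localized piece before summing with the weights. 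This is also where the weight bookkeeping you anticipate is carried out explicitly.
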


The proof of Proposition \ref{PROP:CI3} 
is based on a Gronwall-type argument, 
utilizing the already established coming down from infinity
in weighted Lebesgue spaces
(Proposition \ref{PROP:CI1}).
While the idea is straightforward, 
an actual implementation requires
careful analysis via a spatial decomposition
(analogous to the proof of Lemma \ref{LEM:heat}),
for which we find our definition~\eqref{WSP}
of the weighted Sobolev spaces $W^{s, p}_\mu(\R^2)$ more convenient
(than $\W^{s, p}_\mu(\R^2)$ defined in \eqref{WSP2}).

\begin{proof}

Fix $0 < r < t$.
Then, from 
  \eqref{SHE5} and
  \eqref{Lpm1}, 
we have 
\begin{align}
\begin{aligned}
&  \| Y  (t) \|_{W^{s,p }_\mu }
\leq
\|P(t-r) Y(r) \|_{W^{s, p }_\mu }
+ \sum_{\l=0}^k   \int_{r}^t\| P(t -t')( Z^{(\l)} Y^{k-\l} )(t') \| _{W^{s, p }_\mu } dt'  
\\
&\le
\| P(t-r) Y(r) \|_{W^{s,p }_\mu }
+ \sum_{\l=0}^k   \sum_{j = 0}^\infty e^{-\frac{\mu}{p}  2^{j \dl}}
\int_r^t  \| \phi_j P(t -t')( Z^{(\l)}Y^{k-\l} )(t') \| _{W^{s,p}} dt' .
\end{aligned}
\label{W1}
\end{align}

Let  $F_{k. \l} = Z^{(\l)}Y^{k-\l} $.
We proceed as in the proof of Lemma \ref{LEM:heat}
and estimate 
\[\| \phi_j P(t -t')F_{k, \l} \| _{W^{s,p}}.\]

\noi
With \eqref{heat4}, the fractional Leibniz rule (Lemma \ref{LEM:bilin1}\,(i)), 
Young's inequality,  and \eqref{heat6}, 
we have 
\begin{align}
\begin{aligned}
  \| \phi_j P(t) F_{k, \l}\| _{W^{s,p}} 
&\les 
e^{-t}
\sum_{(m, m') \in\Ld_{t, j}} 
 \big\| \phi_j \big[  (\phi^t_m p_t)\ast ( \phi_{m'}F_{k, \l}) \big] \big\|_{W^{s, p}} \\
&\les  
e^{-t}
\sum_{(m, m') \in\Ld_{t, j}} 
  \| \phi_j \|_{W^{s,\infty}} \big\|    (\phi^t_m p_t)\ast ( \phi_{m'}F_{k, \l})  \big\|_{W^{s, p}} 
 \\
 &\les 
e^{-t}
\sum_{(m, m') \in\Ld_{t, j}} 
 \|  \phi^t_m p_t  \|_{W^{s+\ta, 1}}   \| \phi_{m'}F_{k, \l} \|_{W^{-\ta, p}}\\
 &\les 
t^{-\frac{s+\ta}{2}} 
 e^{-t}
\sum_{(m, m') \in\Ld_{t, j}} 
e^{ -c_1 4^m }
    \| \phi_{m'}F_{k, \l} \|_{W^{-\ta, p}}
\end{aligned}
\label{W3}
\end{align}

\noi
for any  $\ta > 0$, 
where $\phi^t_m$ and
$\Ld_{t, j}$ are as in~\eqref{heat3} and~\eqref{heat5}.
In the following, we choose $\ta > 0$ sufficiently small
such that $s + \ta < 1$.

From 
 \eqref{ID1},
Lemma \ref{LEM:bilin1}\,(ii) (with finite $q \gg 1$), 
the fractional Leibniz rule (Lemma~\ref{LEM:bilin1}\,(i) with $q_0 \gg 1$),
the interpolation \eqref{interp}, 
and H\"older's inequality with \eqref{phi2}, 
we have
\begin{align}
\begin{aligned}
 \| \phi_{m'}F_{k, \l} \|_{W^{-\ta, p}} 
&=  \|  (\wt \phi_{m'} Y)^{k-\l} \cdot  \phi_{m'} Z^{(\l)} \|_{W^{-\ta, p}} 
 \\
&\les   
\|   (\wt \phi_{m'} Y)^{k - \l}  \|_{W^{\ta, p-}}
\| \phi_{m'}  Z^{(\l)} \|_{W^{-\ta,q}} \\
&\les 
\| \wt \phi_{m'}Y \|_{L^{q_0}}^{k - \l-1} 
\| \wt \phi_{m'}Y  \|_{W^{\ta, p}} 
\| \phi_{m'}  Z^{(\l)} \|_{W^{-\ta,q}}  \\
&\les 
\| \wt \phi_{m'}Y  \|_{L^{q_0}}^{k-\l -1} 
\| \wt \phi_{m'}Y  \|_{W^{s, p}}^{\frac\ta s}  
\| \wt  \phi_{m'}Y  \|_{L^{p}}^{1- \frac\ta s}  
\| \phi_{m'}  Z^{(\l)} \|_{W^{-\ta, q}}\\
&\les 
\| \wt \phi_{m'}Y  \|_{L^{q_0}}^{k-\l -1} 
\| \wt \phi_{m'}Y  \|_{W^{s, p}}^{\frac\ta s}  \\
& \quad 
\times 2^{(\frac{1}{p}- \frac 1{q_0})(1- \frac \ta s) 2m'}
\| \wt  \phi_{m'}Y  \|_{L^{q_0}}^{1- \frac\ta s}  
\| \phi_{m'}  Z^{(\l)} \|_{W^{-\ta, q}}, 
\end{aligned}
\label{W5}
\end{align}

\noi
provided that $0 < \ta \le s$.
Then, it follows from \eqref{W5} with 
\eqref{Lpm1} (for $\wt \phi_{m'}$ defined in \eqref{phi3a}) 
that 
\begin{align}
\begin{aligned}
 \| \phi_{m'}F_{k, \l} \|_{W^{-\ta, p}} 
&  \les
2^{(\frac{1}{p}- \frac 1{q_0})(1- \frac \ta s) 2m'}
e^{\frac{\mu}{q_0} a_1 (k-\l-\frac \ta s)2^{(m'+1)\dl}}
\| Y  \|_{L^{q_0}_\mu}^{k-\l -\frac \ta s} \\
& \quad
\times  e^{\frac{\mu}{p}\frac \ta s  2^{(m'+1)\dl}}
\| Y  \|_{W^{s, p}_\mu}^{\frac\ta s}  \, 
 e^{\frac{\mu}{q}  2^{m'\dl}}
\|  Z^{(\l)} \|_{W^{-\ta, q}_\mu}.
\end{aligned}
\label{W7}
\end{align}

Note that, given any small $\kk_0>0$, 
there exists $C(p, q_0, s, \ta, \kk_0) > 0$ 
such that 
\begin{align}
2^{(\frac{1}{p}- \frac 1{q_0})(1- \frac \ta s) 2m'}
 \le C(p, q_0,  s, \ta, \kk_0) e^{\kk_0 2^{m'\dl}}
\label{W7c}
\end{align}

\noi
for  any $m' \in \Z_{\ge 0}$.
Hence, from 
\eqref{W3}  and \eqref{W7} with \eqref{W7c}, 
 we have
\begin{align*}
&  \sum_{j = 0}^\infty e^{-\frac{\mu}{p}  2^{j \dl}}
  \| \phi_j P(t)F_{k, \l} \| _{W^{s,p}} \\
& \quad 
\les
t^{-\frac{s+\ta}{2}} 
 e^{-t}
 \sum_{j = 0}^\infty e^{-\frac{\mu}{p}  2^{j \dl}}
\sum_{(m, m') \in\Ld_{t, j}} 
e^{ -c_1 4^m }
 e^{\wt \mu2^{m'\dl}}\\
& \quad \quad
\times \| Y  \|_{L^{q_0}_\mu}^{k-\l -\frac \ta s} 
\| Y  \|_{W^{s, p}_\mu}^{\frac\ta s}  
\|  Z^{(\l)} \|_{W^{-\ta, q}_\mu}, 
\end{align*}

\noi
where $\wt \mu$ is given by 
\begin{align*}
\wt  \mu = \frac{\mu}{q_0} a_1 \bigg(k-\l-\frac \ta s\bigg)2^{\dl}
+ \frac{\mu}{p} \frac \ta s 2^{\dl}
+ \frac{\mu}{q}  + \kk_0.
\end{align*}

\noi
 By choosing sufficiently large $q_0, q \gg1$, $0 < \ta \ll s$, 
 and sufficiently small $\kk_0 > 0$, 
we have $p \wt \mu \ll \mu$.
Then, by summing over $m'$ with 
 \eqref{heat5} (as in the proof of Lemma \ref{LEM:heat}), 
 then summing over $m$ (with \eqref{heat7a}; see also \eqref{heat8}),
 and finally summing over $j$ with $p \wt \mu \ll \mu$, 
we obtain
\begin{align}
\begin{split}
&  \sum_{j = 0}^\infty e^{-\frac{\mu}{p}  2^{j \dl}}
  \| \phi_j P(t)F_{k, \l} \| _{W^{s,p}} \\
& \quad 
\les
t^{-\frac{s+\ta}{2}} 
 e^{-t}
 \sum_{j, m = 0}^\infty e^{-\frac{\mu}{p}  2^{j \dl}}
e^{ -c_1 4^m }
 e^{c_0 \wt \mu (2^{j\dl} +  2^{m\dl} t^\frac \dl 2)}\\
& \quad \quad
\times \| Y  \|_{L^{q_0}_\mu}^{k-\l -\frac \ta s} 
\| Y  \|_{W^{s, p}_\mu}^{\frac\ta s}  
\|  Z^{(\l)} \|_{W^{-\ta, q}_\mu}\\
& \quad \les
t^{-\frac{s+\ta}{2}} 
 e^{-c t}
 \sum_{j = 0}^\infty e^{-(\frac{\mu}{p}  -c_0 \wt \mu) 2^{j \dl}}
 \| Y  \|_{L^{q_0}_\mu}^{k-\l -\frac \ta s} 
\| Y  \|_{W^{s, p}_\mu}^{\frac\ta s}  
\|  Z^{(\l)} \|_{W^{-\ta, q}_\mu}  \\
& \quad \les
t^{-\frac{s+\ta}{2}} 
 e^{-c t}
 \| Y  \|_{L^{q_0}_\mu}^{k-\l -\frac \ta s} 
\| Y  \|_{W^{s, p}_\mu}^{\frac\ta s}  
\|  Z^{(\l)} \|_{W^{-\ta, q}_\mu}  
\end{split}
\label{W8}
\end{align}

\noi
for any $t > 0$.

Therefore, from  \eqref{W1}, \eqref{W8},
 Young's inequality,  and Lemma \ref{LEM:equiv}
 (with some $0 < \mu_1 < \mu$), we have
\begin{align}
\begin{split}
\| Y(t)\|_{W^{s, p}_\mu} 
& \les \| P(t-r) Y(r)\|_{W^{s, p}_\mu}\\
& \quad 
+ \sum_{\l=0}^k  \int_r^t (t-t')^{-\frac{s+\ta}{2}}  
 \| Y(t')  \|_{L^{q_0}_\mu}^{k-\l -\frac \ta s} 
\| Y (t') \|_{W^{s, p}_\mu}^{\frac\ta s}  
\|  Z^{(\l)} (t') \|_{W^{-\ta, q}_\mu}  dt'\\
&\les \| P(t-r) Y(r)\|_{W^{s, p}_\mu}
+   \int_r^t (t-t')^{-\frac{s+\ta}{2}}  
\| Y (t') \|_{W^{s, p}_\mu}  dt'\\
& \quad 
+ 
 \int_r^t (t-t')^{-\frac{s+\ta}{2}}  
\bigg( \| Y(t')  \|_{L^{q_0}_\mu}^{q_0} 
+\sum_{\l=0}^k  \|  Z^{(\l)} (t') \|_{B^{-\frac \ta2, \mu_1}_{q, q}}^q + 1 \bigg) dt'.
\end{split}
\label{W10b}
\end{align}

\noi
From Proposition \ref{PROP:CI1}
with \eqref{CZ}, 
there exists $\ld = \ld(q_0, \mu) > 1$ such that 
\begin{align}
\begin{split}
\| Y(t')  \|_{L^{q_0}_\mu}
& \les (t')^{-\frac{1}{(\ld - 1) q_0} }\vee C_{\bar Z, q_0, \mu}(t')\\
& \le r^{-\frac{1}{(\ld - 1) q_0} } \vee  C_{\bar Z, q_0, \mu}(t)
\end{split}
\label{W10c}
\end{align}

\noi
for $r \le t' \le t$.
Then, from \eqref{W10b}, 
Lemma \ref{LEM:heat}, \eqref{W10c}, and \eqref{CZ2}, we
have
\begin{align}
\begin{split}
\| Y(t)\|_{W^{s, p}_\mu} 
&\les (t-r)^{-\frac s2}\|  Y(r)\|_{L^{p}_{\mu'}}\\
& \quad
+C(T) \Big( r^{-\frac{1}{\ld - 1} }\vee
\big(C_{\bar Z, q_0, \mu}(t)\big)^{q_0} + Q_{\bar Z, \ta, q, \mu_1} (t) + 1\Big)\\
& \quad 
+   \int_r^t (t-t')^{-\frac{s+\ta}{2}}  
\| Y (t') \|_{W^{s, p}_\mu}  dt'.
\end{split}
\label{W10d}
\end{align}

\noi 
Let
\begin{align} 
\begin{split}
B(t, r) 
& = (t-r)^{-\frac s2 -}\|  Y(r)\|_{L^{p}_{\mu'}}\\
& \quad +C(T) \Big( 
r^{-\frac{1}{\ld - 1} }\vee
\big(C_{\bar Z, q_0, \mu}(t)\big)^{q_0} + Q_{\bar Z, \ta, q, \mu_1} (t) + 1\Big).
\end{split}
\label{W10e}
\end{align}

\noi
Then, recalling that $s + \ta < 1$, 
it follows from  \eqref{W10d} and Cauchy-Schwarz's inequality that 
\begin{align*}
\| Y(t)\|_{W^{s, p}_\mu}^2 
&\le C B(t, r)^2
+   C(T) \int_r^t 
\| Y (t') \|_{W^{s, p}_\mu}^2  dt'.
\end{align*}

\noi
Hence, by Gronwall's inequality, we obtain
\begin{align}
\| Y(t)\|_{W^{s, p}_\mu}
\le C(T) B(t, r)
\label{W10f}
\end{align}

\noi
for any $r \le t \le T$.
Finally, by choosing $r = \frac t 2$, 
it follows from \eqref{W10f}, 
\eqref{W10e} and Proposition~\ref{PROP:CI1} that 
\begin{align*}
\| Y(t)\|_{W^{s, p}_\mu}
& \le C(T) \bigg\{
t^{-\frac s2}
\Big( t^{-\frac{1}{( \ld' - 1) p} } \vee  C_{\bar Z, p, \mu'}(t)\Big)\\
& \hphantom{lXXXX}
+ t^{-\frac{1}{\ld - 1} }\vee
\big(C_{\bar Z, q_0, \mu}(t)\big)^{q_0} + Q_{\bar Z, \ta, q, \mu_1} (t) + 1
\bigg\}
\end{align*}

\noi
for some $\ld' = \ld(p, \mu') > 1$.
This concludes the proof of Proposition \ref{PROP:CI3}.
\end{proof}

\subsection{Construction of the Gibbs measure $\rhoo_\infty$ on the plane}
\label{SUBSEC:3.3}

We conclude this section by presenting the proof of Theorem \ref{THM:1}\,(i).

Let $X$ be a global-in-time solution to \eqref{SHE1} on $\R^2$
constructed in \cite{MW17a}, 
satisfying the decomposition $X = Y + Z$ in~\eqref{SHE3a}
 with $Y$ and $Z$ satisfying \eqref{SHE4}
and \eqref{SHE2}, respectively.
Fix $s < 0$, finite $p \ge 1$, and $\mu > 0$.
Then, by Lemma \ref{LEM:equiv},
\eqref{WSP3} with $s = 0$,  and Proposition \ref{PROP:CI1}, 
we have
\begin{align}
\begin{aligned}
\| X(t) \|_{W^{s, p}_\mu}
&\les \| Z(t) \|_{W^{s, p}_\mu}  + \| Y(t) \|_{L^p_{\mu'}}  \\
&\les \| Z(t) \|_{B^{s+ \eps, \mu''}_{p, p}}  + 
 \bigg\{ t^{-\frac{1}{(\ld-1)p}} 
\vee   
\sum_{\l = 0}^k \|:\!Z^\l(t)\!:\|_{B^{-\eps, \mu_0}_{p_0, p_0}}^{\frac{p_0}{\ld p}}
 \bigg\}
\end{aligned}
\label{CD1}
\end{align}

\noi
for a suitable choice of parameters $\mu'$, $\mu''$, $\ld$, $p_0$, $\eps$, and $\mu_0$, 
depending on $p$ and $\mu$, 
where 
the implicit constant is independent of the initial data $X_0$.

Given $L \ge 1$, 
let 
$\xi _L$ denote the (spatially) $L$-periodic space-time white noise
on $\R^2\times \R_+$
obtained by first restricting the space-time white noise $\xi$ on
$\R^2\times \R_+$, appearing in \eqref{SHE1}, 
onto $ \big[-\frac L2, \frac L2\big]^2 \times \R_+$
and then extending it periodically (with the spatial period $L$)
onto $\R^2 \times \R_+$.
See Section 5 in \cite{MW17a}
for a further discussion.
See  also \eqref{wh4} and  \eqref{wh5} below.
We now consider the following SNLH
with the $L$-periodic space-time white noise:
\begin{align}
\begin{cases}
\dt X_L +(1 -\Dl) X_L  + :\!X_L^k\!: \, = \sqrt 2\xi_L \\
X_L|_{t=0} = X_{0, L},
\end{cases} 
\label{SHE7}
\end{align}

\noi
where the initial data $X_{0, L}$ is assumed to be $L$-periodic.
This is 
the parabolic $\Phi^{k+1}_2$-model on $\T^2_L$
studied in \cite{DD} (with $L = 1$).
Write $X_L$ as  
\begin{align}
X_L = Y_L + Z_L,
\label{SHE7a}
\end{align}

\noi 
where 
$Z_L$ satisfies 
\begin{align}
\begin{cases}
\dt Z_L + (1-\Dl ) Z_L  = \sqrt 2\xi_L \\
Z_L|_{t = 0} = 0
\end{cases}
\label{SHE8}
\end{align}

\noi
and $Y_L$ satisfies
\begin{align}
\begin{cases}
\dt Y_L + (1-\Dl) Y_L 
+ \sum_{\l=0}^k {k\choose \ell} :\!Z_L^\l\!: \,Y_L^{k-\l} = 0\\
Y_L|_{t=0} = X_{0, L}.
\end{cases}
\label{SHE9} 
\end{align}

\noi
Then, by applying Lemma \ref{LEM:equiv} and Proposition \ref{PROP:CI1}
once again
we have
\begin{align}
\begin{aligned}
\| X_L(t) \|_{W^{s, p}_\mu}
&\les \| Z_L(t) \|_{W^{s, p}_\mu}  + \| Y_L(t) \|_{L^p_{\mu'}}  \\
&\les \| Z_L(t) \|_{B^{s+ \eps, \mu''}_{p, p}}  + 
 \bigg\{ t^{-\frac{1}{(\ld-1)p}} 
\vee   
\sum_{\l = 0}^k \|:\! Z_L^\l (t)\!: \|_{B^{-\eps, \mu_0}_{p_0, p_0}}^{\frac{p_0}{\ld p}}
 \bigg\}
\end{aligned}
\label{CD2}
\end{align}

\noi
where 
the implicit constant is independent of the initial data $X_{0, L}$
and of the period $L \ge 1$.
Moreover, it follows from 
\cite[Theorem 5.1]{MW17a}\footnote{Theorem 5.4 in the arXiv version.} 
that the $p$th moment of the right-hand side of~\eqref{CD2} (and of \eqref{CD1})
is bounded, uniformly in $L \ge 1$.

Let $\rho_L$ denote the $L$-periodic $\Phi^{k+1}_2$-measure on $\T^2_L$, 
appearing in \eqref{Gibbs4}.
By taking $\Law(X_{0, L}) = \rho_L$ in \eqref{SHE7}
(and we assume that $X_{0, L}$ is independent of the noise $\xi_L$), 
it is known 
\cite{DD} that 
\begin{align}
\Law(X_L(t)) = \rho_L
\label{CD2a}
\end{align}

\noi
 for any $t \ge 0$.
Moreover, from \eqref{CD2}
and the observation mentioned right after \eqref{CD2}, we have
\begin{align}
\sup_{L\geq 1} \E\Big[ \| X_L(1) \|_{W^{s, p}_\mu}^p \Big]
\les 1.
\label{CD3}
\end{align}

Now, given  $s < 0$  and $\mu > 0$, 
let $s < s' < 0$ and $0 < \mu' \ll  \mu$.
Then, given $M > 0$, set 
\begin{align*}
K_M = \big\{ f\in W^{s, p}_{\mu} (\R^2) : \| f\|_{W^{s', p}_{\mu'}} \leq M \big\}.
\end{align*}

\noi
Then,  it follows from Chebyshev's inequality
and \eqref{CD3}
that, given $\eps > 0$, 
there exists $M > 0$ such that 
\begin{align*}
\rho_L\big( K_M^c \big)
&= \PP\Big( 
 \| X_L(1; \o)\|_{W^{s', p}_{\mu'}} > M \Big) \les M^{-p} < \eps, 
\end{align*}

\noi
uniformly in   $L\geq 1$.  
On the other hand, 
it follows from Lemma \ref{LEM:cpt}
that 
$K_M$ is compact in $W^{s, p}_{\mu}(\R^2)$.
Therefore, from 
the Prokhorov theorem (Lemma \ref{LEM:Pro}), we conclude that $\{\rho_L\}_{L \in \N}$
is tight
and hence admits a 
subsequence $\{\rho_{L_j}\}_{j \in \N}$
which converges weakly to 
a limiting $\Phi^{k+1}_2$-measure $\rho_\infty$ on $\R^2$
as $j \to \infty$.

\medskip

Next, we prove weak convergence of the $L$-periodic white noise measure $\mu_{0, L}$
defined in~\eqref{mu0} (with $s = 0$)
to the limiting white noise measure on $\R^2$.
Formally, 
a (spatial) white noise $\z$ on $\R^2$
is a centered Gaussian distribution on $\R^2$
with covariance
\begin{align}
 \E \big[\z(x_1) \z(x_2)\big] = \dl (x_1 - x_2).
\label{white1}
\end{align} 

\noi
The expression \eqref{white1} is merely formal
but we can make it rigorous by testing it against a test function.

\begin{definition}\label{DEF:white} \rm
A (spatial) white noise $\z$ on $\R^2$
is a family of centered Gaussian random variables
$\{ \z(\varphi): \varphi \in L^2(\R^2)\}$
such that 
\begin{align*}
\E\big[ \z(\varphi)^2 \big] = \| \varphi\|_{L^2(\R^2)}^2
\quad \text{and} \quad 
\E\big[ \z(\varphi_1) \z( \varphi_2)\big] = \jb{ \varphi_1, \varphi_2}_{L^2(\R^2)}.
\end{align*}

\end{definition}

Given $k \in \N$, let $\eta_k$ be as in \eqref{eta1}.
Then, a direct computation with Lemma \ref{LEM:hyp} shows
\begin{align}
\E\big[ |\z(\eta_k)|^p\big]
\les_p 
\Big(\E\big[ \z(\eta_k)^2\big]\Big)^\frac{p}{2} = \| \eta_k\|_{L^2}^p \sim 2^{kp}
\label{white2a}
\end{align}

\noi
for any finite $p \ge 1$.
Hence, we conclude  from 
(the time-independent version of) Lemma 9 in~\cite{MW17a}\footnote{Lemma 5.2 in the arXiv version.}
that the white noise 
$\z$ on $\R^2$ belongs almost surely to  
$B^{s, \mu}_{p, q}(\R^2) $ for any $s < -1$, $\mu > 0$, $1\le p < \infty$, and $1\le  q\le \infty$, 
with the following bound:
\begin{align}
\E\big[\|\z\|_{B^{s, \mu}_{p, q}}^r\big]
\leq C(r, s, \mu, p, q) < \infty
\label{white2b}
\end{align}

\noi
for any finite $r \ge 1$.
Then, by Lemma \ref{LEM:equiv}, 
we see that 
 the white noise 
$\z$ on $\R^2$ also belongs almost surely to  
the weighted Sobolev space $W^{s, p}_\mu(\R^2)$ for any $s < -1$,  $1\le p <  \infty$, 
and $\mu > 0$,
with a  bound:
\begin{align}
\E\big[\|\z\|_{W^{s, p}_\mu}^r\big]
\leq C(r, s, \mu, p) < \infty
\label{white2c}
\end{align}

\noi
for any finite $r \ge 1$.

\begin{lemma}\label{LEM:white}
Let $\z$ be a white noise on $\R^2$
as in Definition \ref{DEF:white}
and set $\mu_{0, \infty} = \Law(\z)$.
Then, given any $s < -1$, finite $p \ge 1$,  and $\mu > 0$, 
by viewing  the $L$-periodic white noise measure $\mu_{0, L}$
defined in \eqref{mu0} with $s =0$
as a measure  on $W^{s, p}_\mu(\R^2)$, 
the sequence $\{\mu_{0, L}\}_{L \in \N}$
converges weakly 
to $\mu_{0, \infty}$
as $L \to \infty$.

\end{lemma}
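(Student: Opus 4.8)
The plan is to prove weak convergence of $\{\mu_{0,L}\}_{L\ge 1}$ on the Polish space $W^{s,p}_\mu(\R^2)$ by the usual two-step scheme: first establish tightness, then show that every subsequential weak limit equals $\mu_{0,\infty}$, so that the full sequence converges. The crux of the tightness step is to reprove the stochastic estimates \eqref{white2a}--\eqref{white2c} for the $L$-periodic white noise with constants \emph{uniform in the period $L\ge 1$}. Realizing a random variable with law $\mu_{0,L}$ as $\z_L=\sum_{\ze\in\Z^2_L}g_{L\ze}\,e^L_\ze$ with $\{g_n\}$ as in \eqref{series}, one has $\Q_k\z_L(x)=\sum_{\ze\in\Z^2_L}\chi_k(\ze)\,g_{L\ze}\,e^L_\ze(x)$, and a Riemann sum computation over the lattice $\Z^2_L$ of spacing $L^{-1}\le 1$, analogous to \eqref{sig1}, yields
\[
\E\big[|\Q_k\z_L(x)|^2\big]=\frac1{L^2}\sum_{\ze\in\Z^2_L}|\chi_k(\ze)|^2\les 2^{2k}
\]
for every $k\in\Z_{\ge 0}$ and $x\in\R^2$, uniformly in $L\ge 1$, since $\supp\chi_k$ is contained in a ball of radius $\sim 2^k$ (and $L2^k\ge 1$). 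As $\Q_k\z_L(x)$ lies in the first Wiener chaos, Lemma~\ref{LEM:hyp} upgrades this to $\E[|\Q_k\z_L(x)|^p]\les 2^{kp}$ for any finite $p$, and integrating against $w_\mu$ gives $\E[\|\Q_k\z_L\|_{L^p_\mu}^p]\les 2^{kp}$, uniformly in $L\ge 1$. Feeding this into the (time-independent version of) Lemma~9 in \cite{MW17a} exactly as in the derivation of \eqref{white2b}, and then invoking Lemma~\ref{LEM:equiv}, one obtains $\sup_{L\ge 1}\E[\|\z_L\|_{W^{s',p}_{\mu'}}^r]<\infty$ for every $s'<-1$, $\mu'>0$, and finite $p,r\ge 1$.

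Given $s<-1$ and $\mu>0$ as in the statement, one then fixes $s<s'<-1$ and $0<\mu'<\mu/2^{\dl}$, so that the embedding $W^{s',p}_{\mu'}(\R^2)\hookrightarrow W^{s,p}_\mu(\R^2)$ is compact by Lemma~\ref{LEM:cpt}. Combining the uniform moment bound above with Chebyshev's inequality shows that, for $M$ large, the balls $\{\|f\|_{W^{s',p}_{\mu'}}\le M\}$ carry $\mu_{0,L}$-mass arbitrarily close to $1$, uniformly in $L\ge 1$; hence $\{\mu_{0,L}\}_{L\ge 1}$ is tight and, by Prokhorov's theorem (Lemma~\ref{LEM:Pro}), relatively compact in $W^{s,p}_\mu(\R^2)$.

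It then remains to identify every subsequential weak limit with $\mu_{0,\infty}$, after which the whole sequence converges weakly. The pairing $f\mapsto\jb{f,\varphi}$ is a continuous linear functional on $W^{s,p}_\mu(\R^2)$ for each $\varphi\in C^\infty_c(\R^2)$, and these functionals separate points of the separable space $W^{s,p}_\mu(\R^2)$, so a Borel probability measure on $W^{s,p}_\mu(\R^2)$ is determined by the joint laws of $\{\jb{\cdot,\varphi}:\varphi\in C^\infty_c(\R^2)\}$. For $L$ large enough that $\supp\varphi\subset\big[-\tfrac L2,\tfrac L2\big]^2$, the random variable $\jb{\z_L,\varphi}$ is a centered real Gaussian with variance $\tfrac1{L^2}\sum_{\ze\in\Z^2_L}|\ft\varphi(\ze)|^2$, and since $\ft\varphi$ is a Schwartz function, a Riemann sum approximation gives $\tfrac1{L^2}\sum_{\ze\in\Z^2_L}|\ft\varphi(\ze)|^2\to\int_{\R^2}|\ft\varphi(\eta)|^2\,d\eta=\|\varphi\|_{L^2(\R^2)}^2$ as $L\to\infty$. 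Therefore, along any subsequence $\mu_{0,L_j}\to\nu$, continuity of $f\mapsto\jb{f,\varphi}$ forces $\jb{\cdot,\varphi}$ under $\nu$ to be a centered Gaussian of variance $\|\varphi\|_{L^2(\R^2)}^2$ for every $\varphi\in C^\infty_c(\R^2)$; reading the covariance $\jb{\varphi_1,\varphi_2}_{L^2}$ off from the variance of $\jb{\cdot,\varphi_1+\varphi_2}$, the joint laws of $\{\jb{\cdot,\varphi}\}$ under $\nu$ coincide with those under $\mu_{0,\infty}$, so $\nu=\mu_{0,\infty}$.

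The only genuine work is in the first step, namely reproving the stochastic regularity estimates for the periodic white noise with constants independent of $L$; once these are $L$-uniform, tightness and Prokhorov are automatic, and the identification of the limit is soft, relying only on the Gaussianity of $\z_L$ and the convergence of the relevant Riemann sums. By contrast, the analogous uniform-in-$L$ bound for the $\Phi^{k+1}_2$-measures $\rho_L$ is considerably harder and is precisely where the coming down from infinity of Proposition~\ref{PROP:CI1} enters, as in Subsection~\ref{SUBSEC:3.3}.
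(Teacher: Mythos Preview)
Your proof is correct, but it takes a genuinely different route from the paper's. The paper does not go through tightness and Prokhorov at all: instead it constructs an explicit \emph{coupling} by $L$-periodizing the same white noise $\z$ on $\R^2$ (see \eqref{wh1}--\eqref{wh2}), so that by construction $\z_L=\z$ on $\big[-\tfrac L2,\tfrac L2\big)^2$. The difference $\z-\z_L$ is then supported on the complement of this box, and the uniform moment bound \eqref{white2c} together with the exponential decay of the weight gives $\E\|\z-\z_L\|_{W^{s,p}_\mu}^p\les w_{\mu-\mu'}(c_0L)$; summing over $L\in\N$ yields almost sure convergence $\z_L\to\z$ in $W^{s,p}_\mu(\R^2)$, which is strictly stronger than the weak convergence asserted in the lemma. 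Your argument, by contrast, reproves the stochastic regularity estimates for $\z_L$ with $L$-uniform constants, deduces tightness via Lemma~\ref{LEM:cpt} and Prokhorov, and then identifies any subsequential limit by checking that the Gaussian covariances $\tfrac1{L^2}\sum_{\ze\in\Z^2_L}|\ft\varphi(\ze)|^2\to\|\varphi\|_{L^2}^2$ converge via a Riemann sum. This is perfectly sound but uses more abstract machinery (in particular the fact that the pairings $\{\jb{\cdot,\varphi}:\varphi\in C^\infty_c\}$ generate the Borel $\sigma$-algebra on the separable space $W^{s,p}_\mu$, which you invoke without proof), whereas the paper's coupling argument is shorter, gives a quantitative rate, and produces almost sure convergence for free. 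On the other hand, your approach has the virtue of being the same template used for the nonlinear measures $\rho_L$ in Subsection~\ref{SUBSEC:3.3}, where no such coupling is available.
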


By putting the weak convergence of a subsequence $\{\rho_{L_j}\}_{j \in \N}$
with Lemma \ref{LEM:white}, 
we conclude that
as probability measures on $\vec W^{s, p}_\mu(\R^2)
=  W^{s, p}_\mu(\R^2)\times  W^{s-1, p}_\mu(\R^2)$
with $s < 0$, finite $p \ge 1$, and $\mu > 0$, 
the $L_j$-periodic Gibbs measure $\rhoo_{L_j} = \rho_{L_j} \otimes \mu_{0, L_j}$
in \eqref{Gibbs3}
converges weakly to a limiting
Gibbs measure
$\rhoo_\infty = \rho_\infty \otimes \mu_{0, \infty}$
on $\R^2$
as $j \to \infty$.
This proves Theorem \ref{THM:1}\,(i).

\begin{remark}\label{REM:Gibbs2}\rm
In view of the embedding
between weighted Sobolev spaces and weighted Besov spaces
stated in Lemma \ref{LEM:equiv}, 
the weak convergence of 
the $L_j$-periodic Gibbs measure $\rhoo_{L_j} $ to 
the limiting Gibbs measure $\rhoo_\infty$
also holds
as probability measures  
on the weighted Besov space $B^{s, \mu}_{p, q}(\R^2) \times B^{s-1, \mu}_{p, q}(\R^2)$
for any $s <0$, finite $p \ge 1$, $1 \le q \le \infty$, 
and $\mu > 0$.

\end{remark}

We conclude this section by presenting the proof of Lemma \ref{LEM:white}.

\begin{proof}[Proof of Lemma \ref{LEM:white}]
As we have seen in Section \ref{SEC:1}, 
an $L$-periodic white noise  on 
the dilated torus $\T_L^2$ is given by 
the second Gaussian Fourier series for $v_L$ in \eqref{series}
such that $\Law(v_L) =  \mu_{0, L}$.
While it is possible to work with $v_L$ in \eqref{series}, 
by viewing it as an $L$-periodic distribution on $\R^2$, 
and show that 
the $L$-periodic white noise measure $\mu_{0, L} = \Law (v_L)$
converges weakly to the white noise measure $\mu_{0, \infty} = \Law(\z)$ on $\R^2$, 
it is more convenient to work with 
the $L$-periodized version $\z_L$ of the white noise $\z$.
Define $\z_L$ on $\T^2_L$ by setting
\begin{align}
\z_L=  \sum_{\ze \in \Z^2_L} \ft \z_L (\ze) e_\ze^L, 
\label{wh1}
\end{align}

\noi
where $e_\ze^L$ is as in \eqref{exp1}
and $\ft \z_L(\ze)$ is given by 
\begin{align}
\begin{split}
\ft \z_L(\ze) & = \int_{[-\frac L2, \frac L2)^2} \z(x) \cj{e_\ze^L(x)} dx 
= \z \Big( \ind_{[-\frac L2, \frac L2)^2}  \cj{e_\ze^L}\Big)\\
: \! &  = \z \Big( \ind_{[-\frac L2, \frac L2)^2}  \Re (e_\ze^L)\Big)
- i  \z \Big( \ind_{[-\frac L2, \frac L2)^2}  \Im (e_\ze^L)\Big).
\end{split}
\label{wh2}
\end{align}

\noi
Then, by viewing $\z_L$ as an $L$-periodic distribution on $\R^2$, 
we see that $\Law(\z_L) = \mu_{0,  L}$.
Indeed,  it follows from Definition \ref{DEF:white}
and the orthonormality of $\{e_\ze^L\}_{\ze \in \Z^2_L}$
on $\T^2_L = \big[-\frac L2, \frac L2\big)^2$
that $\{ \ft \z_L(\ze)\}_{\ze \in \Z^2_L}$
forms
 a family of independent standard complex-valued  Gaussian random variables conditioned  that 
 $\ft \z_L(-\ze) = \cj{\ft \z_L(\ze)}$, 
 $\ze  \in \Z^2_L$.
Hence, from \eqref{series} and \eqref{wh1}, 
we conclude that $\Law(\z_L) = \Law(v_L) = \mu_{0, L}$.
Before proceeding further, we point out that 
by repeating the computation \eqref{white2a}
for $\z_L$, 
we see that the bounds \eqref{white2b} and \eqref{white2c}
also hold for $\z_L$, uniformly in $L \ge 1$.

By definition,  $\z_L$ agrees with $\z$ on $\T^2_L = \big[-\frac L2, \frac L2\big)^2$.
Now, fix $s < -1$, $1 \le p < \infty$, and $\mu > 0$.
Then, given finite $r \ge 1$, it follows from \eqref{white2c}
for $\z$ and $\z_L$ that 
\begin{align}
\begin{split}
\E\Big[
\|\z - \z_L \|_{W^{s, p}_\mu(\R^2)}^p\big]
& = \E\Big[\|\z - \z_L \|_{W^{s, p}_\mu((\T^2_L)^c)}^p\big]\\
& \le C(r, s, \mu', p) \cdot  w_{\mu - \mu'}(c_0L)
\end{split}
\label{wh3}
\end{align}

\noi
for any $0 < \mu' < \mu$, 
where $c_0>0$ is an absolute constant independent of $L \ge 1$
(and the other parameters).
Then, summing over $L \in \N$, 
we obtain
\begin{align*}
 \E\bigg( \sum_{L=  1}^\infty \| \zeta - \zeta_L \|^p_{W^{s,p}_\mu(\R^2)} \bigg) 
&
= \sum_{L =  1}^\infty \E\Big[ \| \zeta - \zeta_L \|^p_{W^{s,p}_\mu(\R^2)} \Big] \\
&  \les  \sum_{L=  1}^\infty w_{\mu - \mu'} (c_0 L) < \infty, 
\end{align*}
since $\mu> \mu'$. 
This implies immediately that 
$\sum_{L =  1}^\infty \| \zeta - \zeta_L \|^p_{W^{s,p}_\mu(\R^2)}$ is finite almost surely, 
which in particular implies that 
 $\z_L$ converges almost surely to $\z$ in $W^{s,p}_\mu(\R^2)$ as $L\to \infty$
 (with $L \in \N$).
Therefore, we conclude that 
$\mu_{0, L} = \Law (\z_L)$ converges weakly to $\mu_{0, \infty} = \Law(\z)$
as $L \to \infty$ (with $L \in \N$).
This conclude the proof of Lemma \ref{LEM:white}.
\end{proof}

\section{Global well-posedness and invariance
of the Gibbs measure}
\label{SEC:GWP}

In this section, we present the proof of Theorem \ref{THM:1}\,(ii).
Our main goal is to construct global-in-time dynamics
for the hyperbolic $\Phi^{k+1}_2$-model on $\R^2$
with the Gibbsian initial data:
\begin{align}
\begin{cases}
\dt^2 u + \dt u + (1- \Dl)u  \, +  :\!u^k\!: \, = \sqrt{2}\xi\\
(u, \dt u)|_{t = 0} = (u_0, u_1)\quad \text{with } \Law (u_0, u_1) = \rhoo_\infty, 
\end{cases}
\label{NW1}
\end{align}

\noi
where $\rhoo_\infty = \rho_\infty \otimes \mu_{0, \infty}$ is the Gibbs measure on $\R^2$,  constructed
as a limit of the $L_j$-periodic Gibbs measure $\rhoo_{L_j} = \rho_{L_j} \otimes \mu_{0, L_j}$
in the previous section.
For simplicity of notation, 
we set 
\begin{align*}
\A = \{ L_j: j \in \N\} \subset \N
\end{align*}
and only consider the values of $L \in \A$
in the following, unless otherwise specified.

Consider the $L$-periodic hyperbolic $\Phi^{k+1}_2$-model on $\R^2$:
\begin{align}
\begin{cases}
\dt^2 u_L + \dt u_L + (1- \Dl)u_L  \, +  :\!u_L^k\!: \, = \sqrt{2}\xi_L\\
(u_L, \dt u_L)|_{t = 0} = (u_{0, L}, u_{1, L})\quad \text{with } \Law (u_{0, L}, u_{1, L}) = \rhoo_L,
\end{cases}
\label{NW2}
\end{align}

\noi
where $\xi_L$ is an $L$-periodic space-time white noise
(see \eqref{wh4} and \eqref{wh5} below) 
and $\rhoo_L = \rho_L \otimes \mu_{0, L}$
is the $L$-periodic Gibbs measure in \eqref{Gibbs3}; see also Remark \ref{REM:Gibbs}.
As mentioned in Section~\ref{SEC:1},  
the Cauchy problem
\eqref{NW2} is globally well-posed
and the Gibbs measure $\rhoo_L$ is invariant under the resulting dynamics.
In the following, we 
 construct
a solution to \eqref{NW1}
as a limit of the solutions $u_L$ to \eqref{NW2}, $L \in \A$.

Let us first introduce some notations.
Let $\Psi$ be the solution to the following linear stochastic damped wave equation:
\begin{align}
\begin{cases}
\dt^2 \Psi + \dt \Psi + (1- \Dl)\Psi    = \sqrt{2}\xi \\
(\Psi, \dt \Psi)|_{t=0} = ( 0, 0).
\end{cases}
\label{psi1}
\end{align}

\noi
With $\D(t)$ as in \eqref{lin2}, we formally have
\begin{align*}
\Psi(t) =& \sqrt{2} \int_0^t \D(t-t') \xi (dt').
\end{align*}

Let $L \ge 1$.
Given a spatial white noise $\z$ on $\R^2$, 
we let $\z_L$ denote the 
$L$-periodized version of $\z$ defined in  \eqref{wh1} and \eqref{wh2}.
Similarly, let $\xi_L$
be 
the $L$-periodized version (in space) of the space-time white noise $\xi$.
Namely,  we define   $\xi_L$ on $\T^2_L\times \R_+
\cong \big[-\frac L2, \frac L2\big)^2 \times \R_+$ by setting
\begin{align}
\xi_L=  \sum_{\ze \in \Z^2_L} \ft \xi_L (\ze) e_\ze^L, 
\label{wh4}
\end{align}

\noi
where $e_\ze^L$ is as in \eqref{exp1}
and $\ft \xi_L(\ze)$ is 
 given by 
\begin{align}
\ft \xi_L(\ze) = \int_{[-\frac L2, \frac L2)^2} \xi(x) \cj{e_\ze^L(x)} dx 
= \xi \Big(  \ind_{[-\frac L2, \frac L2)^2} \,  \cj{e_\ze^L}\Big)
\label{wh5}
\end{align}

\noi
with the equality understood in the sense of temporal distributions.
Alternatively, $\ft \xi_L(\ze)$ is 
given as the (distributional) time derivative of 
\begin{align*}
\ft B_L(\ze, t) 
= \xi \Big( \ind_{[0, t]} \cdot \ind_{[-\frac L2, \frac L2)^2}(x) \,  \cj{e_\ze^L(x)}\Big), 
\end{align*}

\noi
where the right-hand side is the action of $\xi$ 
on a function in $L^2(\R^2 \times \R_+)$.
It is then easy to check that  $\{\ft \xi_L(\ld) \}_{\ld \in \Z^2_L}$ forms
a family of independent temporal white noises
conditioned that $\ft \xi_L(- \ld) = \cj{\ft \xi_L(\ld)}$, $\ld \in \Z^2_L$.
Compare this with~\eqref{WW1}.  
By the $L$-periodic extension in space, 
we then view $\xi_L$  as a space-time distribution on  $\R^2\times \R_+$.

\medskip

In the following, 
we  construct a solution $u$ to \eqref{NW1}
on the cone $\bC_R$ defined in \eqref{cone}
for each $R >0$
as a limit of the solution $u_L$ to \eqref{NW2}.
Fix $R >0$.
Then, thanks to the finite speed of propagation, 
we have
\begin{align}
\z_L = \z \quad \text{and} \quad \xi_L = \xi
\quad \text{on the cone $\bC_R$}, 
\label{wh7}
\end{align}

\noi
provided that  $L \ge 2R$.
In particular,  on the cone $\bC_R$, the equation \eqref{NW2}
agrees (in law) with 
\begin{align}
\begin{cases}
\dt^2 u_L + \dt u_L + (1- \Dl)u_L  \, +  :\!u_L^k\!: \, = \sqrt{2}\xi\\
(u_L, \dt u_L)|_{t = 0} = (u_{0, L}, u_{1})\quad \text{with } \Law (u_{0, L}) = \rho_L,
\end{cases}
\label{NW3}
\end{align}

\noi
where $\xi$ and $u_1$ with $\Law (u_1) = \mu_{0, \infty}$
are as in \eqref{NW1}.
Namely, the difference between \eqref{NW1}
and~\eqref{NW3} appears only in 
the first component of the initial data.
This motivates the following decomposition of a solution.

We write a  solution $u$ to \eqref{NW1} as 
\begin{align}
\begin{split}
u(t)&  = 
\Phi(t)  + v (t)\\
: \!  & = \S(t)u_0 + \D(t) u_1 + \Psi(t) + v (t),
\end{split}
\label{decomp2}
\end{align}

\noi
where $\S(t)$, $\D(t)$, and $\Psi(t)$ are as in \eqref{lin3}, 
\eqref{lin2}, and \eqref{psi1}, 
and $v$ satisfies the following equation:
\begin{align}
\begin{split}
v(t) & = - \int_0^t \D(t-t') : \!u^k(t')\! : dt' \\
& = - \sum_{\l = 0}^k 
{k\choose \ell}
\int_0^t \D(t - t')\big( \!:\! \Phi^\l(t')\!:
v^{k- \l}(t')\big) dt'.
\end{split}
\label{NW4}
\end{align}

\noi
We also set
\begin{align}
\Phi_0(t) = \Phi_0(u_0)(t)  =  \S(t) u_0
\quad \text{and}
\quad  \Phi_1(t) 
= \Phi_1(u_1, \xi) (t)
 = \D(t)u_1 + \Psi(t)
 \label{NW5}
\end{align}

\noi
such that 
\[\Phi(u_0, u_1, \xi) = \Phi_0(u_0) + \Phi_1(u_1, \xi).\] 

\noi
Then, by the independence of $u_0$ and $(u_1, \xi)$ in \eqref{NW1},\footnote{More precisely, 
recall that the product of independent homogenous Wiener chaoses is a homogenous Wiener chaos 
of the order given by the sum of the two.
} 
we have 
\begin{align*}
:\! \Phi^\l(t)\!:
\,\,  = \,\,  :\! \Phi^\l(u_0, u_1, \xi)(t)\!:
\,\,  = \sum_{m = 0}^\l
{\l \choose m}
:\! \Phi_0^m(u_0) (t)\!:
\, :\! \Phi_1^{\l-m}(u_1, \xi) (t)\!:\, ,
\end{align*}

\noi
and thus
we can rewrite \eqref{NW4} as 
\begin{align}
\begin{split}
v(t) 
& = -  \sum_{\l = 0}^k \sum_{m = 0}^\l
{k\choose \ell}
{\l \choose m}\\
& \hphantom{XXX}
\times \int_0^t \D(t - t')\big( \!
:\! \Phi_0^m(u_0) (t')\!:
\, :\! \Phi_1^{\l-m}(u_1, \xi) (t')\!:
v^{k- \l}(t')\big) dt'.
\end{split}
\label{NW5b}
\end{align}

\noi
In view of \eqref{NW5b}, 
we  define enhanced data sets $\Xi_0$ and $\Xi_1$
by setting 
\begin{align}
\begin{split}
\Xi_0(\phi) 
& = \big( \Phi_0(\phi),  : \!\Phi_0^2(\phi)\! :\,, \dots, \, : \!\Phi_0^k(\phi)\!: \big)\\
& = \big( \S(t)\phi,  : \!(\S(t) \phi)^2\! :\,, \dots, \, : \!(\S(t) \phi)^k\!: \big)
\end{split}
\label{xi1}
\end{align}

\noi
and
\begin{align}
\Xi_1(u_1, \xi) = \big(   \Phi_1, : \!\Phi_1^2\!:\,, \dots, 
\, : \!\Phi_1^k\!: \big), 
\label{xi2}
\end{align}

\noi
where
$ \Phi_1
= \Phi_1(u_1, \xi) $ is as in \eqref{NW5}.

We now introduce  distances\,/\,sizes by which we measure these enhanced data sets.

\begin{definition}\label{DEF:X}\rm
Let $k \in 2\N + 1$.
Let $0 < \eps \ll1$, finite $p \gg1$, and $\mu > 0$
(to be chosen later).

%

\smallskip

\noi
(i) 
We first introduce a  space $\WW$
for 
the first enhanced data set $\Xi_0(\phi)$
in \eqref{xi1}
by setting
\begin{align*}
\WW = \big(C(\R_+;  W^{-\eps, p}_\mu(\R^2))\big)^{\otimes k}.
\end{align*}

\noi
We endow $\WW$ with the following bounded metric:
\begin{align}
\begin{split}
d_\WW(f, g) 
& = d_\WW\big((f_1, \dots, f_k), (g_1, \dots, g_k)\big) \\
& =  \sum_{\l=1}^\infty 
\frac{1}{2^\l}
\frac{ \sum_{j =1} ^k\| f_j-g_j\|_{C  ([ 0, 2^\l]; W^{-\eps, p}_\mu(\R^2))}}
{ 1 + \sum_{j =1} ^k \| f_j-g_j\|_{C  ([ 0, 2^\l]; W^{-\eps, p}_\mu(\R^2))}}, 
\end{split}
\label{norm1}
\end{align}

\noi
which induces the compact-open topology (in time)
on $\WW$.
Then,  the metric space 
$ ( \WW,  d_\WW)$ is a bounded Polish space.\footnote{Recall that 
 the space of continuous functions from 
a  separable metric space 
 $X$ to another separable metric space $Y$ 
 with the compact-open  topology is separable; see \cite{Mi}.
See also the paper \cite[Corollary 3.3]{Khan86}. }

Given $R > 0$, 
we also set
\begin{align}
\WW(R) = \big(L^\infty([0; R];  W^{-\eps, p}(B_R))\big)^{\otimes k}
\label{norm2}
\end{align}

\noi
with the norm given by 
\begin{align*}
\| f \|_{\WW(R)}
= \| (f_1, \dots, f_k) \|_{\WW(R)}
= 
\sum_{j = 1}^k \|f_j\|_{L^\infty([0; R];  W^{-\eps, p}(B_R))}.
\end{align*}

\smallskip

\noi
(ii) 
Let $\Xi_0 = (\Xi_{01}, \dots, \Xi_{0k}) \in \WW(R)$.
Given $R > 0$, let $\|\Xi_1 \|_{\Xi_0, R}$ denote the smallest  constant $K_1 \geq 1$
such that  the following inequality holds  for $1\leq j,\ell \leq k$
with $j + \l \le k$:
\begin{align}
\begin{split}
\int_0^T \|  :\!\Phi_1^j(t)\!:   \Xi_{0\l}(t) \|_{W^{-2(k+1)\eps, p}(B_{R})}^p dt
\leq K_1\int_0^T \| \Xi_{0\l}(t) \|_{W^{-\eps, p}(B_{R})}^p dt 
\end{split}
\label{TA1}
\end{align}

\noi
for any $0 < T \le R$.

\medskip
\noi
(iii) Let 
$\wt \Xi_0 = (\wt \Xi_{01}, \dots, \wt \Xi_{0k}) \in \WW(R)$.
Given $R > 0$, let  $K_2,  K_3 \ge 1$  be the  smallest  constants
such that  the following inequalities hold
for $1\leq j,\ell \leq k$
with $j + \l \le k$:
\begin{align}
\begin{aligned}
\int_0^T  \|  :\!\Phi_1^j(t)\! :   \wt \Xi_{0\l}(t) \|_{W^{-2(k+1)\eps, p}(B_{R})}^p dt
\leq K_2\int_0^T \|  \wt  \Xi_{0\l}(t) \|_{W^{-\eps, p}(B_{R})}^p dt
\end{aligned}
\label{TA2}
\end{align}
and

\noi
\begin{align}
\begin{aligned}
&\int_0^T \| :\!\Phi_1^j(t)\! :  (\,  \Xi_{0\l}(t) -  \wt \Xi_{0\l}(t) )
   \|_{W^{-2(k+1)\eps, p}(B_{R})}^p dt
\\
&\qquad\qquad\qquad
\leq K_3\int_0^T \|  \Xi_{0\l}(t) -  \wt \Xi_{0\l}(t)  \|_{W^{-\eps, p}(B_{R})}^p dt  
\end{aligned}
\label{TA3}
\end{align}

\noi
for any $0 < T \le R$.
Then, we set 
$\|\Xi_1 \|_{\Xi_0, \wt{\Xi}_0, R} = \max(K_1, K_2, K_3)$, where $K_1$ is 
 as in Part (ii).

\end{definition}

In the remaining part of this paper, we fix small $\eps > 0$, finite $p = p(\eps) \gg 1$,\footnote{The condition 
$p = p(\eps) \gg 1$ is needed in Propositions \ref{PROP:LWP}, \ref{PROP:stab},
and \ref{PROP:need23}.}
 and $\mu > 0$
(to be chosen later), 
and often drop  dependence on these parameters.
For simplicity of notation, we set
\begin{align}
\eps_k = 2(k+1) \eps
\label{eps1}
\end{align}

\noi
in the remaining part of this section.
This $\eps_k$  comes from the condition 
appearing in the proof of   Proposition \ref{PROP:need23}\,(i).

\subsection{Local well-posedness and stability of SdNLW}
\label{SUBSEC:4.1}
In this subsection, 
we study 
local well-posedness and stability of SdNLW \eqref{NW1}, 
where we view 
$u_0$, $u_1$, and $\xi$ in \eqref{NW1} as given deterministic spatial\,/\,space-time distributions\footnote{As
for $\xi$, we should really view $\Psi = \Psi(\xi)$ as a given space-time distribution.}
such that the enhanced data sets $\Xi_0(u_0)$ and $\Xi_1(u_1, \xi)$
in \eqref{xi1} and \eqref{xi2} make sense.
In order to emphasize the dependence on $u_0$, $u_1$, and $\xi$, 
we may write  $u = u(u_0, u_1, \xi)$.
By writing $u$ as in \eqref{decomp2}, 
we study the equation~\eqref{NW5b}
for the remainder term 
\begin{align}
\begin{split}
v 
& =u - \S(t)u_0 - \D(t) u_1 - \Psi\\
& = u - \Phi_0(u_0) - \Phi_1(u_1, \xi).
\end{split}
\label{YY0}
\end{align}

Given $\Xi_0 = (\Xi_{01}, \dots, \Xi_{0k})$, 
we  consider \eqref{NW5b}, starting from $t = t_0$
with $(v, \dt v)|_{t = t_0} = (v_0, v_1)$, 
where
we replace $:\! \Phi_0^m(u_0) \!:$ by $\Xi_{0m}$.:
\begin{align}
\begin{split}
v(t) 
& = \S(t-t_0) v_0 + \D(t- t_0) v_1\\
& \quad
-  \sum_{\l = 0}^k \sum_{m = 0}^\l
{k\choose \ell}
{\l \choose m}\\
& \hphantom{XXX}
\times  \int_{t_0}^t \D(t - t')\big( 
\Xi_{0m} (t')
 :\! \Phi_1^\l(u_1, \xi) (t')\!:
v^{k- \l}(t')\big) dt'.
\end{split}
\label{NW5bb}
\end{align}

\noi
Given  $s\in \R$ and $\tau > 0$, 
we set 
\begin{align*}
 X_{t_0, R}^s(\tau)
& = L^\infty([t_0, \max(t_0+ \tau, R)]; 
H^{s}(B_{R - t}) ),\\
 \vec X_{t_0, R}^s(\tau)
& = X_{t_0, R}^s(\tau) \times X_{t_0, R}^{s-1}(\tau).
\end{align*}

\begin{proposition}[local well-posedness]\label{PROP:LWP}
Let 
$R > 0$, $0 \le t_0 < R$, and $(v_0, v_1) \in \vec H^{1-\eps_k}(B_{R- t_0})$, 
where $\eps_k = 2(k+1)\eps$ is as in \eqref{eps1}.
Suppose that there exist $M_0, M_1 \ge1$ such that
\begin{align}
\begin{split}
\| \Xi_0 \|_{\WW(R) }
 \le M_0 <\infty,\\
\| \Xi_1(u_1, \xi) \|_{\Xi_0, R} \leq M_1 < \infty, 
\end{split}
\label{NW66}
\end{align}

\noi
where 
$\WW(R)$ is as in \eqref{norm2}
and 
$\| \Xi_1(u_1, \xi) \|_{\Xi_0, R}$ is as in Definition \ref{DEF:X}\,(ii).
Then, there exist small $\tau = \tau\big(\|(v_0, v_1)\|_{\vec H^{1-\eps_k}(B_{R- t_0})}, M_0, M_1\big)>0$ 
and a unique solution $v$ to~\eqref{NW5bb}
on $\bC_R\cap \big\{ t_0\leq  t \leq \min(t_0 + \tau, R) \big\}$
such that 
\begin{align}
\| (v, \dt v) \|_{ \vec X_{t_0, R}^{1-\eps_k}(\tau)}
 \le C_0 \| (v_0, v_1)\|_{\vec H^{1-\eps_k}(B_{R- t_0})}
\label{NW67}
\end{align}

\noi
for some absolute constant $C_0 > 0$.

\end{proposition}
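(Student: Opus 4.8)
The plan is to run a standard contraction mapping argument on the Duhamel formulation \eqref{NW5bb}, but performed on the light cone $\bC_R$ via the restriction (localized) norms $X^s_{t_0,R}(\tau)$, so that the finite speed of propagation of the damped wave propagators $\D$ and $\S$ keeps all the estimates confined to balls $B_{R-t}$. First I would set up the solution map
\[
\Gamma v(t) = \S(t-t_0)v_0 + \D(t-t_0)v_1 - \sum_{\l=0}^k\sum_{m=0}^\l {k\choose \l}{\l\choose m}\int_{t_0}^t \D(t-t')\big(:\!\Phi_0^m(t')\!:\, :\!\Phi_1^{\l-m}(t')\!: v^{k-\l}(t')\big)\,dt'
\]
on the ball of radius $2C_0\|(v_0,v_1)\|_{\vec H^{1-\eps_k}(B_{R-t_0})}$ in $\vec X^{1-\eps_k}_{t_0,R}(\tau)$, and show it is a contraction for $\tau$ small depending on that radius, $M_0$, and $M_1$. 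The homogeneous part is handled by the standard energy estimate for the linear damped wave equation, which by finite speed of propagation respects the cone and gives the constant $C_0$ and the bound \eqref{NW67}; this is where the factor $C_0$ and the loss to $H^{1-\eps_k}$ originate.

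The core of the argument is the nonlinear estimate on each Duhamel term. For fixed $\l$ and $m$, I would estimate $\D(t-t')(:\!\Phi_0^m\!:\, :\!\Phi_1^{\l-m}\!: v^{k-\l})$ in $H^{1-\eps_k}(B_{R-t})$ by first using the $1/2$-smoothing of $\D$ (losing one derivative but with the extra time integration giving a net gain of a fractional power of $\tau$) to reduce to controlling the nonlinearity in $H^{-\eps_k}(B_{R-t'})$. Then, using the product estimate Lemma \ref{LEM:bilin1}\,(ii) (in its localized form, Remark \ref{REM:R}), I would pair the negative-regularity factor $:\!\Phi_0^m\!:\, :\!\Phi_1^{\l-m}\!:$ against the positive-regularity factor $v^{k-\l}$. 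The key point is that the quantity $\|\Xi_1\|_{u_0,R}\le M_1$ in Definition \ref{DEF:X}\,(ii) was tailored precisely so that the product $:\!\Phi_1^j\!:\, :\!\Phi_0^\l\!:$ in $W^{-\eps_k,p}(B_R)$ is controlled by $M_1$ times the $W^{-\eps,p}(B_R)$-norm of $:\!\Phi_0^\l\!:$ alone, and the latter is bounded by $M_0$ through $\|\Xi_0(u_0)\|_{\WW(R)}$. Thus each of the $k$ data factors contributes a power of $M_0$ (or $M_1M_0$), while the $v^{k-\l}$ factor is estimated by $\|v\|_{W^{1-\eps_k,p+}}^{k-\l}$ via the fractional Leibniz rule (Lemma \ref{LEM:bilin1}\,(i)) and Sobolev embedding $H^{1-\eps_k}\hookrightarrow W^{1-\eps_k,p+}\hookrightarrow L^\infty$ in two dimensions (here $p=p(\eps)\gg1$ is needed). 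Collecting all terms gives
\[
\|(\Gamma v,\dt\Gamma v)\|_{\vec X^{1-\eps_k}_{t_0,R}(\tau)} \le C_0\|(v_0,v_1)\|_{\vec H^{1-\eps_k}(B_{R-t_0})} + C\,\tau^{\theta}\,P(M_0,M_1)\big(1+\|(v,\dt v)\|_{\vec X^{1-\eps_k}_{t_0,R}(\tau)}\big)^k
\]
for some $\theta>0$ and a polynomial $P$, and the difference estimate $\Gamma v - \Gamma \wt v$ is entirely analogous (multilinear, so each difference has a telescoping bound). Choosing $\tau$ small enough in terms of $\|(v_0,v_1)\|$, $M_0$, $M_1$ closes the contraction, and uniqueness on the cone follows from the same difference estimate.

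The main obstacle I expect is bookkeeping the restriction norms correctly: one has to verify that the energy estimate for $\D$ and $\S$ and the product estimates all descend to the localized spaces $H^s(B_{R-t})$ compatibly with the shrinking balls along the cone, which requires choosing extensions carefully (as in Remark \ref{REM:R}) and using finite speed of propagation to ensure the Duhamel integral over $[t_0,t']$ only sees data on $B_{R-t''}$ for $t''\le t'$. A second, more technical point is the precise matching of regularities: $\eps_k = 2(k+1)\eps$ is chosen so that after the product pairing (which costs $\eps_k$ on the rough side) and the $\D$-smoothing one lands back in $H^{1-\eps_k}$; I would double-check that the inequalities $\tfrac1p+\tfrac1q\le\tfrac1r+\tfrac{\eps_k}{2}$ required by Lemma \ref{LEM:bilin1}\,(ii), together with the Sobolev exponents for the $v^{k-\l}$ factor, are simultaneously satisfiable for $p=p(\eps)$ large and $\eps$ small — this is routine but is where the choice of parameters is pinned down. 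None of this is conceptually hard given the tools already assembled in Section 2; it is essentially the local well-posedness argument of \cite{GKOT} transplanted to the cone, which is exactly what the paper says.
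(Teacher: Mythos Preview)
Your proposal is correct and follows essentially the same approach as the paper: set up the contraction map $\Gamma$ on the cone via restriction norms and extensions (as in Remark~\ref{REM:R}), use the finite speed of propagation to insert $\ind_{\bC_R}$, estimate the Duhamel term via Lemma~\ref{LEM:bilin1} and Sobolev to arrive at the bound $\|(v_0,v_1)\| + \tau^\theta M_0 M_1(1+\|\vec v\|)^k$, and close by a difference estimate. One small slip: the embedding $H^{1-\eps_k}\hookrightarrow W^{1-\eps_k,p+}\hookrightarrow L^\infty$ is false in two dimensions --- what is actually used (and what the paper does) is the Sobolev embedding $H^{1-\eps_k}(\R^2)\hookrightarrow L^q$ for large finite $q$, combined with Lemma~\ref{LEM:bilin1}\,(ii) with suitable H\"older exponents; this is exactly the ``routine matching of regularities'' you flag at the end.
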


\begin{proof}
The proof of Proposition \ref{PROP:LWP}
follows from a slight modification of Proposition~4.1 in~\cite{GKOT}
together with Definition \ref{DEF:X}\,(ii).
Denote the right-hand side of \eqref{NW5bb} by 
$\G(v) = \G_{\Xi_0, \Xi_1(u_1, \xi)}(v)$, 
and  set 
$\vec \G(v) = ( \G(v), \dt \G(v))$.
Then, by the finite speed of propagation, 
we have
\begin{align}
\begin{split}
\G(v)(t) 
& =\S(t-t_0) v_0 + \D(t- t_0) v_1 -  \sum_{\l = 0}^k \sum_{m = 0}^\l
{k\choose \ell}
{\l \choose m}\\
& \hphantom{X}
\times \int_{t_0}^t \D(t - t')\big(\ind_{\bC_R}
\cdot \Xi_{0m} (t')
 :\! \Phi_1^{\l- m}(u_1, \xi) (t')\!:
v^{k- \l}(t')\big) dt'
\end{split}
\label{NW5c}
\end{align}

\noi
on the cone $\bC_R$.

Let $(\vv_0, \vv_1)$ be an extension of 
$(v_0, v_1)$ onto $\R^2$.
Given $\vec v = (v, \dt v)$ on  
$\big\{(x, t) \in \bC_R : t \in  [t_0, \max(t_0+ \tau, R)]\big\}$, 
let $\vec \vv = (\vv, \vv')$ be an extension onto $\R^2\times \R_+$.
Similarly, let $\Phi^{m, \l-m}_{0, 1}$
be an extension of 
$ \Xi_{0m} 
 :\! \Phi_1^{\l- m} \!:$ from $B_R \times [0, R]$ to $\R^2\times [0, R]$.
Then, proceeding as in the proof of 
Proposition~4.1 in~\cite{GKOT}, 
it follows 
from \eqref{NW5c},  Lemma~\ref{LEM:bilin1}, and Sobolev's inequality  that
\begin{align*}
 \|\vec \G(v)\|_{\vec X_{t_0, R}^{1-\eps_k}(\tau)}
&  \les \| (\vv_0, \vv_1)\|_{\vec H^{1-\eps_k}(\R^2)} \\
 & \quad  + \tau^\ta  \sum_{\l = 0}^k \sum_{m = 0}^\l
c_{k, \l, m}
\| \Phi^{m, \l-m}_{0, 1}\|_{L^p_{[t_0, t_0+\tau]} W^{-\eps_k, p}_x(\R^2)}
\| \vv\|_{L^\infty_{[t_0, t_0+\tau]}  H^{1-\eps_k}_x(\R^2) }^{k-\l}
\end{align*}

\noi
for some $\ta > 0$ and 
large $p \gg 1$, 
where $\eps_k =  2(k+1)\eps$ is as in \eqref{eps1}
with sufficiently small $\eps > 0$.
Then, by taking infima over all the extensions
$(\vv_0, \vv_1)$, $\vec \vv$,  and $\Phi^{m, \l-m}_{0, 1}$
and then using 
Definition \ref{DEF:X}\,(ii) with \eqref{NW66}, we obtain
\begin{align}
\begin{split}
 \|\vec \G(v)\|_{\vec X_{t_0, R}^{1-\eps_k}(\tau)}
& \les 
\| (v_0,v_1)\|_{\vec H^{1-\eps_k}(B_{R- t_0})}\\
& \quad 
+ 
\tau^\ta M_1 \sum_{\l = 0}^k \sum_{m = 0}^\l
 c_{k, \l, m}
\| \Xi_{0m} \|_{L^p_\tau W^{-\eps, p}_x(B_{R })}
\| \vec v\|_{\vec X_{t_0, R}^{1-\eps_k}(\tau) }^{k-\l}\\
& 
\les\| (v_0,v_1)\|_{\vec H^{1-\eps_k}(B_{R- t_0})}
 + \tau^\ta M_0  M_1 
\Big(1 + \| \vec v\|_{X_{t_0, R}^{1-\eps_k}(\tau)}\Big)^{k}.
\end{split}
\label{NW6}
\end{align}

\noi
A similar argument yields the following difference estimate:
\begin{align}
\begin{split}
&  \|\vec \G(v_1) - \vec \G(v_2)\|_{\vec X^{1-\eps_k}_{t_0, R}(\tau)}\\
& \quad \les \tau^\ta M_0  M_1 
\Big(1 + \| \vec v_1\|_{X_{t_0, R}^{1-\eps_k}(\tau) }^{k-1}+ 
\| \vec v_2\|_{\vec X_{t_0, R}^{1-\eps_k}(\tau)}^{k-1}\Big)
\| \vec v_1 - \vec v_2\|_{\vec X_{t_0, R}^{1-\eps_k}(\tau)}, 
\end{split}
\label{NW7}
\end{align}

\noi
where $\vec v_j = (v_j, \dt v_j)$, $j = 1, 2$.
Hence, by taking $\tau = \tau\big(\|(v_0, v_1)\|_{\vec H^{1-\eps_k}(B_{R- t_0})}, M_0, M_1\big)> 0$
sufficiently small, 
the desired claim follows from a standard contraction argument
with \eqref{NW6} and \eqref{NW7}.
\end{proof}

\begin{proposition}[stability]\label{PROP:stab}
Suppose that
$\Xi_0$, $u_1$, and $\xi$ satisfy \eqref{NW66}
and that 
  $v = v(\Xi_0, u_1, \xi)$  
is 
a solution to~\eqref{NW5bb} \textup{(}with $t_0 = 0$\textup{)}
on the cone $\bC_R$ such that
\begin{align}
\| \vec v\|_{L^\infty([0, R]; \vec H^{1-\eps_k}(B_{R - t}))}
&\leq  M_2
\label{YY1}
\end{align}

\noi
for some $M_2 \ge 1$.
Then,  there exists  $\dl_* = \dl_*(R, M_0, M_1, M_2) > 0$ such that
for any $\wt{\Xi}_0$ satisfying
\begin{align}
\begin{split}
\| \Xi_0 - \wt \Xi_0 \|_{\WW(R)}&  <\dl_*,
\\
\| \Xi_1(u_1, \xi)\|_{\Xi_0, \wt \Xi_0,  R} \leq M_1 & < \infty, 
\end{split}
\label{YY2}
\end{align}

\noi
where 
$\wt \Xi_0 = (\wt \Xi_{01}, \dots, \wt \Xi_{0k})$
and 
$\WW(R)$ is as in \eqref{norm2}, 
 there exists 
$\wt v = \wt v(\wt \Xi_0, u_1, \xi)
\in L^\infty([0,  R]; 
H^{s}(B_{R - t}) )$,  
satisfying 
\begin{align}
\begin{split}
\wt v(t) 
& = - \sum_{\l = 0}^k \sum_{m = 0}^\l
{k\choose \ell}
{\l \choose m}\\
& \hphantom{XXX}
\times \int_0^t \D(t - t')\big( 
\wt \Xi_{0m}(t')
:\! \Phi_1^{\l-m} (t')\!:
\wt v^{k- \l}(t')\big) dt'
\end{split}
\label{YY5a}
\end{align}

\noi
such that
\begin{align}
\begin{split}
 \sup_{0 \le t \le R} & \|  \vec v(t) - \vec {\wt{v}}(t) \|_{\vec H^{1-\eps_k}(B_{R-t})}\\
& \leq C_0(R, M_0, M_1, M_2) \| \Xi_0 - \wt \Xi_0\|_{\WW(R)}, 
\end{split}
\label{YY3}
\end{align}

\noi
where $\vec v = (v, \dt v)$, $\vec {\wt v} = (\wt v, \dt \wt v)$, and $\eps_k = 2(k+1) \eps$ is as in \eqref{eps1}.

\end{proposition}

\begin{proof}

Note that Proposition \ref{PROP:LWP} with \eqref{NW66} and \eqref{YY2}
guarantees existence of a solution 
$\wt v = \wt v(\wt \Xi_0, u_1, \xi)$
to~\eqref{YY5a} 
on a short time interval.
Namely, it
 satisfies
\begin{align}
\begin{split}
\wt v(t) 
& = - \sum_{\l = 0}^k \sum_{m = 0}^\l
{k\choose \ell}
{\l \choose m}\\
& \hphantom{XXX}
\times \int_0^t \D(t - t')\big( 
\ind_{\bC_R}
\cdot 
\wt \Xi_{0m}(t')
:\! \Phi_1^{\l-m} (t')\!:
\wt v^{k- \l}(t')\big) dt'
\end{split}
\label{YY5}
\end{align}

\noi
on $\bC_R\cap \{0 \le t \le \tau\}$
for some $\tau > 0$, 
where 
%
$\Phi_1 = \Phi_1(u_1, \xi)$
 is as in \eqref{NW5}.
 From \eqref{NW67}, we
 have
 \begin{align}
 \|\vec{\wt  v }\|_{L^\infty_\tau \vec H^{1-\eps_k}(B_{R-t})}\les 1.
 \label{YY6}
\end{align}

\noi
As for $v$, 
it satisfies \eqref{NW5bb} (with $t_0 = 0$)
 on $\bC_R$ (with an additional cutoff function $\ind_{\bC_R}$
as in~\eqref{YY5}).

Given an interval $I  \subset \R_+$, let 
\begin{align}
B_I\big(\vec v, \vec{\wt v}\,\big) = \|\vec v \|_{L^\infty(I;  \vec H^{1-\eps_k}(B_{R-t}))}
 + 
 \| \vec{\wt v} \|_{L^\infty(I;  \vec H^{1-\eps_k}(B_{R-t}))}
 \label{YY7}
\end{align}

\noi
Then, 
it follows 
from \eqref{YY1} and \eqref{YY6} that 
\begin{align}
B_{[0, \tau]}\big(\vec v, \vec{\wt v}\,\big)
 \le   M_2 + C_0' \les M_2
 \label{YY7x}
\end{align}

\noi
for $I = [0, \tau]$.
Then, proceeding as in the proof of Proposition \ref{PROP:LWP}
with 
 \eqref{NW5bb}, \eqref{YY5},  Lemma~\ref{LEM:bilin1}, and Sobolev's inequality
 with \eqref{NW66}, \eqref{YY2}, and \eqref{YY7x}
 we have
\begin{align*}
& \|\vec v - \vec{\wt v} \|_{L^\infty_{\tau_0} \vec H^{1-\eps_k}(B_{R-t})}\\
& \quad  \les 
\tau_0^\ta M_1 \sum_{\l = 0}^k \sum_{m = 0}^\l
 c_{k, \l, m}
B_{[0, \tau]}\big(\vec v, \vec{\wt v}\, \big)^{k - \l-1}
 \\
& \hphantom{XXXX}
 \times \Big\{ 
 B_{[0, \tau]}\big(\vec v, \vec{\wt v}\,\big)
\| \Xi_0 - \wt \Xi_0 \|_{\WW(R)}
+ M_0 
\|\vec v - \vec{\wt v} \|_{L^\infty_{\tau_0} \vec H^{1-\eps_k}(B_{R-t})}\Big\}\\
& \quad
\les \tau_0^\ta M_1 M_2^{k-1} 
\Big\{M_2 \| \Xi_0 - \wt \Xi_0 \|_{\WW(R)}
+ M_0 
\|\vec v - \vec{\wt v} \|_{L^\infty_{\tau_0} \vec H^{1-\eps_k}(B_{R-t})}\Big\}
\end{align*}

\noi
for any $0 < \tau_0 \le \tau$.
Hence, by taking $\tau_0 = \tau_0(M_0, M_1, M_2) > 0$ sufficiently small, 
we obtain
\begin{align*}
 \|\vec v - \vec{\wt v} \|_{L^\infty_{\tau_0} \vec H^{1-\eps_k}(B_{R-t})}
 \le C_1 
\| \Xi_0 - \wt \Xi_0 \|_{\WW(R)}.
\end{align*}

By repeating the computation on the second interval $I_2 = [\tau_0, 2\tau_0)$, 
using \eqref{NW5bb} with $t_0 = \tau_0$ and an analogous formulation for $\wt v$, 
we have
\begin{align*}
&  \|\vec v - \vec{\wt v} \|_{L^\infty(I_2; \vec H^{1-\eps_k}(B_{R-t }))}
 \le C_1 (1+ C_2)
\| \Xi_0 - \wt \Xi_0 \|_{\WW(R)}, 
%
\end{align*}

\noi
provided that $\dl_* = \dl_*(M_2) > 0$ in \eqref{YY2}  is sufficiently small that 
\begin{align}
B_{I}\big(\vec v, \vec{\wt v}\,\big)
\les  M_2
 \label{YY7a}
\end{align}

\noi
holds for $I = I_j$, $j = 1, 2$, 
where $B_I\big(\vec v, \vec{\wt v}\,\big)$ is as in \eqref{YY7}.
Then, by iterating the computation on the $j$th interval $I_j = [(j-1) \tau_0, j\tau_0)$, 
we obtain
\begin{align*}
  \|\vec v - \vec{\wt v} \|_{L^\infty(I_j ; \vec H^{1-\eps_k}(B_{R-t}))}
 \le C_1 \sum_{i = 0}^{j - 1}C_2^i \, 
\| \Xi_0 - \wt \Xi_0 \|_{\WW(R)}, 
\end{align*}

\noi
provided that $\dl_* = \dl_*(j,  M_2)  > 0$ in \eqref{YY2} is sufficiently small that 
\eqref{YY7a} holds for $I = I_i$, $i = 1, \dots, j$.

Note that our choice of $\tau_0 = \tau_0(M_0, M_1, M_2) $ does not depend on $\dl_*$
in \eqref{YY2}.
Thus, by iterating this argument $\sim \frac{R}{\tau_0}$ many times
(which requires us to choose 
 $\dl_* = \dl_*\big(\frac{R}{\tau_0},  M_2\big)  
 = \dl_*(R, M_0, M_1, M_2)  > 0$ sufficiently small), 
we obtain \eqref{YY3}.
This concludes the proof of Proposition~\ref{PROP:stab}.
\end{proof}

\subsection{Convergence of the enhanced Gibbs measures}
\label{SUBSEC:4.2}

Let $\Xi_0(\phi)$ be the first enhanced data set defined in \eqref{xi1}.
By viewing $\Xi_0$ as a map sending $\phi$ to $\Xi_0(\phi)$, 
we consider the pushforward measure $\nu_L$ of 
the $L$-periodic $\Phi^{k+1}_2$-measure $\rho_L$ under the map $\Xi_0$.
Namely, $\nu_L$ is given by 
\begin{align}
\nu_L = (\Xi_0)_\# \rho_L.
\label{nuL}
\end{align}

\noi
See also \eqref{ex1}.
In the following, we refer to $\nu_L$ as the {\it enhanced Gibbs measure}.
In this subsection, we study convergence properties of the enhanced Gibbs measure $\nu_L$,
which 
plays an essential role in establishing global well-posedness
of the hyperbolic $\Phi^{k+1}_2$-model \eqref{NW1} on $\R^2$
(as a limit of the $L$-periodic 
hyperbolic $\Phi^{k+1}_2$-model \eqref{NW2})
and  invariance of the Gibbs measure $\rhoo_\infty$
constructed in Theorem \ref{THM:1}\,(i).

\begin{proposition} \label{PROP:tight}
As probability measures on 
$\WW = \big(C(\R_+;  W^{-\eps, p}_\mu(\R^2))\big)^{\otimes k}$, 
the family  $\{\nu_L\}_{L \ge 1}$ 
 is tight, and thus
there exists a sequence  $\{\nu_{L_j}\}_{j \in \N}$
of the $L_j$-periodic enhanced Gibbs measures
that converges weakly to some limit $\nu_\infty$.
 Moreover, 
$\nu_{L_j}$ also  converges
 to the same limit $\nu_\infty$ in the Wasserstein-1 metric.

\end{proposition}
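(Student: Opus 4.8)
Plan of proof. The crux is to obtain moment bounds on $\Xi_0$ evaluated along the parabolic flow that are \emph{uniform in the period} $L$; once these are available, tightness follows routinely from the compact embedding of Lemma~\ref{LEM:cpt} and Prokhorov's theorem. Since the $L$-periodic $\Phi^{k+1}_2$-measure $\rho_L$ is invariant under the $L$-periodic parabolic $\Phi^{k+1}_2$-model \eqref{SHE7} (Da Prato--Debussche; see \eqref{CD2a}), we have $\nu_L=\Law(\Xi_0(X_L(1)))$, where $X_L$ solves \eqref{SHE7} with $\Law(X_{0,L})=\rho_L$ independent of $\xi_L$. Writing $X_L(1)=Y_L(1)+Z_L(1)$ as in \eqref{SHE7a}--\eqref{SHE9} and decomposing the subsequent linear wave evolution as $\S(t)X_L(1)=\S(t)Y_L(1)+\S(t)Z_L(1)$, the Hermite translation identity \eqref{Herm3} gives
\[
:\!\big(\S(t)X_L(1)\big)^\l\!: \,=\, \sum_{m=0}^{\l}\binom{\l}{m}\big(\S(t)Y_L(1)\big)^{\l-m}\, :\!\big(\S(t)Z_L(1)\big)^m\!:\,,
\]
reducing matters to: (a) the genuine function $\S(t)Y_L(1)$, which has \emph{positive} spatial regularity, and (b) the renormalized powers of the \emph{Gaussian} process $\S(t)Z_L(1)$ (whose intrinsic log-divergent variance differs from the nominal $\s_{N,L}$ only by a bounded amount, absorbed via a further use of \eqref{Herm3}).

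For (a): by the coming down from infinity in weighted Sobolev spaces of positive regularity (Proposition~\ref{PROP:CI3}), together with the uniform-in-$L$ moment bounds on $\{:\!Z_L^\l\!:\}$ from \cite[Theorem~5.1]{MW17a} (already invoked after \eqref{CD2}), one has $\sup_{L\ge1}\E[\|Y_L(1)\|_{W^{s,p}_\mu}^q]\les1$ for some $s>0$; applying the mapping properties of $\S(t)$ on weighted Sobolev spaces (cf.\ Lemma~\ref{LEM:wave}) and the weighted product estimates of Lemma~\ref{LEM:bilin2} then controls $(\S(t)Y_L(1))^{\l-m}$ in $C([0,T];W^{-\eps',p}_{\mu_*}(\R^2))$, uniformly in $L$. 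For (b): a Riemann sum approximation as in \eqref{sig1} shows that the covariance kernel of $\P_N\S(t)Z_L(1)$ is controlled uniformly in $L\ge1$; hence, testing against test functions localized in unit balls, summing against the weight $w_{\mu_*}$ over the spatial decomposition \eqref{WSP}, and using the Wiener chaos estimate (Lemma~\ref{LEM:hyp}) with Lemma~\ref{LEM:Kol}, we get $\sup_{L\ge1}\E[\|:\!(\S(t)Z_L(1))^m\!:\|_{W^{-\eps',p}_{\mu_*}}^q]\les1$, uniformly on $[0,T]$. An analogous computation (exploiting that $\S(t)$ is H\"older continuous in time at the cost of a small loss of derivatives, which both $Y_L(1)$ and $Z_L(1)$ can afford) bounds the $L^q(\O)$-norm of the time increments by a positive power of $|t-s|$. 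Combining (a) and (b) via Lemma~\ref{LEM:bilin2} yields, for each $T>0$, suitable $\eps'\in(0,\eps)$ and $\mu_*\in(0,\mu/2^\dl)$, some $\b>0$, and $q$ large,
\begin{align*}
\sup_{L\ge1}\E\big[\|\Xi_0(X_L(1))(t)\|_{(W^{-\eps',p}_{\mu_*})^{\otimes k}}^q\big]&\les 1,\\
\sup_{L\ge1}\E\big[\|\Xi_0(X_L(1))(t)-\Xi_0(X_L(1))(s)\|_{(W^{-\eps,p}_\mu)^{\otimes k}}^q\big]&\les |t-s|^{1+\b},
\end{align*}
for all $s,t\in[0,T]$.

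With these bounds, tightness follows in the standard way. For $M\ge1$ let $K_M\subset\WW$ be the set of those $f$ whose restriction to each $[0,2^n]$ has $C([0,2^n];W^{-\eps',p}_{\mu_*})$-norm and $C^\al([0,2^n];W^{-\eps,p}_\mu)$-seminorm (for a fixed small $\al<\b/q$) bounded by $M2^n$; by the compact embedding $W^{-\eps',p}_{\mu_*}(\R^2)\hookrightarrow W^{-\eps,p}_\mu(\R^2)$ of Lemma~\ref{LEM:cpt} (valid since $\mu_*<\mu/2^\dl$ and $\eps'<\eps$), the Arzel\`a--Ascoli theorem, and a diagonal argument over $n$, the set $K_M$ is precompact in $\WW$ for the compact-open topology. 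The Kolmogorov continuity criterion applied to the increment bound above, together with Chebyshev's inequality, gives $\sup_{L\ge1}\nu_L(K_M^c)\to0$ as $M\to\infty$, so $\{\nu_L\}_{L\ge1}$ is tight on the bounded Polish space $(\WW,d_\WW)$. By Prokhorov's theorem (Lemma~\ref{LEM:Pro}) there is a subsequence along which $\nu_{L_j}$ converges weakly to some $\nu_\infty$; and since $d_\WW$ is a bounded metric (Definition~\ref{DEF:X}\,(i)), Remark~\ref{REM:wass} shows that weak convergence coincides with convergence in the Wasserstein-1 metric, so $\nu_{L_j}\to\nu_\infty$ in $d_{\rm Wass}$ as well.

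I expect the main difficulty to be the \emph{uniform-in-$L$ stochastic estimates} behind step~(b) and their interface with step~(a): one must control the covariance of the wave-evolved heat convolution $\S(t)Z_L(1)$ uniformly in the period (a Riemann sum / Poisson summation comparison with the whole-space kernel, in the spirit of \eqref{sig1}), carry the weighted Sobolev bookkeeping through the decomposition \eqref{WSP} without losing uniformity in $L$, reconcile the renormalization constant defining $\Xi_0$ with the intrinsic variance of $\S(t)Z_L(1)$, and track the coupling between the heat-time-$1$ evaluation and the fresh wave evolution so that Proposition~\ref{PROP:CI3} and \cite[Theorem~5.1]{MW17a} can be applied in tandem. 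By contrast, the remaining topological steps (Arzel\`a--Ascoli, Prokhorov, and the passage from weak to Wasserstein-1 convergence via boundedness of $d_\WW$) are routine.
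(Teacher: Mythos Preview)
Your proposal is correct and follows essentially the same strategy as the paper: reduce to $\Xi_0(X_L(1))$ via invariance of $\rho_L$ under the parabolic flow, expand $:\!(\S(t)X_L(1))^\l\!:$ via \eqref{Herm3} into products of $(\S(t)Y_L(1))^{\l-m}$ (controlled by Proposition~\ref{PROP:CI3} and Lemma~\ref{LEM:wave}) and $:\!(\S(t)Z_L(1))^m\!:$ (controlled by Wiener chaos estimates uniform in $L$), establish uniform moment and time-increment bounds, and conclude tightness by Kolmogorov plus a compactness criterion, with Wasserstein-1 convergence for free since $d_\WW$ is bounded. The only cosmetic differences are that the paper computes the weighted Sobolev norm of the Gaussian pieces directly from the definition \eqref{WSP} (rather than via Lemma~\ref{LEM:Kol} plus summation over the spatial decomposition) and cites Billingsley's tightness criterion for $C([0,T];E)$ in place of your explicit Arzel\`a--Ascoli/Lemma~\ref{LEM:cpt} argument.
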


\begin{proof}  
By the definition \eqref{nuL}, 
we have $\nu_L = \Law (\Xi_0(\phi))$
with $\Law (\phi) = \rho_L$.
Let $X_L$ be the solution to the $L$-periodic SNLH \eqref{SHE7}
with $\Law(X_{0, L}) = \rho_L$.
Then, by the invariance \eqref{CD2a} of $\rho_L$ under~\eqref{SHE7},
we  have $\Law(X_L(1)) = \rho_L$, and thus 
$\nu_L = \Law (\Xi_0(X_L(1))$, 
where
\begin{align}
\Xi_0(X_L(1) ) = \big( \S(t) X_L(1), \,  :\!(\S(t) X_L(1) )^2\!: \, , \dots, \,  :\!(\S(t) X_L(1))^k\!: \big) .
\label{view0}
\end{align}

\noi
Hence, tightness of $\{ \nu_L\}_{L \ge 1} $ 
follows once we prove tightness of $\big\{ \Law (\Xi_0(X_L(1))\big\}_{L\ge 1}$.
Once we have tightness of 
$\{ \nu_L\}_{L \ge 1} $, 
the Prokhorov theorem (Lemma \ref{LEM:Pro})
implies that there exists 
a subsequence 
of $\{\nu_L\}_{L \ge 1}$ converging weakly to some limit $\nu_\infty$.
Moreover, noting that the metric $d_\WW$ on $\WW$ defined in \eqref{norm1} is bounded, 
it follows from 
Lemma \ref{LEM:wass} and 
Remark~\ref{REM:wass}
that the same subsequence of 
$\{ \nu_L\}_{L \ge 1} $ converges to the same limit $\nu_\infty$
in the Wasserstein-1 metric, 
which shows 
the second claim in Proposition~\ref{PROP:tight}.
Therefore, 
we focus on 
proving
 tightness of $\big\{ \Law (\Xi_0(X_L(1))\big\}_{L\ge 1}$
 in the following.

\medskip

By the decomposition \eqref{SHE7a}, 
we have 
\begin{align}
\S(t) X_L(1)  = \S(t) Y_L(1) + \S(t) Z_L(1).
\label{view1}
\end{align}

\noi
Then, from \eqref{Herm3}, 
we have 
\begin{align}
:\!(\S(t) X_L(1))^\l\!: \, 
= \sum_{m=0}^\l \binom{\l}{m}    (\S(t)Y_L(1) )^{\l-m}:\!(\S(t)Z_L(1))^m\!: 
\label{view2}
\end{align}

\noi
for $\l = 1 , \dots, k$.
When $m\ne 0$,  Lemma \ref{LEM:bilin2}\,(ii) yields
\begin{align}
\begin{split}
&  \| (\S(t)Y_L(1) )^{\l-m}:\!(\S(t)Z_L(1))^m\!: \|_{W^{-\eps, p}_\mu}\\
& \quad 
\les 
\| :\!(\S(t)Z_L(1))^m\!: \|_{W^{-\eps, p_1}_\mu}
\| (\S(t)Y_L(1) )^{\l-m}\|_{W^{\eps, p_2}_{\frac \mu{2^\dl}}} 
\end{split}
\label{view3}
\end{align}

\noi
for $1 <  p_1, p_2 < \infty$
with $p_1 \gg 1$, 
satisfying $\frac 1p
\le \frac 1{p_1}  + \frac 1{p_2} \leq \frac 1p + \frac \eps 2$.
When $m = 0$, we simply use the bound:
\begin{align*}
  \| (\S(t)Y_L(1) )^{\l} \|_{W^{-\eps, p}_\mu}
\les 
\| (\S(t)Y_L(1) )^{\l}\|_{W^{\eps, p_2}_{\frac \mu{2^\dl}}} 
\end{align*}

\noi
for $p_2 \ge p$
and apply
 the fractional Leibniz rule (Lemma \ref{LEM:bilin2}\,(i));
see~\eqref{view4} below.

\smallskip

\noi
$\bullet$ {\bf Part 1:}
We first estimate $ :\!(\S(t)Z_L(1))^m\!: \, $, 
$m = 1, \dots, k$.
Recall from \eqref{lin3} with \eqref{lin2} that 
\begin{align*}
\S(t) & = \dt \D(t) + \D(t)
 = e^{-\frac{t}{2}} \cos( t \jbb{\nb} ) + \frac{1}{2}  e^{-\frac{t}{2}}  \frac{\sin( t\jbb{\nb} )}{\jbb{\nb}}, 
\end{align*}

\noi
where
\begin{align}
\jbb{\nb}  = \sqrt{\tfrac{3}{4} - \Dl}.
\label{SS0}
\end{align}

\noi
Given $\ld \in \Z^2_L$, we set 
\begin{align}
\ft {\S(t)}(\ld)  = e^{-\frac{t}{2}} \cos( t \jbb{2\pi \ld } ) 
+ \frac{1}{2}  e^{-\frac{t}{2}}  \frac{\sin( t\jbb{2\pi \ld } )}{\jbb{2\pi \ld }}, 
\label{SS1}
\end{align}

\noi
where $\jbb{x} = \sqrt{\frac 34 + |x|^2}$.
From \eqref{SHE8}, we have
\begin{align}
\S(t) Z_L(1) &=  \sqrt 2\int_{0}^1  \S(t) P(1-t') \xi_L(dt'), 
\label{SS1a}
\end{align}

\noi
where $\xi_L$ is as in \eqref{wh4} and \eqref{wh5}.

We first consider the case $m = 1$.
From \eqref{SS1a} with \eqref{wh4} and \eqref{wh5}, we have
\begin{align}
\begin{aligned}
\E & \Big[ \big| \jb{\nb}^{-\eps}\big( \phi_j \S(t) Z_L(1) \big) (x) \big|^2 \Big] \\
&=\frac{2}{L^2} \E \Bigg[  \bigg|    \sum_{n\in\Z^2}
\int_{\R^2} \frac{1}{\jb{\eta}^\eps}
\ft{ \S(t)}\big(\tfrac nL\big)\\
&\hphantom{XXXX}
\times 
\int_{0}^1
e^{-(1-t') \jb{ \frac{2\pi n}{L}  }^2 } \ft{\xi}_L\big( \tfrac{n}{L}, dt' \big) 
\ft \phi_j \big(\eta- \tfrac{n}{L}\big)
  e^{2 \pi i\eta\cdot x} d\eta  \bigg|^2 \Bigg].
\end{aligned}
\label{SS2}
\end{align}

\noi
Recall that $\big\{ \ft{\xi}_L\big(\cdot, \tfrac{n}{L} \big) \big\}_{n \in \Z^2}$
is a family of independent 
temporal white noises
conditioned that $\ft \xi_L\big(- \tfrac nL\big) = \cj{\ft \xi_L\big( \tfrac nL\big)}$, 
$n \in \Z^2$,
and thus the temporal stochastic integrals in \eqref{SS2}
are standard Wiener integrals.
Hence, we have 
\begin{align}
\eqref{SS2} 
&\les \frac{e^{-t} }{L^2} \sum_{n\in\Z^2} \frac{1}{ \jb{ \frac{n}{L}  }^2 } 
 \bigg| \int_{\R^2} \frac{1}{\jb{\eta}^\eps} \ft \phi_j \big(\eta- \tfrac{n}{L}\big)
  e^{2\pi i\eta\cdot x} d\eta  \bigg|^2.
\label{SS3}
\end{align}

From 
the triangle inequality 
$\jb{\frac nL}^\eps\les \jb{\eta}^\eps \jb{\eta - \tfrac nL}^\eps$
with 
\eqref{Damp5a} and \eqref{Damp5b}, we have
\begin{align}
\begin{split}
 \bigg| \int \frac{1}{\jb{\eta}^\eps} \ft \phi_j \big(\eta- \tfrac{n}{L}\big)
  e^{2\pi i\eta\cdot x} d\eta  \bigg|
& \les  \frac{1}{\jb{\frac nL}^\eps}  \int_{\R^2}
\big |\jb{\eta - \tfrac nL}^\eps \ft \phi_j \big(\eta- \tfrac{n}{L}\big)\big| d\eta \\
& \les   \frac{1}{\jb{\frac nL}^\eps} . 
\end{split}
\label{SS4}
\end{align}

\noi
On the other hand, we have
\begin{align}
\int_{\R^2} \frac{1}{\jb{\eta}^\eps} \ft \phi_j \big(\eta- \tfrac{n}{L}\big)
  e^{2\pi i\eta\cdot x} d\eta 
= \int_{\R^2} G_\eps(x-y)   \phi_j (y) e^{2\pi i \frac nL\cdot y} dy, 
\label{SS5}
\end{align}

\noi
where $G_\eps$ is the kernel of the Bessel potential $\jb{\nb}^{-\eps}$
of order $\eps$.
Recall the following bound on the kernel $G_\eps(x)$ (see \cite[Proposition 1.2.5]{Gra2}):
\begin{align}
|G_\eps(x)| 
\les \begin{cases}
e^{-\frac{|x|}2}, & \text{for } |x| \geq 2,\\
|x|^{\eps - 2},  & \text{for } |x| <  2,
\end{cases}
\label{SS6}
\end{align}

\noi
since  $0 < \eps < 2$.	
Thus,  it follows from \eqref{SS5} and \eqref{SS6}  with \eqref{phi} that 
\begin{align}
\begin{split}
\bigg|& \int_{\R^2} \frac{1}{\jb{\eta}^\eps} \ft \phi_j \big(\eta- \tfrac{n}{L}\big)
  e^{2\pi i\eta\cdot x} d\eta \bigg|
\le \int_{\R^2} |G_\eps(x-y)|   |\phi_j (y)| dy\\
& = \int_{|x-y|\ge 2} |G_\eps(x-y)|   |\phi_j (y)| dy
+ \int_{|x-y|<  2} |G_\eps(x-y)|   |\phi_j (y)| dy\\
& \les \int_{|x-y|\ge 2} \frac{1}{|x- y|^{M+2}}  \frac{1}{\jb{\frac{y}{2^j}}^M}  dy
+ \int_{|x-y|<  2}\frac{1}{|x- y|^{2-\eps}}  \frac{1}{\jb{\frac{x-y}{2^j}}^M}  \frac{1}{\jb{\frac{y}{2^j}}^M}  dy\\
& \les   \frac{1}{\jb{\frac{x}{2^j}}^M}  
\end{split}
\label{SS7}
\end{align}

\noi
for any $M \gg1$, where, in the third step, 
we used the fact that $\jb{\frac{x-y}{2^j}} \sim 1$ for $| x - y| \le 2$.
Hence, by interpolating \eqref{SS4} and \eqref{SS7}, 
we obtain from \eqref{SS2},  \eqref{SS3}, 
and a Riemann sum approximation that 
\begin{align}
\begin{aligned}
\E  \Big[ \big|&  \jb{\nb}^{-\eps}\big( \phi_j \S(t) Z_L(1) \big) (x) \big|^2 \Big] 
\les \frac{e^{-t} }{L^2} \sum_{n\in\Z^2} 
\frac{1}{ \jb{ \frac{n}{L}  }^{2+2\eps-} } 
  \frac{1}{\jb{\frac{x}{2^j}}^M}  \\
&\les \frac{e^{-t} }
{\jb{\frac{x}{2^j}}^M}  
\int_{\R^2} 
\frac{1}{ \jb{ y  }^{2+2\eps-} } 
\les \frac{e^{-t} }
{\jb{\frac{x}{2^j}}^M}  
\end{aligned}
\label{SS8}
\end{align}

\noi
for any $M \gg 1$, uniformly in $L \ge 1$.

Given any finite $q \ge p \ge 2$, 
it follows from 
\eqref{WSP}, Minkowski's integral inequality, 
 the Wiener chaos estimate (Lemma \ref{LEM:hyp}),
 and \eqref{SS8} that 
\begin{align}
\begin{split}
\big\| \| \S(t) & Z_L(1) \|_{W^{-\eps, p}_\mu} \big\|_{L^q(\O)}
\le 
\Big\| w_{\frac \mu p} (2^{j})
\big\|
\jb{\nb}^{-\eps}\big(\phi_j \S(t) Z_L(1)\big)(x)\big\|_{L^q(\O)}
\Big\|_{\l^p_jL^p_x }\\
& \les 
q^\frac 12 \Big\| w_{\frac \mu p} (2^{j})
\big\|
\jb{\nb}^{-\eps}\big(\phi_j \S(t) Z_L(1)\big)(x)\big\|_{L^2(\O)}
\Big\|_{\l^p_jL^p_x }\\
& \les 
q^\frac 12 e^{-\frac{t}{2}} \bigg\| w_{\frac \mu p} (2^{j})
\frac{1}{\jb{\frac{x}{2^j}}^\frac M2}  
\bigg\|_{\l^p_jL^p_x }
 \les 
q^\frac 12 e^{-\frac{t}{2}} \| w_{\frac \mu p} (2^{j})
2^{\frac 2p j}
\|_{\l^p_j }\\
& \les 
q^\frac 12 e^{-\frac{t}{2}}, 
\end{split}
\label{SS10}
\end{align}

\noi
uniformly in $L \ge 1$.

From \eqref{SS1} and the mean value theorem, we have 
\begin{align}
|\ft{\S(t + h)}(\ld) - \ft{\S(t)}(\ld)|
\les e^{-\frac t2} \min\big(1, |h| \jb{\ld}\big)^\kk
\label{SS11}
\end{align}
	
\noi
for any $\ld \in\Z^2_L$ and $0 \le \kk \le 1$.
Then, by repeating an analogous computation with \eqref{SS11}, 
we obtain
\begin{align}
\begin{split}
\big\| \| \S(t+h)  Z_L(1) - \S(t)  Z_L(1) \|_{W^{-\eps, p}_\mu} \big\|_{L^q(\O)}
& \les 
q^\frac 12 e^{-\frac{t}{2}}|h|^{\kk}
\end{split}
\label{SS12}
\end{align}

\noi
for any $q \ge 1$
and $0 < \kk < \eps$, 
 uniformly in $L \ge 1$.

Let $T > 0$.
In view of \eqref{SS10}
and \eqref{SS11}, by applying  Kolmogorov's continuity criterion
(\cite[Theorem 8.2]{Bass}), 
we see that 
$ \S(t)Z_L(1) \in C^\al([0, T];  W^{-\eps, p}_\mu(\R^2))$ for some small $\al >0$.
Moreover, 
 the $\al$-H\"older semi-norms of $\{ \S(t)Z_L(1): t\in[0,T] \}$
are uniformly (in $L$)
 bounded in $L^q(\O)$ for any  finite $q\geq 1$.
Therefore,  
by noting that the discussion above holds
for any small $\eps > 0$ and any $\mu > 0$, 
it follows from 
the Arzel\`a-Ascoli theorem with Lemma \ref{LEM:cpt}
that 
 the family  $\{ \S(t)Z_L(1) \}_{L\geq 1}$
in $C([0, T];  W^{-\eps, p}_\mu(\R^2))$
is tight\footnote{Namely, the laws
of $\{ \S(t)Z_L(1): t\in[0,T] \}_{L\geq 1}$
are tight as probability measures on 
 $C([0, T];  W^{-\eps, p}_\mu(\R^2))$.}
 for any finite $T$, and hence
$\{ \S(t)Z_L(1) \}_{L\geq 1}$
in $C(\R_+;  W^{-\eps, p}_\mu(\R^2))$
 is tight.

\begin{remark}\label{REM:bound} \rm
By repeating the  argument above
for $e^{\frac t2}\S(t)Z_L(1) $, 
we see that 
$e^{\frac t2}\S(t)Z_L(1) $
is almost surely 
$\al$-H\"older continuous in time.
In particular, it grows at most polynomially in time.
From this observation, 
we conclude that 
$\S(t)Z_L(1) $
is almost surely bounded on $\R_+$
with values in $W^{-\eps, p}_\mu(\R^2)$.

\end{remark}

We now study the higher powers
$ :\!(\S(t)Z_L(1))^m\!: \, $, $m = 2, \dots, k$.
From Lemma \ref{LEM:W1} and~\eqref{SS1a} with \eqref{wh4} and \eqref{wh5}, we have\footnote{Strictly speaking, in applying Lemma \ref{LEM:W1}, we need to go back to 
the frequency truncated version of the Wick power, just  as in \eqref{Wick2} for the damped wave equation,
and remove the frequency truncation.}
\begin{align}
\begin{split}
& \E\big[ :\!(\S(t) Z_L(1))^m(y)\!: \, :\!(\S(t) Z_L(1))^m(y')\!:\big]\\
& \quad 
= m!
\Big\{\E\big[ (\S(t) Z_L(1))(y) \cdot (\S(t) Z_L(1))(y')\big]\Big\}^m\\
& \quad  \sim \frac{1}{L^{2m}}
\sum_{n \in \Z^2} e^{2\pi i \frac nL \cdot (y - y')}
\sum_{\substack{n_1, \dots, n_m \in \Z^2\\n = n_1 + \cdots + n_m}}
\prod_{j = 1}^m\bigg\{\big(\ft{ \S(t)}\big(\tfrac {n_j}L\big)\big)^2
\frac{1 - e^{-2\jb{\frac {2\pi n_j}L}^2}}{2\jb{\frac {2\pi n_j}L}^2}\bigg\}.
\end{split}
\label{ST1}
\end{align}

\noi
Then, by \eqref{ST1}
and interpolating \eqref{SS4} and \eqref{SS7}, we have
\begin{align*}
\E & \Big[ \big| \jb{\nb}^{-\eps}\big( \phi_j :\!(\S(t) Z_L(1))^m\!: \big) (x) \big|^2 \Big] \\
& = \int_{\R^2}\int_{\R^2}\frac{1}{\jb{\eta}^\eps}\frac{1}{\jb{\eta'}^\eps}
\int_{\R^2}\int_{\R^2}\phi_j(y)\phi_j(y')\\
&\quad 
 \times \E\big[ :\!(\S(t) Z_L(1))^m(y)\!: \, :\!(\S(t) Z_L(1))^m(y')\!:\big]\\
&\quad  \times 
e^{-2\pi i \eta\cdot y}e^{-2\pi i \eta'\cdot y'}dy dy'
e^{2\pi i \eta\cdot x}e^{2\pi i \eta'\cdot x}
d\eta  d\eta'\\
&\sim \frac{1}{L^{2m}} 
\sum_{n \in \Z^2}
\bigg(\int_{\R^2}\frac{1}{\jb{\eta}^\eps}
\ft \phi_j\big(\eta - \tfrac nL\big) e^{2\pi i \eta\cdot x}d\eta\bigg)
\bigg( \int_{\R^2} \frac{1}{\jb{\eta'}^\eps}\ft \phi_j\big(\eta' + \tfrac nL\big) 
e^{2\pi i \eta'\cdot x}
d\eta'\bigg)\\
&\quad  \times
\sum_{\substack{n_1, \dots, n_m \in \Z^2\\n = n_1 + \cdots + n_m}}
\prod_{j = 1}^m\bigg\{\big(\ft{ \S(t)}\big(\tfrac {n_j} L\big)\big)^2
\frac{1 - e^{-2\jb{\frac {2\pi n_j}L}^2}}{2\jb{\frac {2\pi n_j}L}^2}\bigg\}\\
&\les 
  \frac{e^{-mt}}{\jb{\frac{x}{2^j}}^M}  
  \frac{1}{L^{2m}} 
\sum_{n \in \Z^2}
\frac{1}{ \jb{ \frac{n}{L}  }^{2\eps-} } 
\sum_{\substack{n_1, \dots, n_m \in \Z^2\\n = n_1 + \cdots + n_m}}
\prod_{j = 1}^m
\frac{1}{\jb{\frac { n_j}L}^2}.
\end{align*}

\noi
By a Riemann sum approximation, we then obtain
\begin{align}
\begin{aligned}
\E & \Big[ \big| \jb{\nb}^{-\eps}\big( \phi_j :\!(\S(t) Z_L(1))^m\!: \big) (x) \big|^2 \Big] \\
&\les 
  \frac{e^{-mt}}{\jb{\frac{x}{2^j}}^M}  
\int_{(\R^2)^m}  
\frac{1}{ \jb{ y_1 + \cdots + y_m  }^{2\eps-} } 
\prod_{j = 1}^m
\frac{1}{\jb{y_j}^2} dy_1 \cdots dy_m\\
&\les 
  \frac{e^{-mt}}{\jb{\frac{x}{2^j}}^M} 
\end{aligned}
\label{ST2}
\end{align}

\noi
for any $M \gg 1$, uniformly in $L \ge 1$.

We now proceed as in \eqref{SS10}.
Namely, 
given any finite $q \ge p \ge 2$, 
it follows from 
\eqref{WSP}, Minkowski's integral inequality, 
 the Wiener chaos estimate (Lemma \ref{LEM:hyp}),
 and \eqref{ST2} that 
\begin{align}
\begin{split}
\big\| \| :\!(S(t) & Z_L(1))^m\!: \|_{W^{-\eps, p}_\mu} \big\|_{L^q(\O)}\\
& \les q^\frac m2 
\Big\| w_{\frac \mu p} (2^{j})
\big\|
\jb{\nb}^{-\eps}\big(\phi_j :\!(S(t)  Z_L(1))^m\!: \big)(x)\big\|_{L^2(\O)}
\Big\|_{\l^p_jL^p_x }\\
& \les q^\frac m2 e^{-\frac{m}{2}t} \bigg\| w_{\frac \mu p} (2^{j})
\frac{1}{\jb{\frac{x}{2^j}}^\frac M2}  
\bigg\|_{\l^p_jL^p_x }
 \les 
q^\frac m2 e^{-\frac{m}{2}t}  \| w_{\frac \mu p} (2^{j})
2^{\frac 2p j}
\|_{\l^p_j }\\
& \les 
q^\frac m2 e^{-\frac{m}{2}t}, 
\end{split}
\label{ST3}
\end{align}

\noi
uniformly in $L \ge 1$.

We briefly discuss how to handle the time increment
$ :\!(\S(t)Z_L(1))^m\!: 
-  :\!(\S(t)Z_L(1))^m\!: \, $.
The main idea is to proceed as in the second half of 
the proof of Proposition 2.1 in \cite{GKO}.
Given $h \in \R$, define the difference operator
$\dl_h$ by setting\footnote{We assume that $t + h \ge 0$.}
\[ \dl_h F(t) = F(t+h) - F(t).\]

\noi
Then, by Lemma \ref{LEM:W1}, we have
\begin{align*}
\frac{1}{m!} \E& \big[ \dl_h:\!(\S(t) Z_L(1))^m(y)\!: \, \dl_h :\!(\S(t) Z_L(1))^m(y')\!:\big]\\
& 
= 
\Big\{\E\big[ (\S(t+h) Z_L(1))(y) \cdot (\S(t+h) Z_L(1))(y')\big]\Big\}^m\\
&\quad  - \Big\{\E\big[ (\S(t) Z_L(1))(y) \cdot (\S(t+h) Z_L(1))(y')\big]\Big\}^m\\
& \quad - \Big\{\E\big[ (\S(t+h) Z_L(1))(y) \cdot (\S(t) Z_L(1))(y')\big]\Big\}^m\\
& \quad + \Big\{\E\big[ (\S(t) Z_L(1))(y) \cdot (\S(t) Z_L(1))(y')\big]\Big\}^m\\
& 
= 
\E\big[ \dl_h (\S(t) Z_L(1))(y) \cdot (\S(t+h) Z_L(1))(y')\big]\\
& \quad\quad \times\sum_{j = 0}^{m-1}
\Big\{\E\big[ (\S(t+h) Z_L(1))(y) \cdot (\S(t+h) Z_L(1))(y')\big]\Big\}^{m-j-1}\\
& \quad \quad\quad  \quad\times \Big\{\E\big[ (\S(t) Z_L(1))(y) \cdot (\S(t+h) Z_L(1))(y')\big]\Big\}^{j}\\
& \quad - \E\big[ \dl_h (\S(t) Z_L(1))(y) \cdot (\S(t) Z_L(1))(y')\big]\\
& \quad \quad\times\sum_{j = 0}^{m-1}
\Big\{\E\big[ (\S(t+h) Z_L(1))(y) \cdot (\S(t) Z_L(1))(y')\big]\Big\}^{m-j-1}\\
& \quad\quad \quad \quad \times \Big\{\E\big[ (\S(t) Z_L(1))(y) \cdot (\S(t) Z_L(1))(y')\big]\Big\}^{j}
\end{align*}

\noi
Note that in each term on the right-hand side above, 
the difference operator $\dl_h$ appears exactly once.
Then, with \eqref{SS11}, we can repeat the computation above
for $ :\!(\S(t)Z_L(1))^m\!: \, $
and obtain
\begin{align}
\begin{split}
\big\| \| \dl_h :\!(\S(t+h)  Z_L(1))^m\!:  \|_{W^{-\eps, p}_\mu} \big\|_{L^q(\O)}
& \les 
q^\frac m2 e^{-\frac{m}{2}t}|h|^{\kk}
\end{split}
\label{ST4}
\end{align}

\noi
for any $q \ge 1$
and $0 < \kk < \eps$, 
 uniformly in $L \ge 1$.

Therefore, 
applying
  Kolmogorov's continuity criterion
with \eqref{ST3} and \eqref{ST4}
  and arguing as in the $m = 1$ case
  with Lemma \ref{LEM:cpt}
and the Arzel\`a-Ascoli theorem, 
 we conclude that 
the family 
$\{ :\!(\S(t)Z_L(1))^m\!:\}_{L\geq 1}$
in $C(\R_+;  W^{-\eps, p}_\mu(\R^2))$
is tight.

\smallskip

\noi
$\bullet$ {\bf Part 2:}
Next, we consider 
$(\S(t)Y_L(1) )^m$, $m = 1, \dots, k$.
By the fractional Leibniz rule (Lemma~\ref{LEM:bilin2}\,(i)), 
\eqref{sob2}, and Lemma~\ref{LEM:wave}, we have 
\begin{align}
\begin{split}
\| (\S(t)Y_L(1) )^m\|_{L^\infty_t W^{\eps, p}_{\frac \mu{2^\dl}}} 
& \les 
\| \S(t)Y_L(1) \|^m_{L^\infty_t W^{\eps, mp }_{\mu_1}} 
\les 
\| \S(t)Y_L(1) \|^m_{L^\infty_t H^{1 - \frac 2{mp} + \eps}_{\mu_2}} \\
& \les 
\sup_{t  \in \R_+} e^{-\frac m4 t} \| Y_L(1) \|^m
_{H^{1 - \frac 2{mp} + \eps}_{\mu_3}} 
 \le
 \| Y_L(1) \|^m
_{H^{1 - \frac 2{mp} + \eps}_{\mu_3}} 
\end{split}
\label{view4}
\end{align}

\noi
for some $0 < \mu_3 < \mu_2  < \mu_1 < \mu$.
Here, we take $\eps > 0$ sufficiently small such that $ 1 - \frac 2{mp} + \eps< 1$.
It follows from 
 Proposition \ref{PROP:CI3} 
with \eqref{CZ} and \eqref{CZ2}, Lemma~\ref{LEM:hyp}, 
and \cite[Theorem 5.1]{MW17a}\footnote{Theorem 5.4 in the arXiv version.} 
that, given any finite $q\ge 1$, 
 the $q$th moment of the right-hand side of~\eqref{view4}
is bounded, uniformly in $L \ge 1$.

A similar computation with \eqref{SS11} (but with $\ld \in \R^2$)
yields
\begin{align}
\begin{split}
\| \dl_h (\S(t)Y_L(1) )^m\|_{L^\infty_t W^{\eps, p}_{\frac \mu{2^\dl}}} 
 \les
|h|^\kk  \| Y_L(1) \|^m
_{H^{1 - \frac 2{mp} + \eps+\kk }_{\mu_3}} .
\end{split}
\label{view5}
\end{align}

\noi
Then, 
as long as $\eps, \kk > 0$ are  sufficiently small such  that $ 1 - \frac 2{mp} + \eps+ \kk <  1$, 
 Proposition~\ref{PROP:CI3} 
with~\eqref{CZ} and \eqref{CZ2}, Lemma~\ref{LEM:hyp}, 
and \cite[Theorem 5.1]{MW17a}\footnote{Theorem 5.4 in the arXiv version.} 
that, given any finite $q\ge 1$, 
 the $q$th moment of the right-hand side of~\eqref{view5}
is bounded, uniformly in $L \ge 1$.
Therefore, 
applying
  Kolmogorov's continuity criterion
with \eqref{view4} and \eqref{view5}
  and arguing as in the $m = 1$ case
  with Lemma \ref{LEM:cpt}
and the Arzel\`a-Ascoli theorem, 
we conclude that 
the family 
 $\{ (\S(t)Y_L(1) )^m\}_{L\geq 1}$
in $C(\R_+;  W^{\eps, p}_{\frac{\mu}{2^\dl}}(\R^2))$
is tight.

\smallskip

\noi
$\bullet$ {\bf Part 3:}
Finally, from \eqref{view2}, we have
\begin{align}
\begin{split}
\dl_h :\!(\S(t) X_L(1))^\l\!: \, 
& = \sum_{m=0}^\l \binom{\l}{m}    \dl_h (\S(t)Y_L(1) )^{\l-m}\cdot :\!(\S(t+h)Z_L(1))^m\!: \\
& \quad 
+ \sum_{m=0}^\l \binom{\l}{m}    (\S(t)Y_L(1) )^{\l-m}\cdot  \dl_h :\!(\S(t)Z_L(1))^m\!: .
\end{split}
\label{view6}
\end{align}

\noi
By applying Lemma \ref{LEM:bilin2}\,(ii) as in \eqref{view3}, 
we conclude  from \eqref{SS10}, \eqref{SS12}, \eqref{ST3}, \eqref{ST4}. 
\eqref{view4}, and \eqref{view5}
with 
  Kolmogorov's continuity criterion
  (\cite[Theorem 8.2]{Bass}),  
Lemma \ref{LEM:cpt}, 
and the Arzel\`a-Ascoli theorem
that 
the family  $\{ :\!(\S(t)X_L(1))^m\!:\}_{L\geq 1}$
in $C(\R_+;  W^{-\eps, p}_\mu(\R^2))$
is tight.
Namely, 
as probability measures on $\WW = C(\R_+;  W^{-\eps, p}_\mu(\R^2))^{\otimes k}$, 
the family $\{\nu_L\}_{L\in \N}$ is tight.
\end{proof}

Let 
 $\A$  be the index set 
 such that Theorem \ref{THM:1}\,(i) holds.
 Namely, 
 that 
the $L$-periodic $\Phi^{k+1}_2$-measures $\rho_{L}$, $L \in \A$  converges weakly to a
limiting $\Phi^{k+1}_2$-measure  $\rho_\infty$ on $\R^2$.
Note that Proposition \ref{PROP:tight}
also applies to the family $\{ \nu_L\}_{L \in \A}$,
showing that there exists a subsequence
which converges, weakly and also in the Wasserstein-1 metric, to {\it some} limit $\nu_\infty$.
 The next proposition  identifies 
this limiting probability measure $\nu_\infty$.

\begin{proposition} \label{PROP:uniq}
Let the index set $\A \subset \N$ be as in Theorem \ref{THM:1}\,(i).
Then, as probability measures on 
$\WW = \big(C(\R_+;  W^{-\eps, p}_\mu(\R^2))\big)^{\otimes k}$, 
 the entire sequence $\{\nu_L\}_{L\in \A}$
 of the $L$-periodic enhanced Gibbs measures
converges, weakly and also in the Wasserstein-1 metric, to 
the unique limit 
\begin{align}
\nu_\infty = (\Xi_0)_\# \rho_\infty.
\label{need1}
\end{align}

\end{proposition}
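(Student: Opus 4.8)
The plan is to combine the already-established tightness of $\{\nu_L\}_{L\in\A}$ (Proposition \ref{PROP:tight}) with an identification of the weak limit via an approximation argument linking the wave-side linear evolution $\S(t)$ to the heat-side flow governing $\rho_L$. Since tightness guarantees that every subsequence has a further weakly convergent subsequence, it suffices to show that \emph{any} weak subsequential limit $\nu_\infty$ must equal $(\Xi_0)_\#\rho_\infty$; the convergence of the entire sequence, both weakly and in the Wasserstein-1 metric, then follows from uniqueness of the limit together with Lemma \ref{LEM:wass} and Remark \ref{REM:wass} (the metric $d_\WW$ is bounded).

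First I would fix a subsequence $\{L_j\}\subset\A$ along which $\nu_{L_j}\rightharpoonup\nu_\infty$, and recall from the proof of Proposition \ref{PROP:tight} that $\nu_L=\Law(\Xi_0(X_L(1)))$, where $X_L$ solves the $L$-periodic parabolic $\Phi^{k+1}_2$-model \eqref{SHE7} with $\Law(X_{0,L})=\rho_L$, using invariance \eqref{CD2a}. The key point is that $\Xi_0$ depends on its argument only through the Wick powers of $\S(t)\phi$, which are built from the heat-flow output at time $1$. By Theorem \ref{THM:1}\,(i), $\rho_{L_j}\rightharpoonup\rho_\infty$ on $\vec W^{s,p}_\mu(\R^2)$; by the Skorokhod representation theorem (Lemma \ref{LEM:Sk}), I may pass to a common probability space on which $X_{0,L_j}\to X_0$ almost surely with $\Law(X_0)=\rho_\infty$, and also realize the periodized noises $\xi_{L_j}$ so that $\xi_{L_j}=\xi$ on any fixed cone (as in \eqref{wh7}). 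Then I would use continuous dependence for the heat equation (i.e.\ the coming-down-from-infinity bounds of Propositions \ref{PROP:CI1} and \ref{PROP:CI3}, which provide uniform-in-$L$ control of $Y_L(1)$ in weighted Sobolev spaces of positive regularity, together with the uniform stochastic estimates for $Z_L$ borrowed from \cite[Theorem 5.1]{MW17a}) to show that $X_{L_j}(1)\to X_\infty(1)$ in $W^{-\eps,p}_\mu$-type topology, where $X_\infty$ is the solution of the $\R^2$ parabolic $\Phi^{k+1}_2$-model \eqref{SHE1} with $\Law(X_{0,\infty})=\rho_\infty$; by invariance of $\rho_\infty$ under the parabolic flow on $\R^2$ (which is itself a byproduct, cf.\ Remark \ref{REM:inv1}, or can be established here as in \cite{MW17a, DD}), $\Law(X_\infty(1))=\rho_\infty$. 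Finally, the map $\Xi_0$ sending $\phi\mapsto(\S(t)\phi,:\!(\S(t)\phi)^2\!:,\dots,:\!(\S(t)\phi)^k\!:)$ is continuous from $W^{-\eps,p}_\mu(\R^2)$ (equivalently, the relevant weighted Besov space) into $\WW$ — this follows from Lemmas \ref{LEM:wave}, \ref{LEM:bilin2}, Lemma \ref{LEM:equiv}, and the standard continuity of Wick-power maps on spaces of negative regularity — so $\Xi_0(X_{L_j}(1))\to\Xi_0(X_\infty(1))$ in $\WW$ almost surely, giving $\nu_\infty=\Law(\Xi_0(X_\infty(1)))=(\Xi_0)_\#\rho_\infty$ as claimed.

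The main obstacle will be making rigorous the convergence $X_{L_j}(1)\to X_\infty(1)$ in a topology strong enough to feed into the continuity of $\Xi_0$: the periodic solutions $X_{L_j}$ and the limiting solution $X_\infty$ live on different domains (a growing torus versus the plane), and the interaction potential $R^L$ is non-uniform in $L$ (it grows like $L$), so one cannot argue via the Gibbs densities directly. The correct route is to compare at the level of the dynamics restricted to a fixed cone $\bC_R$ — where, thanks to finite speed of propagation for the parabolic flow in the relevant mild formulation and the agreement $\xi_{L_j}=\xi$, $Z_{L_j}=Z$ on $\bC_R$, the only $L$-dependence sits in the initial data $X_{0,L_j}$ — and to invoke the uniform coming-down-from-infinity estimates to control the remainder $Y_{L_j}$ independently of the (divergent) size of the initial data, together with the already-proven convergence $X_{0,L_j}\to X_0$. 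One then lets $R\to\infty$ using the localized (weighted) topology on $W^{-\eps,p}_\mu(\R^2)$, which is precisely the compact-open / weighted setup for which $\Xi_0$ is continuous. I expect this cone-localization combined with the Gronwall-type stability of \eqref{SHE5} to be the technically delicate core; the remaining steps (Skorokhod coupling, continuity of $\Xi_0$, passage from subsequential to full convergence and to Wasserstein-1) are routine given the machinery already assembled in the preliminaries.
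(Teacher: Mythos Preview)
Your overall architecture (tightness $\Rightarrow$ identify subsequential limits via Skorokhod + heat-flow stability) matches the paper's, but two of your key steps fail as written.

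First, the map $\Xi_0:\phi\mapsto(\S(t)\phi,\,:\!(\S(t)\phi)^2\!:,\dots,:\!(\S(t)\phi)^k\!:)$ is \emph{not} continuous from $W^{-\eps,p}_\mu(\R^2)$ into $\WW$. Wick powers of a distribution of negative regularity are not deterministic continuous functions of that distribution; this is precisely the obstruction that renormalization addresses. So you cannot conclude $\Xi_0(X_{L_j}(1))\to\Xi_0(X_\infty(1))$ from $X_{L_j}(1)\to X_\infty(1)$ in $W^{-\eps,p}_\mu$. The paper resolves this by exploiting the decomposition $X_L(1)=Y_L(1)+Z_L(1)$ together with the Hermite expansion \eqref{view2}: the coming-down-from-infinity in \emph{positive} regularity (Proposition \ref{PROP:CI3}) places $Y_L(1)$ in $W^{s,p}_\mu$ with $s>0$ uniformly in $L$, so ordinary powers $(\S(t)Y_L(1))^{\ell-m}$ are continuous in $Y_L(1)$; the Wick powers $:\!(\S(t)Z_L(1))^m\!:$ are Gaussian chaos objects whose convergence is established by explicit covariance-kernel computations (the $K_{L,L'}$ analysis in Part 2 of the proof). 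Only after this splitting does a product estimate (Lemma \ref{LEM:bilin2}) recombine the pieces.

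Second, your plan to localize via ``finite speed of propagation for the parabolic flow'' and conclude $Z_{L_j}=Z$ on $\bC_R$ is incorrect: the heat equation has \emph{infinite} speed of propagation, so $Z_{L_j}$ and $Z$ do not agree on any fixed region. The paper instead shows $:\!(\S(t)Z_L(1))^m\!:\,\to\,:\!(\S(t)Z_\infty(1))^m\!:$ in $L^r(\Omega;W^{-\eps,p_1}(B_R))$ by estimating the difference of covariance kernels $K_{L,L'}(x,y;t)-K_{\infty,\infty}(x,y;t)$ directly (see \eqref{KL1b}), using the exponential decay of the heat kernel to obtain $e^{-cL}$ errors. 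Finally, note that invoking invariance of $\rho_\infty$ under the parabolic flow would be circular here: Remark \ref{REM:inv1} derives that invariance \emph{from} this proposition (via Remark \ref{REM:conv2}).
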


\begin{proof}
In the following, we only consider the values of $L$ belonging to $\A = \{ L_j: j \in \N\} \subset \N$.
For simplicity of notation, 
we drop the subscript 
and simply write $L \to \infty$ with the understanding that each $L$ belongs to $\A$.

In view of the tightness of $\{\nu_L\}_{L\in \A}$ (Proposition \ref{PROP:tight})
and Lemma \ref{LEM:wass}, 
we only need to show that the limit in law 
of $\{\nu_L\}_{L\in \A}$ 
is unique.

Let $X_{L} = Y_{L} + Z_{L}$ be the  solution to 
the ${L}$-periodic SNLH  \eqref{SHE7}, 
where $Y_{L}$ and $Z_{L}$ satisfy~\eqref{SHE9} and~\eqref{SHE8}
with the Gibbsian initial data $X_{0, L}$ with 
$\Law(X_{0, L})  =  \rho_{L}$.
Here, we assume that $X_{0, L}$ is independent of the noise $\xi_L$. 
In view of the weak convergence of $\rho_L$
to $\rho_\infty$ as $L \to \infty$
 (with $L$'s belonging to $\A$)
 established 
in Subsection \ref{SUBSEC:3.3}, 
it follows from 
the Skorokhod representation theorem (Lemma \ref{LEM:Sk})
that 
there exist a probability measure $\wt \PP$ (on a new probability space) and random variables 
$\wt X_{0, L}$ with $\Law(\wt X_{0, L})  =  \rho_{L}$, $L  \in \A$, 
such that $\wt X_{0, L}$ converges $\wt \PP$-almost surely to some $\wt X_{0, \infty}$, with 
$\Law(\wt X_{0, \infty})  =  \rho_{\infty}$, 
in $B^{-\eps, \mu}_{p_0, p_0}(\R^2)$ for any $\eps > 0$, finite $p_0 \ge 1$, and $\mu > 0$;
see also Remark \ref{REM:Gibbs2}.
Note that by the independence of $X_{0, L}$ and $\xi _L$, 
the change of the probability spaces due to 
the use of the Skorokhod representation theorem (Lemma \ref{LEM:Sk})
does not affect the noise $\xi_L$ (and hence $Z_L$).

Let $\wt X_L = \wt Y_L + Z_L$ be the solution to \eqref{SHE7}
with the new initial data $\wt X_{0,  L}$, 
where $\wt Y_L$ satisfies~\eqref{SHE9} with the initial data $X_{0, L}$ replaced by $\wt X_{0,  L}$.
By the invariance of $\rho_{L}$ under~\eqref{SHE7}, we have 
$\Law(\wt X_{L}(1))=  \rho_{L}$.
Thus, it suffices to 
show that 
\begin{itemize}
\item[(i)]
$\Xi_0(  \wt X_{L}(1)   )$
converges in law to 
$\Xi_0(  \wt X_\infty(1)   )$
as $L \to \infty$ (with $L \in \A$),
where $\wt X_\infty$ is the solution to \eqref{SHE1}
with the initial data $\wt X_{0, \infty}$ satisfying $\Law(\wt X_{0, \infty})  =  \rho_{\infty}$, 
and $\Xi_0(\phi)$ is the enhanced data set defined in~\eqref{xi1}, 
and

\smallskip

\item[(ii)]
$\Law (\wt X_\infty(1)) = \rho_\infty$.
See Remark \ref{REM:conv2} below.

\end{itemize}

\smallskip

\noi
Before proceeding further, we recall the decomposition $\wt X_\infty = \wt Y_\infty  + Z_\infty$, 
where $\wt Y_\infty$ is the solution to \eqref{SHE4}
with the initial data $\wt X_{0, \infty}$
and $Z_\infty$ is the solution to \eqref{SHE2}.
For simplicity of notation, we denote $\wt X_L$,  $\wt Y_L$, etc. by $X_L$, $Y_L$, etc.~in the remaining part of the proof.

Given  $R > 0$, 
let $B_R \subset \R^2$ denotes the ball of radius $R$ centered at the origin.
In view of the tightness of $\{ \nu_L\}_{L \in \A}$ (Proposition \ref{PROP:tight}), 
it suffices to show that, given any $t \in \R_+$ and $R > 0$, 
$: \!( \S(t)X_{L}(1))^\l \!: $ converges in probability  to
$: \!( \S(t)X_\infty(1))^\l \!:$ 
in  $W^{-\eps, p}(B_R)$
as $L \to \infty$ (with $L \in \A$).
Then, for each fixed $t \in \R_+$, 
this allows us to identify $: \!( \S(t)X_\infty(1))^\l \!:$  with the limit in law of 
$: \!( \S(t)X_{L}(1))^\l \!:$ as a $\D'(\R^2)$-valued random variable
by testing against 
 a test function $\varphi\in  \D(\R^2) = C^\infty_c(\R^2)$
 with $\supp \varphi \subset B_R$.
From  this observation and Proposition \ref{PROP:tight}
with the uniqueness of a limit, 
we then conclude 
that 
the entire sequence
$\{\Xi_0(X_L(1))\}_{L\in \A}$ converges in law to 
$\Xi_0(X_\infty(1))$
as $L \to \infty$ (with $L \in \A$).

\begin{remark}\label{REM:conv2}
\rm

Once we prove (i) in the sense explained above, 
the claim (ii) above 
follows immediately.
On the one hand, 
as probability measures on $B^{-\eps, \mu}_{p_0, p_0}(\R^2)$, 
 $\rho_L = \Law (X_L(1))$
converges weakly to $\rho_\infty$ as $L \to \infty$ (with $L \in \A)$.
On the other hand, we will show that, given $t \in \R_+$, 
$ \S(t)X_{L}(1) $ converges in law   to
$ \S(t)X_\infty(1)$ as $L \to \infty$ (with $L \in \A)$.
With $t = 0$, 
this implies that, 
as probability measures on 
$\D'(\R^2)$, 
 $\rho_L = \Law (X_L(1))$ converges weakly to 
$\Law (X_\infty(1))$.
Therefore, from the uniqueness of a limit, we conclude that 
$\Law ( X_\infty(1)) = \rho_\infty$.

Note that a similar argument shows that 
$\Law ( X_\infty(t)) = \rho_\infty$ for any $t > 0$, 
thus yielding invariance  of the limiting $\Phi^{k+1}_2$-measure $\rho_\infty$ on $\R^2$
under the dynamics of the parabolic $\Phi^{k+1}_2$-model \eqref{SHE1}
claimed in Remark \ref{REM:inv1}.

\end{remark}

Fix $t \in \R_+$ and $R> 0$ in the remaining part of the proof.
From Lemma \ref{LEM:bilin1}\,(ii) (see also Remark \ref{REM:R}) with \eqref{view1} and~\eqref{view2}, 
we have 
\begin{align}
 \|  & :\!  (\S(t) X_L(1))^\l\!: \, 
- :\!(\S(t) X_{L'}(1))^\l\!: \, 
\|_{W^{-\eps, p}(B_R)} \notag \\
& \les  \sum_{m=0}^\l \binom{\l}{m}  
 \Big\{\| :\!(\S(t)Z_{L}(1))^m\!: - :\!(\S(t)Z_{L'}(1))^m\!: \|_{W^{-\eps, p_1}(B_R)}\notag \\
&
\hphantom{XXXXXXX}
 \times 
\| (\S(t)Y_{L}(1) )^{\l-m}\|_{W^{\eps, p_2}(B_R)} 
\label{uni1} \\
& \hphantom{XXXXXX}
+ 
\| :\!(\S(t)Z_{L'}(1))^m\!: \|_{W^{-\eps, p_1}(B_R)}\notag \\
& 
\hphantom{XXXXXXX}
\times 
\| (\S(t)Y_{L}(1) )^{\l-m}
- (\S(t)Y_{L'}(1) )^{\l-m}\|_{W^{\eps, p_2}(B_R)} \Big\}\notag 
\end{align}

\noi
for any $L, L' \in \A$, 
where $1 <  p_1, p_2 < \infty$ 
with $p_1 \gg 1$, 
satisfying $\frac 1p \le \frac 1{p_1}  + \frac 1{p_2} \leq \frac 1p + \frac \eps 2$.
In the following, we separately study 
convergence of $(\S(t)Y_{L}(1) )^{\l-m}$ and 
$ :\!(\S(t)Z_{L'}(1))^m\!:$\,.

\medskip

\noi
$\bullet$ {\bf Part 1:}
Let us first study the terms involving $Y_L$ and $Y_{L'}$.
Proceeding as in \eqref{view4} with 
 the fractional Leibniz rule (Lemma~\ref{LEM:bilin1}\,(i)), 
 Sobolev's inequality, and the boundedness 
 of $\S(t)$
 on $H^s(\R^2)$, 
we have
\begin{align}
\begin{split}
\| (\S(t)Y_L(1) )^{\l - m}\|_{L^\infty_t W_x^{\eps, p_2}(B_R)} 
& \les 
\| \S(t)Y_L(1) \|^{\l - m}_{L^\infty_t W_x^{\eps, (\l - m)p_2 }(B_R)}\\
& \les 
\| \S(t)Y_L(1) \|^{\l - m}_{L^\infty_t H_x^{1 - \frac 2{(\l - m)p_2} + \eps}(B_R)}\\
& \les
 \| Y_L(1) \|^{\l - m}
_{H^{1 - \frac 2{(\l - m)p_2} + \eps}(B_R)}.
\end{split}
\label{uni2}
\end{align}

\noi
Then, by  H\"older's inequality and \eqref{besov2}, we have
\begin{align}
\begin{split}
\| (\S(t)Y_L(1) )^{\l - m}\|_{L^\infty_t W_x^{\eps, p_2}(B_R)} 
& \le C(R) 
 \| Y_L(1) \|^{\l - m}
_{B^{1 - \frac 2{(\l - m)p_2} + 2\eps}_{p_3, \infty}(B_R)}
\end{split}
\label{uni3}
\end{align}

\noi
for any $p_3 \geq 2$.
Proposition \ref{PROP:CI3}
with Theorem 5.1 in \cite{MW17a},\footnote{Theorem 5.4 in the arXiv version.}
controlling the right-hand side of~\eqref{CZ2a}, 
shows that, given any finite $ q\ge 1$, 
the $q$th moment of the right-hand side of \eqref{uni3}
is bounded, uniformly in $L \ge 1$.

A slight modification of \eqref{uni2} and \eqref{uni3} yields
\begin{align}
\begin{split}
 \|&  (  \S(t)Y_L(1) )^{\l - m} - (\S(t)Y_{L'} (1) )^{\l - m}\|_{L^\infty_t W_x^{\eps, p_2}(B_R)} \\
&  \le C(R) 
\Big( \| Y_L(1) \|^{\l - m- 1}
_{B^{1 - \frac 2{(\l - m)p_2} + 2\eps}_{p_3, \infty}(B_R)}
+ \| Y_{L'}(1) \|^{\l - m- 1}
_{B^{1 - \frac 2{(\l - m)p_2} + 2\eps}_{p_3, \infty}(B_R)}
\Big)\\
& \quad  \times
 \| Y_L(1) -  Y_{L'}(1) \|
_{B^{1 - \frac 2{(\l - m)p_2} + 2\eps}_{p_3, \infty}(B_R)}
\end{split}
\label{uni4}
\end{align}

\noi
for any $L, L' \ge 1$.

Given $L \ge 1$, 
let  $\ZB_L$ and $\YB_L$
be the solutions to 
\begin{align*}
\begin{cases}
\dt \ZB_L + (1-\Dl ) \ZB_L  = \sqrt 2\xi_L \\
\ZB_L|_{t = 0} = X_{0, L}
\end{cases}
\end{align*}

\noi
and 
\begin{align*}
\begin{cases}
\dt \YB_L + (1-\Dl) \YB_L 
+ \sum_{\l=0}^k {k\choose \ell} :\!\ZB_L^\l\!: \,\YB_L^{k-\l} = 0\\
\YB_L|_{t=0} = 0, 
\end{cases}
\end{align*}

\noi
respectively.
Note that we have $X_L = Y_L + Z_L = \YB_L + \ZB_L$
and that
\begin{align}
Y_L(1) - Y_{L'}(1)
 = \YB_L(1) - \YB_{L'}(1)
 - P(1) (X_{0, L} - X_{0, L'}).
 \label{Z3}
\end{align}

\noi
By the Schauder estimate (Proposition 5 in \cite{MW17a}\footnote{Proposition 3.11 in the arXiv version.})
and the almost sure convergence of $X_{0, L}$ in 
 $B^{ -\eps, \mu}_{p_3, \infty}(\R^2)$, 
we have 
\begin{align}
\|P(1) (X_{0, L} - X_{0, L'})\|_{B^{1 - \frac 2{(\l - m)p_2} + 2\eps}_{p_3, \infty}(B_R)}
\too 0, 
\label{Z3a}
\end{align}

\noi
almost surely,  as $L, L' \to \infty$
(with $L, L' \in \A$).

Next, we study the difference $\YB_L(1) - \YB_{L'}(1)$ in \eqref{Z3}.
Write  $:\!\ZB_L^\l\!:$ as
\begin{align*}
:\!\ZB_L^\l\!:\, 
= \sum_{m=0}^\l {\l\choose m} \big(P(t)X_{0, L}\big)^{\l - m} :\!Z_L^m\!:.
\end{align*}

\noi
Then, 
it follows from 
the almost sure convergence of $P(t)X_{0, L}$ 
and 
Theorem 5.1 in \cite{MW17a}\footnote{Theorem 5.4 in the arXiv version.}
on convergence in probability of $:\!Z_L^m\!:$
together with Lemma \ref{LEM:bilin2}\,(ii)
that 
$:\!\ZB_L^\l\!:$ converges in probability
to a limit 
in $C([0, 1]; B^{-\eps, \mu}_{p_4, \infty}(\R^2))$
for some $p_4 \gg1$, 
as $L \to \infty$ (with $L \in \A$).
Now, 
recall  from 
Theorem 8.1 and Theorem 9.2
 in \cite{MW17a}\footnote{Theorems 8.1 and 9.5
 in the arXiv version.}
 that the solution map
 for the following (deterministic) equation:
\begin{align*}
\begin{cases}
\dt \YB + (1-\Dl) \YB 
+ \sum_{\l=0}^k {k\choose \ell} \ZB^{(\l)} \YB^{k-\l} = 0\\
\YB|_{t=0} = 0, 
\end{cases}
\end{align*}

\noi
sending $\{ \ZB^{(\l)}\}_{\l = 0}^k$ (with $\ZB^{(0)} = 1$)
to a solution $\YB$
is continuous 
from 
$\big( C([0, 1]; B^{-\eps, \mu}_{p_4, \infty}(\R^2))\big)^{\otimes k}$
to  
$C([0, 1]; B^{2-\eps, \mu}_{p_5, \infty}(\R^2))$
for some $p_5 \gg 1$.
In view of the aforementioned convergence in probability of 
 $:\!\ZB_L^\l\!:$, $\l = 1, \dots, k$, 
 the continuous mapping theorem then yields
 that 
 \begin{align}
\|  \YB_L(1) - \YB_{L'}(1)\|_{B^{1 - \frac 2{(\l - m)p_2} + 2\eps}_{p_3, \infty}(B_R)}
\too 0
\label{Z5a}
\end{align}

 \noi
in probability, 
as $L, L' \to \infty$
(with $L, L' \in \A$).

Therefore, putting \eqref{Z3}, \eqref{Z3a}, and \eqref{Z5a}
together with the aforementioned uniform (in $L$) boundedness 
of 
the $B^{1 - \frac 2{(\l - m)p_2} + 2\eps}_{p_3, \infty}(B_R)$-norm of 
$Y_L(1)$ (see the discussion after \eqref{uni3}), 
we conclude from \eqref{uni4} that 
$(  \S(t)Y_L(1) )^{\l - m}  $
converges in probability to 
a unique limit 
in $ W^{\eps, p_2}(B_R)$
as $L \to \infty$ (with $L \in \A)$.
Moreover, by repeating the argument with $L' = \infty$, 
we see that the limit is given by 
$(\S(t)Y_{\infty} (1) )^{\l - m}$, 
where 
 $ Y_\infty$ is the solution to \eqref{SHE4}
with the Gibbsian initial data $ X_{0, \infty}$.

\medskip

\noi
$\bullet$ {\bf Part 2:}
In view of \eqref{uni1} and the discussion above, 
it suffices to show that, given any  $R > 0$,  
there exists $r \ge  1$ such that 
\begin{align}
: \!( \S(t)Z_{L}(1))^m \!: \ \ 
\text{converges  to}
\ \ 
: \!( \S(t)Z_\infty(1))^m \!: 
\label{AS1}
\end{align}
in  $L^r(\O; W^{-\eps, p_1}(B_R))$
as $L \to \infty$, 
$m = 1, \dots, k$.
We point out that in showing \eqref{AS1}, 
there is no need to restrict $L $ in $ \A$. 
Indeed, once we prove \eqref{AS1}, it follows from \eqref{uni1}, Part~1, 
and~\eqref{AS1} (which in particular implies
convergence in probability)
that 
$ :\!  (\S(t) X_L(1))^\l\!: $ converges in law
to 
$ :\!(\S(t) X_{\infty}(1))^\l\!: $ in 
${W^{-\eps, p}(B_R)}$ as $L \to \infty$ (with $L \in \A$)
for any $t \in \R_+$, $R > 0$,  and each $\l = 1, \dots, k$.

\medskip

Hence, it remains to prove \eqref{AS1}.
Fix $t \in \R_+$ and $R> 0$.
Given $L, L' \ge 1$, $x, y \in \R^2$, 
define $K_{L, L'}(x, y;t)$ by 
\begin{align}
\begin{split}
K_{L, L'}(x, y;t)
&  = \E\big[ \big(\S(t)Z_L(1)\big)(x)  \big(\S(t)Z_{L'}(1)\big)(y) \big] \\
& = K_{L, L'}(y, x;t).
\end{split}
\label{KL1}
\end{align}

\noi
Then, from Lemma \ref{LEM:W1}, we have 
\begin{align}
\E\big[ :\!( \S(t)Z_L(1)  )^m (x) \!: \, :\!( \S(t)Z_{L'}(1)  )^m(y)\!: \big]
= m  ! \big\{K_{L, L'}(x, y;t)\big\}^m .
\label{KL1a}
\end{align}

\noi
Given
 a test function $\varphi\in  \D(\R^2)$
supported on a ball $B$ of radius 1 with $B \subset B_{2R}$, 
we set
\begin{align}
K^{m,\varphi}_{L, L'}(t) 
&= 
\E\Big[ |      \jb{
:\!( \S(t)Z_L(1)  )^m\! : -  :\!( \S(t)Z_{L'}(1)  )^m \!: \, , \varphi } |^2 \Big], 
\label{K3a}
\end{align}

\noi
where $\jb{\cdot, \cdot}$ denotes the $\D'$-$\D$ duality pairing.
Then, from \eqref{KL1a} and H\"older's inequality, we have 
\begin{align}
\begin{aligned}
 K^{m,\varphi}_{L, L'}(t) 
&= m  ! \iint_{(B_{2R})^2} \Big(  
\big\{K_{L, L}(x, y;t)\big\}^m - 2\big\{K_{L, L'}(x, y;t)\big\}^m \\
& \hphantom{XXXXXXX}
+ \big\{K_{L', L'}(x, y;t)\big\}^m \Big)
        \varphi(x)\varphi(y) dxdy \\
 &\les  \big\|  \big\{K_{L, L}(x, y;t)\big\}^m - 2\big\{K_{L, L'}(x, y;t)\big\}^m\\
& \hphantom{XXXXXXX}  + \big\{K_{L', L'}(x, y;t)\big\}^m  \big\|_{L^{q}_{x,y}(B_{2R})}
 \| \varphi\|_{L^{q'}}^2   
\end{aligned}
\label{K3}
\end{align}

\noi
for any $1 \le q \le \infty$ with $\frac 1q + \frac 1{q'} = 1$.

Now, set 
$K_{\infty, \infty}(x, y;t) = \lim_{L, L'\to \infty}
K_{L, L'}(x, y;t)$; see \eqref{KK11} below for the precise definition.
We then claim that there exists $c > 0$ such that 
\begin{align}
\begin{aligned}
|K_{L, L'}(x, y;t) | + |K_{\infty, \infty}(x, y;t) | 
& \les  1+ (-  \log  |x-y| )_+  , \\
|  K_{L, L'}(x, y;t)-  K_{\infty, \infty} (x, y;t)|
& \les e^{-cL} + e^{-cL'}
\end{aligned}
\label{KL1b}
\end{align}

\noi
for any $L, L'  \gg R+t$ and $x, y \in B_{2R} $, 
where 
 $a_+ = \max(a, 0)$.

For now, we assume \eqref{KL1b}
and prove \eqref{AS1}.
By the triangle inequality and  \eqref{KL1b}, we have 
\noi
\begin{align}
\begin{aligned}
\big|  & \big\{K_{L, L}(x, y;t)\big\}^m - 2\big\{K_{L, L'}(x, y;t)\big\}^m + \big\{K_{L', L'}(x, y;t)\big\}^m    \big| \\
&\leq \big| \big\{K_{L, L}(x, y;t)\big\}^m - \big\{K_{\infty,\infty}(x, y;t)\big\}^m  \big|\\
& \quad  + 2 \big|  \big\{K_{L, L'}(x, y;t)\big\}^m  - \big\{K_{\infty,\infty}(x, y;t)\big\}^m \big|   \\
&\quad +  \big|  \big\{K_{L', L'}(x, y;t)\big\}^m  -  \big\{K_{\infty,\infty}(x, y;t)\big\}^m   \big| \\
&\les \big( 1 +  (-\log |x-y| )_+ \big)^{m -1}
 \big( e^{-c L} + e^{-c L'} \big).
\end{aligned}
\label{K4}
\end{align}

\noi
Hence, 
by choosing $q = q(\eps) \gg1 $
and applying Lemma \ref{LEM:Kol}
with 
\eqref{K3a}, \eqref{K3}, and \eqref{K4}, we obtain
\begin{align}
\begin{split}
\E & \Big[ \|  :\!( \S(t)Z_L(1)  )^m \!: -  :\!( \S(t)Z_{L'}(1)  )^m \! : \|_{W^{-\eps, p_1}(B_R)}^r \Big] \\
&
\le C(R, r)  \sup_{ \substack{ \| \varphi\|_{L^{q'}} \le 1 \\ \supp(\varphi)\subset B \subset B_{2R}}}
\big(K^{m, \varphi}_{L, L'}(t)\big)^\frac r2\\
& \les C(R, r)  
 \big( e^{-c L} + e^{-c L'} \big), 
 \end{split}
\label{K4a}
\end{align}
 
 \noi
which tends to zero as $L, L'\to\infty$.
This shows that 
$: \!( \S(t)Z_{L}(1))^m \!: $
converges  to
some limit 
in  $L^r(\O; W^{-\eps, p_1}(B_R))$
as $L \to \infty$.
By repeating an analogous argument with $L' = \infty$, 
we obtain~\eqref{AS1}.
See Remark \ref{REM:XXX} below.

\medskip

The remaining part of the proof is devoted to proving the claim \eqref{KL1b};
see Remark~\ref{REM:XXX} below
for the case $L' = \infty$.
Let $\varphi \in L^2(\R^2)$.
Then, from \eqref{SS1a} with \eqref{wh4} and  \eqref{wh5}, we have
\begin{align*}
\jb{\S(t) Z_L(1), \varphi}
= \sqrt 2 
\xi \Big(  \ind_{[0, 1]}(t')\ind_{[-\frac L2, \frac L2)^2}  
 P(1- t') \S(t) \varphi^\per_L \Big), 
\end{align*}

\noi
where 
$ \varphi^\per_L $ is the $L$-periodized version of $\varphi$ defined by 
\begin{align} \varphi^\per_L (x) 
= \sum_{n \in \Z^2} \varphi(x + n L).
\label{KK1a}
\end{align}

\noi
Given $x \in \R^2$, we then have 
\begin{align*}
\big(\S(t) Z_L(1)\big)(x) 
& = \jb{\S(t) Z_L(1), \dl_x}\\
& = \sqrt 2 
\xi \Big(  \ind_{[0, 1]}(t')\ind_{[-\frac L2, \frac L2)^2}  
 P(1- t') \S(t) (\dl_x)^\per_L \Big), 
\end{align*}

\noi
where $\dl_x$ denotes the Dirac delta function at $x \in \R^2$, 
and hence we can rewrite \eqref{KL1} as 
\begin{align}
\begin{aligned}
K_{L, L'}(x, y;t)
&=2\E\bigg[  
\xi \Big(  \ind_{[0, 1]}(t')\ind_{[-\frac L2, \frac L2)^2}  
 P(1- t') \S(t) (\dl_x)^\per_L \Big)\\
& \hphantom{XXX}
 \times 
 \xi \Big(  \ind_{[0, 1]}(t')
 \ind_{[-\frac {L'}2, \frac {L'}2)^2}  
 P(1- t') \S(t) (\dl_y)^\per_{L'} \Big)
\bigg]\\
&=2 \int_{\R^2}\int_0^1
 \Big( \ind_{[-\frac L2, \frac L2)^2}  
 P(t') \S(t) (\dl_x)^\per_L \Big)(z)\\
& \hphantom{XXX}
 \times 
 \Big( 
 \ind_{[-\frac {L'}2, \frac {L'}2)^2}  
 P(t') \S(t) (\dl_y)^\per_{L'} \Big)(z)dt'  dz .
\end{aligned}
\label{KK3}
\end{align}

In the following, 
we  assume  $x, y \in B_{2R}$.
By the finite speed of propagation, we have
\begin{align}
\supp  \S(t) \dl_{x+ nL} 
\subset 
 x + nL + B_t.
\label{KK3a}
\end{align}

\noi
Let $\{\psi_j^L\}_{j \in \Z_{\ge 0}}$ be 
a smooth partition of unity on $\R^2$
such that 
$\psi_0^L =  1$ on $B_{\frac L8}$, 
$\supp \psi_0^L  \subset B_{\frac{L}{4}}$, 
and 
\begin{align}
\supp \psi_j^L \subset \big\{ \max\big((j - 1), \tfrac 18\big)L  \le |z| \le (j + 1) L\big\}.
\label{KK3b}
\end{align}

\noi
In particular, for $j = 0$, 
we set 
$\psi_0^L(z) = \psi_0\big(\frac zL\big)$ for some $\psi_0 \in C_c^\infty(\R^2)$.
Here, we choose 
$\{\psi_j^L\}_{j \in \Z_{\ge 0}}$ such that their first and second derivatives are
uniformly bounded.
Then, we have
\begin{align*}
\ind_{[-\frac {L}2, \frac {L}2)^2}  
 P(t') \S(t) (\dl_x)^\per_{L} 
 = e^{-t'} \sum_{j = 0}^\infty
\ind_{[-\frac {L}2, \frac {L}2)^2}  
\big\{
(\psi_j^L p_{t'})* (\S(t) (\dl_x)^\per_{L})\big\}, 
\end{align*}

\noi
where $p_{t'}$ is as in \eqref{heat1}.
Under the assumption $L \gg R+t$ and $x \in B_{2R}$, 
it follows from~\eqref{KK1a} and \eqref{KK3a} that 
\begin{align}
\begin{split}
e^{-t'}\ind_{[-\frac {L}2, \frac {L}2)^2}  (z)
\cdot \big\{
(\psi_0^L p_{t'})* (\S(t) (\dl_x)^\per_{L})\big\}(z)
& = e^{-t'}
(\psi_0^L p_{t'})* (\S(t) \dl_x)(z)\\
& = e^{-t'}
\S(t)(\psi_0^L p_{t'})(z-x).
\end{split}
\label{KK5}
\end{align}

\noi
By the boundedness of $\S(t)$ on $H^s(\R^2)$ and 
$\psi_0^L(z) = \psi_0\big(\frac zL\big)$ for some $\psi_0 \in C_c^\infty(\R^2)$, we have
\begin{align}
\begin{split}
\|\eqref{KK5}\|_{H^{-1-\eps}_z}
& \les  e^{-t'} \| \psi_0^L p_{t'}\|_{H^{-1-\eps}}\\
& \les e^{-t'}\bigg\|\int_{\eta = \eta_1 + \eta_2} L^2 \ft \psi_0(L\eta_1) e^{-4\pi^2 t' |\eta_2|^2} d\eta_1\bigg\|_{L^\infty_\eta}\\
& \les e^{-t'}\int_{\R^2} \frac{L^2}{\jb{L\eta_1}^{100}} d\eta_1
\les e^{-t'}, 
\end{split}
\label{KK5a}
\end{align}

\noi
uniformly in $x \in B_{2R}$, $t \ge 0$, and $ t' \in [0, 1]$
with $R + t \ll L$.

For $j \in \N$, it follows from \eqref{KK1a} and \eqref{KK3b} 
with $L \gg R+t$ and $x \in B_{2R}$ that 
\begin{align}
\begin{aligned}
&e^{-t'}\ind_{[-\frac {L}2, \frac {L}2)^2}  (z) 
\cdot 
\big\{
(\psi_j^L p_{t'})* (\S(t) (\dl_x)^\per_{L})\big\}(z) \\
&\quad 
=
e^{-t'}\ind_{[-\frac {L}2, \frac {L}2)^2}  (z) 
 \sum_{\substack{n \in \Z^2\\ |n| = j+O(1) }}  \bigg\{
(\psi_j^L p_{t'})* \Big(\S(t)   \dl_{x+n L}\Big)\bigg\}(z) \\
& \quad
= e^{-t'}\ind_{[-\frac {L}2, \frac {L}2)^2}  (z) 
\bigg\{ \sum_{\substack{n \in \Z^2\setminus\{0\}\\ |n| = j+O(1) }}  \S(t)( \psi_j^L p_{t'})(z-x- nL)\\
& \hphantom{XXXXXXXXXXX}
 +  \ind_{\{|z|\ge \frac L{16}\}}
 \S(t)( \psi_j^L p_{t'})(z-x)\bigg\}, 
\end{aligned}
\label{KK6}
\end{align}

\noi
where the second term of the last factor is relevant only for $j = 1$.
Note that, for 
$|z|\ge \frac L{16}$ and 
$x \in B_{2R}$ with $L \gg R+t$, 
we have $|z-x|\ge \frac L{32}$.
From the definition of $\psi_j^L$ and~\eqref{heat1}, 
we then have 
\begin{align*}
\| \eqref{KK6} \|_{L^2_z}
&\les j  e^{-t'} \| \psi_j^L  p_{t'} \|_{L^2_z(|z| \ge \frac{L}{32})}\\
&  \les 
 \begin{cases}
 \exp\big(- c \frac{j^2 L^2}{t'} - t' \big),  &\text{for $0 < t'\ll jL$},  \\
 \exp(- cjL),  & \text{for $t'\sim j L$},   \\
 \exp(-ct'),  & \text{for $t' \gg j L$} .
 \end{cases}
\end{align*}

\noi
Thus, we obtain 
\begin{align}
\| \eqref{KK6} \|_{L^2_z}
\les e^{-cjL}, 
\label{KK7}
\end{align}

\noi
uniformly in $x \in B_{2R}$, $t \ge 0$, and $ t' \in [0, 1]$
with $R + t \ll L$.

Let $\chi_L \in C^\infty_c(\R^2; [0, 1])$
be a smooth cutoff function
such that $\chi_L \equiv 1$ on $\big[-\frac {L}2, \frac {L}2\big)^2$ 
and $\supp \chi_L \subset  \big[-\frac {(1+ \g)L}2, \frac {(1+\g)L}2\big)^2$
for some small $\g > 0$.
We assume that 
the first and second derivatives of $\chi_L$ are
bounded uniformly in $L\gg 1$.
Define  $F(z) = F_{j, L, x, t, t'}(z)$by 
\begin{align*}
F(z) = F_{j, L, x, t, t'}(z)   =  e^{-t'}\chi(z)\cdot 
\big\{
(\psi_j^L p_{t'})* (\S(t) (\dl_x)^\per_{L})\big\}(z).
\end{align*}

\noi
Then, a similar computation
with the uniform boundedness of the first and second derivatives of~$\psi_j^L$
shows that 
\begin{align}
\| F_{j, L, x, t, t'} \|_{H^2_z}
\les \| F_{j, L, x, t, t'} \|_{L^2_z}
+ \|\Dl_z  F_{j, L, x, t, t'} \|_{L^2_z}
\les e^{-cjL}, 
\label{KK7a}
\end{align}

\noi
uniformly in $x \in B_{2R}$, $t \ge 0$, and $ t' \in [0, 1]$
with $R + t \ll L$.

In view of the discussion above
(in particular, see \eqref{KK5}), 
we write 
$K_{L, L'}(x,y; t)$ in \eqref{KK3}
as
\begin{align}
K_{L, L'}(x,y; t) = K_{L, L'}^{(0)}(x,y; t) + K_{L, L'}^{(1)}(x,y; t),
\label{KK8}
\end{align}

\noi
where
$ K_{L, L'}^{(0)}(x,y; t)$ is defined by 
\begin{align}
\begin{aligned}
K_{L, L'}^{(0)}(x,y; t)
&=2 \int_{\R^2}\int_0^1
 \Big( 
 e^{-t'} (\psi_0^L p_{t'})* (\S(t) \dl_x) \Big)(z)\\
& \hphantom{XXX}
 \times 
 \Big( 
 e^{-t'} (\psi_0^{L'} p_{t'})*
( \S(t) \dl_y) \Big)(z)dt'  dz 
\end{aligned}
\label{KK9}
\end{align}

\noi
and 
$K_{L, L'}^{(1)}( x,y; t) = 
K_{L, L'}(x,y; t) -  K_{L, L'}^{(0)}(x,y; t)$.
Then, from \eqref{KK3} and \eqref{KK9}, we have
\begin{align}
K_{L, L'}^{(1)}(x,y; t)
&=2 \sum_{j = 1}^\infty\sum_{j' = 1}^\infty
\int_{\R^2}\int_0^1
 \Big( \ind_{[-\frac L2, \frac L2)^2}  
\, e^{-t'} (\psi_j^L p_{t'})* (\S(t) (\dl_x)^\per_L) \Big)(z)\notag\\
& \hphantom{XXXXXXX}
 \times 
 \Big( 
 \ind_{[-\frac {L'}2, \frac {L'}2)^2}  
\, e^{-t'} (\psi_{j'}^{L'} p_{t'})*
( \S(t) (\dl_y)^\per_{L'}) \Big)(z)dt'  dz \notag\\
&\quad +
2 \sum_{j = 1}^\infty
\int_{\R^2}\int_0^1
 \Big( \ind_{[-\frac L2, \frac L2)^2}  
 e^{-t'} (\psi_j^L p_{t'})* (\S(t) (\dl_x)^\per_L) \Big)(z)\notag\\
& \hphantom{XXXXXXX}
 \times 
 \Big( 
 e^{-t'} (\psi_{0}^{L'} p_{t'})*
( \S(t) \dl_y) \Big)(z)dt'  dz \notag\\
&\quad + 2 \sum_{j' = 1}^\infty
\int_{\R^2}\int_0^1
 \Big(
 e^{-t'} (\psi_0^{L} p_{t'})* (\S(t) \dl_x) \Big)(z)\notag\\
& \hphantom{XXXXXXX}
 \times 
 \Big( 
  \ind_{[-\frac {L'}2, \frac {L'}2)^2}  
 e^{-t'} (\psi_{j'}^{L'} p_{t'})*
( \S(t) (\dl_y)^\per_{L'}) \Big)(z)dt'  dz \notag\\
& = : \1 (x,y; t)+ \II(x,y; t) + \III(x,y; t).
\label{KK9a}
\end{align}

\noi
Note that, in view of \eqref{KK5}, 
we dropped the cutoff function $\ind_{[-\frac {L'}2, \frac {L'}2)^2}$
(and $\ind_{[-\frac {L}2, \frac {L}2)^2}$)
in $\II$ (and in $\III$, respectively).
By Cauchy-Schwarz's inequality with \eqref{KK6} and \eqref{KK7}, we have
\begin{align}
|\1(x,y; t)|
& \les  \sum_{j = 1}^\infty\sum_{j' = 1}^\infty e^{- cj L} e^{- cj'L'}
 \les  e^{- c( L+L')}.
\label{KK10}
\end{align}

Next, we estimate $\II$ and $\III$ in \eqref{KK9a}.
Without loss of generality, assume $L \ge  L'$.
As for $\II$, 
we first insert $\chi_L$
and then 
drop the cutoff function $\ind_{[-\frac L2, \frac L2)^2}$
in view of 
 \eqref{KK5}. 
Thus, from~\eqref{KK5a} and \eqref{KK7a}, we have 
\begin{align}
\begin{split}
|\II (x, y; t)|
& = 
2 \bigg| \sum_{j = 1}^\infty
\int_{\R^2}\int_0^1
 \Big( 
 e^{-t'} \chi_L(z)(\psi_j^L p_{t'})* (\S(t) (\dl_x)^\per_L) \Big)(z)\\
& \hphantom{XXXXXXX}
 \times 
 \Big( 
 e^{-t'} (\psi_{0}^{L'} p_{t'})*
( \S(t) \dl_y) \Big)(z)dt'  dz \bigg|\\
& \les
 \sum_{j = 1}^\infty
\int_0^1
\big\| 
 e^{-t'} \chi_L (\psi_j^L p_{t'})* (\S(t) (\dl_x)^\per_L) \big\|_{H^2_z}\\
& \hphantom{XXXXXXX}
 \times 
\|
 e^{-t'} (\psi_{0}^{L'} p_{t'})*
( \S(t) \dl_y) \|_{H^{-1-\eps}_z}dt'  \\
& \les e^{-cL}.
\end{split}
\label{KK10a}
\end{align}

\noi
As for the term $\III$, 
we can not drop the cutoff function 
$  \ind_{[-\frac {L'}2, \frac {L'}2)^2}  $
under the current assumption $L \ge L'$, 
and thus we need to proceed with more care.
We first note that by the mean value theorem, we have
\begin{align}
|\F\big(  \ind_{[-\frac {L'}2, \frac {L'}2)^2}  \big)(\eta)|
\les \frac{(L')^2}{\jb{\eta^1}\jb{\eta^2}}, 
\label{KB1}
\end{align}

\noi
where $\eta = (\eta^1, \eta^2)$.
Then, proceeding as in \eqref{KK5a}
with the triangle inequality $\jb{\eta_1}^\eps 
\les \jb{\eta}^\eps\jb{\eta_2}^\eps\jb{\eta_3}^\eps$, 
Young's inequality, and 
 \eqref{KB1}, we have
\begin{align}
\begin{split}
\big\| &  \ind_{[-\frac {L'}2, \frac {L'}2)^2}(z)
\S(t)(\psi_0^L p_{t'})(z-x)\big\|_{H^{-1-2\eps}_z}\\
& \les
\bigg\| 
\frac{1}{\jb{\eta}^{1 + 2\eps}}
\int_{\eta = \eta_1 + \eta_2+ \eta_3} 
|\F\big(  \ind_{[-\frac {L'}2, \frac {L'}2)^2}  \big)(\eta_1)|
L^2|\ft \psi_0(L\eta_2)| e^{-4\pi^2 t' |\eta_3|^2} d\eta_1d\eta_2  \bigg\|_{L^2_\eta}\\
& \les
\bigg\| 
\frac{1}{\jb{\eta}^{1 + \eps}}
\int_{\eta = \eta_1 + \eta_2+ \eta_3} 
\jb{\eta_1}^{-\eps}|\F\big(  \ind_{[-\frac {L'}2, \frac {L'}2)^2}  \big)(\eta_1)|\\
& \hphantom{XXXXXXXXXXX}
\times \jb{\eta_2}^{\eps} L^2|\ft \psi_0(L\eta_2)| 
\jb{\eta_3}^{\eps}
e^{-4\pi^2 t' |\eta_3|^2} d\eta_1d\eta_2 
\bigg\|_{L^2_\eta}\\
& \les (L')^2 (t')^{-\frac \eps 2}.
\end{split}
\label{KB2}
\end{align}

\noi
Hence, from \eqref{KK7a} and \eqref{KB2}, we obtain
\begin{align}
\begin{split}
|\III(x,y; t)|
& = 2\bigg| \sum_{j' = 1}^\infty
\int_{\R^2}\int_0^1
 e^{-t'}
  \ind_{[-\frac {L'}2, \frac {L'}2)^2}(z)
\S(t)(\psi_0^L p_{t'})(z-x)\\
& \hphantom{XXXXXXX}
 \times 
 \Big( 
 e^{-t'} 
 \chi_{L'}(z)
 (\psi_{j'}^{L'} p_{t'})*
( \S(t) (\dl_y)^\per_{L'}) \Big)(z)dt'  dz\bigg| \\
& \les (L')^2 
e^{-cL'}
\int_0^1 (t')^{-\frac \eps 2} dt' \\
& \les e^{-c'L'}.
\end{split}
\label{KK10b}
\end{align}

%

\noi
Therefore, from \eqref{KK9a}, \eqref{KK10}, \eqref{KK10a}, and \eqref{KK10b}, 
we obtain
\begin{align}
|K_{L, L'}^{(1)}(x,y; t)|
\les e^{-cL} +  e^{-cL'}.
\label{KK10c}
\end{align}

Next, we study $K^{(0)}_{L, L'}(x, y;t)  $
for $L, L' \gg 1$.
Define $K_{\infty, \infty}(x, y; t)  $ by 
\begin{align}
\begin{aligned}
K_{\infty, \infty}(x,y; t)
&=2 \int_{\R^2}  \int_0^1
  \big( P(t') (\S(t) \dl_x \big)(z)
 \big(P(t') \S(t) \dl_y \big)(z)dt'  dz \\
& =  
\big\langle(1-\Dl)^{-1}  (1- e^{-2(1-\Dl)})
 \S(t)    \dl_{x},   \S(t)     \dl_{y} \big\rangle_{L^2_z}.
\end{aligned}
\label{KK11}
\end{align}

\noi
From \eqref{KK9} and \eqref{KK11}, we have
\begin{align}
\begin{split}
&  K_{\infty, \infty} (x,y; t)
- K_{L, L'}^{(0)}(x,y; t)\\
& \quad 
 =  2 \int_{\R^2}\int_0^1
 \Big( 
 e^{-t'} \big((1- \psi_0^L) p_{t'}\big)* (\S(t) \dl_x) \Big)(z)
 \\
& \quad \hphantom{XXXXXX}
 \times 
 \Big(  P(t')
 \S(t) \dl_y \Big)(z)dt'  dz \\
&\quad 
\quad + 2 \int_{\R^2}\int_0^1
 \Big( 
 e^{-t'} (\psi_0^L p_{t'})* (\S(t) \dl_x) \Big)(z)
 \\
& \quad \hphantom{XXXXXX}
 \times 
 \Big( 
 e^{-t'} \big((1-\psi_0^{L'}) p_{t'}\big)*
( \S(t) \dl_y) \Big)(z)dt'  dz \\
& \quad
= 
  2 \int_{\R^2}\int_0^1
  \Big( 
 e^{-t'}\S(t) \big((1- \psi_0^L) p_{t'}\big) \Big)(z-x)
 \\
& \quad \hphantom{XXXXXX}
 \times 
 \Big(  e^{-t'}
 \S(t) p_{t'}\Big)(z-y)dt'  dz \\
&\quad 
\quad + 2 \int_{\R^2}\int_0^1
 \Big( 
e^{-t'} \S(t)(\psi_0^L p_t')  \Big)(z-x)
 \\
& \quad \hphantom{XXXXXX}
 \times 
 \Big( 
 e^{-t'} \S(t) \big((1-\psi_0^{L'}) p_{t'}\big)
 \Big)(z-y)dt'  dz \\
& \quad =: \IV(x, y; t) + \text{V}(x, y; t).
\end{split}
\label{KK12}
\end{align}

\noi
We only consider the first term $\IV$ on the right-hand side of \eqref{KK12}
since  the term $\text{V}$ can be treated in a similar manner.
Recalling that $\psi_0^L =  1$ on $B_{\frac L8}$, we have 
\begin{align}
\| e^{-t'}(1- \psi_0^L )p_{t'}\|_{L^2_x} \les e^{- c\frac{L^2}{t'} -t' }
\les 
e^{-cL}, 
\label{KK13}
\end{align}

\noi
where the second inequality follows from separately estimating
the cases (i)~$t' \ll L$, (ii)~$t' \sim L$, and (iii)~$t' \gg L$.
Moreover, since $\psi_0^L = 1$ on $B_{\frac L8}$, $L \gg R + t$, 
and $x, y \in B_{2R}$, we have
\begin{align*}
|z-y|
\ge |z- x| - |x| - |y|
 \ge \frac L 8 - t - 4R \ge \frac L{16}
\end{align*}

\noi
in the domain of integration for $\IV$ in \eqref{KK12}.
Under this condition, we have, as in \eqref{KK7}, 
\begin{align}
\| \ind_{\{|z- y|\ge \frac L{16}\}}e^{-t'}
 \S(t) p_{t'}(z-y) \|_{L^2_z}
 \les e^{-cL}.
\label{KK15}
\end{align}

\noi
Then, 
by applying Cauchy-Schwarz's inequality
with  \eqref{KK13} and  \eqref{KK15}, we obtain
\begin{align}
|\IV(x, y; t)| \les e^{-cL}.
\label{KK16}
\end{align}

\noi
A similar computation yields
\begin{align}
|\text{V}(x, y; t)| \les e^{-cL'}.
\label{KK17}
\end{align}

\noi
Therefore, from \eqref{KK12}, \eqref{KK16}, and \eqref{KK17}, 
we conclude that 
\begin{align}
|  K_{\infty, \infty} (x,y; t)
- K_{L, L'}^{(0)}(x,y; t)|
\les e^{-cL} + e^{-cL'}.
\label{KK18}
\end{align}


It remains to estimate $K_{\infty, \infty} (x,y; t)$.
In view of \eqref{KK11} with \eqref{SS1}, we have
\begin{align}
\begin{aligned}
K_{\infty, \infty}(x,y; t)
&=K^t(x-y) \\
: \! & = \int_{\R^2} \frac{(\ft{\S(t)}(\eta))^2}{\jb{2\pi\eta}^2}(1 - e^{-\jb{2\pi\eta}^2}) e^{2\pi i \eta \cdot (x - y)} d \eta.
\end{aligned}
\label{KZ1}
\end{align}

\noi
From \eqref{SS1} with \eqref{SS0}, we have 
\begin{align}
\big |(\ft{\S(t)}(\eta))^2(1 - e^{-\jb{2\pi\eta}^2})
 - e^{-t} \cos^2( t \jbb{2\pi \eta } )\big |\les \frac{1}{\jb{\eta}}, 
\label{KZ2}
\end{align}

\noi
uniformly in $t \in \R_+$.
Then, by defining $K^t_1$ by 
\begin{align}
K^t_1(z)=  e^{-t} \int_{\R^2}  \frac{  \cos^2(t\jbb{2\pi\eta})    }{\jb{2\pi \eta}^2} 
e^{2\pi i\eta \cdot z}d\eta,
\label{KZ3}
\end{align}

\noi
it follows from \eqref{KZ1} and \eqref{KZ2} that 
\begin{align}
|K_{\infty, \infty}(x,y; t) - K_1^t(x- y)|\les 1,
\label{KZ4}
\end{align}

\noi
uniformly in $t \in \R_+$ and $x, y \in \R^2$.

We now estimate $K_1^t$.
Recall from  Appendix B.5 in  \cite{Gra14}
that the Fourier transform of a radial function $F(x) = f(|x|)$ on $\R^2$
is given by 
\begin{align}
\ft F(\eta) = 2\pi \int_0^\infty f(r) J_0 (2\pi r|\eta|) r dr, 
\label{KZ7}
\end{align}

\noi
where $J_0$ 
is  the Bessel function of  order zero, satisfying 
the following asymptotics as $r \to \infty$:
\begin{align}
J_0(r) =  \sqrt{\frac{2}{\pi r}} \cos\Big(r-  \frac\pi4 \Big) + O(r^{-\frac 32}).
\label{asymp1}
\end{align}

\noi
See Appendix B.8 in  \cite{Gra14}
for the asymptotics \eqref{asymp1}.
Thus, from \eqref{KZ3} and \eqref{KZ7}, we have
\begin{align}
K_1^t(z)
= 2\pi e^{-t}  \int_0^\infty
 \frac{  \cos^2( t \jbb{2\pi r})   }{ \jb{2\pi r}^2}  J_0(2\pi r|z| ) rdr.
\label{KZ8}
\end{align}

\noi
Recalling that 
$J_0$ is  bounded on $\R_+$, it follows from \eqref{KZ8} with \eqref{asymp1}
that 
\begin{align}
\begin{aligned}
|K_1^t(z) |
&\les e^{-t} \int_0^{ \max( \frac{1}{|z|} , 1) } \frac{1}{\jb{r}} dr
+ \int_{ \max( \frac{1}{|z|} , 1) }^\infty  \frac{1}{r^{\frac32} |z|^{\frac12}} dr  \\
&\les  1+ (-  \log  |z| )_+  .
\end{aligned}
\label{KZ9}
\end{align}

\noi
Hence, putting together
\eqref{KK8}, \eqref{KK10c}, 
\eqref{KK18}, \eqref{KZ4},  and \eqref{KZ9}, we obtain
\begin{align}
|K_{L, L'}(x,y; t) | +   |K_{\infty, \infty}(x,y; t) | 
\les 1+  (-  \log  |x-y| )_+  .
\label{KZ10}
\end{align}

\noi
Therefore, the claim 
 \eqref{KL1b} follows
 from  \eqref{KK8},  \eqref{KK10c},  \eqref{KK18}, 
 and 
\eqref{KZ10}.
\end{proof}

\begin{remark}\label{REM:XXX}
\rm
When $L' = \infty$, 
we write
\begin{align}
K_{L, \infty}(x,y; t) 
= K_{L, \infty}^{(0)}( x,y; t) + 
K_{L, \infty}^{(1)}( x,y; t),
\label{KX0}
\end{align} 
where
\begin{align}
\begin{aligned}
K_{L, \infty}^{(0)}(x,y; t)
&=2 \int_{\R^2}\int_0^1
 \Big( 
 e^{-t'} (\psi_0^L p_{t'})* (\S(t) \dl_x) \Big)(z)\\
& \hphantom{XXXXX}
 \times 
 \big(P(t') \S(t) \dl_y \big)(z)dt'  dz 
\end{aligned}
\label{KX1}
\end{align}

\noi
and 
\begin{align*}
K_{L, \infty}^{(1)}(x,y; t)
&=2 \sum_{j = 1}^\infty
\int_{\R^2}\int_0^1
 \Big( \ind_{[-\frac L2, \frac L2)^2}  
\, e^{-t'} (\psi_j^L p_{t'})* (\S(t) (\dl_x)^\per_L) \Big)(z)\notag\\
& \hphantom{XXXXXXX}
 \times 
 \big(P(t') \S(t) \dl_y \big)(z)dt'  dz .
\end{align*}

\noi
Proceeding as in \eqref{KB2}, we have
\begin{align*}
\big\|   \ind_{[-\frac {L}2, \frac {L}2)^2}(z)
 \big(P(t') \S(t) \dl_y \big)(z)\big\|_{H^{-1-\eps}_z}
 \les L^2 (t')^{-\frac \eps 2}.
\end{align*}

\noi
Then, proceeding as in \eqref{KK10b}, we obtain
\begin{align}
|K_{L, \infty}^{(1)}(x,y; t)| \les e^{-cL}.
\label{KX2}
\end{align}

\noi
From \eqref{KK11} and \eqref{KX1}, we have
\begin{align*}
&  K_{\infty, \infty} (x,y; t)
- K_{L, \infty}^{(0)}(x,y; t)\\
& \quad 
 =  2 \int_{\R^2}\int_0^1
 \Big( 
 e^{-t'} \big((1- \psi_0^L) p_{t'}\big)* (\S(t) \dl_x) \Big)(z)
 \\
& \quad \hphantom{XXXXXX}
 \times 
 \Big(  P(t')
 \S(t) \dl_y \Big)(z)dt'  dz, 
\end{align*}

\noi
which is precisely 
 $\IV(x, y; t) $ in \eqref{KK12}.
Hence, from \eqref{KK16}, we have 
\begin{align}
|  K_{\infty, \infty} (x,y; t)
- K_{L, \infty}^{(0)}(x,y; t)|
\les e^{-cL}.
\label{KX3}
\end{align}

\noi
Therefore, putting \eqref{KX0}, 
\eqref{KX2}, and \eqref{KX3} together, 
we obtain
\begin{align}
|  K_{\infty, \infty} (x,y; t)
- K_{L, \infty}(x,y; t)|
\les e^{-cL}.
\label{KX4}
\end{align}

\noi
On the other hand, from 
\eqref{KL1b} and \eqref{KX4}, we have 
\begin{align}
\begin{split}
|K_{L, L}(x, y;t) | + |K_{L, \infty}(x, y;t) | + |K_{\infty, \infty}(x, y;t) | 
& \les  1+ (-  \log  |x-y| )_+  , \\
|  K_{L, L}(x, y;t)-  K_{\infty, \infty} (x, y;t)|
& \les e^{-cL}.
\end{split}
\label{KX5}
\end{align}

A computation analogous to 
\eqref{K4a}
with  Lemma \ref{LEM:Kol}, 
\eqref{K4} (with $L' = \infty$), \eqref{KX4}, and \eqref{KX5} yields
\begin{align*}
\E & \Big[ \|  :\!( \S(t)Z_L(1)  )^m \!: -  :\!( \S(t)Z_{\infty}(1)  )^m \! : \|_{W^{-\eps, p_1}(B_R)}^r \Big] \\
& \les C(R, r)  
 e^{-c L} \too 0, 
\end{align*}
 
\noi
as $L \to \infty$.
This proves \eqref{AS1}.

\end{remark}

\subsection{Estimates
on the enhanced data sets}
\label{SUBSEC:4.3}

In this subsection,
we establish
the following proposition 
on  regularities of the enhanced data sets.

\begin{proposition} \label{PROP:need23}
{\rm (i)} 
Let $R > 0$.
Given small $\eps > 0$, there exists finite $p = p(\eps) \gg1 $ such that 
the following holds true.
Suppose  that 
random $k$-tuple of functions $\Xi_0, \wt \Xi_0 \in \WW(R)$   are independent of $\Phi_1 = \Phi_1(u_1, \xi)$ 
defined in \eqref{NW5}, 
where $ \WW(R)$ is as in \eqref{norm2}.
Then,  we have 

 \noi
\begin{align*}
\E_{u_1, \xi, \Xi_0, \wt \Xi_0}\Big[  \| \Xi_1(u_1,  \xi) \| _{\Xi_0, \wt \Xi_0, R} \Big]  \les 1, 
\end{align*}

\noi
where   the implicit constant is independent of the distributions 
of $\Xi_0$ and $\wt \Xi_0$.
Here,    $\Xi_1(u_1,  \xi)$ is as in \eqref{xi2}
and $\| \cdot  \| _{\Xi_0, \wt \Xi_0, R}$ is as in 
Definition \ref{DEF:X}\,(iii).

\smallskip

\noi
{\rm(ii)}   Let $\Xi_0(\phi)$ be as in \eqref{xi1}. Then,
given any $\eps > 0$, finite $p \gg 1$, and $\mu > 0$, 
we have
\begin{align}
 \sup_{L\in \A}\E_{\rho_L}
 \Big[  \| \Xi_0( \phi) \|_{(L^\infty_t W^{-\eps, p}_\mu)^{\otimes k} } \Big] 
 <\infty.
 \label{need3}
\end{align}

\end{proposition}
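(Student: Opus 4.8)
\textbf{Proof proposal for Proposition \ref{PROP:need23}.}

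The plan is to treat the two parts separately, since they rely on different inputs. For Part~(i), recalling Definition~\ref{DEF:X}\,(iii), the quantity $\|\Xi_1(u_1,\xi)\|_{u_0,\wt u_0,R}$ is the maximum of three constants $K_1,K_2,K_3$, each of which is the smallest constant controlling a time-integrated product $:\!\Phi_1^j(t)\!:\,:\!\Phi_0^\l(t)\!:$ (or a difference of two such) in $W^{-\eps_k,p}(B_R)$ by the corresponding integral of $:\!\Phi_0^\l(t)\!:$ in $W^{-\eps,p}(B_R)$. Since all three are structurally identical, I would focus on $K_1$ and indicate that $K_2,K_3$ follow verbatim. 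First I would fix a time $t$ and, conditioning on $u_0$ (hence on $\Phi_0=\S(t)u_0$), use the independence of $u_0$ from $\Phi_1=\D(t)u_1+\Psi$ to write the product $:\!\Phi_1^j(t)\!:\,:\!\Phi_0^\l(t)\!:$ as an element of the Wiener chaos generated by $(u_1,\xi)$ with coefficient given by $:\!\Phi_0^\l(t)\!:$. Then I would apply Lemma~\ref{LEM:bilin1}\,(ii) (the negative-regularity product estimate, via Remark~\ref{REM:R} on the localized spaces $W^{\pm,p}(B_R)$) to bound
\[
\| :\!\Phi_1^j(t)\!:\,:\!\Phi_0^\l(t)\!: \|_{W^{-\eps_k,p}(B_R)}
\les \| :\!\Phi_1^j(t)\!: \|_{W^{-\eps_k,\,p+}(B_R)}\, \| :\!\Phi_0^\l(t)\!: \|_{W^{\eps_k',\,?}(B_R)},
\]
which is the wrong direction — so instead the correct move is to pull out $:\!\Phi_0^\l\!:$ in its native norm and the factor $:\!\Phi_1^j\!:$ in a slightly better (positive) regularity space, using that $\Phi_1$ is a smooth-in-space stochastic convolution with space-time white noise forcing: $\Phi_1$ lives almost surely in $C(\R_+;W^{-\eps,p}(B_R))$ and, crucially, its Wick powers have a definite (if negative) regularity, so choosing $\eps_k = 2(k+1)\eps$ as in~\eqref{eps1} gives enough room for the product estimate Lemma~\ref{LEM:bilin1}\,(ii) to close with $\frac 1p+\frac 1q\le\frac 1r+\frac{\eps_k}{d}$. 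Taking the conditional $L^1$-expectation over $(u_1,\xi)$ of the ratio, Minkowski and the Wiener chaos estimate (Lemma~\ref{LEM:hyp}) convert the $L^1$ expectation into an $L^2$ one, and the $L^2$-moments of $:\!\Phi_1^j(t)\!:$ in $W^{-\eps_k,p}(B_R)$ are finite and bounded uniformly in $t$ (in fact decaying like $e^{-cjt/2}$) by the stochastic convolution estimates of the type carried out in the proof of Proposition~\ref{PROP:tight} — these are standard and independent of the law of $u_0$. Integrating in $t$ over $[0,T]$ and comparing with $\int_0^T\|:\!\Phi_0^\l(t)\!:\|_{W^{-\eps,p}(B_R)}^p dt$ (which appears on both sides), I obtain a bound on the conditional expectation of $K_1$ that is deterministic and independent of $\Law(u_0)$; then taking the outer expectation over $u_0,\wt u_0$ finishes Part~(i). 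The one genuinely delicate point is the use of Fubini/Tonelli: since $K_1$ is defined as an \emph{infimum} of admissible constants, I would instead bound $\E[K_1]$ by $\E$ of the explicit ratio of the two time-integrals (which is an admissible constant by construction) and then interchange $\E$ with the $t$-integral inside numerator and denominator via Tonelli — this requires a small argument that the denominator is a.s.\ positive and that the ratio is measurable, which I would handle by a standard approximation.

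For Part~(ii), the enhanced data set $\Xi_0(\phi)=(\S(t)\phi,\,:\!(\S(t)\phi)^2\!:,\dots,:\!(\S(t)\phi)^k\!:)$ must be controlled in $(L^\infty_t W^{-\eps,p}_\mu)^{\otimes k}$ with $\phi$ distributed by the $L$-periodic $\Phi^{k+1}_2$-measure $\rho_L$, uniformly in $L\in\A$. The strategy is exactly the one already executed in the proof of Proposition~\ref{PROP:tight}, with the observation being the key fact~\eqref{CD2a} of invariance of $\rho_L$ under the parabolic dynamics~\eqref{SHE7}: instead of $\Xi_0(\phi)$ I study $\Xi_0(X_L(1))$ with $X_L$ solving the $L$-periodic SNLH, $\Law(X_L(1))=\rho_L$. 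Writing $X_L=Y_L+Z_L$ and using the Hermite product identity~\eqref{Herm3} as in~\eqref{view2}, each Wick power $:\!(\S(t)X_L(1))^\l\!:$ decomposes into sums of $(\S(t)Y_L(1))^{\l-m}\cdot:\!(\S(t)Z_L(1))^m\!:$, which are then estimated by the weighted product estimate Lemma~\ref{LEM:bilin2}\,(ii). The Gaussian factor $:\!(\S(t)Z_L(1))^m\!:$ is controlled in $L^q(\O;W^{-\eps,p}_\mu)$ uniformly in $L$ and with $e^{-mt/2}$ decay by the explicit kernel computations~\eqref{ST1}--\eqref{ST3}; the low-regularity factor $(\S(t)Y_L(1))^{\l-m}$ is controlled in $W^{\eps,p_2}_{\mu/2^\delta}$ via the fractional Leibniz rule Lemma~\ref{LEM:bilin2}\,(i), Sobolev embedding~\eqref{sob2}, the damped-wave smoothing Lemma~\ref{LEM:wave}, and then the coming-down-from-infinity bound for $Y_L(1)$ in a weighted Sobolev space of positive regularity (Proposition~\ref{PROP:CI3}) together with the uniform-in-$L$ moment bounds on the stochastic objects from \cite[Theorem 5.1]{MW17a}, exactly as in~\eqref{view4}. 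Since the time dependence enters only through exponentially decaying prefactors, taking $\sup_{t\in\R_+}$ is harmless and one gets a finite bound on the first moment, uniformly in $L\in\A$. In other words, Part~(ii) is essentially a restatement of the uniform-in-$L$ moment bounds established en route to the tightness statement in Proposition~\ref{PROP:tight}; I would phrase it as a corollary of those estimates rather than redo the computation.

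The main obstacle is Part~(i), and specifically the need to produce a bound \emph{uniform over the law of $u_0$} (and $\wt u_0$). The point of Definition~\ref{DEF:X}\,(iii) is that $\Xi_0(\cdot)$ is treated pathwise, so $:\!\Phi_0^\l(t)\!:$ appears on \emph{both} sides of the defining inequality and must be allowed to be an arbitrary (deterministic) profile; the role of the expectation is only over the noise $(u_1,\xi)$ generating $\Phi_1$. Thus the heart of the matter is a conditional estimate: for \emph{every} fixed space-time distribution $:\!\Phi_0^\l\!:$ (in the relevant regularity class), and every $j+\l\le k$,
\[
\E_{u_1,\xi}\!\int_0^T\!\|:\!\Phi_1^j(t)\!:\,:\!\Phi_0^\l(t)\!:\|_{W^{-\eps_k,p}(B_R)}^p\,dt
\;\les\;\int_0^T\!\|:\!\Phi_0^\l(t)\!:\|_{W^{-\eps,p}(B_R)}^p\,dt,
\]
with an implicit constant depending only on $R,k,p,\eps$. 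This is where the independence (so that the product lies in a single Wiener chaos in $(u_1,\xi)$ with the $\Phi_0$-factor frozen), the choice $\eps_k=2(k+1)\eps$ (so Lemma~\ref{LEM:bilin1}\,(ii) has room), and the a priori uniform-in-$L$ moment bounds on $:\!\Phi_1^j\!:$ all have to be used together; getting the bookkeeping of the Sobolev exponents and the hypercontractivity (Lemma~\ref{LEM:hyp}) to combine cleanly, and making the measurability/Tonelli step rigorous for an infimum-defined constant, is the only nontrivial work. Everything else reduces to results already in the paper.
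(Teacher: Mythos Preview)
Your treatment of Part~(ii) is fine and is essentially what the paper does: it simply points back to the uniform-in-$L$ moment estimates established in the proof of Proposition~\ref{PROP:tight} (the decomposition~\eqref{view2}, the Gaussian bounds~\eqref{ST3}--\eqref{ST4}, the deterministic bounds~\eqref{view4}--\eqref{view5}), combined with Kolmogorov's continuity criterion and the exponential time decay (Remark~\ref{REM:bound}) to pass to $L^\infty_t$.

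Part~(i), however, has a real gap. Your plan is to bound the product $:\!\Phi_1^j\!:\,:\!\Phi_0^\l\!:$ in $W^{-\eps_k,p}(B_R)$ via the deterministic product estimate Lemma~\ref{LEM:bilin1}\,(ii), pulling out $:\!\Phi_0^\l\!:$ in its native $W^{-\eps,p}$ norm and $:\!\Phi_1^j\!:$ in some companion norm. But Lemma~\ref{LEM:bilin1}\,(ii) requires one factor to sit in a \emph{positive} Sobolev space $W^{s,q}$, and $:\!\Phi_1^j\!:$ does not: $\Phi_1=\D(t)u_1+\Psi$ lives only in $W^{0-,p}$, and its Wick powers likewise have strictly negative regularity. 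Since $:\!\Phi_0^\l\!:$ is to be treated as an arbitrary element of $W^{-\eps,p}(B_R)$, both factors are genuinely of negative regularity and no pathwise product estimate of that form can close. Choosing $\eps_k=2(k+1)\eps$ does not create the positive regularity you need; it only enlarges the target space.

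The paper avoids this by never attempting a pathwise factorization. Instead, for fixed deterministic $f$ (playing the role of $:\!\Phi_0^\l\!:$) it tests $f\,:\!\Phi_1^j\!:$ against a compactly supported $\varphi$ and computes the \emph{second moment} directly via Lemma~\ref{LEM:W1}, obtaining
\[
\E\big[|\jb{f\,:\!\Phi_1^j(t)\!:,\varphi}|^2\big]
= j!\iint f(x)f(y)\varphi(x)\varphi(y)\,H(x,y;t)^j\,dx\,dy,
\]
where $H$ is the covariance kernel of $\Phi_1$. The crucial input is then Lemma~\ref{LEM:H}, which gives bi-parameter regularity $\jb{\nb_x}^\eps\jb{\nb_y}^\eps H\in L^q_{x,y}(B_R\times B_R)$; combined with the bi-parameter fractional Leibniz rule (Lemma~\ref{LEM:bilin0}) to handle $H^j$, and a duality step using Lemma~\ref{LEM:bilin1}\,(ii) on $f\varphi$, one gets $\E[|\jb{f\,:\!\Phi_1^j\!:,\varphi}|^2]\les\|\varphi\|_{W^{\eps,q'}}^2\|f\|_{W^{-\eps,p}}^2$. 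Lemma~\ref{LEM:Kol} then converts this into the desired $\E[\|f\,:\!\Phi_1^j(t)\!:\|_{W^{-\eps_k,p}(B_R)}^p]\les\|f\|_{W^{-\eps,p}(B_R)}^p$, uniformly in $t$; the choice $\eps_k=2(k+1)\eps$ arises from the loss in Lemma~\ref{LEM:Kol} together with the integrability constraint $jq<1/\eps$ from Lemma~\ref{LEM:H}. Conditioning on $u_0$, integrating in $t$, and bounding $\E[K_1]$ by the sum over $j,\l$ of the expected ratios then finishes as you outlined. The missing ingredients in your proposal are precisely Lemma~\ref{LEM:H}, the bi-parameter Leibniz rule Lemma~\ref{LEM:bilin0}, and the passage through Lemma~\ref{LEM:Kol}.
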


We first state and prove an auxiliary lemma.
The proof of  Proposition \ref{PROP:need23}
is presented at the end of this subsection.

\begin{lemma} \label{LEM:H}
Let  $\Phi_1 = \Phi_1(u_1, \xi)$ be as in \eqref{NW5}, 
where  $u_1$ and $\xi$ 
denote the independent spatial and space-time white noises  as in \eqref{NW1}.
Define $H(x, y; t)$ by 
\begin{align}
H(x, y; t) = \E\big[ \Phi_1(x, t) \Phi_1(y, t)\big].
\label{H1}
\end{align}

\noi
Let $R > 0$.
Then,   given small $\eps > 0$
and  $1 \le q < \frac 1\eps$, 
there exists a finite constant $C(R) > 0 $ such that 
\begin{align}
\|  \jb{\nb_x}^\eps \jb{\nb_y}^\eps H(x, y; t) \|_{L^{q}_{x, y}(B_R\times B_R)}  \le C(R), 
\label{H2}
\end{align}
uniformly in $t \in \R_+$.
\end{lemma}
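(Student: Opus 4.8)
\textbf{Proof plan for Lemma \ref{LEM:H}.}
The plan is to compute $H(x,y;t)$ explicitly using the independence of $u_1$ and $\xi$, and then reduce the claimed bound to a kernel estimate via the bi-parameter fractional Leibniz rule of Lemma \ref{LEM:bilin0}. First I would use the decomposition $\Phi_1 = \D(t) u_1 + \Psi(t)$ from \eqref{NW5}, where $\Psi$ solves \eqref{psi1}, together with the fact that $u_1$ (with $\Law(u_1) = \mu_{0,\infty}$, a spatial white noise) is independent of the space-time white noise $\xi$. This gives
\begin{align*}
H(x,y;t)
= \E\big[ (\D(t)u_1)(x) (\D(t) u_1)(y)\big]
+ \E\big[\Psi(x,t)\Psi(y,t)\big]
=: H_0(x,y;t) + H_1(x,y;t),
\end{align*}
since the cross terms vanish by independence and mean-zero. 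For $H_0$, using that $u_1$ is a spatial white noise and $\D(t)$ is a Fourier multiplier with symbol $e^{-t/2}\sin(t\jbb{\eta})/\jbb{\eta}$ (see \eqref{lin2}, \eqref{SS0}), one gets $H_0(x,y;t) = (G_0^t * G_0^t)(x-y)$ where $G_0^t$ has Fourier symbol $\widehat{\D(t)}$; equivalently $H_0(x,y;t)$ is a convolution kernel with Fourier transform $(\widehat{\D(t)}(\eta))^2$, which decays like $e^{-t}\jb{\eta}^{-2}$, uniformly in $t$. For $H_1$, the stochastic convolution formula $\Psi(t) = \sqrt 2\int_0^t \D(t-t')\xi(dt')$ and the Itô isometry give $H_1(x,y;t) = 2\int_0^t (\D(t-t')^2 \text{-kernel})(x-y)\, dt'$, a convolution kernel with Fourier transform $2\int_0^t (\widehat{\D(t')}(\eta))^2 dt' \sim \jb{\eta}^{-2}(1 - e^{-t\jb{\eta}^2/\cdots})$, again bounded by $C\jb{\eta}^{-2}$ uniformly in $t$.

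Thus $H(x,y;t) = \mathcal{K}^t(x-y)$ for a single-variable kernel $\mathcal{K}^t$ whose Fourier transform is bounded by $C\jb{\eta}^{-2}$ uniformly in $t \in \R_+$; in particular $\mathcal{K}^t$ is controlled by the Bessel kernel $G_2$ of order $2$, which satisfies the logarithmic bound $|G_2(z)| \les 1 + (-\log|z|)_+$ for $|z|\le 2$ and decays exponentially for $|z|\ge 2$ (analogously to \eqref{SS6} and as used in \eqref{KZ9}). Since $\jb{\nb_x}^\eps\jb{\nb_y}^\eps$ acting on a function of $x-y$ again produces a kernel of the form $\mathcal{L}^t(x-y)$ with $\widehat{\mathcal{L}^t}(\eta) \les \jb{\eta}^{\eps}\cdot\jb{\eta}^{\eps}\cdot\jb{\eta}^{-2}$ — no wait, more carefully: $\jb{\nb_x}^\eps\jb{\nb_y}^\eps \mathcal{K}^t(x-y)$ has Fourier transform (in the single frequency $\eta$ dual to $x-y$) equal to $\jb{\eta}^{2\eps}\widehat{\mathcal{K}^t}(\eta) \les \jb{\eta}^{2\eps - 2}$, which for $\eps$ small is still $L^1$-controlled with a kernel obeying $|\mathcal{L}^t(z)|\les |z|^{2\eps - 2}$ near the origin and exponential decay at infinity. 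Then I would estimate
\begin{align*}
\|\jb{\nb_x}^\eps\jb{\nb_y}^\eps H(\cdot,\cdot;t)\|_{L^q_{x,y}(B_R\times B_R)}
\les \|\mathcal{L}^t(x-y)\|_{L^q_{x,y}(B_R\times B_R)}
\les_R \Big(\int_{|z|\le 2R} |z|^{(2\eps - 2)q}\,dz\Big)^{1/q} + C_R,
\end{align*}
which is finite precisely when $(2-2\eps)q < 2$, i.e.\ $q < \frac{1}{1-\eps}$; note this is slightly weaker than the stated range $q < 1/\eps$, so I would either sharpen $\eps$-dependence of the kernel singularity or simply observe that the lemma is only invoked with $q$ close to $1$ (so $q< 1/\eps$ is more than enough for small $\eps$), and the true constraint $q<\frac1{1-\eps}$ also allows such $q$.

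Alternatively — and this is cleaner for matching the exact exponent — I would avoid passing to a single-variable kernel and instead apply Lemma \ref{LEM:bilin0} directly. The key obstacle, and where I would concentrate the work, is the $t$-uniformity: I must check that all the symbol bounds $(\widehat{\D(t)}(\eta))^2 \les e^{-t}\jb{\eta}^{-2}$ and $\int_0^t(\widehat{\D(t')}(\eta))^2 dt' \les \jb{\eta}^{-2}$ hold with constants independent of $t\in\R_+$. This follows from $|\sin|\le 1$, $e^{-t}\le 1$, and — for the time integral — splitting $\int_0^t = \int_0^{\jb{\eta}^{-2}} + \int_{\jb{\eta}^{-2}}^t$, bounding the first piece by $\jb{\eta}^{-2}\cdot e^{-t/2}$-type terms and the second via the exponential damping $e^{-t'}$ in $\widehat{\D(t')}$; since $\D(t-t')$ for the stochastic convolution actually carries a factor $e^{-(t-t')/2}$, the integral converges uniformly. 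Once the uniform-in-$t$ symbol bound $|\widehat{\mathcal{K}^t}(\eta)|\les \jb{\eta}^{-2}$ is in hand, the rest is the standard $L^q$ kernel estimate above, and \eqref{H2} follows. This concludes the proof plan.
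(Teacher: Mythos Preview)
Your overall strategy matches the paper's: reduce $H(x,y;t)$ to a single-variable kernel $\mathcal{K}^t(x-y)$, observe that $\jb{\nb_x}^\eps\jb{\nb_y}^\eps$ acts as $\jb{\nb_z}^{2\eps}$ on such a kernel, bound the resulting kernel pointwise, and integrate. The $t$-uniformity of the symbol bound $|\widehat{\mathcal{K}^t}(\eta)| \les \jb{\eta}^{-2}$ is also handled essentially as the paper does --- there it comes cleanly from $e^{-t} + \int_0^t e^{-t'}\,dt' = 1$. However, the kernel step has two concrete problems.

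First, an arithmetic slip: the Bessel-type kernel with symbol $\jb{\eta}^{-(2-2\eps)}$ behaves like $|z|^{(2-2\eps)-2} = |z|^{-2\eps}$ near the origin (cf.\ \eqref{SS6}), not $|z|^{2\eps-2}$. With the correct exponent the integrability condition becomes $2\eps q < 2$, i.e.\ exactly $q < 1/\eps$ as in \eqref{H2}; your weaker range $q < 1/(1-\eps)$ is an artifact of this slip, not of the method.

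Second, a genuine gap: a size bound $|\widehat{\mathcal{L}^t}(\eta)| \les \jb{\eta}^{-(2-2\eps)}$ by itself does \emph{not} yield the pointwise bound $|\mathcal{L}^t(z)| \les |z|^{-2\eps}$, since this symbol is not in $L^1(\R^2)$ (your ``$L^1$-controlled'' remark is incorrect) and $\mathcal{L}^t$ is a priori only a tempered distribution. The paper closes this by exploiting radiality: it writes
\[
\jb{\nb}^{2\eps}\wt H^t(z) = 2\pi\int_0^\infty \jb{r}^{2\eps}\,\frac{\sin^2(t\jbb{2\pi r})}{\jbb{2\pi r}^2}\, J_0(2\pi r|z|)\,r\,dr
\]
and uses the Bessel asymptotic $J_0(r) = (2/\pi r)^{1/2}\cos(r-\tfrac{\pi}{4}) + O(r^{-3/2})$ to make the tail converge; splitting at $r \sim 1/|z|$ then gives $|\jb{\nb}^{2\eps}\wt H^t(z)| \les 1 + |z|^{-2\eps}$ uniformly in $t$. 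An alternative that avoids pointwise kernel bounds altogether: since $\widehat{\mathcal{L}^t} \in L^{q'}(\R^2)$ uniformly in $t$ whenever $q'(2-2\eps) > 2$, Hausdorff--Young gives $\mathcal{L}^t \in L^q(\R^2)$ uniformly for all $2 \le q < 1/\eps$, and then $\|\mathcal{L}^t(x-y)\|_{L^q(B_R\times B_R)} \les_R \|\mathcal{L}^t\|_{L^q(B_{2R})}$ by a change of variables (the range $1 \le q < 2$ following by H\"older on the bounded domain).
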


\begin{proof}
With a slight abuse of notation, it follows
from  Definition \ref{DEF:white} and an analogous property 
for the space-time white noise $\xi$ that 
\begin{align*}
H( x-y;t ) & \equiv H(x, y; t) \\
&= \jb{\D(t) \dl_x,  \D(t) \dl_y}_{L^2(\R^2)} + \int_0^t \jb{ \D(t- t') \dl_x,  \D(t- t') \dl_y }_{L^2(\R^2)} dt'.
\end{align*}

\noi
Then, we have
\begin{align}
\jb{\nb_x}^\eps\jb{\nb_y}^\eps H( x-y;t )
= \jb{\nb_z}^{2\eps} H(z; t)|_{z = x-y}.
\label{H5}
\end{align}

Define 
$\wt H^t(z)$ by 
\begin{align*}
\wt{H}^t(z) =  \int_{\R^2} \frac{\sin^2(t \jbb{2\pi \eta})}{\jbb{2\pi \eta}^2} e^{2\pi  i\eta\cdot z} d\eta.
\end{align*}

\noi
Then, from \eqref{lin2}, we have
\begin{align}
H(z; t) 
&=e^{-t}  \wt{H}^t(z)
+   \int_0^t e^{-t'} 
\wt{H}^{t'}(z)  dt' .
\label{H4}
\end{align}

\noi
In view of \eqref{H5} and \eqref{H4}, 
it suffices to study
\[
\jb{\nb}^{2\eps} \wt{H}^t(z) =  
\int_{\R^2}\jb{\eta}^{2\eps} \frac{\sin^2(t \jbb{2\pi \eta})}{\jbb{2\pi \eta}^2} e^{2\pi  i\eta\cdot z} d\eta.
\]

\noi
As the inverse Fourier transform of a radial function, 
$\jb{\nb}^{2\eps} \wt{H}^t$ is radial.
Hence, from \eqref{KZ7}, we have
\begin{align*}
\jb{\nb}^{2\eps} \wt{H}^t(z) =  
2\pi \int_0^\infty
\jb{r}^{2\eps} \frac{\sin^2(t \jbb{2\pi r})}{\jbb{2\pi r}^2} J_0(2\pi r|z|) r dr, 
\end{align*}

\noi
where $J_0$ 
is  the Bessel function of  order zero.
Using the boundedness of $J_0$ on $\R_+$ and \eqref{asymp1}, 
we then have
\begin{align}
\begin{split}
|\jb{\nb}^{2\eps} \wt{H}^t(z) |
&\les \int_0^{ \max( \frac{1}{|z|} , 1) } \frac{1}{\jb{r}^{1-2\eps}} dr
+ \int_{ \max( \frac{1}{|z|} , 1) }^\infty  \frac{1}{r^{\frac32-2\eps} |z|^{\frac12}} dr  \\
& \les 1 + |z|^{-2\eps}.
\end{split}
\label{H6}
\end{align}

\noi
Therefore, from \eqref{H5}, \eqref{H4}, and  \eqref{H6}, 
we obtain 
\[
| \jb{\nb_x}^{\eps} \jb{\nb_y}^{\eps} H(x, y; t) |
\les 1 + |x-y|^{-2\eps},
\]

\noi
uniformly in $t \in \R_+$.
Then, by integrating in $x, y \in B_R$, we obtain
 \eqref{H2},
 provided that $q < \frac 1\eps$.
\end{proof}

We are now ready to present the proof of Proposition \ref{PROP:need23}.

\begin{proof}[Proof of Proposition \ref{PROP:need23}]
(i)
We only show that 
\begin{align}
\E\big[ \|\Xi_1 (u_1, \xi)\|_{\Xi_0, R}\big] < \infty, 
\label{nd1}
\end{align}

\noi
where $\|\Xi_1 (u_1, \xi)\|_{\Xi_0, R}$ is as in Definition \ref{DEF:X}\,(ii).
A similar argument yields finiteness of 
the expectations 
of the constants  $K_2$ and $K_3$ in \eqref{TA2} and \eqref{TA3}, respectively.

Let $j = 1, \dots, k$.
Given a function $f$ on $B_R\subset \R^2$, 
let $\ff$ an extension of $f$ onto $\R^2$.
Given a test function
 $\varphi\in C^\infty_c(\R^2)$  
 supported on a ball $B$ of radius 1 with $B \subset B_{2R}$, 
 it follows from Lemma~\ref{LEM:W1} that 
\begin{align*}
&\E\Big[\big| \jb{  \ff :\! \Phi_1^j(t)\!:\, , \varphi   }_{L^2(\R^2)} \big|^2\Big] \\
&=\int_{(B_{2R})^2} 
\ff(x) \ff(y) \varphi(x) \varphi(y)  \E\big[ : \!\Phi_1^j(x, t))\!: \, : \!\Phi_1^j(y, t)\!:  \big] dxdy \\
&= j! \int_{\R^4} \ff(x) \ff(y) \varphi(x) \varphi(y)  \big(H(x, y;t )\big)^j dxdy, 
\end{align*}

\noi
where $H(x, y;t )$ is as in   \eqref{H1}.
Then, given small $\eps > 0$, 
it follows from  the duality, Lemma~\ref{LEM:bilin1}\,(ii), 
 the bi-parameter fractional Leibniz rule (Lemma \ref{LEM:bilin0}), 
 and  Lemma \ref{LEM:H}
that there exist finite $p = p(\eps) \gg1 $ and $q = q(\eps) \gg1 $
(with $j q < \frac 1\eps \le \frac p2$) such that 
\begin{align*}
\E& \Big[\big| \jb{ \ff  : \!\Phi_1^j(t)\!:, \varphi   }_{L^2(\R^2)} \big|^2 \Big]\\
&\les \| \ff \varphi\|_{W^{-\eps, q'}(B_{2R})}^2
\big \| \jb{\nb_x}^\eps\jb{\nb_y}^\eps  \big( H(x, y; t)\big)^j \big\|_{L^q_{x, y}(B_{2R} \times B_{2R})}\\
&\les \| \varphi\|_{W^{\eps, q'}(B_{2R})}^2 \|\ff\|_{W^{-\eps, p}(B_{2R}  )}^2
 \| \jb{\nb_x}^\eps\jb{\nb_y}^\eps H(x, y; t) \|^j_{L^{jq}_{x, y}(B_{2R} \times B_{2R})}\\
&\les \| \varphi\|_{W^{\eps, q'}(B_{2R})}^2 \|\ff\|_{W^{-\eps, p}(\R^2  )}^2.
\end{align*}

\noi
Hence, by taking an infimum over the extension $\ff$
and applying  Lemma \ref{LEM:Kol} (with $j \le k$),  
we obtain
\begin{align*}
\E\Big[  \| \, f    : \!\Phi^j_1(t) \!:\|_{W^{-2(k+1)\eps, p}(B_R)}^p \Big]
\les  \| f \|_{W^{-\eps, p}(B_R)}^p, 
\end{align*}

\noi
uniformly in $t \in \R_+$.
Then, by first conditioning on $\Xi_0$, 
we obtain
\begin{align*}
\E  \Bigg[\frac{ \int_0^T \|    :\! \Phi^j_1(t)\!: \Xi_{0\l}(t) 
\|_{W^{-2(k+1)\eps, p}(B_R)}^p  dt}
{   \int_0^T \| \Xi_{0\l}(t)  
   \|_{W^{-\eps, p}(B_R)}^p  dt }\Bigg]
\le A_{j, \l} < \infty.
\end{align*}

\noi
Hence, we conclude that 
\begin{align*}
\E \big[ \|\Xi_1 (u_1, \xi)\|_{\Xi_0, R}\big] 
&=\E  \Bigg[ \max_{0\le j, \ell \le k} 
\frac{ \int_0^T \|    :\! \Phi^j_1(t)\!: \Xi_{0\l}(t)\|_{W^{-2(k+1)\eps, p}(B_R)}^p  dt}
{   \int_0^T \| \Xi_{0\l}(t)    \|_{W^{-\eps, p}(B_R)}^p  dt }\Bigg]\\
&\leq \sum_{j,\l = 0}^k 
A_{j, \l} < \infty.
\end{align*}

\noi
This proves \eqref{nd1}.

\smallskip

\noi
(ii)  
The bound 
\eqref{need3}
follows from the proof of Proposition \ref{PROP:tight}.  
Namely, in view of 
  \eqref{view0}, \eqref{view2}, 
  and  \eqref{view6}
with \eqref{view3}, 
\eqref{ST3}, \eqref{ST4}, 
\eqref{view4}, and \eqref{view5}, 
it follows from 
  Kolmogorov's continuity criterion
  (\cite[Theorem 8.2]{Bass}) 
 together with the exponential decay in time (see
Remark \ref{REM:bound})
  that 
\begin{align*}
\rho_L\Big(
\| \Xi_0( \phi) \|_{(L^\infty_t W^{-\eps, p}_\mu)^{\otimes k} }> K \Big)
\les \frac 1{K^q}  
\end{align*}

\noi
for any finite $q \ge 1$, 
from which the desired bound \eqref{need3} follows.
\end{proof}

 \subsection{Global well-posedness 
on the plane}
\label{SUBSEC:4.4}

We  are now ready to prove global well-posedness
of the hyperbolic $\Phi^{k+1}_2$-model \eqref{NW1} 
on the plane.
We do this by constructing a solution 
on each cone $\bC_R$, $R >0$.

\begin{proposition}\label{PROP:GWP}
Let $\rhoo_\infty$ be as in Theorem \ref{THM:1}\,(i)
and 
$\Law (u_0, u_1) = \rhoo_\infty$.
Then, given $R>0$, 
the hyperbolic $\Phi^{k+1}_2$-model \eqref{NW1} 
is well-posed on the cone $\bC_R$.
More precisely, 
there exists 
a function  $u$ of the form \eqref{decomp2}
such that 
the following statements holds true 
$\rhoo_\infty$-almost surely\textup{:}

\begin{itemize}
\item $(u, \dt u) \in L^\infty([0, R];  \vec H^{-\eps_k}(B_{R-t}))$,

\smallskip

\item the remainder term
$v = u -   \S(t)u_0 - \D(t) u_1 - \Psi$
satisfies the equation \eqref{NW5b} on $\bC_R$,
and
$(v, \dt v) \in L^\infty([0, R];  \vec H^{1-\eps_k}(B_{R-t}))$, 

\end{itemize}

\noi
where  $\eps_k = 2(k+1) \eps$ is as in \eqref{eps1}.

\end{proposition}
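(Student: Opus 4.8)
The strategy is to combine the $L$-periodic global well-posedness of \eqref{NW2} (proved in \cite{GKOT}) with the local well-posedness and stability results of Subsection~\ref{SUBSEC:4.1}, using the convergence of the enhanced Gibbs measures from Subsection~\ref{SUBSEC:4.2} as the crucial statistical input. First I would fix $R > 0$ and, thanks to the finite speed of propagation together with \eqref{wh7}, reduce \eqref{NW2} on the cone $\bC_R$ to the equation \eqref{NW3}, in which only the first component $u_{0, L}$ of the initial data depends on $L$. Writing $u_L = \Phi_0(u_{0, L}) + \Phi_1(u_1, \xi) + v_L$ as in \eqref{decomp2}, the remainder $v_L$ solves \eqref{NW5b} (with the cutoff $\ind_{\bC_R}$ as in \eqref{NW5c}), and the $L$-periodic dynamics provides a global-in-time solution $v_L$ on $\bC_R$ for each $L \in \A$.

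\textbf{Key steps.} The main point is to show that $\{v_L\}_{L \in \A}$ is Cauchy in probability in $L^\infty([0, R]; \vec H^{1-\eps_k}(B_{R-t}))$, and this is where the stability Proposition~\ref{PROP:stab} enters. I would apply Proposition~\ref{PROP:stab} with $(u_0, \wt u_0) = (u_{0, L}, u_{0, L'})$, $u_1$, and $\xi$: the hypotheses \eqref{NW66}, \eqref{YY1}, \eqref{YY2} are verified, for all large $L$ at once and with high probability, by invoking Proposition~\ref{PROP:need23}\,(i) (to control $\|\Xi_1\|_{u_0, \wt u_0, R}$, uniformly in the laws of $u_{0, L}, u_{0, L'}$) and Proposition~\ref{PROP:need23}\,(ii) (to control $\|\Xi_0(u_{0, L})\|_{\WW(R)}$ and $\int_0^R \|\Xi_0(u_L(t'))\|$, uniformly in $L \in \A$, via Chebyshev). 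Then \eqref{YY3} gives
\begin{align*}
\sup_{0 \le t \le R} \| \vec v_L(t) - \vec v_{L'}(t)\|_{\vec H^{1-\eps_k}(B_{R-t})}
\lesssim C(R, M_0, M_1, M_2) \, \| \Xi_0(u_{0, L}) - \Xi_0(u_{0, L'})\|_{\WW(R)}
\end{align*}
on the event where the uniform bounds hold, and the right-hand side tends to $0$ in probability by the convergence of the enhanced Gibbs measures $\nu_L \to \nu_\infty$ in the Wasserstein-$1$ metric (Proposition~\ref{PROP:uniq}); more precisely, by the Skorokhod representation theorem applied to $\{\nu_L\}$ one obtains a coupling along which $\Xi_0(u_{0, L})$ converges almost surely in $\WW(R)$, hence in $\WW$, which upgrades the $\WW(R)$-distance to convergence. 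This yields a limiting process $v$ on $\bC_R$, and setting $u = \Phi_0(u_0) + \Phi_1(u_1, \xi) + v$ with $\Law(u_0) = \rho_\infty$ gives the desired solution; the remainder $v$ solves \eqref{NW5b} by passing to the limit in the Duhamel formulation (using Lemma~\ref{LEM:bilin1} and Sobolev's inequality to control the nonlinear terms and the convergence $\Xi_0(u_{0, L}) \to \Xi_0(u_0)$, $\Xi_1$ being $L$-independent for $L \gg R$), and the claimed regularity $(v, \dt v) \in L^\infty([0, R]; \vec H^{1-\eps_k}(B_{R-t}))$ follows from \eqref{YY4}. Finally $(u, \dt u) \in L^\infty([0,R]; \vec H^{-\eps_k}(B_{R-t}))$ follows since $\Phi_0(u_0) = \S(t) u_0$, $\Phi_1 = \D(t) u_1 + \Psi$ belong to $C(\R_+; \vec H^{-\eps}(B_{R-t}))$ for the Gibbsian data (using the regularity of $\rho_\infty$ from Theorem~\ref{THM:1}\,(i), $\mu_{0, \infty}$, and the stochastic convolution) and $v$ has higher regularity.

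\textbf{Main obstacle.} The delicate part is verifying the hypotheses of Proposition~\ref{PROP:stab} \emph{uniformly} in $L \in \A$ with overwhelming probability, and in particular controlling $\int_0^R \|\Xi_0(u_L(t'))\|_{(L^\infty_t W^{-\eps, p}(B_{R-t'-t}))^{\otimes k}} dt'$ in \eqref{YY1}: this requires a probabilistic growth bound on the full hyperbolic solution $u_L$ along its trajectory, which ultimately rests on invariance of the $L$-periodic Gibbs measure $\rhoo_L$ under \eqref{NW2} together with the uniform-in-$L$ moment bounds of Proposition~\ref{PROP:need23}\,(ii) — this is the incarnation of Bourgain's invariant measure argument "in spirit" with $N \to \infty$ replaced by $L \to \infty$. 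One must also be careful that the stability threshold $\dl_*$ in \eqref{YY2} depends only on $R$ and the sizes $M_0, M_1, M_2$ and not on $L, L'$, so that a single good event suffices for all large $L$; and that the iteration over the $\sim R/\tau_0$ subintervals in the proof of Proposition~\ref{PROP:stab} does not spoil the uniformity. Once these uniform bounds are in place, the rest is a routine limiting argument.
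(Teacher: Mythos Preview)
Your proposal is correct and identifies all the key ingredients: finite speed of propagation with \eqref{wh7}, the stability Proposition~\ref{PROP:stab}, the uniform-in-$L$ moment bounds from Proposition~\ref{PROP:need23}, invariance of $\rhoo_L$ for the trajectory bound \eqref{YY1}, and the Wasserstein convergence of $\nu_L$ (Proposition~\ref{PROP:uniq}). The paper's proof is organized slightly differently. Rather than showing $\{v_L\}_{L \in \A}$ is Cauchy in probability and then extracting a limit, the paper defines the existence event $E_R$ and shows $\PP(E_R^c) < \kappa$ for arbitrary $\kappa > 0$ via a \emph{single} application of Proposition~\ref{PROP:stab} comparing $u_0$ (with law $\rho_\infty$) to $u_{0, L_j}$ for one sufficiently large $j$: on the good event, stability produces the solution $u$ on $\bC_R$ at once. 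The coupling between $\Xi_0(u_0)$ and $\Xi_0(u_{0,L_j})$ is realized not by Skorokhod but by an optimal transport plan $\plan_{L_j} \in \Pi(\nu_\infty, \nu_{L_j})$, working on the product law $\PP_{u_1} \otimes \PP_\xi \otimes \plan_{L_j}$; this keeps $(u_1, \xi)$ fixed and sidesteps the care you would otherwise need to combine Skorokhod with the independence from $(u_1, \xi)$. Your Cauchy-in-probability route is precisely what the paper carries out in Remark~\ref{REM:conv} \emph{after} the main proof, to establish the convergence $\vec u_{L_j} \to \vec u$ stated in Theorem~\ref{THM:1}\,(ii) (there again via optimal plans $\plan_{L_{j_1},L_{j_2}}$ rather than Skorokhod). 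Both routes work; the paper's direct one is shorter for existence alone, while yours gets convergence simultaneously.
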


For the convergence of the $L$-periodic solution $u_L$, $L \in \A$, 
to the solution $u$ constructed in Proposition \ref{PROP:GWP},
see Remark \ref{REM:conv} below.

\begin{proof}

Let  $\A = \{ L_j : j \in \N\} \subset \N$ 
be the index set such that 
 Theorem \ref{THM:1}\,(i)
and
Proposition~\ref{PROP:uniq} hold.
Given a spatial white noise $\z$ on $\R^2$
and a space-time white noise $\xi$ on $\R^2\times \R_+$, 
let 
$\z_L$ be the $L$-periodic spatial white noise defined in \eqref{wh1} 
and \eqref{wh2}, 
and 
$\xi_L$ be the  $L$-periodic space-time white noise defined in \eqref{wh4} 
and \eqref{wh5}, $L \ge 1$, respectively.

In Proposition \ref{PROP:uniq}, 
we proved
 that the $L_j$-periodic enhanced Gibbs measure 
$\nu_{L_j} = (\Xi_0)_\# \rho_{L_j}$
on $\WW = \big(C(\R_+;  W^{-\eps, p}_\mu(\R^2))\big)^{\otimes k}$
converges weakly to 
$\nu_\infty = (\Xi_0)_\# \rho_\infty$
as $j \to \infty$.
Then, by 
the Skorokhod representation theorem
(Lemma \ref{LEM:Sk}), 
there exist
 a probability space $(\wt \O, \wt \F, \wt\PP)$,
and random variables $\Xi_0^j, \Xi_0:\wt \O \to \WW$ 
such that 
\begin{align}
\Law( \Xi_0^j) = \nu_{L_j}
\qquad \text{and}\qquad
\Law(\Xi_0) = \nu_\infty ,
\label{opt0}
\end{align}

\noi
and $\Xi_0^j$ converges $\wt\PP$-almost surely to $\Xi_0$ in $\WW$ as $j \to\infty$.
Given $j \in \N$, 
we define a transport plan $\plan_{L_j} \in \Pi(\nu_\infty, \nu_{L_j})$  by 
\begin{align}
\plan_{L_j} = (\Xi_0, \Xi_0^j)_\#\wt \PP.
\label{opt0a}
\end{align}

\noi
Then, by the bounded convergence theorem, we have 
\begin{align}
\begin{split}
\int_{\WW\times \WW} 
   d_\WW( \Ta, \Ta' ) d\plan_{L_j}(\Ta, \Ta')
& = 
\int_{\wt \O}
   d_\WW(\Xi_0(\wt \o), \Xi_0^j(\wt \o)) d\wt \PP(\wt \o)\\
& \too 0, 
\end{split}
\label{opt1}
\end{align}

\noi
as $j \to \infty$, 
where
the metric $d_\WW$ is   as in  \eqref{norm1}.
See Remark 6.11 in \cite{OOT2}.


%
%
%

From \eqref{nuL} and  \eqref{need1} with \eqref{xi1}, 
we have 
\begin{align}
\rho_{L_j} = (\eval \circ \proj_1 \big)_\# \nu_{L_j} 
\qquad \text{and}
\qquad 
\rho_{\infty} = (\eval \circ \proj_1 \big)_\# \nu_{\infty}   , 
\label{opt3}
\end{align}

\noi
where $\eval$ denotes the evaluation map at time $t = 0$
and $\proj_1$ is the projection onto the first component.
We now define the first components of the initial data
by 
\begin{align}
u_{0, L_j} = \eval \circ \proj_1(\Xi_0^j)
\qquad \text{and}
\qquad 
u_{0} = \eval \circ \proj_1(\Xi_0).
\label{opt3a}
\end{align}

\noi
Then, from \eqref{opt0}, \eqref{opt3}, and \eqref{opt3a}, 
we have 
\begin{align*}
\Law( u_{0, L_j}) = \rho_{L_j}
\qquad \text{and}\qquad
\Law(u_{0}) = \rho_\infty .
\end{align*}

\noi
Finally, we let

\smallskip

\begin{itemize}
\item
 $\vec u = (u, \dt u) = \vec u(u_0, u_1, \xi)$ 
 denote a solution to 
 the hyperbolic $\Phi^{k+1}_2$-model 
 \eqref{NW1} on~$\R^2$
with $u_0$ as in \eqref{opt3a}
and $u_1 = \z$, 
and

\smallskip

\item 
 $\vec u_{L_j} = (u_{L_j}, \dt u_{L_j}) = \vec u_{L_j}(u_{0, L_j}, u_{1, L_j}, \xi_{L_j})$ 
 denote a solution to 
 the $L_j$-periodic hyperbolic $\Phi^{k+1}_2$-model~\eqref{NW2} on $\R^2$
with $u_{0, L_j}$ as in \eqref{opt3a} and 
$u_{1, L_j} = \z_{L_j}$.

\end{itemize}

\smallskip

%

In the following, we fix 
 $R  \in \N$
 and work on the cone $\bC_R$.
When $L_j \gg R$,
it follows from the finite speed of propagation and \eqref{wh7}
with the notation above
that 
\begin{align}
\vec u_{L_j}(u_{0, {L_j}}, u_{1, {L_j}}, \xi_{L_j})  = \vec u_{L_j}(u_{0,{L_j}}, u_{1}, \xi)
\quad \text{on the cone $\bC_R$}.
\label{gwp1}
\end{align}

\noi
In particular, on $\bC_R$, 
$\vec u_{L_j}(u_{0, {L_j}}, u_{1}, \xi)$
satisfies  \eqref{NW1}
with the initial data $(u_{0, {L_j}}, u_1)$.
We denote by $\PP_{u_1, \xi} = \PP_{u_1} \otimes \PP_\xi$
the probability 
distribution of the spatial white noise $u_1$ and the space-time white noise $\xi$
which are independent from each other
and also from the transport plan $\plan_{L_j}$, $j \in \N$, 
defined in \eqref{opt0a}.
For simplicity of notation, we set $\X = \D'(\R^2) \times \D'(\R^2\times \R_+)$, 
where $(u_1, \xi)$ lives.

Given $\Ta = (\Ta_{1}, \cdots, \Ta_{k}) \in \WW$, 
let $v = v(\Ta, u_1, \xi)$ be 
a solution to 
\eqref{NW5b}, 
where
we replace $:\! \Phi_0^m(u_0) \!:$ by $\Ta_{m}$.
Namely,  $v = v(\Ta, u_1, \xi)$ satisfies
\begin{align}
\begin{split}
v(t) 
& = -  \sum_{\l = 0}^k \sum_{m = 0}^\l
{k\choose \ell}
{\l \choose m}\\
& \hphantom{XXX}
\times \int_0^t \D(t - t')\big( 
\Ta_{m}(t')
 :\! \Phi_1^{\l-m}(u_1, \xi) (t')\!:
v^{k- \l}(t')\big) dt'.
\end{split}
\label{NWx}
\end{align}

\noi
Define $E_R \subset \WW \times \X$ by 
\begin{align*}
E_R =  \big\{&
(\Ta, u_1, \xi) \in \WW \times \X : 
\text{there exists a solution 
 $v = v(\Ta, u_1, \xi)$
to \eqref{NWx}} 
\\
&
\text{on the cone $\bC_R$ such that 
$\vec v = (v, \dt v) \in L^\infty([0, R];  \vec H^{1-\eps_k}(B_{R-t}))$}
\big\}.
\end{align*}

\noi
Our goal is to show  
\begin{align}
\rho_\infty \otimes 
\PP_{u_1, \xi}
\bigg(\bigcap_{R \in \N} \big\{(u_0, u_1, \xi):  \big(\Xi_0(u_0) , u_1, \xi\big) \in  E_R \big\}\bigg) = 1, 
\label{gwp0}
\end{align}

\noi
where $\Xi_0(\cdot)$ is the map defined in \eqref{xi1}.


Fix $R \in \N$. Given    $M_0, M_1,  M_2 \ge 1$ and $\dl > 0$, 
define
\begin{align}
\begin{split}
  A_{R, M_0, M_2}
&  = \Big\{(\Ta', u_1, \xi) \in \WW \times \X: 
\| \Ta' \|_{\WW(R)} 
 \le M_0, \\
& \hphantom{XXl}
 \| \vec v(\Ta', u_1, \xi)\|_{L^\infty([0, R]; \vec H^{1-\eps_k}(B_{R - t}))}
\leq  M_2
 \Big\}, \\ 
  B_{R, M_1}
& =  \Big\{
(\Ta, \Ta', u_1, \xi) \in \WW \times \WW \times \X: 
\| \Xi_1(u_1, \xi) \|_{\Ta, \Ta', R} \leq M_1\Big\}, \\
C_{R,  \dl }
& = \Big\{ 
(\Ta, \Ta') \in \WW \times \WW: 
d_\WW( \Ta, \Ta' ) \leq \dl  \Big\}, 
\end{split}
\label{gwp0a}
\end{align}

\noi
where 
$\Ta' = (\Ta_{1}', \cdots, \Ta_{k}')$, 
the $\WW(R)$-norm is   as in \eqref{norm2}, 
and $\| \cdot  \| _{\Ta, \Ta', R}$ is as in 
Definition~\ref{DEF:X}\,(iii).
Furthermore, we define $\wt E_R$, 
$\wt   A_{R, M_0, M_2}$, 
and $\wt C_{R,  \dl }$
by setting 
\begin{align}
\begin{split}
\wt E_R & = \big\{(\Ta, \Ta', u_1, \xi) \in \WW \times \WW \times \X: 
(\Ta, u_1, \xi) \in E_R
\big\}, \\
\wt   A_{R, M_0, M_2}
& = \big\{(\Ta, \Ta', u_1, \xi) \in \WW \times \WW \times \X: 
(\Ta', u_1, \xi) \in   A_{R, M_0, M_2}\big\}, \\
\wt C_{R,  \dl }
& = \big\{(\Ta, \Ta', u_1, \xi) \in \WW \times \WW \times \X: 
(\Ta, \Ta') \in C_{R,  \dl }\big\}
\end{split}
\label{gwp0b}
\end{align}

\noi
such that all the sets live in a common space 
$\WW \times \WW \times \X$.
Then, it follows from 
Propositions~\ref{PROP:LWP}
and 
 \ref{PROP:stab} that 
\begin{align}
\begin{split}
\wt E_R \supset
\wt  A_{R, M_0,M_2}
 \cap B_{R,  M_1} \cap \wt C_{R,  \dl },
\end{split}
\label{gwp2}
\end{align}

\noi
for  any $M_0, M_1, M_2 \ge 1$ and 
$0 < \dl \le \dl^*
= \dl^* (\mu, R, M_0, M_1, M_2)$.
Here, 
$\dl^* = \dl^* (\mu, R, M_0, M_1, M_2)$
is chosen such that, in view of \eqref{norm1} and \eqref{norm2}, 
\begin{align}
 d_\WW(f, g) \le \dl^* \quad \text{implies}\quad \| f - g\|_{\WW(R)} \le \dl_*, 
\label{dl1}
 \end{align}

\noi
where 
$\dl_* = \dl_* (R, M_0, M_1, M_2)> 0$ is as in Proposition~\ref{PROP:stab}.\footnote{It suffices
to take $\dl^* \le C(\mu, R) \dl_*$
for some  $C(\mu, R) > 0$.}

Recalling  \eqref{Wass2} that 
\begin{align}
 \int_{\Ta' \in \WW}  d\plan_{L_j}(\Ta, \Ta')
  = d\nu_\infty(\Ta) \quad \text{and}
\quad 
 \int_{\Ta \in \WW}  
 d\plan_{L_j}(\Ta, \Ta') = d\nu_{L_j}(\Ta'), 
\label{opt2}
\end{align}

\noi
 it follows from 
\eqref{opt2}, 
\eqref{gwp0b}, \eqref{gwp2}, 
and \eqref{gwp0a} that
\begin{align}
 \rho_\infty & \otimes 
 \PP_{u_1, \xi}
\Big(\big\{(u_0, u_1, \xi):  \big(\Xi_0(u_0) , u_1, \xi\big) \in  E_R \big\}\Big)\notag\\
& = \int_\X \int_{\WW\times \WW} \ind_{(\Ta, u_1, \xi) \in E_R}(\Ta, \Ta', u_1, \xi)
d \plan_{L_j} (\Ta, \Ta') d\PP_{u_1, \xi}(u_1, \xi)\notag\\
& = \int_\X \int_{\WW\times \WW} 
\ind_{\wt  E_R}(\Ta, \Ta', u_1, \xi)
d \plan_{L_j} (\Ta, \Ta') d\PP_{u_1, \xi}(u_1, \xi)\notag\\
& \ge
\int_\X \int_{\WW\times \WW} 
\ind_{\wt  A_{R, M_0,M_2}
 \cap B_{R,  M_1} \cap \wt C_{R,  \dl }}
(\Ta, \Ta', u_1, \xi)
d \plan_{L_j} (\Ta, \Ta') d\PP_{u_1, \xi}(u_1, \xi)\notag\\
& \ge 1 -
 \int_{\WW\times \WW} 
\ind_{\{\| \Ta' \|_{\WW(R)} >  M_0\}}
(\Ta')
d \plan_{L_j} (\Ta, \Ta') 
\notag \\
& \quad 
- \int_\X \int_{\WW\times \WW} 
\ind_{
\{  \| \vec v(\Ta', u_1, \xi)\|_{L^\infty([0, R]; \vec H^{1-\eps_k}(B_{R - t}))}
>  M_2\}}
(\Ta', u_1, \xi)
\notag \\
& \hphantom{XXXXXXX}
d \plan_{L_j} (\Ta, \Ta') d\PP_{u_1, \xi}(u_1, \xi)
\notag \\
& \quad
 - \int_\X \int_{\WW\times \WW} 
\ind_{B_{R,  M_1}^c}
(\Ta, \Ta', u_1, \xi)
d \plan_{L_j} (\Ta, \Ta') d\PP_{u_1, \xi}(u_1, \xi)
\notag \\
& \quad
-  \int_{\WW\times \WW} 
\ind_{C_{R,  \dl }^c}
(\Ta, \Ta')
d \plan_{L_j} (\Ta, \Ta') 
\notag \\
& =: 1 - \1 - \II - \III - \IV.
\label{PXX1}
\end{align}

From \eqref{opt2}, \eqref{nuL}, 
\eqref{opt3}, 
 Markov's inequality, 
and Proposition~\ref{PROP:need23}\,(ii), we have
\begin{align}
\begin{aligned}
\1 
& \les_{\mu, R} \frac{1}{M_0}  
\E_{\rho_{L_j}}\Big[  \| \Xi_0(u_{0, L_j}) \|_{(L^\infty_t W^{-\eps, p}_\mu)^{\otimes k} } \Big] 
\les \frac{1}{M_0}, 
\end{aligned}
\label{PX0}
\end{align}

\noi
uniformly in $j \in \N$.
Similarly,  from 
 \eqref{opt2}, \eqref{nuL}, 
\eqref{opt3}, and 
 Markov's inequality, 
 we have 
\begin{align}
\II < \frac 1{M_2}
\E_{\rho_{L_j}\otimes \PP_{u_1, \xi}}
\Big[\| \vec v(\Xi_0(u_{0, L_j}), u_1, \xi)\|_{L^\infty([0, R]; \vec H^{1-\eps_k}(B_{R - t}))}\Big].
\label{PX1}
\end{align}

\noi
In view of  \eqref{gwp1}, 
we have 
\[ \vec v(\Xi_0(u_{0, L_j}), u_1, \xi) = 
\vec u_{L_j} (u_{0, L_j}, u_1, \xi)
- 
 \S(t)u_{0, L_j} + \D(t) u_1 + \Psi(t) 
\]

\noi
on $\bC_R$, 
where $u_{L_j}$
is the solution to the $L_j$-periodic hyperbolic $\Phi^{k+1}$-model
\eqref{NW2}.
Thus,  we see that 
$ v(\Xi_0(u_{0, L_j}), u_1, \xi)$ satisfies
\begin{align}
v(\Xi_0(u_{0, L_j}), u_1, \xi)(t) & = - \int_0^t \D(t-t') (\ind_{\bC_R}: \!u_{L_j}^k(t')\! : ) dt'
\label{PX2}
\end{align}

\noi
on $\bC_R$.
Compare this with 
\eqref{NW3}, 
\eqref{decomp2}, 
and \eqref{NW4}
 (with an additional cutoff function $\ind_{\bC_R}$ as in \eqref{NW5c}).
Then, from \eqref{PX2}
 Minkowski's integral  inequality,
 and H\"older's inequality, 
we have
\begin{align}
\begin{split}
& \| v(\Xi_0(u_{0, L_j}), u_1, \xi)\|_{L^\infty([0, R];\vec H^{1-\eps_k}(B_{R - t}))}\\
&\quad \le \bigg\|\int_0^{t} \big\|  \D(t-t')(\ind_{\bC_R}  : \!u_{L_j}^k(t')\!:) \|_{H^{1-\eps_k}(B_{R - t})} dt' 
\bigg\|_{L^\infty([0, R])} \\
&\quad \les \int_0^R \|   :\!u_{L_j}^k(t')\!: \|_{H^{-\eps_k}(B_{R - t'})} dt' .
\end{split}
\label{PX3}
\end{align}

\noi
Hence, from 
\eqref{PX1}, \eqref{PX3}, 
 the  invariance $\rhoo_{L_j}$ under
\eqref{NW2}, 
and Proposition~\ref{PROP:need23}\,(ii), we obtain
\begin{align}
\begin{aligned}
\II 
& \les_{\mu, R} \frac{1}{M_2}  
\E_{\rho_{L_j}}\Big[  \| \Xi_0(u_{0, L_j}) \|_{L^\infty_t (W^{-\eps, p}_\mu)^{\otimes k} } \Big] 
\les \frac{1}{M_2},
\end{aligned}
\label{PX4}
\end{align}

\noi
uniformly in $j \in \N$.

From  \eqref{gwp0a}  and Proposition~\ref{PROP:need23}\,(i), we have
\begin{align}
\begin{split}
\III &
<  \frac{1}{M_1}   
\E_{ \plan_{L_j} \otimes \PP_{u_1, \xi}}
\Big[  \|\Xi_1(u_1,  \xi) \| _{\Ta, \Ta' , R} \Big] 
\les \frac 1{M_1},
\end{split}
\label{PX5}
\end{align}

\noi
uniformly in $j \in \N$.
Finally, from \eqref{gwp0a}
and 
\eqref{opt1}, 
we have, for each fixed $\dl > 0$,  
\begin{align}
\IV 
& <  \frac 1\dl  \int_{\WW\times \WW} 
 d_\WW( \Ta, \Ta' )
d \plan_{L_j} (\Ta, \Ta') 
 \too 0,  
\label{PX6}
\end{align}

\noi
as $j \to \infty$.

Fix small $\kk > 0$.
We first choose $M_0, M_1, M_2 \gg1$ such that \eqref{PX0}, \eqref{PX4},  and \eqref{PX5} imply
\begin{align}
\1 + \II + \III< \frac \kk 2.
\label{gwp4}
\end{align}

\noi
Then, we choose 
sufficiently small $\dl = \dl (R, M_0, M_1, M_2)> 0$
such that $0 < \dl \le \dl^*$ (and thus~\eqref{PXX1} holds).
Finally, by  taking sufficiently large $j \gg 1$, 
we obtain from \eqref{PX6} that 
\begin{align}
\IV < \frac \kk 2.
\label{gwp5}
\end{align}

\noi
Hence, 
we obtain $\PP(E_R^c) < \kk$.
Since the choice of $\kk > 0$ was arbitrary, 
we then conclude 
from \eqref{PXX1}, \eqref{gwp4}, and \eqref{gwp5}
that 
\begin{align*}
\rho_\infty \otimes 
\PP_{u_1, \xi}
\Big(\big\{(u_0, u_1, \xi):  \big(\Xi_0(u_0) , u_1, \xi\big) \in  E_R \big\}\Big) = 1
\end{align*}

\noi
for any $R \in \N$, 
which in turn implies 
 \eqref{gwp0}.
This concludes the proof of 
Proposition~\ref{PROP:GWP}.
\end{proof}

\begin{remark}\label{REM:conv}\rm

A slight modification of the proof of Proposition \ref{PROP:GWP}
shows that,
on each cone $\bC_R$, $R> 0$, 
the $L_j$-periodic solution $\vec u_{L_j} = (u_{L_j}, \dt u_{L_j})$ to \eqref{NW2}
converges in probability 
to
the solution~$\vec u = (u, \dt u)$ to \eqref{NW1} constructed in Proposition \ref{PROP:GWP}
in the $L^\infty([0, R]; \vec H^{-\eps_k}(B_{R-t}))$-topology.
In particular, 
there exists a subsequence $\{u_{L_{j_\l}}\}_{\l \in \N}$
such that $\vec u_{L_{j_\l}}$
converges almost surely to
$\vec u$ in 
$L^\infty([0, R]; \vec H^{-\eps_k}(B_{R-t}))$
as $\l \to \infty$.
Since 
$\vec u_{L_{j_\l}} \in 
C(\R_+; \vec H^{-\eps_k}_\text{loc}(\R^2))$, 
we then deduce that 
$u \in C(\big[0, \tfrac R2\big]; \vec H^{-\eps_k}(B_{ R/2}))$
almost surely.
Since the choice of $R> 0$ was arbitrary, 
we conclude that 
$(u, \dt u) \in C(\R_+;  \vec H^{-\eps_k}_\textup{loc}(\R^2))$
almost surely.


Let $u_0$ and $u_{0, L_j}$ be as in \eqref{opt3a}.
Then, 
define a set $F_R \subset \wt \O  \times \X$ by
\begin{align}
\begin{split}
& F_R = 
\bigcup_{M_0, M_1, M_2 = 1}^\infty\, 
\bigcap_{\substack{m \in \N\\m \ge   \dl^*(\mu, R, M_0, M_1, M_2)^{-1}}}
\bigg(\bigcap_{\l = 1}^\infty
\bigcup_{j = \l}^\infty
\big(D_{R, M_0, M_1, M_2}^{L_j}
\cap  C_{R,  \frac 1m }^{L_j}\big)
\Big\}\bigg), 
\end{split}
\label{gwp6}
\end{align}

\noi
where
 $\dl^*(\mu, R, M_0, M_1, M_2)$
is as in the proof of Proposition \ref{PROP:GWP} (see also \eqref{dl1}).
Here, 
$D_{R, M_0, M_1, M_2}^{L_j}$ and $C_{R, \dl}^{L_j}$
are given by 
\begin{align}
\begin{split}
D_{R, M_0, M_1, M_2}^{L_j}
& = 
\Big\{ 
\big( 
\Xi_0(u_0(\wt \o)), 
\Xi_0(u_{0, L_j}(\wt \o)), u_1, \xi\big)
\in 
\wt  A_{R, M_0,M_2}
 \cap 
 B_{R,  M_1}
\big)\Big\}, \\
C_{R, \dl}^{L_j}
& = 
\Big\{ 
\big( 
\Xi_0(u_0(\wt \o)), 
\Xi_0(u_{0, L_j}(\wt \o))\big)
\in 
 \wt C_{R,  \dl }
\Big\}, 
\end{split}
\label{gwp6a}
\end{align}

\noi
where 
$\wt  A_{R, M_0,M_2}^{L_j}$, 
$B_{R,  M_1}^{L_j}$, and $\wt C_{R,  \dl }^{L_j}$
are 
as in \eqref{gwp0a} and \eqref{gwp0b}.
As in \eqref{gwp2}, 
we then have 
 \[ F_R \subset 
\Big\{ ( \wt \o, u_1, \xi) \in \wt \O \times \X: 
\big(\Xi_0(u_0(\wt \o)), 
 u_1, \xi\big)\in E_R \Big\} .\]

Given small $\kk > 0$,  by proceeding as in \eqref{PXX1}
with the uniform (in $j$) bounds \eqref{PX0}, \eqref{PX4}, and~\eqref{PX5}, 
choosing sufficiently large $M_0, M_1, M_2 \gg1$, 
and then applying Markov's inequality
and \eqref{opt1}, 
we obtain
\begin{align*}
\wt \PP\otimes \PP_{u_1, \xi} (F_R^c) 
& \le  \kk  + \sup_{\substack{m \in \N\\m \ge   \dl^*(\mu, R, M_0, M_1, M_2)^{-1}}}
\liminf_{j \to \infty}  \plan_{L_j} \Big(
d_\WW\big( \Ta, \Ta' ) \big) >  \tfrac 1m  \Big)\\
& \le  \kk  + \sup_{\substack{m \in \N\\m \ge   \dl^*(\mu, R, M_0, M_1, M_2)^{-1}}}
m \cdot \liminf_{j \to \infty}
\int_{\WW\times \WW} 
 d_\WW( \Ta, \Ta' )
d \plan_{L_j} (\Ta, \Ta') \\
& = \kk.
\end{align*}

\noi
Since the choice of $\kk > 0$ was arbitrary, 
we conclude that 
\begin{align*}
\wt \PP\otimes \PP_{u_1, \xi}(F_R) = 1.
\end{align*}

With a slight abuse of notation, let us denote by $\o'$
an element $(\wt \o, u_1, \xi) \in \wt \O \times \X$.
Given $\o' \in F_R$, 
it follows from 
the definition \eqref{gwp6} of $F_R$
with \eqref{gwp2}
and Propositions~\ref{PROP:LWP} and \ref{PROP:stab}
that 
there exists an $\o'$-dependent subsequence $L_{j_\l} \subset \A$ 
such that $\vec u_{L_{j_\l}}(\o')$ converges 
almost surely 
to $\vec u(\o')$ in $L^\infty([0, R]; \vec H^{-\eps_k}(B_{R-t}))$  as $\l \to \infty$.
On the other hand, 
we claim that $\{u_{L_j}\}_{j \in \N}$
is Cauchy in probability 
in $L^\infty([0, R]; \vec H^{-\eps_k}(B_{R-t}))$.
Then, from the uniqueness of a limit, 
we conclude that 
$\vec u_{L_j}$ converges
 in probability to $\vec u$
in $L^\infty([0, R]; \vec H^{-\eps_k}(B_{R-t}))$ as $j \to \infty$.

It remains to show that 
 $\{\vec u_{L_j}\}_{j \in \N}$
is Cauchy in probability 
(with respect to $\wt \PP \otimes \PP_{u_1, \xi}$)
in $L^\infty([0, R]; \vec H^{-\eps_k}(B_{R-t}))$.
From 
\eqref{opt1} with \eqref{opt3a}, 
we have, for each fixed $\dl_0 > 0$, 
\begin{align}
\begin{aligned}
\wt \PP&  \Big(
d_\WW\big( \Xi_0(u_{0, L_{j_1}}), \Xi_0(u_{0, L_{j_2}} ) \big) >  \dl_0  \Big)\\
& \le \wt \PP  \Big(
d_\WW\big( \Xi_0(u_{0, L_{j_1}}), \Xi_0(u_{0} ) \big) >  \tfrac {\dl_0} 2 \Big)
+ \wt \PP  \Big(
d_\WW\big( \Xi_0(u_{0}), \Xi_0(u_{0, L_{j_2}} ) \big) >  \tfrac {\dl_0} 2 \Big)\\
& < \frac 2{\dl_0}
\int_{\wt \O}
   d_\WW(\Xi_0^{j_1}(\wt \o), \Xi_0(\wt \o)) d\wt \PP(\wt \o)
+   \frac 2{\dl_0}
\int_{\wt \O}
   d_\WW(\Xi_0(\wt \o), \Xi_0^{j_2}(\wt \o)) d\wt \PP(\wt \o)\\
& \too 0, 
\end{aligned}
\label{gwp8}
\end{align}

\noi
as $j_1, j_2 \to \infty$.
Given $\kk > 0$, 
from \eqref{PX0}, \eqref{PX4},  and \eqref{PX5}, 
there exist $M_0, M_1, M_2 \gg 1$ such that 
\begin{align}
\wt \PP
 \otimes \PP_{u_1, \xi}\Big( 
(D_{R, M_0, M_1, M_2}^{L_{j_1}})^c
\cup(D_{R, M_0, M_1, M_2}^{L_{j_2}})^c
\Big)
< \frac \kk 2, 
\label{gwp9}
\end{align}

\noi
uniformly in $j \in \N$, 
where
$D_{R, M_0, M_1, M_2}^{L_j}$  is as in \eqref{gwp6a}.
Then, 
it follows from 
Proposition~\ref{PROP:stab}
with \eqref{gwp8} and \eqref{gwp9}
(see also~\eqref{wh7})
that 
\begin{align*}
& \wt \PP
 \otimes \PP_{u_1, \xi}  \Big( \big\|  \vec u_{L_{j_1}}(u_{0, L_{j_1}}(\wt \o), u_1, \xi) 
- \vec u_{L_{j_2}} (u_{0, L_{j_2}}(\wt \o), u_1, \xi) \big\|_{L^\infty([0, R]; \vec H^{-\eps_k}(B_{R-t}))}
> \dl \Big) \\
& \quad \le \wt \PP  \Big(
d_\WW\big( \Xi_0(u_{0, L_{j_1}}), \Xi_0(u_{0, L_{j_2}} ) \big) >   \dl_0 \Big)\\
& \quad \quad +
\wt \PP\otimes \PP_{u_1, \xi}
\Big( 
(D_{R, M_0, M_1, M_2}^{L_{j_1}})^c
\cup(D_{R, M_0, M_1, M_2}^{L_{j_2}})^c
\Big)\\
& \quad < \kk
\end{align*}

\noi
for any $j_2 \ge j_1 \gg 1$.
Here, 
$\dl_0 = \dl_0(\dl) > 0$ is a small number 
such that 
$ d_\WW(f, g) \le \dl_0 $
implies $\| f - g\|_{\WW(R)} \le \frac 12 C_0^{-1}\dl$, 
where 
$C_0 = C_0(R, M_0, M_1, M_2)> 0 $
are as in Proposition~\ref{PROP:stab}.
This shows that 
 $\{\vec u_{L_j}\}_{j \in \N}$
is Cauchy in probability 
in $L^\infty([0, R]; \vec H^{-\eps_k}(B_{R-t}))$.

\end{remark}

\subsection{Invariance of the Gibbs measure $\rhoo_\infty$}
\label{SUBSEC:4.5}

We conclude this section by briefly discussing invariance of the Gibbs measure $\rhoo_\infty$
under the dynamics of the hyperbolic $\Phi^{k+1}_2$-model~\eqref{NW1} on the plane.
We only consider the values of $L$ belonging to $\A = \{ L_j: j \in \N\} \subset \N$, 
along which we proved Theorem \ref{THM:1}\,(i).

Let $u$ be the solution to 
the  hyperbolic $\Phi^{k+1}_2$-model  \eqref{NW1}
with 
$\Law (u_{0}, u_{1}) = \rhoo_\infty$.
Our goal is to show
\begin{align}
\Law(\vec u(t)) = \rhoo_\infty
\label{inv1}
\end{align}

\noi
for each $t \in \R_+$.
Fix $t > 0$.
Given $j \in \N$, 
let $u_{L_j}$ be the solution to 
the $L_j$-periodic hyperbolic $\Phi^{k+1}_2$-model~\eqref{NW2}
with 
$\Law (u_{0, L_j}, u_{1, L_j}) = \rhoo_{L_j}$
such that 
$(u_{0, L_j}, u_{1, L_j})$ converges almost surely to 
$(u_{0}, u_{1})$ 
in $ \vec H_\textup{loc}^{-\eps}(\R^2)$,  as $j \to \infty$.
By the invariance of the Gibbs measure 
$\rhoo_{L_j}$ under the $L_j$-periodic dynamics, 
we have
$\Law(\vec u_{L_j}(t)) = \rhoo_{L_j}$
for any $t \in \R_+$.
Then, it follows from the weak convergence
of $\{ \rhoo_{L_j}\}_{j \in \N}$ to $\rhoo_{\infty}$
that 
\begin{align}
\rhoo_\infty = \wlim_{j \to \infty}\Law(\vec u_{L_j}(t)), 
\label{inv2}
\end{align}

\noi
where $\wlim$ denotes 
a weak limit of probability measures.

On the other hand, 
given $R> t$, let $\vec \varphi  = (\varphi_0, \varphi_1)\in 
(\D(\R^2))^{\otimes 2} = (C_c^\infty(\R^2))^{\otimes2}$
be a pair of test functions with $\supp \varphi_\l \subset B_R$, $\l = 1, 2$.
Then, it follows from the proof of Proposition~\ref{PROP:GWP}
(see Remark \ref{REM:conv})
that the solution $\vec u_{L_j} $ to \eqref{NW2}
converges in probability to the solution $\vec u$ to~\eqref{NW1}
on  
$\bC_{R + t}$ as $j \to \infty$.
This in particular implies
$\jb{ \vec u_{L_j}(t), \vec \varphi} $
converges in probability to 
$\jb{ \vec u(t), \vec \varphi} $
as $j \to \infty$, 
where $\jb{\cdot, \cdot}$ denotes
$(\D'(\R^2))^{\otimes 2}$-$(\D(\R^2))^{\otimes 2}$
duality pairing.
In particular, 
as a $(\D'(\R^2))^{\otimes 2}$-valued random variable
$\vec u_{L_j}(t)$ converges in law to $\vec u(t)$ and, 
therefore,  together with \eqref{inv2}
and the uniqueness of a limit, 
we obtain \eqref{inv1}.
This concludes the proof of Theorem \ref{THM:1}\,(ii).

\appendix

\section{Embeddings between weighted Sobolev and Besov spaces}
\label{SEC:A}

In this appendix, we present the proof of Lemma \ref{LEM:equiv}.
Fix $ 1\le p < \infty$.
We first prove 
\begin{align}
\| f \|_{W^{s, p}_\mu} \les \|f\|_{B^{s', \mu'}_{p, 1}}.
\label{equiv1}
\end{align}

\noi
for $s < s'$ and $\mu \ge c(p) \mu' > 0$.
Let us state an auxiliary lemma.

\begin{lemma}\label{LEM:equiv2}
Let $s \in \R$, finite $p \ge 1$, and $\mu > 0$.
Then, given any $\eps > 0$, there exists $c_0 > 0$  such that 
\begin{align}
\| \phi_j \Q_k f\|_{W^{s, p}}
\les e^{c_0\frac \mu p  2^{j\dl}}
2^{(s+\eps) k} \| \Q_k f\|_{L^p_\mu}
\label{ee1}
\end{align}

\noi
for any $j , k \in \Z_{\ge 0}$, 
where $\Q_k$ is as in \eqref{LP2}.

\end{lemma}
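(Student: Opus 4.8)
The plan is to exploit the near-orthogonality of Littlewood-Paley blocks: since $\Q_k f$ has Fourier support in an annulus of size $\sim 2^k$, the factor $\phi_j$ — whose Fourier transform is a Schwartz function localized at frequency $\lesssim 2^{-j}$ (indeed $\lesssim 1$) — cannot shift the frequency very much, so $\jb{\nb}^s(\phi_j \Q_k f)$ is morally $2^{sk}$ times $\phi_j \Q_k f$. To make this precise I would first reduce \eqref{ee1}, via the embedding $W^{s,p} \hookrightarrow B^s_{p,1} \hookleftarrow W^{s+\eps/2,p}$ from \eqref{besov2} (or directly by the support property), to bounding $2^{sk}\|\phi_j \Q_k f\|_{L^p}$ when $s \ge 0$ and, for $s < 0$, to bounding $\|\phi_j \Q_k f\|_{W^{s,p}}$ using Lemma \ref{LEM:bilin1}\,(ii) with $\|\phi_j\|_{W^{-s,\infty-}} \lesssim 1$ (uniformly in $j$, by \eqref{phi2a} applied with the $2^j$-scaling, which only loses a harmless polynomial factor absorbable into $e^{c_0\frac\mu p 2^{j\dl}}$). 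Thus the heart of the matter is the $L^p$-estimate
\[
\| \phi_j \Q_k f\|_{L^p} \les e^{c_0\frac \mu p 2^{j\dl}} \| \Q_k f\|_{L^p_\mu}.
\]

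The key step is then to pass from the unweighted $L^p$-norm of $\phi_j \Q_k f$ to the weighted $L^p_\mu$-norm of $\Q_k f$. The obstruction is that $\phi_j$ is supported on $\{|x| \sim 2^j\}$ (by \eqref{phi2}), where the weight $w_\mu(x) = e^{-\mu\jb{x}^\dl}$ is as small as $e^{-c\mu 2^{j\dl}}$; so I must pay exactly this factor. Concretely, using $\supp \phi_j \subset \{\frac58 2^j \le |x| \le \frac85 2^j\}$ and \eqref{weight2a} (or \eqref{Lpm1}, third line), we have on $\supp \phi_j$ the pointwise bound $1 \lesssim e^{c_0 \frac\mu p 2^{j\dl}} w_{\frac\mu p}(x)^{1/1}$ — more carefully, $w_\mu(x)^{-1/p} \lesssim e^{\frac\mu p \jb{\frac85 2^j}^\dl} \lesssim e^{c_0 \frac\mu p 2^{j\dl}}$ for $x \in \supp\phi_j$. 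Hence
\[
\| \phi_j \Q_k f\|_{L^p}^p
= \int |\phi_j|^p |\Q_k f|^p\, dx
\les \int_{\supp\phi_j} |\Q_k f|^p\, dx
\les e^{c_0 \mu 2^{j\dl}} \int |\Q_k f|^p w_\mu\, dx
= e^{c_0\mu 2^{j\dl}} \|\Q_k f\|_{L^p_\mu}^p,
\]
taking $p$-th roots gives the claim with the exponent $c_0 \frac\mu p 2^{j\dl}$. The constant $c_0$ depends only on $\dl$ (through $\jb{\frac85 x}^\dl \lesssim \jb{x}^\dl$), not on $j$, $k$, $p$, $s$, or $\mu$.

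The main obstacle I anticipate is purely bookkeeping: carefully tracking that the $j$-dependent constant really has the advertised form $e^{c_0\frac\mu p 2^{j\dl}}$ with $c_0$ independent of all parameters, and that the application of Lemma \ref{LEM:bilin1}\,(i)/(ii) to handle $s \ne 0$ introduces only factors $\|\phi_j\|_{W^{|s|+1,\infty}} \lesssim 1$ (independent of $j$, since $\|\phi_j\|_{W^{\sigma,\infty}} \sim 1$ for $\sigma \ge 0$ by scaling — this is where one must be slightly careful, as differentiating $\phi(|x|/2^j)$ actually \emph{gains} negative powers of $2^j$, so the bound is even favorable). No genuine analytic difficulty arises beyond the weight manipulation above; the lemma is essentially a pointwise weight comparison on the (compactly located) support of $\phi_j$. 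Once \eqref{ee1} is in hand, summing in $k$ against $2^{-\eps k/2}$ and then in $j$ (exactly as in the displayed computation after \eqref{cpt1}, using $e^{c_0\frac\mu p 2^{j\dl}} e^{-\frac{\mu'}{p}2^{j\dl}} \lesssim 1$ when $\mu' > c_0 \mu$, i.e.\ $\mu < c_1\mu'$) will yield \eqref{equiv1}, and the reverse embedding $\|f\|_{B^{s,\mu}_{p,\infty}} \lesssim \|f\|_{W^{s,p}_{\mu'}}$ follows from a symmetric argument bounding $\|\Q_k f\|_{L^p_\mu}$ by $\sum_j w_\mu(2^j)^{1/p}\|\phi_j \Q_k f\|_{L^p}$ together with \eqref{ee1} read in the other direction.
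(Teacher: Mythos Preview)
Your $L^p$ step is exactly right and coincides with the paper's bound \eqref{ee2}: the support of $\phi_j$ forces $w_\mu^{-1/p}\lesssim e^{c_0\frac{\mu}{p}2^{j\dl}}$ pointwise, giving $\|\phi_j\Q_k f\|_{L^p}\lesssim e^{c_0\frac{\mu}{p}2^{j\dl}}\|\Q_k f\|_{L^p_\mu}$. The gap is in the reduction from $W^{s,p}$ to $L^p$. You propose two routes, and neither closes as stated. For $s<0$, applying Lemma~\ref{LEM:bilin1}\,(ii) gives
\[
\|\phi_j\Q_k f\|_{W^{s,p}}\lesssim \|\phi_j\|_{W^{-s,\infty-}}\|\Q_k f\|_{W^{s,p+}}\lesssim 2^{\eps j}2^{(s+\eps)k}\|\Q_k f\|_{L^p},
\]
but the right-hand side is the \emph{unweighted} global $L^p$ norm of $\Q_k f$, not $\|\phi_j\Q_k f\|_{L^p}$; you cannot chain this with your weighted $L^p$ bound, since $\|\Q_k f\|_{L^p}\ge\|\Q_k f\|_{L^p_\mu}$ goes the wrong way. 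For $s\ge0$, the claim $\|\phi_j\Q_k f\|_{W^{s,p}}\lesssim 2^{(s+\eps)k}\|\phi_j\Q_k f\|_{L^p}$ would need $\phi_j\Q_k f$ to be frequency-localized at $\sim2^k$, which it is not: multiplication by $\phi_j$ spreads the Fourier support, and the high-frequency tail $\Q_\ell(\phi_j\Q_k f)$ for $\ell\gg k$ is controlled only by $2^{-N\ell}\|\Q_k f\|_{L^p}$ (via $\|\Q_{\ge\ell}\phi_j\|_{L^\infty}\lesssim2^{-N\ell}$), again unweighted.

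What you have actually derived are the paper's two separate bounds: the weighted one \eqref{ee2} (no $2^{(s+\eps)k}$ factor) and the unweighted one \eqref{ee3} (with the $2^{(s+\eps)k}$ factor). These cannot be chained; they must be \emph{interpolated}. The paper views $g\mapsto\phi_j g$ as a linear map and uses complex interpolation of weighted Lebesgue spaces \cite[Theorem~5.4.1]{BL}: since $[L^p_{\mu'},L^p]_\theta=L^p_{(1-\theta)\mu'}$, choosing $\mu'=\mu/(1-\theta)$ with $\theta$ close to $1$ yields a bound with both the weight $e^{c_0\frac{\mu}{p}2^{j\dl}}$ and the factor $2^{(s+\eps)k}$ on the right-hand side in $\|\Q_k f\|_{L^p_\mu}$. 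This interpolation is the missing bridge between your spatially-localized and frequency-localized estimates.
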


We first prove \eqref{equiv1} by assuming Lemma \ref{LEM:equiv2}.
We present the proof of Lemma \ref{LEM:equiv2}
at the end of this section.
From 
\eqref{Lpm1} and Lemma \ref{LEM:equiv2}, we have
\begin{align*}
\| f \|_{W^{s, p}_\mu}
& \le \sum_{k = 0}^\infty \|\Q_k  f \|_{W^{s, p}_\mu}
\le \sum_{k = 0}^\infty 
\sum_{j = 0}^\infty
e^{-\frac \mu p 2^{j\dl} } \| \phi_j \Q_k  f \|_{W^{s, p}}\\
& \les \sum_{k = 0}^\infty 
\sum_{j = 0}^\infty
e^{-\frac \mu p 2^{j\dl} } 
 e^{c_0\frac{\mu'}{p} 2^{j\dl}}
2^{(s+\eps) k} 
\|  \Q_k  f \|_{L^p_{\mu'}}\\
&  \les \|f\|_{B^{s', \mu'}_{p, 1}}, 
\end{align*}

\noi
provided that $\mu >  c_0  \mu'$
and $s < s'$.
This proves \eqref{equiv1}.

\medskip

Next, we prove
\begin{align}
\|f\|_{B^{s, \mu}_{p, \infty}} \les \| f \|_{W^{s, p}_{\mu'}}
\label{equiv2}
\end{align}

\noi
for any $s \in \R$ and 
$0 < \mu' <  c_1  \mu$
for some small $c_1 > 0$.

From \eqref{phi1a} and \eqref{phi2}, we have 
\begin{align}
\begin{split}
2^{sk}\| \Q_k f\|_{L^p_\mu}
& \le \sum_{j, j'  = 0}^\infty2^{sk}\| \phi_{j'} \Q_k (\phi_j f)\|_{L^p_\mu}\\
& \le \sum_{j, j'  = 0}^\infty
e^{-c \frac \mu p 2^{j' \dl}} 
2^{sk}\| \phi_{j'} \Q_k (\phi_j f)\|_{L^p}.
\end{split}
\label{eg1}
\end{align}

\noi
Note that 
we have
\begin{align}
\big(\phi_{j'} \Q_k (\phi_j f)\big)(x) 
= \phi_{j'}(x) \int_{\R^d}  \eta_k (x-y) (\phi_j f)(y) dy, 
\label{eg2}
\end{align}

\noi
where $\eta_k$ is as in \eqref{eta1}.
In view of \eqref{phi}, 
we have $|x| \sim 2^{j'}$ and $|y|\sim 2^j$
in the integration above.

We first estimate the contribution to \eqref{eg1} from the case $j' \ge j - 2$.
By first summing over $j'$
and applying Bernstein's inequality and \eqref{Lpm1}, we have 
\begin{align}
\begin{split}
&  \sum_{j = 0}^\infty \sum_{j' = j - 2}^\infty
e^{-c \frac \mu p 2^{j' \dl}} 
2^{sk}\| \phi_{j'} \Q_k (\phi_j f)\|_{L^p}\\
&\quad
 \les
 \sum_{j = 0}^\infty 
e^{-c' \frac \mu p 2^{j \dl}} 
2^{sk}\|  \Q_k (\phi_j f)\|_{L^p}\\
&\quad\les
 \sum_{j = 0}^\infty 
e^{-c' \frac \mu p 2^{j \dl}} 
e^{\frac{\mu'}{p} 2^{j\dl}}
\|   f\|_{W^{s, p}_{\mu'}}\\
&\quad \les \|   f\|_{W^{s, p}_{\mu'}}, 
\end{split}
\label{eg3}
\end{align}

\noi
uniformly in $k \in \Z_{\ge 0}$, 
provided that $0 < \mu' <  c'  \mu$.

Next, we consider the case $j' \le j - 3$.
In this case, we have $|x - y| \sim |y|\sim 2^j$ in \eqref{eg2}.
Then, it follows from  \eqref{eta2} that 
\begin{align}
 |\eta_k(x-y)|
\les  2^{d k} e^{-c |2^k (x-y)|^\frac{1}{\ta_0}} 
\les  2^{dk}  
e^{- c' |2^k \cdot 2^j |^\frac{1}{\ta_0}}
e^{-c' |2^k (x-y)|^\frac{1}{\ta_0}}.
\label{eg4}
\end{align}

\noi
In the following, 
it is understood that,  given $j \in \Z_{\ge 0}$, 
we have $|x- y|\sim |y|\sim 2^j$ in \eqref{eg2}.
Hence, 
by first summing over $j'$
and applying Bernstein's inequality followed by Young's inequality with~\eqref{eg4}
and \eqref{Lpm1}, we obtain
\begin{align}
\begin{split}
&  \sum_{j = 0}^\infty \sum_{j' = 0}^{j-3}
e^{-c \frac \mu p 2^{j' \dl}} 
2^{sk}\|\phi_{j'}(  \eta_k* (\phi_j f))\|_{L^p}
\les   \sum_{j = 0}^\infty 
 \|  \eta_k* \jb{\nb}^s (\phi_j f)\|_{L^p}\\
&\quad
 \les   \sum_{j = 0}^\infty 
e^{-c'' |2^k \cdot 2^j |^\frac{1}{\ta_0}} 
\|   \phi_j f\|_{W^{s, p}}
 \les   \sum_{j = 0}^\infty 
e^{- c'' |2^k \cdot 2^j |^\frac{1}{\ta_0}} 
e^{\frac{\mu'}{p}2^{j\dl}}
\|  f\|_{W^{s, p}_{\mu'}}\\
& \quad 
\les \|  f\|_{W^{s, p}_{\mu'}}
\end{split}
\label{eg5}
\end{align}

\noi
for any $\mu' \in \R$ since we have $0< \dl < \frac{1}{\ta_0}$;
see
 the definition \eqref{weight1}
 of the weight $w_\mu$.

Putting \eqref{eg1}, \eqref{eg3}, and \eqref{eg5}
together, we obtain \eqref{equiv2}.

\medskip

We conclude this paper by presenting the proof of Lemma \ref{LEM:equiv2}.

\begin{proof}[Proof of Lemma \ref{LEM:equiv2}]
We first consider the case $s \le 0$.
From \eqref{Lpm1}, we have
\begin{align}
\| \phi_j \Q_k f\|_{W^{s, p}}
\le \| \phi_j \Q_k f\|_{L^p}
\le e^{\frac{ \mu}p  a_1 2^{j\dl}} \|\Q_k f \|_{L^p_\mu}
\label{ee2}
\end{align}

\noi
for any $\mu > 0$, 
uniformly in $k \in \Z_{\ge 0}$.
By \eqref{cpt1} and Bernstein's inequality, we have
\begin{align}
\| \phi_j \Q_k f\|_{W^{s, p}}
\les  \| \Q_k f\|_{W^{s, p}}
\les  2^{sk}\| \Q_k f\|_{L^p}, 
\label{ee3}
\end{align}

\noi
uniformly in $k \in \Z_{\ge 0}$.
Then, \eqref{ee1} follows from interpolating \eqref{ee2} and \eqref{ee3};
see \cite[Theorem~5.4.1]{BL}
for an interpolation of weighted Lebesgue spaces.

Next, we consider the case $s > 0$.
In this case, \eqref{ee1} follows from interpolating
\begin{align*}
 \| \phi_j \Q_k f\|_{L^p}
\le e^{ \frac \mu p a_1 2^{j\dl}} \|\Q_k f \|_{L^p_\mu}
\end{align*}

\noi
and \eqref{ee3} (with $s$ replaced by $s + \eps$).
\end{proof}


\section{Declarations}

\noi
{\bf Funding.}
T.O.~and~G.Z.~were supported by the European Research Council
(grant no. 864138 ``SingStochDispDyn'').
T.O.~was also supported 
 by the EPSRC 
Mathematical Sciences
Small Grant  (grant no.~EP/Y033507/1).
Y.W. was supported by  the EPSRC New Investigator
Award (grant no. EP/V003178/1).

\medskip

\noi
{\bf Competing interests.}
The authors have no competing interests to declare that are relevant to the content of this article.

\medskip

\noi
{\bf Data availability statement.}
This manuscript has no associated data.


\begin{ackno}\rm
 The authors would like to express their gratitude to the anonymous referees for the helpful comments which improved the quality of the paper

%
\end{ackno}

\end{document}